\documentclass[11pt]{amsart}
\usepackage[T1]{fontenc}
\usepackage[utf8]{inputenc}
\usepackage[margin=1in]{geometry}
\usepackage{amsmath,amssymb,amsfonts,mathtools,mathrsfs,bm}
\usepackage{esint}
\usepackage{enumitem}
\usepackage{array}
\usepackage{microtype}
\usepackage{hyperref}
\usepackage[capitalize]{cleveref}
\hypersetup{
 colorlinks=true,
 linkcolor=blue,
 citecolor=blue,
 urlcolor=blue,
 pdftitle={Lipschitz regularity of harmonic map heat flows from RCD spaces into CAT(0) spaces}
}
\numberwithin{equation}{section}

\theoremstyle{plain}
\newtheorem{theorem}{Theorem}[section]
\newtheorem{proposition}[theorem]{Proposition}
\newtheorem{lemma}[theorem]{Lemma}
\newtheorem{corollary}[theorem]{Corollary}

\theoremstyle{definition}
\newtheorem{definition}[theorem]{Definition}

\theoremstyle{remark}

\crefname{theorem}{Theorem}{Theorems}
\crefname{proposition}{Proposition}{Propositions}
\crefname{lemma}{Lemma}{Lemmas}
\crefname{corollary}{Corollary}{Corollaries}
\crefname{claim}{Claim}{Claims}
\crefname{definition}{Definition}{Definitions}
\crefname{notation}{Notation}{Notations}
\crefname{assumption}{Assumption}{Assumptions}
\crefname{remark}{Remark}{Remarks}
\Crefname{theorem}{Theorem}{Theorems}
\Crefname{proposition}{Proposition}{Propositions}
\Crefname{lemma}{Lemma}{Lemmas}
\Crefname{corollary}{Corollary}{Corollaries}
\Crefname{claim}{Claim}{Claims}
\Crefname{definition}{Definition}{Definitions}
\Crefname{notation}{Notation}{Notations}
\Crefname{assumption}{Assumption}{Assumptions}
\Crefname{remark}{Remark}{Remarks}

\newcommand{\R}{\mathbb R}
\newcommand{\dd}{\mathrm d}
\newcommand{\m}{\mathfrak m}
\newcommand{\dist}{\mathsf d}
\newcommand{\Lip}{\operatorname{Lip}}
\newcommand{\lip}{\operatorname{lip}}
\newcommand{\supp}{\operatorname{supp}}
\newcommand{\esssup}{\operatorname*{ess\,sup}}
\newcommand{\essinf}{\operatorname*{ess\,inf}}
\newcommand{\loc}{\mathrm{loc}}
\newcommand{\Pheat}{\mathsf P}
\newcommand{\Hheat}{\mathsf H}
\newcommand{\Z}{\mathsf Z}
\newcommand{\RCD}{\ensuremath{\mathrm{RCD}}}
\newcommand{\CAT}{\ensuremath{\mathrm{CAT}}}
\newcommand{\KS}{\ensuremath{\mathrm{KS}}}
\newcommand{\EVI}{\ensuremath{\mathrm{EVI}}}
\newcommand{\Ch}{\operatorname{Ch}}
\newcommand{\ip}[2]{\left\langle#1,#2\right\rangle}
\newcommand{\HeatDelta}{\Delta_{\mathrm H}^{+}}
\newcommand{\DeltaZ}{\boldsymbol\Delta_{\Z}}
\newcommand{\MapE}{\mathsf E}
\newcommand{\FlowE}{\mathcal G}
\newcommand{\md}{\mathsf{md}}

\title[Harmonic map heat flows from $\RCD$ spaces]{Lipschitz regularity of
harmonic map heat flows from $\RCD$ spaces into $\CAT(0)$ spaces}

\author{Bang-Xian Han}
\address{School of Mathematics, Shandong University, Jinan 250100, China}
\email{hanbx@sdu.edu.cn}

\author{Hui-Chun Zhang}
\address{Department of Mathematics, Sun Yat-sen University, Guangzhou 510275, China}
\email{zhanghc3@mail.sysu.edu.cn}

\author{Xi-Ping Zhu}
\address{Department of Mathematics, Sun Yat-sen University, Guangzhou 510275, China}
\email{stszxp@mail.sysu.edu.cn}

\thanks{The authors were supported in part by the National Key R\&D Program
of China (Nos.~2022YFA1005400 and 2021YFA1002200), the Shandong Provincial
Natural Science Foundation (ZR2025QB05), and the Taishan Scholars Program of
Shandong Province (tsqn202408059).}

\keywords{Harmonic map heat flow, $\RCD(K,N)$ space, $\CAT(0)$ space, Lipschitz regularity, Hamilton--Jacobi method}
\makeatletter
\@namedef{subjclassname@2020}{\textup{2020} Mathematics Subject Classification}
\makeatother
\subjclass[2020]{Primary 53C23, 53C43, 58E20; Secondary 31E05, 35K55, 49Q22}

\begin{document}

\begin{abstract}
We prove positive-time regularity for harmonic map heat flows from
finite-dimensional $\RCD(K,N)$ spaces into complete $\CAT(0)$ spaces, without
assuming that either the source or the target is smooth.  For bounded-image
initial data, the \EVI{} gradient flow of the Dirichlet energy admits a
representative that is locally Lipschitz jointly in space and time.  Moreover,
its spatial pointwise Lipschitz constant satisfies an Eells--Sampson-type
parabolic Bochner inequality.  The main difficulty is that the smooth
parabolic perturbations used to select contact points are unavailable on an
$\RCD{}$ source.  We overcome it by a sliced contact-selection principle that
converts the elliptic ABP estimate on $\RCD{}$ spaces into the space--time
contact selection required by the Hamilton--Jacobi argument.
\end{abstract}

\maketitle

\section{Introduction}

\subsection{Background and motivation}

The harmonic map heat flow between Riemannian manifolds is the gradient flow of the Dirichlet energy for
maps.  Eells--Sampson
\cite{EellsSampson1964} proved long-time smooth existence and convergence
when the target has nonpositive sectional curvature.  Hamilton
\cite{Hamilton1975} subsequently treated the initial-boundary value problem.
These results rely on the convexity of squared distance in the target and on
the parabolic calculus available on a smooth source manifold.

Gromov--Schoen \cite{GromovSchoen1992} initiated the variational theory for
singular nonpositively curved targets.  Korevaar--Schoen
\cite{KorevaarSchoen1993,KorevaarSchoen1997} developed an intrinsic Sobolev
and energy theory for maps from Riemannian domains to complete metric
targets, including compactness and variational existence results.
Independently, Jost \cite{Jost94} introduced an energy of the same
finite-scale averaging form for maps between metric spaces, thus already
allowing general metric source spaces, and related the resulting notions of
equilibrium and harmonic maps to mean-value properties and
$\Gamma$-convergence.  He subsequently developed the convex-functional and
generalized Dirichlet-form aspects of this theory in
\cite{Jost95,Jost97}.  Building on these finite-scale energy constructions,
Gigli--Tyulenev \cite{GigliTyulenev2021} established the Sobolev theory for
maps from strongly rectifiable source spaces, which admit, up to null sets,
countably many bi-Lipschitz Euclidean charts with distortion arbitrarily
close to one.  Complete $\CAT(0)$ spaces provide the synthetic model for
globally nonpositively curved targets; see Reshetnyak
\cite{Reshetnyak1968} and Bridson--Haefliger
\cite{BridsonHaefliger1999}.  For such a target, $L^2(\Omega,Y)$ is again
$\CAT(0)$, so the quadratic energy functional of Sobolev maps generates a unique gradient flow in
the evolution variational inequality (\EVI{}) sense.  This
point of view was developed by Mayer
\cite{Mayer1998}, Jost \cite{Jost1998}, and Sturm
\cite{Sturm2001,Sturm2005}; it gives global existence and uniqueness without
introducing coordinates on the target.

In the present paper, the sources are finite-dimensional $\RCD(K,N)$ spaces.
They form the broadest currently available synthetic class with a lower Ricci
curvature bound, an upper dimension bound, and a Hilbertian first-order
calculus; see Ambrosio--Gigli--Savar\'e
\cite{AmbrosioGigliSavare2014,AmbrosioGigliSavareDuke2014} and
Erbar--Kuwada--Sturm \cite{ErbarKuwadaSturm2015}.  Such spaces are strongly
rectifiable by \cite[Theorem~2.19]{GigliTyulenev2021}; see also
\cite{GigliP2021} for the behavior of the reference measure in rectifiable
charts.

This variational framework is accompanied by a strong elliptic regularity
theory.  In the smooth-source case, Korevaar--Schoen
\cite{KorevaarSchoen1993} proved local Lipschitz continuity for harmonic maps
into $\CAT(0)$ spaces.  For Alexandrov source spaces with curvature bounded
below, Lin \cite{Lin97} and Jost \cite{Jost97} established H\"older
continuity.  Lin \cite{Lin97} conjectured, and Jost \cite{Jost1998} asked more
generally, whether such harmonic maps are locally Lipschitz; the second and
third authors resolved this problem in \cite{ZhangZhu18}.  For
finite-dimensional $\RCD(K,N)$ sources, Gigli \cite{Gigli2023} and
Mondino--Semola \cite{MondinoSemola2026} proved local Lipschitz regularity and
Bochner--Eells--Sampson estimates for harmonic maps into $\CAT(0)$ spaces.
These results provide essential fixed-time tools, but they concern energy
minimizers and do not by themselves control the time evolution of an \EVI{}
flow.

For smooth source manifolds, the regularity of this variational flow has
recently been studied by several methods.  Lin--Segatti--Sire--Wang
\cite{LinSegattiSireWang2026} obtained positive-time regularity through
elliptic approximation and a parabolic frequency argument.  In
\cite{ZhangZhu2026}, the second and third authors proved space--time Lipschitz
regularity for the semigroup solution by combining \EVI{} inequalities,
Campanato estimates, and a metric Hamilton--Jacobi method.  Lin--Wang
\cite{LinWang2026} subsequently gave an alternative,
Korevaar--Schoen-inspired proof of this local Lipschitz regularity.

The present paper extends this theory to finite-dimensional
$\RCD(K,N)$ sources while retaining an arbitrary complete $\CAT(0)$ target.
Thus both sides of the flow may be nonsmooth within the natural synthetic
curvature classes used in the variational theory.  This is not a direct
application of elliptic regularity to the spatial slices: the \EVI{} solution
is initially only an $L^2$-valued curve, its slices are not energy minimizers,
and the pointwise parabolic inequalities needed for the Hamilton--Jacobi
argument are available only almost everywhere.  We prove both positive-time
local Lipschitz continuity jointly in space and time and a parabolic Bochner
inequality for the spatial pointwise Lipschitz constant.

\subsection{Main result}\label{subsec:main-result}

Let $(X,\dist,\m)$ be a finite-dimensional $\RCD(K,N)$ space,
$\Omega\subset X$ a bounded open set, and $(Y,\dist_Y)$ a complete
$\CAT(0)$ space.  For a space--time map $u$, we write
$u^t:=u(\cdot,t)$ for its spatial slice at time $t$.  We use the notation of
\cite{GigliTyulenev2021}: $\KS^{1,2}(\Omega,Y)$ denotes the Sobolev space of
metric-valued maps and $\MapE$ its quadratic energy.  The scale density,
scale energy, Sobolev class, and limiting
energy are fixed in
\eqref{eq:KS-scale-density}--\eqref{eq:KS-scale-energy},
\eqref{eq:KS-space-definition}, and
\eqref{eq:KS-energy-definition}.  Every
finite-dimensional $\RCD(K,N)$ source has an essential dimension $n$, namely
the almost-everywhere dimension of its Euclidean tangent charts; see
\cref{subsec:KS-energy}.  We use the following normalization of the Dirichlet
energy generating the flow:
\begin{equation}\label{eq:flow-energy-normalization}
 \FlowE(u):=\frac{n+2}{2}\MapE(u).
\end{equation}

Given a boundary datum
$\psi\in\KS^{1,2}(\Omega,Y)$, define
\[
 \KS^{1,2}_\psi(\Omega,Y)
 :=\left\{u\in\KS^{1,2}(\Omega,Y):
       \dist_Y(u,\psi)\in W^{1,2}_0(\Omega,\dist,\m)\right\}.
\]
We regard $\FlowE$ as an extended functional on $L^2(\Omega,Y)$ by setting it
equal to $+\infty$ outside the fixed-trace class
$\KS^{1,2}_\psi(\Omega,Y)$.  This class is closed and geodesically convex,
and the extended functional generates an \EVI{} flow; the precise flow
convention is specified in \cref{subsec:semigroup-HMHF}.  See also
Mayer \cite[Sections~1--2]{Mayer1998}, Gigli--Tyulenev
\cite[Theorems~3.13 and~4.16]{GigliTyulenev2021}.

For later use, $V_2(\Omega\times(0,T))$ denotes the class of functions
$f$ satisfying
\begin{equation*}
 \esssup_{0<t<T}\|f(\cdot,t)\|_{L^2(\Omega)}
 +\left(\int_0^T\int_\Omega|\mathrm D f(\cdot,t)|^2\,\dd\m\,\dd t\right)^{1/2}
 <\infty.
\end{equation*}
We write $V_{2,\mathrm{loc}}(\Omega\times(0,T))$ if $f$ belongs to
$V_2(\Omega'\times(a,b))$ for every
$\Omega'\Subset\Omega$ and $0<a<b<T$.

Here and below, a representative of a space--time map is a pointwise-defined
map agreeing with the original map
$\m\otimes\mathcal L^1$-almost everywhere.

\begin{theorem}[Lipschitz regularity and Bochner inequality]\label{main-thm}
Let $(X,\dist,\m)$ be an $\RCD(K,N)$ space with $K\in\mathbb R$ and
$1<N<\infty$, let $\Omega\subset X$ be bounded and open, and let
$(Y,\dist_Y)$ be a complete $\CAT(0)$ space.  Let
$\psi\in\KS^{1,2}(\Omega,Y)$, let
$u_0\in\KS^{1,2}_\psi(\Omega,Y)$, and let $u^t$ be the \EVI{} gradient flow of
$\FlowE$ in the fixed-trace class.  Assume that there exist $P_0\in Y$ and
$M<\infty$ such that
\(u_0(x)\in\overline{B}_M(P_0)\) for \(\m\)-almost every \(x\in\Omega\).
Then the following conclusions hold.
\begin{itemize}
\item[$\mathrm{(i)}$] The flow $u$ has a representative that is locally
Lipschitz on $\Omega\times(0,\infty)$.  More precisely, if
$B_{16R}(x_0)\Subset\Omega$ and $0<t_*<T<\infty$, then
\[
 \dist_Y(u(x,t),u(y,s))\le C\bigl(\dist(x,y)+|t-s|\bigr)
\]
for $x,y\in B_R(x_0)$ and $s,t\in[t_*,T]$.  The constant depends only on
$K,N,R,t_*,T$, the fixed local doubling--Poincar\'e and volume data on
$B_{16R}(x_0)$, the image radius $M$, and $\FlowE(u_0)$.

\item[$\mathrm{(ii)}$] For every $t>0$, define the pointwise spatial
Lipschitz constant by
\begin{equation*}
 \lip_x u(x,t)
 :=\limsup_{\substack{y\to x\\y\ne x}}
 \frac{\dist_Y\bigl(u(x,t),u(y,t)\bigr)}{\dist(x,y)}.
\end{equation*}
Then $\lip_x u$ belongs to
$V_{2,\mathrm{loc}}(\Omega\times(0,\infty))
 \cap L^\infty_{\mathrm{loc}}(\Omega\times(0,\infty))$ and satisfies
\begin{equation*}
 (\boldsymbol\Delta-\partial_t)(\lip_x u)^2
 \ge 2|\mathrm D\lip_x u|^2+2K(\lip_x u)^2
\end{equation*}
on $\Omega\times(0,\infty)$ in the sense of distributions.
\end{itemize}
\end{theorem}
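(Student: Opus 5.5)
We follow the strategy of \cite{ZhangZhu2026} for smooth sources, replacing every use of smooth parabolic calculus on the source by $\RCD$ tools.

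\textbf{Step 1: a priori bounds from the \EVI{} flow.}
The \EVI{} inequality, tested against constant maps and against the convex projection onto $\overline{B}_M(P_0)$, shows that the image stays in $\overline{B}_M(P_0)$ for all $t$. It also gives that $t\mapsto\FlowE(u^t)$ is nonincreasing, and it yields the time-regularity bound
\[
\int_\Omega \dist_Y^2(u^t,u^s)\,\dd\m\le C|t-s|^2
\]
for $s,t\ge t_*$, with $C$ depending on $\FlowE(u_0)$ and $t_*$. Here one uses that the metric slope is controlled by $\FlowE(u_0)/t_*$.

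Next we use the \EVI{} inequality with competitors of the form $u^t$ moved geodesically toward a point $Q\in Y$ with a cutoff weight. This produces the parabolic subsolution inequality
\[
(\boldsymbol\Delta-\partial_t)\,\dist_Y^2(u,Q)\ge 2|\mathrm D u|^2
\]
in the distributional sense on $\Omega\times(0,\infty)$. A parabolic Moser iteration on the doubling--Poincar\'e ball $B_{16R}(x_0)$ then combines with Campanato estimates to give H\"older continuity of a representative jointly in $(x,t)$.

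\textbf{Step 2: Lipschitz bound via the Hamilton--Jacobi method.}
We compare $u$ with its Hopf--Lax type infimal convolution
\[
f_s(x,t):=\inf_{y}\left\{\dist_Y\bigl(u(x,t),u(y,t)\bigr)-\frac{\dist^2(x,y)}{2s}\right\},
\]
or rather with the coupled two-point function $\dist_Y(u(x,t),u(y,t))$ evaluated along Hamilton--Jacobi flows. The two-point version is chosen so that the source $\RCD(K,N)$ condition enters through the Wasserstein contraction and the Kuwada-type duality for the heat flow, while the $\CAT(0)$ target enters through the convexity of $\dist_Y$ on $Y\times Y$.

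We aim to show that
\[
\Phi(x,y,t):=\dist_Y\bigl(u(x,t),u(y,t)\bigr)-L\,e^{-Kt}\dist(x,y)-\text{(time penalty)}
\]
cannot have a positive interior maximum when $L$ is large. This gives the spatial Lipschitz bound. The time Lipschitz bound then follows from the $L^2$ time bound of Step~1 combined with spatial Lipschitz continuity, by an averaging-over-balls argument.

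\textbf{Main obstacle: contact-point selection.}
The difficulty is that the parabolic inequalities hold only almost everywhere or distributionally. On a smooth source one perturbs by smooth parabolic functions to find contact points where second-order information holds; this is unavailable here. We would first try a sliced approach:
\begin{enumerate}[label=(\alph*)]
\item freeze time slices;
\item apply the elliptic ABP estimate on $\RCD$ spaces, which guarantees that the set of contact points of $\Phi$ with paraboloid-type (distance-squared) supports has positive measure;
\item integrate over the time variable with a Fubini argument, choosing the time penalty so that a positive-measure set of space--time contact points results. On this set the a.e.\ inequalities from Step~1 and the pointwise Laplacian comparison $\boldsymbol\Delta\dist\le$ (curvature term) are simultaneously valid.
\end{enumerate}
Making the time derivative compatible with the slice-wise contact, by perturbing with $\epsilon t$ and using semiconcavity in $t$ from the $\EVI{}$ structure, is where we expect most of the work. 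This step yields (i).

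\textbf{Step 3: Bochner inequality (ii).}
With $u$ locally Lipschitz, on each slice we use the Gigli / Mondino--Semola machinery as an approximate-harmonicity argument. The slice satisfies a Poisson-type equation with the time derivative as source term, bounded by Step~2. Equivalently, we take the Hamilton--Jacobi infimal convolution $s\mapsto\inf_y\{\dist_Y(u(x,t),u(y,t))/s\ldots\}$ and pass to the limit $s\to0$. Following the elliptic proof in \cite{ZhangZhu18,Gigli2023}, this yields the distributional inequality
\[
(\boldsymbol\Delta-\partial_t)(\lip_x u)^2\ge 2|\mathrm D\lip_x u|^2+2K(\lip_x u)^2,
\]
with the $V_{2,\loc}$ membership coming from Caccioppoli estimates applied to this subsolution inequality.
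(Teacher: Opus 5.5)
Your proposal has genuine gaps. The most basic one is the order in which you obtain the time and space estimates.

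\textbf{Time-Lipschitz bound.} The averaging argument you give does not produce a Lipschitz bound. For $y\in B_r(x)$ it yields $\dist_Y(u(x,t),u(x,s))\le 2C_Sr+L|t-s|\,\m(B_r(x))^{-1/2}\lesssim r+|t-s|\,r^{-N/2}$, because the speed bound $\dist_{L^2}(u^t,u^s)\le L|t-s|$ is global. Optimizing in $r$ gives only $|t-s|^{2/(N+2)}$.

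More importantly, the pointwise time-Lipschitz bound is an \emph{input} to the sliced contact selection, not a consequence of it. The ABP estimate on a time slice needs an upper Laplacian bound for the slice of $\frac{e^{-2Kt}}{2\varepsilon}\dist^2(x,y)-F(x,y,t)-v(x,t)$. This reduces to $\boldsymbol\Delta\dist_Y(u^t,P)\ge-c_L\m$ for a.e.\ $t$ and all $P$. That inequality follows from $(\boldsymbol\Delta-\partial_t)\dist_Y(u,P)\ge0$ only after bounding $|\partial_t\dist_Y(u,P)|\le c_L$ pointwise. The running-minimum argument that turns slice contact sets into a space--time set of positive measure also needs the slice minima to be uniformly Lipschitz in $t$. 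The \EVI{} controls only $L^2$ quantities, so ``semiconcavity in $t$'' is not available pointwise.

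The missing ingredient is the two-flow inequality $(\boldsymbol\Delta-\partial_t)\dist_Y^2(u,v)\ge0$ for two \EVI{} flows; you derive only the fixed-target version. Applied to $v=u(\cdot,\cdot+h)$, it makes $\dist_Y^2(u(x,t+h),u(x,t))$ a nonnegative subsolution with spatial $L^2$ norm at most $L^2h^2$. Parabolic local boundedness then gives $\sup\le CL^2h^2$. This step has to precede the spatial argument.

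\textbf{Spatial bound.} Your Step~2 lacks the mechanism that produces a regularizing estimate. Ruling out a positive interior maximum of $\Phi=F-Le^{-Kt}\dist(x,y)-(\text{time penalty})$ only propagates a Lipschitz bound from the parabolic boundary. Since $F\le2M$, a large $L$ handles pairs with $\dist(x,y)\gtrsim M/L$. Near the diagonal, on the initial slice and on the lateral boundary, nothing is known. A time penalty that forces $\Phi\to-\infty$ at the initial time must decrease in $t$, and at a backward maximum it then contributes with the wrong sign. With the coupling available on an \RCD{} source (the $W_p$ contraction), the distance penalty contributes only a nonpositive term, so there is no Ishii--Lions-type concavity gain to close the argument.

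The paper's route is integral instead:
\begin{itemize}
\item[(a)] The envelope $\mathcal V_\varepsilon(x,t)=\max_y\{F(x,y,t)-\frac{e^{-2Kt}}{2\varepsilon}\dist^2(x,y)\}$ is shown to be a weak heat subsolution. A contact argument shows $-\mathcal V_\varepsilon$ is a heat-test supersolution. Temporal mollification plus the Gigli--Mondino--Semola equivalence converts this into the distributional inequality.
\item[(b)] The bound $\mathcal V_\varepsilon/\varepsilon\lesssim[\lip_{C\sqrt\varepsilon}u^t]^2$ holds, and its $L^1$ norm is controlled by the energy through a Haj\l asz--Koskela maximal-function estimate.
\item[(c)] The mean-value inequality then gives an $\varepsilon$-uniform bound.
\end{itemize}

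Saying that ``the a.e.\ inequalities are simultaneously valid'' on the contact set is also not enough. At the selected point one needs the pointwise heat-kernel inequality $\limsup_{s\downarrow0}s^{-1}\int p_s(x_0,y)\,w_{P,s}(x_0,y,t_0)\,\dd\m(y)\le0$ for the midpoint target $P$. That target depends on the contact point, so the good set must be built independently of $P$.

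\textbf{Bochner inequality.} In Step~3, a slicewise elliptic argument does not give the parabolic inequality. The limit of the quadratic envelope only gives $(\boldsymbol\Delta-\partial_t)(\lip_xu)^2\ge2K(\lip_xu)^2$, without the term $2|\mathrm D\lip_xu|^2$. The paper obtains the gradient term from $p$-power penalties. Their limits $\frac{e^{qKt}}q(\lip_xu)^q$ are subsolutions bounded in $V_2$ uniformly in $p$. Letting $p\to\infty$ gives $(\boldsymbol\Delta-\partial_t)\lip_xu\ge K\lip_xu$, and squaring this yields the stated inequality.
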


Part~\(\mathrm{(i)}\) gives pointwise control of the flow in both variables,
starting from its variational construction as an $L^2$-valued curve.
Part~\(\mathrm{(ii)}\) is the parabolic counterpart of the
Bochner--Eells--Sampson inequality for harmonic maps: it controls the
evolution of the spatial slope and retains the Ricci-curvature term of the
source.  Thus the theorem provides both positive-time regularization and a
differential estimate for that regularization.

The bounded-image hypothesis is the intrinsic metric-valued counterpart of
bounded initial data in the standard positive-time Lipschitz estimate for the
Euclidean heat equation.  If $\Pheat_t^{\mathbb R^n}$ denotes the Euclidean
heat semigroup on functions, then
$\|\nabla \Pheat_t^{\mathbb R^n}f\|_{L^\infty}
\lesssim t^{-1/2}\|f\|_{L^\infty}$.  Thus a bound
on the initial datum is built into the basic Lipschitz estimate already in the
scalar theory; it is not a technical restriction of our proof.  In the
present setting the image bound is
preserved by the flow through metric projection onto a closed ball; see
\eqref{eq:bounded-image-ball} in \cref{subsec:semigroup-HMHF}.

\subsection{New ingredients for a nonsmooth source}

The proof centers on an elliptic-to-parabolic contact-selection principle on
the source.  It replaces the smooth perturbations used in
\cite[Sections~7--8]{ZhangZhu2026} and supplies the contact information needed
by the metric Hamilton--Jacobi envelope.  The complete argument rests on four
ingredients specific to an \RCD{} source.

\smallskip
\noindent\emph{Two-flow inequality.}
We derive the energy variation and strong two-flow inequality from the
tensorized calculus of Sobolev maps in $\KS^{1,2}(\Omega,Y)$.

\smallskip
\noindent\emph{Simultaneous pointwise identities.}
We recover the fixed-target inequalities and then use heat-kernel blow-up and
parabolic differentiation to place the identities required by the
Hamilton--Jacobi argument on a common full-measure set.

\smallskip
\noindent\emph{Supersolution formulation.}
We replace the smooth viscosity argument by heat-test supersolutions and prove
that this formulation implies the required weak inequality.

\smallskip
\noindent\emph{Contact selection.}
The remaining difficulty is to select a touching point for the
Hamilton--Jacobi envelope.  The heat-kernel identities hold only on a
prescribed full-measure subset of $X\times X\times(0,\infty)$, whereas an
arbitrary minimum need not lie in that set.  Moreover, the affine
perturbations and the classical parabolic ABP argument used on a smooth source
have no counterpart on a general \RCD{} space.  We instead perturb on
$X\times X$ with separated squared-distance functions.  Laplacian comparison
and tensorization provide elliptic upper bounds on each time slice.  The
variation of the running minimum identifies a positive-measure family of
slices on which the elliptic contact estimate of Mondino--Semola
\cite[Theorem~4.3]{MondinoSemola2026} applies.  Integrating the resulting
slice estimates yields a space--time contact set of positive measure, and
hence a contact point in the prescribed full-measure set.  This
elliptic-to-parabolic selection principle replaces the unavailable smooth
space--time perturbation and closes the Hamilton--Jacobi argument.  Together
with a local Laplacian bound for the $p$-power diagonal penalty, it also gives
the parabolic Bochner inequality.

\subsection{Organization of the paper}

\Cref{sec:prelim} collects the scalar \RCD{} estimates, the
$\CAT(0)$ geometry, and the theory of Sobolev spaces for maps used later.
\Cref{sec:two-flow} proves the energy variation and the strong two-flow
inequality.  Positive-time H\"older regularity and pointwise time-Lipschitz
continuity are established in \cref{sec:positive-time}.
\Cref{sec:fixed-target-good-set} develops the fixed-target inequalities and
constructs the simultaneous heat-kernel good set.  The quadratic barriers and
the sliced elliptic contact selection are proved in
\cref{sec:contact-moving}.  \Cref{sec:HJ-final} applies the
Hamilton--Jacobi argument.  The quadratic envelope gives the spatial
Lipschitz bound, while its \(p\)-power version and the limiting argument of
\cite[Section~8]{ZhangZhu2026} give the Eells--Sampson-type Bochner
inequality.

\medskip
\noindent\textbf{Acknowledgments.}
We thank Professor J\"urgen Jost for his interest in this work and for
bringing to our attention the question of convergence of the heat flow to a
harmonic map.

\section{Preliminaries}\label{sec:prelim}

We work with $\RCD(K,N)$ spaces whose
reference measures have full support and are finite on bounded sets.  Under
our convention the ambient metric space is geodesic, complete and separable.  The
curvature--dimension condition makes it locally doubling and locally compact.  Constants are
local unless otherwise stated and may depend on $K$, $N$, and a fixed ambient
ball.

We use the first- and second-order \RCD{} calculus of
Ambrosio--Gigli--Savar\'e
\cite{AmbrosioGigliSavare2014,AmbrosioGigliSavareDuke2014} and Gigli
\cite{Gigli2015}, Sturm's local parabolic theory
\cite{Sturm1995II,Sturm1996III}, and the heat-kernel estimates of Jiang
\cite{Jiang2015} and Jiang, Li, and the second author
\cite{JiangLiZhang2016}.  The metric-valued Sobolev theory is taken from Gigli--Tyulenev
\cite{GigliTyulenev2021}.  We use one-sided Laplacian bounds in the equivalent
distributional and heat-flow formulations of Gigli--Mondino--Semola
\cite[Theorem~1.1]{GigliMondinoSemola2024}, always with a continuous right-hand side when
that equivalence is invoked.

\subsection{Cheeger energy, Laplacian, and scalar parabolic theory}

For a Lipschitz function $f$, let
\(\lip f(x):=\limsup_{y\to x}|f(y)-f(x)|/\dist(x,y)\) be its local Lipschitz
constant.  The Cheeger energy is defined by relaxation:
\begin{equation}
 \Ch(f)
 :=\inf_{\substack{f_j\in\Lip_b(X)\cap L^2(X,\m)\\
                    f_j\to f\ \mathrm{in}\ L^2(X,\m)}}
       \liminf_{j\to\infty}\frac12\int_X(\lip f_j)^2\,\dd\m
 =\frac12\int_X|\mathrm D f|^2\,\dd\m.
\end{equation}
Here $|\mathrm D f|$ denotes the minimal weak upper gradient; see Cheeger
\cite{Cheeger1999} and Shanmugalingam \cite{Shanmugalingam2000}.
We define
\(W^{1,2}(X,\dist,\m):=\{f\in L^2(X,\m):\Ch(f)<\infty\}\).
The infinitesimal Hilbertianity in the definition of an \RCD{} space says
precisely that the Cheeger energy is quadratic.  We write
\(\ip{\nabla f}{\nabla g}:=\frac14\bigl(|\mathrm D(f+g)|^2-
|\mathrm D(f-g)|^2\bigr)\) for the corresponding pointwise polarization.

\begin{definition}[Measure-valued Laplacian]
\label{def:measure-valued-laplacian}
Let $U\subset X$ be open.  A function
$f\in W^{1,2}_{\loc}(U,\dist,\m)$ belongs to
$D_{\loc}(\boldsymbol\Delta,U)$ if there is a locally finite signed Radon
measure $\mu_f$ on $U$ such that
\begin{equation}
 \int_U\phi\,\dd\mu_f
 =-\int_U\ip{\nabla f}{\nabla\phi}\,\dd\m
 \qquad\text{for every }\phi\in\Lip_c(U).
\end{equation}
In this case we set $\boldsymbol\Delta f:=\mu_f$.
\end{definition}

Throughout the paper, \(\boldsymbol\Delta\) denotes this measure-valued
Laplacian. (The symbol \(\HeatDelta\), introduced in \cref{sec:contact-moving},
denotes the corresponding heat-flow upper Laplacian.)  An inequality involving
\(\boldsymbol\Delta-\partial_t\) is always understood in the following weak
sense unless membership in \(D_{\loc}(\boldsymbol\Delta,\cdot)\) is stated
explicitly.
For a space--time function $q(x,t)$, the inequality
\begin{equation}
 (\boldsymbol\Delta-\partial_t)q\ge F
\end{equation}
means that for every nonnegative
$\phi\in\Lip_c(X\times\R)$,
\[
 -\int\!\!\int\ip{\nabla q}{\nabla\phi}\,\dd\m\,\dd t
 +\int\!\!\int q\,\partial_t\phi\,\dd\m\,\dd t
 \ge\int\!\!\int F\phi\,\dd\m\,\dd t.
\]
\medskip

Finite-dimensional \RCD{} spaces are locally doubling
\cite[Corollary~2.4]{SturmActaII} and support a local
$L^2$-Poincar\'e inequality
\cite[Theorem~1.2]{Rajala2012}.  On each
fixed ambient ball $B_R(p)\subset X$ there are constants
$C_D,C_P<\infty$ and $\lambda\ge1$, depending only on the local \RCD{} data of $B_R(p)$, such
that, whenever $B_{\lambda r}(x)\Subset B_R(p)$,
\begin{align}
 \m(B_{2r}(x))&\le C_D\m(B_r(x)),\\
 \fint_{B_r(x)}|f-f_{B_r(x)}|^2\,\dd\m
 &\le C_P r^2\fint_{B_{\lambda r}(x)}|\mathrm D f|^2\,\dd\m.
\end{align}

\begin{proposition}[Good cut-offs, {\cite[Lemma~3.1]{MondinoNaber2019}}]\label{prop:RCD-cutoffs}
Let $B_R(p)\subset X$ and $B_r(x)\subset B_{2r}(x)\Subset B_R(p)$.  There is a test function
$\chi$ such that $0\le\chi\le1$, $\chi\equiv1$ on $B_r(x)$,
$\supp\chi\subset B_{2r}(x)$, and
\begin{equation}
 |\mathrm D\chi|\le C/r,\qquad
 \boldsymbol\Delta\chi=g_\chi\m,\qquad
 g_\chi\in L^\infty(\m),\qquad
 \|g_\chi\|_{L^\infty}\le C r^{-2},
\end{equation}
where the constant depends only on the
local \RCD{} data of $B_R(p)$.
\end{proposition}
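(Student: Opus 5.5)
The plan is to construct $\chi$ by composing a Lipschitz profile with a heat-flow regularization of the distance function from $x$. This is the standard Mondino--Naber construction, localized to the fixed ball $B_R(p)$. Since the statement is cited from \cite[Lemma~3.1]{MondinoNaber2019}, the main task is to check that their proof depends only on the local \RCD{} data of $B_R(p)$.

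\textbf{Step 1: a good Lipschitz function.} Start from $\rho:=\dist(x,\cdot)$. By Laplacian comparison on $\RCD(K,N)$ spaces, $\boldsymbol\Delta\rho\le C/\rho$ on the annulus $B_{2r}(x)\setminus B_{r}(x)$, with $C$ depending on $K,N,R$. This gives only an upper bound: the singular part of $\boldsymbol\Delta\rho$ (cut locus) is negative and unbounded below, so $\rho$ itself cannot serve. To get a two-sided $L^\infty$ bound, regularize by the heat flow instead. Take a Lipschitz function $\eta$ equal to $1$ on $B_{5r/4}(x)$, vanishing outside $B_{7r/4}(x)$, with $|\mathrm D\eta|\le C/r$. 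Set $\tilde\eta:=\Pheat_{\epsilon r^2}\eta$ for a small fixed $\epsilon$. By the Bakry--\'Emery gradient bound $|\mathrm D \Pheat_s f|^2\le e^{-2Ks}\Pheat_s|\mathrm D f|^2$, we get $|\mathrm D\tilde\eta|\le C/r$. By the $L^\infty$ Laplacian estimate $\|\Delta \Pheat_s f\|_\infty\le C s^{-1/2}\|\mathrm D f\|_\infty$ (from the Bakry--\'Emery/Li--Yau type estimates of \cite{Jiang2015,JiangLiZhang2016}), we get $|\Delta\tilde\eta|\le C r^{-2}$. Gaussian heat kernel bounds, which depend only on the doubling and Poincar\'e constants, make $\tilde\eta$ close to $\eta$ in sup norm on $B_{2r}(x)$ when $\epsilon$ is small. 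In particular $\tilde\eta\ge 3/4$ on $B_r(x)$ and $\tilde\eta\le 1/4$ outside $B_{2r}(x)$. The contribution of mass far away is controlled by Gaussian tails, so the estimate can be localized inside $B_R(p)$.

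\textbf{Step 2: composition.} Let $\varphi\in C^\infty(\R)$ be a fixed profile with $\varphi=1$ on $[3/4,\infty)$ and $\varphi=0$ on $(-\infty,1/4]$. Define $\chi:=\varphi\circ\tilde\eta$. The chain rule gives
\[
\boldsymbol\Delta\chi=\bigl(\varphi'(\tilde\eta)\Delta\tilde\eta+\varphi''(\tilde\eta)|\mathrm D\tilde\eta|^2\bigr)\m ,
\]
which is absolutely continuous and bounded by $Cr^{-2}$. Also $|\mathrm D\chi|\le C/r$, $\chi\equiv1$ on $B_r(x)$, and $\supp\chi\subset B_{2r}(x)$. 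Finally, $\chi$ is a test function in the sense of Savar\'e/Gigli: it is Lipschitz, its Laplacian lies in $W^{1,2}$ because $\Delta\tilde\eta=\Pheat_{\epsilon r^2}\Delta$-type expressions are smooth in the heat-flow sense, and $\chi$ is bounded.

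\textbf{Main obstacle.} The delicate point is the uniform sup-norm closeness of $\Pheat_{\epsilon r^2}\eta$ to $\eta$ with constants depending only on local data. The heat flow is global, so the $\RCD(K,N)$ structure outside $B_R(p)$ enters, though only through $K,N$, which are global. I would handle this by using the Gaussian upper bound for the heat kernel together with the Lipschitz bound on $\eta$:
\[
|\Pheat_s\eta(y)-\eta(y)|\le \int p_s(y,z)\min\bigl(1,\dist(y,z)/r\bigr)\,\dd\m(z)\le C\sqrt{s}/r .
\]
This is uniform once $s=\epsilon r^2$, and the volume-ratio constants are those of the doubling data near $B_{2r}(x)$.
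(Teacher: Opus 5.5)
Your proposal is correct and is essentially the standard construction behind the cited \cite[Lemma~3.1]{MondinoNaber2019}, which the paper invokes without reproducing a proof. The construction has five steps: heat-flow regularization $\Pheat_{\epsilon r^2}\eta$ of a $C/r$-Lipschitz cutoff; Bakry--\'Emery for the gradient; the bound $\|\boldsymbol\Delta\Pheat_s\eta\|_{L^\infty}\le Cs^{-1/2}\Lip(\eta)$ for the Laplacian (obtainable, for example, from the dimensional Bakry--Ledoux estimate \cite{ErbarKuwadaSturm2015} or from Gaussian bounds on $\partial_t p_s$ together with stochastic completeness); the first-moment estimate $\|\Pheat_{\epsilon r^2}\eta-\eta\|_{L^\infty}\le C\sqrt\epsilon$; and composition with a smooth profile. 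The only slip is cosmetic: your global closeness bound gives $\tilde\eta\le 1/4$ outside $B_{7r/4}(x)$, and that, rather than ``outside $B_{2r}(x)$'', is what yields $\supp\chi\subset\overline{B}_{7r/4}(x)\subset B_{2r}(x)$; all constants then depend only on $K$, $N$, and $R$.
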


For $z_0=(x_0,t_0)\in X\times \R$ put
\[
 Q_r^-(z_0):=B_r(x_0)\times(t_0-r^2,t_0),
 \qquad
 Q_r(z_0):=B_r(x_0)\times(t_0-r^2,t_0+r^2).
\]
We shall use the following scalar parabolic estimates.

\begin{proposition}[Scalar local estimates]
\label{prop:scalar-parabolic-estimates}
Let $B_R(p)\subset X$ and let
\[
Q_{2r}^-(z_0)=B_{2r}(x_0)\times(t_0-4r^2,t_0)
 \Subset B_R(p)\times\mathbb R.
\]
Assume that \(q\) belongs locally to
\(L^2((t_0-4r^2,t_0);W^{1,2}(B_{2r}(x_0),\dist,\m))\), and interpret the
differential inequality in the weak sense specified above.
There are constants \(p_0>0\) and \(C<\infty\), determined by the local
doubling--Poincar\'e data of the ambient ball $B_R(p)$, such that, for every $A\ge0$:
\begin{enumerate}[label=(\roman*),leftmargin=2em]
\item If \(q\ge0\) and \((\boldsymbol\Delta-\partial_t)q\ge-A\) on
\(Q_{2r}^-(z_0)\), then
\begin{equation}\label{eq:expanded-local-boundedness}
 \esssup_{Q_r^-(z_0)}q
 \le \frac{C}{r^2\m(B_{2r}(x_0))}
 \int_{Q_{2r}^-(z_0)}q\,\dd\m\,\dd t+CAr^2.
\end{equation}
\item If \(q\ge0\) and \((\boldsymbol\Delta-\partial_t)q\le A\) on
\(Q_{2r}^-(z_0)\), then
\begin{equation}\label{eq:expanded-weak-harnack}
 \left(\fint_{Q_{r/2}^-(x_0,t_0-2r^2)}q^{p_0}\,\dd\m\,\dd t\right)^{1/p_0}
 \le C\left(\essinf_{Q_{r/2}^-(x_0,t_0)}q+Ar^2\right).
\end{equation}
\item As an elliptic consequence, if
\(B_{2r}(x)\Subset X\), \(h\in W^{1,2}_{\loc}(B_{2r}(x))\), \(h\ge0\), and
\(\boldsymbol\Delta h\ge-A\m\) on \(B_{2r}(x)\), then
\begin{equation}\label{eq:elliptic-local-boundedness}
 \esssup_{B_r(x)}h
 \le C\left(\fint_{B_{2r}(x)}h\,\dd\m+Ar^2\right).
\end{equation}
\end{enumerate}
\end{proposition}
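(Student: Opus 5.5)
This proposition collects standard De Giorgi--Nash--Moser estimates. My plan is to derive it from Sturm's local parabolic theory on doubling spaces with Poincaré inequality, applied to the Dirichlet form $\Ch$ restricted to $B_R(p)$, rather than proving it from scratch. The RCD structure enters only through two facts recorded above: local doubling \cite[Corollary~2.4]{SturmActaII} and the local $L^2$-Poincaré inequality \cite{Rajala2012}. Together with infinitesimal Hilbertianity, which makes $\Ch$ a strongly local regular Dirichlet form with carré du champ $|\mathrm D f|^2$ and intrinsic distance equal to $\dist$, these put us in the framework of \cite{Sturm1995II,Sturm1996III}. There, parabolic Moser iteration gives the mean-value inequality for nonnegative subsolutions and the weak Harnack inequality for nonnegative supersolutions.

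\textbf{Step 1 (absorbing the constant).} The inhomogeneity $A$ is removed by a time shift. If $(\boldsymbol\Delta-\partial_t)q\ge -A$ weakly, then $\tilde q:=q+A(t-t_0+4r^2)$ satisfies $(\boldsymbol\Delta-\partial_t)\tilde q\ge0$ on $Q_{2r}^-(z_0)$. This holds because $\partial_t$ of the added term is $A$ and its spatial gradient vanishes, so the weak formulation is preserved for nonnegative $\phi\in\Lip_c$. Moreover $0\le \tilde q-q\le 4Ar^2$ on the cylinder.

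Symmetrically, for (ii) set $\tilde q:=q+A(t_0-t)\ge0$. Then $(\boldsymbol\Delta-\partial_t)\tilde q\le A-A=0$ and $\tilde q\le q+4Ar^2$ on the cylinder.

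\textbf{Step 2 (applying the homogeneous estimates).} Applying Sturm's $L^1$ mean-value inequality to $\tilde q$ gives (i). Sturm states it in $L^2$, and the passage to $L^1$ is the usual iteration over shrinking cylinders. Using $q\le\tilde q\le q+4Ar^2$ then costs only $CAr^2$, after using $|Q_{2r}^-|\simeq r^2\m(B_{2r})$.

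For (ii), apply the weak Harnack inequality to $\tilde q$. On the left side $q\le\tilde q$, and on the right side $\essinf\tilde q\le\essinf q+4Ar^2$.

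One point needs checking: the cylinders must fit inside $B_R(p)$ with the enlarged radius $\lambda r$ required by the Poincaré inequality. This is handled by covering and chaining, since doubling lets us compare $\m(B_{\lambda r})$ with $\m(B_{2r})$. The constants then depend only on $C_D$, $C_P$ and $\lambda$.

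\textbf{Step 3 (elliptic consequence).} For (iii), regard $h(x)$ as a time-independent function $q(x,t):=h(x)$ on $B_{2r}(x)\times(-4r^2,0)$. The distributional inequality $\boldsymbol\Delta h\ge-A\m$ gives $(\boldsymbol\Delta-\partial_t)q\ge -A$ in the weak sense: test with $\phi(\cdot,t)$, integrate in $t$, and note that $\int q\,\partial_t\phi\,\dd t=0$ for compactly supported $\phi$. Then (i) yields the bound, and the space--time average reduces to $\fint_{B_{2r}}h$.

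\textbf{Main obstacle.} I expect the main obstacle to be the verification that Sturm's hypotheses hold in the local, weak setting used here. Sturm requires local solutions in his Dirichlet-form sense with test functions in the form domain, whereas the paper tests with $\Lip_c$ functions and assumes only $q\in L^2_{\loc}W^{1,2}$ with no time regularity. I would handle this with Steklov averaging in time together with density of $\Lip_c$ in $W^{1,2}_0$ on RCD spaces, which makes the two weak formulations agree. The remaining work is cutoff-function bookkeeping, for which \cref{prop:RCD-cutoffs} is convenient.
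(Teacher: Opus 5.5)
Your overall route matches the paper's: reduce to Sturm's homogeneous local boundedness and weak Harnack estimates by a linear-in-time shift, then obtain (iii) by regarding $h$ as time-independent. However, Step~1 has both time shifts backwards, so the cancellation you claim does not occur. For $g(t):=A(t-t_0+4r^2)$ the weak formulation gives $\iint g\,\partial_t\phi\,\dd\m\,\dd t=-A\iint\phi\,\dd\m\,\dd t$, that is, $(\boldsymbol\Delta-\partial_t)g=-A$. Your $\tilde q=q+g$ in (i) therefore only satisfies $(\boldsymbol\Delta-\partial_t)\tilde q\ge-2A$. Likewise $(\boldsymbol\Delta-\partial_t)[A(t_0-t)]=+A$, so your $\tilde q$ in (ii) only satisfies $(\boldsymbol\Delta-\partial_t)\tilde q\le 2A$. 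Heuristically, a heat subsolution is allowed to decrease in time, so absorbing a source $-A$ requires adding a function that decreases in time, not one that increases.

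The repair is to swap the two shifts, which is exactly what the paper does. Use $q^+:=q+A(t_0-t)$ in (i) and $q^-:=q+A\bigl(t-(t_0-4r^2)\bigr)$ in (ii). Both are nonnegative on $Q_{2r}^-(z_0)$, they satisfy $(\boldsymbol\Delta-\partial_t)q^+\ge0$ and $(\boldsymbol\Delta-\partial_t)q^-\le0$, and each differs from $q$ by a quantity between $0$ and $4Ar^2$. With this change the rest of your Step~2 goes through unchanged, using $q\le q^\pm\le q+4Ar^2$ and $|Q_{2r}^-|=4r^2\m(B_{2r})$, and so does Step~3. Your remarks on reconciling Sturm's Dirichlet-form notion of local solution with the $\Lip_c$-tested formulation (Steklov averaging, density of Lipschitz functions) and on the Poincar\'e dilation $\lambda$ are reasonable details that the paper leaves implicit.
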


\begin{proof}
The homogeneous local boundedness and weak Harnack inequalities follow from
Sturm's local subsolution estimate
\cite[Theorem~2.1]{Sturm1995II} and parabolic Harnack theorem
\cite{Sturm1996III}, since the Cheeger energy is a strongly local regular
Dirichlet form and the space is locally doubling and supports a local
Poincar\'e inequality.

For (i), set
\(q^{+}(x,t):=q(x,t)+A(t_0-t)\).  Then
\(q^{+}\ge0\) and
\((\boldsymbol\Delta-\partial_t)q^{+}\ge0\).  The homogeneous local
boundedness estimate applied to \(q^{+}\), together with
\(0\le A(t_0-t)\le4Ar^2\), proves Part~(i).

For (ii), put
\(q^{-}(x,t):=q(x,t)+A(t-(t_0-4r^2))\).  This function is nonnegative and
satisfies \((\boldsymbol\Delta-\partial_t)q^{-}\le0\).  Apply the
homogeneous weak Harnack inequality to the two separated cylinders in
Part~(ii).  Since
\(0\le q^{-}-q\le4Ar^2\), this proves the stated estimate.

For (iii), regard $h$ as a time-independent function on
$B_{2r}(x)\times(-4r^2,0)$.  It satisfies
$(\boldsymbol\Delta-\partial_t)h\ge-A$.  Part~(i) gives
the asserted elliptic estimate after canceling the length of the time interval.
\end{proof}

\subsection{Heat kernel and transport contraction}

Let $(\Pheat_t)_{t\ge0}$ be the heat semigroup on functions and let
$p_t(x,y)$ be its kernel.  On every fixed scale the Gaussian estimates of Jiang, Li, and the
second author \cite[Theorems~1.1--1.2]{JiangLiZhang2016} give
\begin{equation}\label{eq:Gaussian}
 \frac{c^{-1}}{\m(B_{\sqrt t}(x))}
 e^{-\dist^2(x,y)/(c_1t)-c_2t}
 \le p_t(x,y)\le
 \frac{c}{\m(B_{\sqrt t}(x))}
 e^{-\dist^2(x,y)/(c_3t)+c_2t}.
\end{equation}
Here one may take \(c_2=0\) when \(K\ge0\); the factors
\(e^{\pm c_2t}\) cover the general case \(K<0\).

The semigroup is strongly continuous and Markovian on every $L^p$ space by
Ambrosio--Gigli--Savar\'e
\cite[Theorem~4.16]{AmbrosioGigliSavare2014}; in particular, it preserves
nonnegativity, satisfies \(0\le f\le1\Rightarrow0\le\Pheat_t f\le1\), and is
an \(L^p\)-contraction.  If
$f\in L^1_{\loc}$, then $\Pheat_t f\to f$ at almost every Lebesgue point;
this follows from the Gaussian bound \eqref{eq:Gaussian} and local doubling by
splitting the heat-kernel integral into \(B_{A\sqrt t}(x)\) and its annular
complement, and then letting \(A\to\infty\).

We write $(\Hheat_t)_{t\ge0}$ for the dual heat flow on measures, defined by
\begin{equation}\label{eq:heat-flow-duality}
 \int_X f\,\dd(\Hheat_t\mu)=\int_X\Pheat_t f\,\dd\mu
\end{equation}
for bounded Borel functions $f$ and $\mu\in\mathcal P_2(X)$.  In particular,
$\Hheat_t\delta_x=p_t(x,\cdot)\m$, whereas
$\Pheat_t f(x)=\int_X f(y)p_t(x,y)\,\dd\m(y)$.  Thus $\Pheat_t$ always acts
on functions below, while $\Hheat_t$ acts on measures.

The $L^p$-Wasserstein contraction of the heat flow is
\begin{equation}\label{eq:Wp-contraction}
 W_p(\Hheat_t\delta_x,\Hheat_t\delta_y)
 \le e^{-Kt}\dist(x,y),\quad \forall p\in(1,+\infty).
\end{equation}
We use \eqref{eq:Gaussian} for all off-diagonal localization errors and
\eqref{eq:Wp-contraction} with an optimal coupling in the Hamilton--Jacobi
argument.  The latter estimate follows from the heat-flow gradient estimate via
Kuwada's duality \cite[Theorem~2.2]{Kuwada2010}; see also von Renesse--Sturm
\cite{VonRenesseSturm2005}, Ambrosio--Gigli--Savar\'e
\cite{AmbrosioGigliSavare2014}, and Erbar--Kuwada--Sturm
\cite{ErbarKuwadaSturm2015}, for $p=2$, and Savar\'e \cite{Savare2014} for general $p\in(1,+\infty)$.

\subsection{\texorpdfstring{$\CAT(0)$}{CAT(0)} geometry and barycenters}
\label{subsec:CAT0-barycenters}

Let $(Y,\dist_Y)$ be a complete $\CAT(0)$ space.  Geodesics are unique;
we write $(1-s)P+sQ$ for the point at parameter $s$ on the geodesic from $P$
to $Q$.  For $P,Q,R\in Y$ and $s\in[0,1]$, the strong convexity of squared
distance reads
\begin{equation}\label{eq:CAT0-strong-convexity}
 \dist_Y^2\bigl(R,(1-s)P+sQ\bigr)
 \le (1-s)\dist_Y^2(R,P)+s\dist_Y^2(R,Q)
      -s(1-s)\dist_Y^2(P,Q).
\end{equation}
For later reference, Reshetnyak's four-point inequality is
\begin{equation}\label{eq:Reshetnyak-four-point}
 \dist_Y^2(P,S)+\dist_Y^2(Q,R)
 \le \dist_Y^2(P,R)+\dist_Y^2(Q,S)
      +2\dist_Y(P,Q)\dist_Y(R,S)
\end{equation}
for every $P,Q,R,S\in Y$; see Reshetnyak \cite{Reshetnyak1968},
Bridson--Haefliger
\cite[Chapter~II.2]{BridsonHaefliger1999} and Ba\v{c}\'ak
\cite[Chapters~2--3]{Bacak2014}.
The metric space $L^2(\Omega,Y)$, equipped with
\begin{equation}
 \dist_{L^2}(u,v):=\sqrt{\int_\Omega\dist_Y^2(u(x),v(x))\,\dd\m(x)},
\end{equation}
is complete $\CAT(0)$.  Indeed, integrating
\eqref{eq:CAT0-strong-convexity} gives the $\CAT(0)$ inequality, while an
$L^2$-Cauchy sequence has a subsequence \((u_{j_k})\) with
\(\sum_k\dist_{L^2}(u_{j_{k+1}},u_{j_k})<\infty\); hence this subsequence is
pointwise Cauchy almost everywhere and converges to a \(Y\)-valued limit by
completeness of \(Y\); see also
\cite[Chapters~2--3]{Bacak2014}.

For a probability measure $\nu\in\mathcal P_2(Y)$, its barycenter
$\operatorname{bar}(\nu)$ is the unique minimizer of the functional
\(P\mapsto\int\dist_Y^2(P,Q)\,\dd\nu(Q)\).
The integrated form of \eqref{eq:CAT0-strong-convexity} makes every minimizing
sequence Cauchy; completeness gives a minimizer, and the same inequality
gives uniqueness.  It also shows that the barycenter belongs to the closed
convex hull of $\supp\nu$; see \cite[Chapters~2--3]{Bacak2014}.
If $E$ is a spatial or space--time set of finite positive measure, we write
\(\bar u_E\) for the barycenter of the pushforward by $u$ of the normalized
restriction to $E$ of, respectively, $\m$ or $\m\otimes\mathcal L^1$.

\subsection{Sobolev energy for maps on \RCD{} spaces}
\label{subsec:KS-energy}

The metric-valued Sobolev theory used here originates with
Korevaar--Schoen \cite{KorevaarSchoen1993} and has closely related
formulations due to Heinonen--Koskela--Shanmugalingam--Tyson
\cite{HeinonenKoskelaShanmugalingamTyson2001}, Kuwae--Shioya
\cite{KuwaeShioya2003}, and Ohta \cite{Ohta2004}.  We use the strongly
rectifiable formulation of Gigli--Tyulenev \cite{GigliTyulenev2021}.

Let $(Y,\dist_Y,\bar y)$ be a
pointed complete metric space and let $u:X\to Y$ be Borel with
$\dist_Y(u,\bar y)\in L^2$.  Its scale-$r$ squared density and scale energy
are
\begin{align}
 e_r[u](x)
 &:=\fint_{B_r(x)}\frac{\dist_Y^2(u(x),u(y))}{r^2}\,\dd\m(y),
 \label{eq:KS-scale-density}\\
 \MapE_r(u)&:=\int_X e_r[u]\,\dd\m.
 \label{eq:KS-scale-energy}
\end{align}
Following the notation of \cite{GigliTyulenev2021}, define
\begin{equation}\label{eq:KS-space-definition}
 u\in\KS^{1,2}(X,Y)
 \quad\Longleftrightarrow\quad
 \varliminf_{r\downarrow0}\MapE_r(u)<\infty.
\end{equation}
Under local doubling and Poincar\'e, Gigli--Tyulenev
\cite[Corollary~3.10]{GigliTyulenev2021} identify this class with the
metric-valued Newtonian/Haj\l asz--Sobolev space and show that the corresponding
limsup and liminf finiteness conditions are equivalent.

By Bru\'e--Semola \cite{BrueSemola2020} and Mondino--Naber
\cite{MondinoNaber2019}, every finite-dimensional \(\RCD(K,N)\) space has an
essential dimension
$n\in\{1,\dots,\lfloor N\rfloor\}$.  Using rectifiable coordinate charts,
Gigli--Tyulenev \cite[Definition~3.3 and Theorem~3.13]{GigliTyulenev2021}
associate to $u$, at almost every point, an approximate metric differential
$\md_xu$.  This is the seminorm on $\mathbb R^n$ describing the first-order
distance behavior of \(u\) in those coordinates.  They prove that the squared
density \(e_u(x):=\fint_{B_1(0)}(\md_xu(z))^2\,\dd\mathcal L^n(z)\) satisfies
\begin{equation}\label{eq:GT-density-convergence}
 e_r[u]\longrightarrow e_u
 \qquad\m\text{-a.e.\ and in }L^1_{\loc}.
\end{equation}
In particular, the corresponding quadratic Sobolev energy is
\begin{equation}\label{eq:KS-energy-definition}
 \MapE(u):=\lim_{r\downarrow0}\MapE_r(u)=\int_X e_u\,\dd\m.
\end{equation}

For a real-valued Sobolev function \(f\), the preceding definition gives
\(|\mathrm D f|^2=(n+2)e_f\) \(\m\)-a.e.
Consequently, the normalization \eqref{eq:flow-energy-normalization} agrees
with the scalar Dirichlet energy:
\[
 \FlowE(f)=\frac{n+2}{2}\int e_f\,\dd\m
 =\frac12\int|\mathrm D f|^2\,\dd\m.
\]
With the heat semigroup generated by the Laplacian, the Euclidean Gaussian
second moment introduces one further factor $2$.  This accounts for the
constant $2(n+2)$ in the heat-kernel recovery formula
\cref{lem:continuous-KS-heat-recovery}: the factor $n+2$ comes from the
preceding scalar identity, and the factor $2$ from the Gaussian variance.

Whenever a map is defined only on an open set $U\subset X$, put
\(U_r:=\{x\in U:\dist(x,X\setminus U)>r\}\).
The scale density $e_r[u](x)$ is then used only for $x\in U_r$, so that the
ball $B_r(x)$ in \eqref{eq:KS-scale-density} is contained in $U$.  Every local
integral and every limiting argument below is taken first on a fixed
$U'\Subset U$, with $r<\dist(U',X\setminus U)$; the limiting density and
energy on $U$ are obtained by exhaustion.  Thus no extension of a locally
defined map across $\partial U$ is implicit in the notation.  Unless another
domain is displayed, $e_r[u]$, $e_u$, $\MapE_r(u)$, $\MapE(u)$, and
$\FlowE(u)$ refer to this local convention.

For a time-dependent map $u$, we write its spatial energy density as the
space--time function \(e(x,t):=e_{u^t}(x)\).

\begin{proposition}[Finite-scale Korevaar--Schoen estimates]
\label{prop:finite-scale-KS-domination}
Let \((X,\dist,\m)\) be an \(\RCD(K,N)\) space, let
\(\Omega\subset X\) be bounded and open, and let
\(u\in\KS^{1,2}(\Omega,Y)\), where \(Y\) is complete.  Fix \(r_0>0\) and set
\(\Omega_{r_0}:=\{x\in\Omega:\dist(x,X\setminus\Omega)>r_0\}\).
Then, for every \(0<r<r_0/4\),
\begin{equation}\label{eq:finite-scale-KS-domination}
 \int_{\Omega_{r_0}} e_r[u]\,\dd\m
 \le C(K,N,r_0,\operatorname{diam}\Omega)\,\MapE(u).
\end{equation}
\end{proposition}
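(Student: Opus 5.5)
The estimate \eqref{eq:finite-scale-KS-domination} would follow from the identification of $\KS^{1,2}$ with the Newtonian/Haj\l asz class, i.e.\ Gigli--Tyulenev \cite[Corollary~3.10]{GigliTyulenev2021}, together with a standard maximal-function bound. By that identification, $u$ has a minimal weak upper gradient $|\mathrm D u|\in L^2(\Omega)$ in the metric-valued sense, with $\int_\Omega|\mathrm D u|^2\,\dd\m\le C(n)\MapE(u)$. Indeed, for every $1$-Lipschitz $\varphi:Y\to\R$ the composition $\varphi\circ u$ is scalar Sobolev and $|\mathrm D(\varphi\circ u)|\le|\mathrm D u|$; one may also see the bound from the a.e.\ density convergence \eqref{eq:GT-density-convergence} and the scalar identity $|\mathrm D f|^2=(n+2)e_f$. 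I would therefore aim for a pointwise Haj\l asz-type inequality on $\Omega_{r_0/2}$:
\[
 \dist_Y(u(x),u(y))\le C\,\dist(x,y)\bigl(\mathcal M_{2\lambda\dist(x,y)}|\mathrm D u|(x)+\mathcal M_{2\lambda\dist(x,y)}|\mathrm D u|(y)\bigr),
\]
valid for a.e.\ $x,y$ with $\dist(x,y)<r_0/4$. Here $\mathcal M_\rho$ is the maximal function restricted to radii at most $\rho$, and $C,\lambda$ come from the local doubling--Poincar\'e constants on a ball containing $\Omega$. These constants are controlled by $K,N,\operatorname{diam}\Omega$ and a ball of radius comparable to $\operatorname{diam}\Omega$.

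The inequality is derived by the usual telescoping argument over dyadic balls, using the Poincar\'e inequality. The scalar Poincar\'e inequality transfers to metric targets by the barycenter construction of \cref{subsec:CAT0-barycenters}. For general complete $Y$, I would instead apply the scalar inequality to $f_z:=\dist_Y(u(\cdot),u(z))$ at a Lebesgue point $z$. This gives $\fint_{B_\rho}\dist_Y(u,u(z))$-type oscillation bounds and hence the chain estimate. Since all balls of radius $\le r_0/2$ centered in $\Omega_{r_0}$ stay inside $\Omega_{r_0/2}$, no extension across $\partial\Omega$ is needed.

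Squaring and averaging over $y\in B_r(x)$ with $r<r_0/4$ gives
\[
 e_r[u](x)\le C\Bigl((\mathcal M|\mathrm D u|)^2(x)+\fint_{B_r(x)}(\mathcal M|\mathrm D u|)^2\,\dd\m\Bigr).
\]
Here $\mathcal M$ is the maximal operator localized to $\Omega_{r_0/2}$, applied to $|\mathrm D u|\mathbf 1_{\Omega}$. Integrating over $\Omega_{r_0}$, I would handle the second term with Fubini and doubling. The relevant quantity is $\int_{\Omega_{r_0}}\m(B_r(x))^{-1}\mathbf 1_{\dist(x,y)<r}\,\dd\m(x)\le C$, by doubling, since $\m(B_r(x))\ge C^{-1}\m(B_r(y))$. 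The first term is handled by the $L^2$-boundedness of the local maximal operator on doubling spaces. This yields $\int_{\Omega_{r_0}}e_r[u]\le C\int_\Omega|\mathrm D u|^2\le C\MapE(u)$.

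The main obstacle is making the Haj\l asz pointwise inequality and the comparison $\int|\mathrm D u|^2\lesssim\MapE(u)$ precise for metric targets with constants depending only on the stated data. For this I would rely directly on the equivalences in \cite[Corollary~3.10 and Theorem~3.13]{GigliTyulenev2021} rather than reproving them. A more direct alternative would be to compare $e_r$ with $e_{r/2^k}$ via the triangle inequality and doubling, but that gives constants growing in $k$. I would therefore prefer the maximal-function route above.
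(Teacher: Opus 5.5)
Your proposal is correct and follows essentially the paper's argument: the paper cites Gigli--Tyulenev \cite[Proposition~2.17 and Corollary~3.10]{GigliTyulenev2021} for the same pointwise bound $e_r[u](x)\le C\bigl(G^2(x)+\fint_{B_r(x)}G^2\,\dd\m\bigr)$, with $G$ a local maximal-function upper gradient controlled by $\MapE(u)$, and then integrates the averaged term using doubling, as you do. One point to make explicit: your telescoping step needs the $(1,1)$-Poincar\'e inequality, which holds on $\RCD(K,N)$ spaces by Rajala and may be taken with $\lambda=1$ because the space is geodesic, so the balls stay inside $\Omega$; the $L^2$-Poincar\'e inequality recalled in the paper would only give $\mathcal M(|\mathrm D u|^2)^{1/2}$, and its square $\mathcal M(|\mathrm D u|^2)$ need not be integrable.
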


\begin{proof}
Gigli--Tyulenev's finite-scale estimate
\cite[Proposition~2.17 and Corollary~3.10]{GigliTyulenev2021} bounds
\(e_r[u]\) by a constant multiple of
\(G_R^2(x)+\fint_{B_r(x)}G_R^2\,\dd\m\), where \(R\) is fixed below the
distance to the boundary and
\(G_R\in L^2\) is a local maximal-function upper gradient whose \(L^2\)-norm
is controlled by $\MapE(u)$.  Integrating the
second term and using local doubling gives the asserted estimate without
taking the supremum over \(r\) inside the integral.
\end{proof}

The following proposition collects the properties of
\(\KS^{1,2}(\Omega,Y)\) used below.

\begin{proposition}\label{prop:KS-properties}
Let \(Y\) be a complete \(\CAT(0)\) space, and let
\(u,v\in\KS^{1,2}_{\loc}(\Omega,Y)\) have locally bounded images.  Then:
\begin{enumerate}[label=(\alph*)]
\item \(\dist_Y(u,v)\in W^{1,2}_{\loc}(\Omega,\dist,\m)\) and
\begin{equation}\label{eq:distance-upper-gradient}
 |\mathrm D\dist_Y(u,v)|^2\le2(n+2)e_u+2(n+2)e_v.
\end{equation}
In particular, for every \(P\in Y\),
\begin{equation}\label{eq:fixed-P-upper-gradient}
 |\mathrm D\dist_Y(u,P)|^2\le (n+2)e_u.
\end{equation}
\item If \(0\le\phi\le1\) is Lipschitz, then the pointwise geodesic
interpolation \((1-\phi)u+\phi v\) belongs to
\(\KS^{1,2}_{\loc}(\Omega,Y)\).  Moreover, the approximate energies of these
interpolations are computed by the same densities \(e_r[\cdot]\) in
\eqref{eq:KS-scale-density}.
\item For scalar functions
\(f,g\in W^{1,2}_{\loc}(\Omega,\dist,\m)\), with one compactly supported
factor, the ball-scale polarization
\begin{equation}
 B_r(f,g):=\frac12\int_X\fint_{B_r(x)}
 \frac{(f(x)-f(y))(g(x)-g(y))}{r^2}\,\dd\m(y)\,\dd\m(x)
\end{equation}
satisfies
\begin{equation}
 B_r(f,g)\longrightarrow\frac{1}{2(n+2)}
 \int_X\ip{\nabla f}{\nabla g}\,\dd\m.
\end{equation}
\item If \(B_{2r}\Subset\Omega\) and \(\bar u_B\) denotes the barycenter from
\cref{subsec:CAT0-barycenters} of the normalized pushforward of \(\m|_B\), then
\begin{equation}\label{eq:metric-Poincare}
 \fint_{B_r}\dist_Y^2(u,\bar u_{B_r})\,\dd\m
 \le Cr^2\fint_{B_{2r}}e_u\,\dd\m.
\end{equation}
\end{enumerate}
\end{proposition}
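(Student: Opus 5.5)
The plan is to derive each of (a)--(d) from the Gigli--Tyulenev theory, working on a fixed \(U'\Subset\Omega\) with \(r<\dist(U',X\setminus\Omega)\) and then exhausting \(\Omega\). Local boundedness of the images makes every quantity locally integrable, so all truncation and dominated-convergence steps are harmless.

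For (a), start from the scale level. By the triangle inequality in \(Y\),
\[
 |\dist_Y(u,v)(x)-\dist_Y(u,v)(y)|\le \dist_Y(u(x),u(y))+\dist_Y(v(x),v(y)).
\]
Hence \(e_r[\dist_Y(u,v)]\le 2e_r[u]+2e_r[v]\) pointwise. Letting \(r\downarrow0\) and using \eqref{eq:GT-density-convergence}, the scalar function \(f=\dist_Y(u,v)\) has finite Korevaar--Schoen energy. By the identification in \cite[Corollary~3.10]{GigliTyulenev2021} it therefore lies in \(W^{1,2}_{\loc}\), and \(e_f\le2e_u+2e_v\) a.e. The scalar identity \(|\mathrm D f|^2=(n+2)e_f\) then gives \eqref{eq:distance-upper-gradient}. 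For a constant map \(v\equiv P\) one has \(e_v=0\), and the factor \(2\) coming from \((a+b)^2\le2a^2+2b^2\) is not needed, because \(|f(x)-f(y)|\le\dist_Y(u(x),u(y))\) directly. This yields \eqref{eq:fixed-P-upper-gradient}.

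For (b), write \(w=(1-\phi)u+\phi v\) and use the standard \(\CAT(0)\) estimate for geodesic interpolation, a consequence of \eqref{eq:CAT0-strong-convexity} and the convexity of the distance function:
\[
 \dist_Y(w(x),w(y))\le (1-\phi(x))\dist_Y(u(x),u(y))+\phi(x)\dist_Y(v(x),v(y))+|\phi(x)-\phi(y)|\,\dist_Y(u(y),v(y)).
\]
Squaring and averaging over \(B_r(x)\), the terms involving \(u\) and \(v\) are controlled by \(e_r[u]\) and \(e_r[v]\). The last term is bounded by \(\Lip(\phi)^2\sup\dist_Y^2(u,v)\), which is finite locally by the bounded-image assumption. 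Thus \(\liminf_r\MapE_r(w)<\infty\) locally, so \(w\in\KS^{1,2}_{\loc}\). The second sentence of (b) is then immediate, since \(w\) is a Borel map with locally bounded image and its approximate energies are by definition given by \eqref{eq:KS-scale-density}.

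For (c), polarize the scalar statement. For \(f\in W^{1,2}\) with compact support,
\[
 B_r(f,f)=\tfrac12\MapE_r(f)\to\tfrac12\MapE(f)=\frac{1}{2(n+2)}\int|\mathrm D f|^2\,\dd\m .
\]
The form \(B_r\) is bilinear and symmetric, so \(B_r(f,g)=\frac14\bigl(B_r(f+g,f+g)-B_r(f-g,f-g)\bigr)\) converges to the polarization of \(|\mathrm Df|^2\), which is \(\ip{\nabla f}{\nabla g}\) by infinitesimal Hilbertianity. If only one factor is compactly supported, I would replace the other by \(\chi g\) with \(\chi\) a cut-off equal to \(1\) on a neighborhood of the support. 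The error is supported at distance at most \(r\) from that support and vanishes for small \(r\). \textbf{The main technical point} is justifying the pointwise-to-integral convergence of \(e_r\). Here I would use the \(L^1_{\loc}\) convergence in \eqref{eq:GT-density-convergence}, together with \cref{prop:finite-scale-KS-domination} for uniform integrability near the support.

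For (d), fix any \(P\) and compare with the barycenter. Since \(\bar u_{B_r}\) minimizes \(Q\mapsto\fint\dist_Y^2(u,Q)\), we get
\[
 \fint_{B_r}\dist_Y^2(u,\bar u_{B_r})\le\fint_{B_r}\dist_Y^2(u,u(z))
\]
for every \(z\in B_r\). Averaging over \(z\) gives the double average \(\fint\!\fint_{B_r\times B_r}\dist_Y^2(u(x),u(z))\). This quantity is bounded by \(Cr^2\fint_{B_{2r}}e_u\) via the metric-valued Poincar\'e inequality, which follows from the Haj\l asz/Newtonian identification of \cite[Corollary~3.10]{GigliTyulenev2021} and the local Poincar\'e inequality. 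If the dilation constant \(\lambda>2\), I would pass from \(B_{\lambda r}\) to \(B_{2r}\) by the standard chaining argument on the geodesic space.
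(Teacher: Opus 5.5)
Your proposal is correct and follows essentially the same route as the paper. Part (b) goes through $\CAT(0)$ convexity at the scale level followed by \eqref{eq:GT-density-convergence}; part (c) applies the scalar energy convergence to $f\pm g$ and polarizes; part (d) combines the pairwise metric Poincar\'e inequality with the minimizing property of the barycenter. The only difference is in (a): the paper simply cites \cite[Lemma~5.9]{GigliTyulenev2021}, whereas you derive the bound from the scale-level estimate $e_r[\dist_Y(u,v)]\le 2e_r[u]+2e_r[v]$ and the scalar identity $|\mathrm D f|^2=(n+2)e_f$, which is valid and more self-contained.
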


\begin{proof}
Item (a) is Gigli--Tyulenev's metric Sobolev
calculus, specifically \cite[Lemma~5.9]{GigliTyulenev2021}; the fixed-target
estimate is the special case \(v\equiv P\).

Item (b) follows by applying $\CAT(0)$ convexity of squared distance
to the scale densities and then using \eqref{eq:GT-density-convergence} from
\cref{subsec:KS-energy}.  Item (c)
follows by applying the convergence \eqref{eq:GT-density-convergence} from
\cref{subsec:KS-energy} to the scalar maps
\(f+g\) and \(f-g\), and then using the polarization identity and the
quadraticity of the Cheeger energy.  The factor $1/2$ comes from the
definition of $B_r$.

For Item~(d), the metric-valued Poincar\'e inequality gives the
pairwise estimate
\[
 \fint_{B_r}\fint_{B_r}\dist_Y^2(u(x),u(y))\,\dd\m(x)\,\dd\m(y)
 \le Cr^2\fint_{B_{2r}}e_u\,\dd\m.
\]
The minimizing property of the $\CAT(0)$ barycenter from
\cref{subsec:CAT0-barycenters} yields
\eqref{eq:metric-Poincare}; see
\cite{KoskelaShanmugalingamTyson2004,Guo2021,GigliTyulenev2021}.
\end{proof}

\subsection{The harmonic map heat flow}
\label{subsec:semigroup-HMHF}

Fix a trace class $\KS^{1,2}_\psi(\Omega,Y)$.  It is a closed geodesically
convex subset of $L^2(\Omega,Y)$, and $\MapE$ is lower semicontinuous and
convex there; see Gigli--Tyulenev
\cite[Theorems~3.13 and~4.16]{GigliTyulenev2021} and Mayer
\cite[Sections~1--2]{Mayer1998}.  Mayer's resolvent construction
\cite[Theorem~1.13]{Mayer1998}, as used for singular metric spaces by Guo
\cite{Guo2021}, and the differential theory of metric gradient flows of
Gigli--Nobili \cite{GigliNobili2021} therefore apply to the energy
\(\FlowE\) fixed in \eqref{eq:flow-energy-normalization}.

Given \(u_0\in\KS^{1,2}_\psi(\Omega,Y)\), an \emph{\EVI{} harmonic map heat
flow} starting from \(u_0\) is a curve
\(u:[0,\infty)\to L^2(\Omega,Y)\) such that
\(u\in AC_{\loc}((0,\infty);L^2(\Omega,Y))\),
\(u^t\in\KS^{1,2}_\psi(\Omega,Y)\) for \(t>0\),
\(u^t\to u_0\) in \(L^2(\Omega,Y)\) as \(t\downarrow0\), and, for every
\(v\in\KS^{1,2}_\psi(\Omega,Y)\),
 \begin{equation}\label{eq:EVI-HMHF}
  \frac12\frac{\dd}{\dd t}\dist_{L^2}^2(u^t,v)+\FlowE(u^t)
  \le \FlowE(v)
  \qquad\text{in }\mathcal D'(0,\infty).
 \end{equation}
Equivalently, for every nonnegative
\(\eta\in C_c^1((0,\infty))\),
 \begin{equation}\label{eq:EVI-HMHF-integrated}
 \begin{split}
  -\frac12\int_0^\infty\eta'(t)\dist_{L^2}^2(u^t,v)\,\dd t
  +\int_0^\infty\eta(t)\FlowE(u^t)\,\dd t
  \le \FlowE(v)\int_0^\infty\eta(t)\,\dd t.
 \end{split}
 \end{equation}

Equations \eqref{eq:EVI-HMHF}--\eqref{eq:EVI-HMHF-integrated} define the \EVI{}
harmonic map heat flow used below, in particular in
\cref{lem:EVI-finite-difference}.  This definition agrees with the
semigroup solution obtained from Mayer's resolvent construction, namely the
limit of the implicit Euler minimizing scheme; see
\cite[Theorems~1.13 and~2.5]{Mayer1998}.

The energy is nonincreasing along this flow: by
\cite[Corollary~2.6]{Mayer1998},
\begin{equation}\label{eq:EVI-energy-monotonicity}
 \FlowE(u^t)\le \FlowE(u^s)\le \FlowE(u_0),
 \qquad 0\le s\le t.
\end{equation}

For an absolutely continuous curve $\tau\mapsto u^\tau$ in
$(L^2(\Omega,Y),\dist_{L^2})$, its metric derivative is
\[
 |\dot u^\tau|_{\dist_{L^2}}
 :=\lim_{h\to0}\frac{\dist_{L^2}(u^{\tau+h},u^\tau)}{|h|}
\]
whenever the limit exists.  Here the subscript $\dist_{L^2}$ specifies the ambient
metric.  General \CAT(0)
gradient-flow theory gives the semigroup property and, for every $0<t_*<T$,
the finite metric-speed bound
\begin{equation}\label{eq:EVI-L}
 L=L(u,t_*,T):=\esssup_{\tau\in[t_*/2,T+1]}|\dot u^\tau|_{\dist_{L^2}}<\infty,
 \qquad
 \dist_{L^2}(u^t,u^s)\le L|t-s|.
\end{equation}
See Mayer \cite[Theorems~2.9 and~2.17]{Mayer1998} and Guo \cite{Guo2021}.

The bounded initial image in \cref{main-thm} propagates without any boundedness assumption on the
interior values of the chosen representative $\psi$.  Indeed, let
$\mathcal B_Y:=\overline{B}_M(P_0)$ contain \(u_0(x)\) for \(\m\)-almost every \(x\), and
let $\pi_{\mathcal B_Y}:Y\to\mathcal B_Y$ be the metric projection.  Since
$u_0\in\KS^{1,2}_\psi(\Omega,Y)$, the maps $u_0$ and $\psi$ have the same
trace, so the fixed-trace class may equivalently be described using the trace
of $u_0$.  The metric projection onto a closed convex subset of a complete
$\CAT(0)$ space is $1$-Lipschitz
\cite[Chapter~II.2]{BridsonHaefliger1999}.  Thus
$\pi_{\mathcal B_Y}$ fixes $u_0$, post-composition by
$\pi_{\mathcal B_Y}$ preserves that trace class and satisfies
\(\MapE(\pi_{\mathcal B_Y}\circ w)\le\MapE(w)\).  If \(w\) takes values in
$\mathcal B_Y$, projection also decreases the $L^2$ distance from \(w\).
Therefore, by induction, every resolvent iterate, and then the \EVI{} limit,
remains in $\mathcal B_Y$.  Here a resolvent iterate is one step of Mayer's
implicit minimization scheme.  In particular,
\begin{equation}\label{eq:bounded-image-ball}
 u(x,t)\in\overline{B}_M(P_0)
\quad\text{for a.e. }x\text{ and every }t>0.
\end{equation}

For a space--time map \(u\), let \(e_r^{X\times\mathbb R}[u]\) denote the
scale-\(r\) squared density computed with the product metric and
product measure on \(X\times\mathbb R\).
This is the ordinary product-metric scale, not the parabolic scale of
\(Q_r\).

\begin{proposition}[Local space--time Sobolev regularity of the heat flow]
\label{prop:space-time-KS-regularity}
Let \(u\) be the harmonic map heat flow generated by
\(\FlowE=(n+2)\MapE/2\), and let
\(Q'=\Omega'\times(t_0,t_1)\Subset\Omega\times(0,\infty)\).
Then the map \((x,t)\mapsto u(x,t)\) belongs to
\(\KS^{1,2}(Q',Y)\) with respect to the product metric-measure structure on
\(X\times\mathbb R\).  For all sufficiently small \(r\),
\begin{equation}
 \int_{Q'} e_r^{X\times\mathbb R}[u](x,t)\,\dd\m(x)\,\dd t
 \le C L_{t_0,t_1}^2(t_1-t_0)
   +C\int_{t_0/2}^{2t_1}\MapE(u^s)\,\dd s,
\end{equation}
where \(L_{t_0,t_1}\) is the metric Lipschitz constant of
 \(t\mapsto u^t\) on \([t_0/2,2t_1]\) in \((L^2(\Omega,Y),\dist_{L^2})\).  The constant \(C\) depends only on the
local \RCD{} data and on the distance of \(Q'\) from the boundary of the
chosen localization cylinder.
\end{proposition}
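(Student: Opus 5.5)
The plan is to bound the product-metric scale density directly by splitting the product ball into spatial and temporal displacements. For \((x,t)\in Q'\) and \((y,s)\) in the product ball \(B^{X\times\R}_r((x,t))\), we have \(\dist(x,y)<r\) and \(|t-s|<r\). The triangle inequality in \(Y\) gives
\[
 \dist_Y^2(u(x,t),u(y,s))\le 2\dist_Y^2(u(x,t),u(y,t))+2\dist_Y^2(u(y,t),u(y,s)).
\]
The product measure of the product ball is comparable, by local doubling, to \(r\,\m(B_r(x))\), and the product ball lies inside \(B_r(x)\times(t-r,t+r)\). Hence \(e_r^{X\times\R}[u](x,t)\) is bounded by \(C\) times the sum of two averages over \(B_r(x)\times(t-r,t+r)\), with \(r^{-2}\) in front of each.

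The first term, after averaging in \(s\) (which it does not depend on), is \(C e_r[u^t](x)\). Integrating over \(Q'\) and applying \cref{prop:finite-scale-KS-domination} slice by slice gives the bound \(C\int_{t_0}^{t_1}\MapE(u^t)\,\dd t\). The \(r_0\) there is fixed by the distance of \(\Omega'\) to \(\partial\Omega\), and \(r<r_0/4\).

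The second term is
\[
 \frac{C}{r^2}\fint_{t-r}^{t+r}\fint_{B_r(x)}\dist_Y^2(u(y,t),u(y,s))\,\dd\m(y)\,\dd s.
\]
Integrating in \(x\) over \(\Omega'\), I would use Fubini together with local doubling. This converts \(\int_{\Omega'}\fint_{B_r(x)}F(y)\,\dd\m(y)\,\dd\m(x)\) into at most \(C\int_{\Omega'_r}F\,\dd\m\), because \(\m(B_r(x))\) and \(\m(B_r(y))\) are comparable when \(\dist(x,y)<r\). The spatial integral is then at most \(\dist_{L^2}^2(u^t,u^s)\le L_{t_0,t_1}^2|t-s|^2\) by \eqref{eq:EVI-L}, valid once \(r<t_0/2\) so that \(s\in[t_0/2,2t_1]\). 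Since \(|t-s|<r\), the factor \(r^{-2}\) cancels, and integrating in \(t\) yields \(CL^2_{t_0,t_1}(t_1-t_0)\).

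Membership in \(\KS^{1,2}(Q',Y)\) then follows from \eqref{eq:KS-space-definition}, because the liminf of the scale energies is finite. The right-hand side is finite since \(\MapE(u^s)\le C\FlowE(u_0)\) by \eqref{eq:EVI-energy-monotonicity}. Measurability of \(u\) as a space–time map is guaranteed by the \(AC\) curve structure, as a Borel representative of an \(L^2_{\loc}\) curve.

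The main point requiring care is the Fubini/doubling step. There one must ensure the enlarged sets stay inside \(\Omega\times[t_0/2,2t_1]\) and that the constant depends only on the local data. One also has to handle that \(\m\otimes\mathcal L^1\) on \(X\times\R\) is again doubling, so the product-ball volume is comparable to \(r\,\m(B_r(x))\).
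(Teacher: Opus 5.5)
Your argument is correct and is essentially the paper's proof: a triangle-inequality split into a spatial and a temporal displacement, comparison of the product ball with $B_r(x)\times(t-r,t+r)$ via local doubling, the slice-wise finite-scale Korevaar--Schoen domination for the spatial term, and the $L^2$ metric-speed bound for the temporal term. The only difference is that the paper routes the triangle inequality through $u(x,s)$ rather than $u(y,t)$, so its temporal term is controlled directly by $\dist_{L^2}^2(u^t,u^s)$ without your spatial Fubini/doubling swap, at the price of applying the slice estimate at the shifted times $s\in(t-r,t+r)$ (hence its energy integral over $(t_0/2,2t_1)$).
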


\begin{proof}
Choose \(Q=\Omega_0\times(t_0/2,2t_1)\Subset\Omega\times(0,\infty)\) with
\(Q'\Subset Q\).  For \((x,t),(y,s)\in Q\) close enough,
\[
 \dist_Y^2(u(x,t),u(y,s))
 \le 2\dist_Y^2(u(x,t),u(x,s))
   +2\dist_Y^2(u(x,s),u(y,s)).
\]
The first term is controlled after integration by
\[
 \dist_{L^2}^2(u^t,u^s)\le L_{t_0,t_1}^2|t-s|^2.
\]
Inclusions between balls in the product metric and products of spatial balls
with time intervals compare the corresponding averages, with
constants depending only on local doubling.  For the spatial term, apply
\cref{prop:finite-scale-KS-domination}, specifically the integrated estimate
 \eqref{eq:finite-scale-KS-domination}, to each slice \(u^s\), and then use
Fubini's theorem to obtain
\[
 \int_{Q'} e_r^{X\times\mathbb R}[u]
 \le C L_{t_0,t_1}^2(t_1-t_0)
   +C\int_{t_0/2}^{2t_1}\MapE(u^s)\,\dd s.
\]
The right-hand side is finite by the energy monotonicity
\eqref{eq:EVI-energy-monotonicity} from
\cref{subsec:semigroup-HMHF}.  Taking
the liminf as $r\to0^+$ proves the asserted
space--time Sobolev regularity.
\end{proof}

\section{The distance subsolution on an \RCD{} source}\label{sec:two-flow}

This section establishes the scalar parabolic inequalities used in the later
regularity argument.  We first compute the Sobolev energy variation
under paired geodesic interpolations, retaining a nonnegative remainder that
reduces to the energy density when one map is constant.  We then combine this
variation with the \EVI{} finite-difference scheme of
\cite[Section~3]{ZhangZhu2026} to obtain the two-flow inequality in
\cref{thm:RCD-two-flow} and its fixed-target consequences in
\cref{cor:fixed-target}.  Because the variations are compactly supported, the
two flows may have different fixed boundary traces.

\subsection{The energy variation and its nonnegative remainder}

Let $u,v\in\KS^{1,2}(\Omega,Y)\cap L^\infty(\Omega,Y)$, let
$0\le\phi\le1$ be Lipschitz with compact support, and set
\begin{equation}
 u_\phi(x):=(1-\phi(x))u(x)+\phi(x)v(x),\qquad
 v_\phi(x):=\phi(x)u(x)+(1-\phi(x))v(x).
\end{equation}
The interpolations are taken along the unique target geodesics.  Put
$w:=\dist_Y^2(u,v)$.

\begin{lemma}[Geodesic variations]\label{lem:variation-admissible}
The maps $u_\phi$ and $v_\phi$ lie in
$\KS^{1,2}(\Omega,Y)\cap L^\infty(\Omega,Y)$.  Moreover,
$w\in W^{1,2}(\Omega,\dist,\m)\cap L^\infty(\Omega)$.
If $u$ and $v$ belong to possibly different fixed-trace classes, then
$u_\phi$ has the trace of $u$, whereas $v_\phi$ has the trace of $v$.
\end{lemma}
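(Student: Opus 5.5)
The plan has three parts, matching the three assertions of the lemma.

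\emph{Boundedness and Sobolev regularity of the interpolations.} By hypothesis, $u$ and $v$ take values in a common closed ball $\overline B_\rho(P)$. Closed balls in a $\CAT(0)$ space are convex, so $u_\phi$ and $v_\phi$ also take values in this ball; this gives the $L^\infty$ part. For the Sobolev part we apply \cref{prop:KS-properties}(b), which gives $u_\phi,v_\phi\in\KS^{1,2}_{\loc}(\Omega,Y)$. To get a global energy bound we compare pointwise. From the $\CAT(0)$ comparison for geodesics, for $x,y\in\Omega$,
\[
\dist_Y\bigl(u_\phi(x),u_\phi(y)\bigr)\le \dist_Y\bigl((1-\phi(x))u(x)+\phi(x)v(x),(1-\phi(x))u(y)+\phi(x)v(y)\bigr)+|\phi(x)-\phi(y)|\,\dist_Y(u(y),v(y)).
\]
The first term is at most $(1-\phi(x))\dist_Y(u(x),u(y))+\phi(x)\dist_Y(v(x),v(y))$ by convexity of the distance function along pairs of geodesics. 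The second is at most $\Lip(\phi)\,\dist(x,y)\cdot2\rho$. Squaring and averaging over $B_r(x)$ then gives
\[
e_r[u_\phi]\le 3e_r[u]+3e_r[v]+12\rho^2\Lip(\phi)^2 .
\]
Since $\Omega$ is bounded, integrating and taking $\liminf_{r\downarrow0}$ yields $\MapE(u_\phi)<\infty$. The same bound holds for $v_\phi$.

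\emph{Regularity of $w$.} Set $d:=\dist_Y(u,v)$. By \cref{prop:KS-properties}(a), $d\in W^{1,2}_{\loc}$ with $|\mathrm D d|^2\le 2(n+2)(e_u+e_v)$, and $0\le d\le2\rho$. The function $t\mapsto t^2$ is Lipschitz on $[0,2\rho]$, so the chain rule gives $w=d^2\in W^{1,2}_{\loc}$ with $|\mathrm D w|\le4\rho|\mathrm D d|$. This upper gradient lies in $L^2(\Omega)$ because the energy densities are integrable on all of $\Omega$. Therefore $w\in W^{1,2}(\Omega)\cap L^\infty$. The point needing care is the passage from the local statement in (a) to a global one. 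I will argue that (a) comes from the integrated finite-scale density bound $\dist_Y(u,v)$-differences $\le$ sum of the $u$ and $v$ differences, so it holds on every $\Omega'\Subset\Omega$ with constants independent of $\Omega'$, and then exhaust $\Omega$.

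\emph{Traces.} Since $\dist_Y(u_\phi,u)=\phi\,d$, and $\phi$ is Lipschitz with compact support in $\Omega$ while $d\in W^{1,2}(\Omega)\cap L^\infty$, the product $\phi d$ lies in $W^{1,2}_0(\Omega)$. Now let $\psi_u$ be the datum of the class of $u$, so that $\dist_Y(u,\psi_u)\in W^{1,2}_0$. By the triangle inequality,
\[
\bigl|\dist_Y(u_\phi,\psi_u)-\dist_Y(u,\psi_u)\bigr|\le\phi d .
\]
Off $\supp\phi$ the two functions coincide. Both are Sobolev, by the previous step applied with $\psi_u$ in place of $v$; if $\psi_u$ is unbounded, local boundedness on $\supp\phi$ suffices. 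Their difference is therefore a $W^{1,2}$ function supported in the compact set $\supp\phi$, hence lies in $W^{1,2}_0$, and so $\dist_Y(u_\phi,\psi_u)\in W^{1,2}_0$. The argument for $v_\phi$ is symmetric.

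I expect the main obstacle to be this trace step when $\psi_u$ is not bounded. In that case I would fall back on the characterization of the trace class through compactly supported modifications, as in Gigli--Tyulenev.
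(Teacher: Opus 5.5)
Your proposal is correct and is essentially the paper's proof written out in detail. The paper uses the same ingredients: \cref{prop:KS-properties}(a) with the chain rule for $w$, part~(b) with $\CAT(0)$ convexity of balls for $u_\phi,v_\phi$, and the observation that the interpolations coincide with $u,v$ off $\supp\phi$ for the traces. Your bound $e_r[u_\phi]\le 3e_r[u]+3e_r[v]+12\rho^2\Lip(\phi)^2$ and your compactly supported difference argument supply the details behind these citations. The bound also upgrades the merely local conclusion of part~(b) to the global energy bound the lemma asserts.

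The obstacle you anticipate for an unbounded datum $\psi_u$ is not real. Your remark that local boundedness on $\supp\phi$ suffices would not have covered it anyway, since a $\KS^{1,2}$ datum need not be locally bounded. Boundedness is needed only for the squared distance $w$. For the unsquared distance, the triangle inequality gives $e_r[\dist_Y(u_\phi,\psi_u)]\le 2e_r[u_\phi]+2e_r[\psi_u]$, so $\dist_Y(u_\phi,\psi_u)\in W^{1,2}_{\loc}(\Omega)$ without any boundedness assumption, and your trace step closes without a fallback.
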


\begin{proof}
Part~(a) of \cref{prop:KS-properties}, specifically
\eqref{eq:distance-upper-gradient}, and the scalar chain rule give the
assertion for $w$.  Part~(b) of the same proposition gives the Sobolev
regularity of \(u_\phi\) and \(v_\phi\); their images are bounded by
\(\CAT(0)\) convexity.  Since $\phi$ is compactly supported, the two
interpolations agree with $u$ and $v$, respectively, near the boundary,
which proves the trace statement.
\end{proof}

For the finite-scale calculation set
\begin{equation}
 \kappa_r(x,y):=
 \frac{\mathbf 1_{B_r(x)}(y)}{r^2\m(B_r(x))},
\end{equation}
so that
\[
 e_r[u](x)=\int\kappa_r(x,y)\dist_Y^2(u(x),u(y))\,\dd\m(y).
\]
By the local convention following \eqref{eq:KS-energy-definition}, the scale
energy \(\MapE_r(u)\) is the integral of this density over \(\Omega\).
No symmetry of the kernel is required below.  In terms of \(\kappa_r\), the
polarization from \cref{prop:KS-properties}\,(c) reads
\begin{equation}
 B_r(f,g)=\frac12\iint\kappa_r(x,y)
 (f(x)-f(y))(g(x)-g(y))\,\dd\m(y)\,\dd\m(x).
\end{equation}
By the scalar energy-density convergence
\eqref{eq:GT-density-convergence} and polarization,
\begin{equation}\label{eq:scalar-polarization-limit}
 B_r(f,g)\longrightarrow
 \frac{1}{2(n+2)}\int\ip{\nabla f}{\nabla g}\,\dd\m
\end{equation}
whenever one factor is compactly supported in the interior localization.

Define
\begin{align}
 R_{r}^{u,v}(x)
 &:={\int\kappa_r(x,y)
 \Bigl(\dist_Y(u(x),u(y))-\dist_Y(v(x),v(y))\Bigr)^2\,\dd\m(y)},
 \label{eq:finite-scale-remainder-definition}\\
 R_{u,v}(x)&:=\liminf_{\substack{r\downarrow0\\ r\in\mathbb Q}}R_{r}^{u,v}(x).
 \label{eq:remainder-definition}
\end{align}
Restricting to rational radii makes the pointwise liminf measurable.
Since
\[
 0\le R_r^{u,v}\le2e_r[u]+2e_r[v],
\]
the convergence \eqref{eq:GT-density-convergence} from
\cref{subsec:KS-energy} gives
\begin{equation}\label{eq:remainder-density-bound}
 0\le R_{u,v}\le2e_u+2e_v
 \qquad\m\text{-a.e.}
\end{equation}
and hence $R_{u,v}\in L^1_{\loc}$.  In the fixed-target case,
\begin{equation}\label{eq:remainder-fixed-P}
 R_{u,P}=e_u\qquad\m\text{-a.e.}
\end{equation}
because $R_r^{u,P}=e_r[u]$.

\begin{theorem}[Dirichlet energy variation on \RCD{} spaces]\label{thm:KS-variation}
For the geodesic variations in \cref{lem:variation-admissible}, with
the remainders defined by
\eqref{eq:finite-scale-remainder-definition}--\eqref{eq:remainder-definition},
the bound \eqref{eq:remainder-density-bound} holds; in the fixed-target case,
one has \eqref{eq:remainder-fixed-P}.  Moreover,
\begin{equation}\label{eq:KS-variation}
\begin{split}
 \MapE(u_\phi)+\MapE(v_\phi)-\MapE(u)-\MapE(v)
 \le{}&-\frac{1}{n+2}
 \int_\Omega\ip{\nabla\phi}{\nabla((1-2\phi)w)}\,\dd\m\\
 &-2\int_\Omega(\phi-\phi^2)R_{u,v}\,\dd\m.
\end{split}
\end{equation}
\end{theorem}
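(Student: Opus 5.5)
The plan is to establish a pointwise finite-scale inequality in the spirit of Korevaar--Schoen's variation formula, integrate it against $\kappa_r$, and pass to the limit $r\downarrow0$ using \eqref{eq:GT-density-convergence} and the scalar polarization limit \eqref{eq:scalar-polarization-limit}. The bounds \eqref{eq:remainder-density-bound} and \eqref{eq:remainder-fixed-P} were already verified before the statement, so only \eqref{eq:KS-variation} remains.

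\textbf{Pointwise quadrilateral estimate.} Fix $x,y$ and write $P=u(x)$, $Q=v(x)$, $P'=u(y)$, $Q'=v(y)$, $s=\phi(x)$, $s'=\phi(y)$. The key step is the Korevaar--Schoen quadrilateral estimate for complete $\CAT(0)$ spaces (derived from \eqref{eq:CAT0-strong-convexity} and Reshetnyak's inequality \eqref{eq:Reshetnyak-four-point}, after subdividing the quadrilateral):
\begin{align*}
 &\dist_Y^2\bigl((1-s)P+sQ,(1-s')P'+s'Q'\bigr)
 +\dist_Y^2\bigl(sP+(1-s)Q,s'P'+(1-s')Q'\bigr)\\
 &\quad\le \dist_Y^2(P,P')+\dist_Y^2(Q,Q')
 -(s-s')\bigl((1-2s)\dist_Y^2(P,Q)-(1-2s')\dist_Y^2(P',Q')\bigr)\\
 &\qquad-2\,\tfrac{s+s'}2\bigl(1-\tfrac{s+s'}2\bigr)\bigl(\dist_Y(P,P')-\dist_Y(Q,Q')\bigr)^2
 +\mathrm{Err},
\end{align*}
where $|\mathrm{Err}|\le C|s-s'|^2\bigl(\dist_Y^2(P,P')+\dist_Y^2(Q,Q')+\dist_Y^2(P,Q)+\dist_Y^2(P',Q')\bigr)$. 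To prove it, I would first treat $s=s'$ by comparison with a Euclidean quadrilateral, which gives the exact remainder $-2s(1-s)(\dist_Y(P,P')-\dist_Y(Q,Q'))^2$. I would then handle $s\neq s'$ by moving along the geodesic at $y$ and expanding to first order in $s-s'$, as in \cite[Lemma~2.4.2]{KorevaarSchoen1993}. This is the step I expect to be the main obstacle: the first-order cross term has to be written exactly as $-(s-s')(\dots)$, and the second-order error has to be controlled with constants independent of the configuration. I would follow Korevaar--Schoen's proof verbatim, since it uses only $\CAT(0)$ comparison.

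\textbf{Integration.} Multiply the estimate by $\kappa_r(x,y)$ and integrate over $x\in\Omega_{r}$ and $y$. The left side gives $\MapE_r(u_\phi)+\MapE_r(v_\phi)$, by \cref{prop:KS-properties}(b). The first two terms on the right give $\MapE_r(u)+\MapE_r(v)$. Since $\phi$ is Lipschitz, the error term is bounded by $C\Lip(\phi)^2\bigl(\int(e_r[u]+e_r[v])\,r^2\,\dd\m+r^2\|w\|_{L^\infty}\bigr)$, using $|s-s'|\le\Lip(\phi)r$ and bounded images. This tends to zero by \cref{prop:finite-scale-KS-domination}.

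\textbf{Cross term and remainder.} Writing $f=\phi$ and $g=(1-2\phi)w$, the cross term is exactly $-\iint\kappa_r(f(x)-f(y))(g(x)-g(y))=-2B_r(f,g)$. By \eqref{eq:scalar-polarization-limit} it converges to $-\frac1{n+2}\int\ip{\nabla\phi}{\nabla((1-2\phi)w)}\,\dd\m$. This is legitimate because $\phi$ is compactly supported, and $g\in W^{1,2}\cap L^\infty$ by \cref{lem:variation-admissible}.

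For the remainder term, replace $\frac{s+s'}2(1-\frac{s+s'}2)$ by $\phi(x)-\phi(x)^2$ at a cost of $O(r)$ times $2e_r[u]+2e_r[v]$, which is negligible. The remaining quantity is $-2\int(\phi-\phi^2)R_r^{u,v}\,\dd\m$. Taking $\liminf$ along rational $r\downarrow0$, Fatou's lemma (applicable since the integrand is nonnegative) gives $\limsup\bigl(-2\int(\phi-\phi^2)R^{u,v}_r\bigr)\le-2\int(\phi-\phi^2)R_{u,v}$.

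Finally, the left side converges to $\MapE(u_\phi)+\MapE(v_\phi)$ by \eqref{eq:KS-energy-definition}, and $\MapE_r(u)+\MapE_r(v)\to\MapE(u)+\MapE(v)$. Combining these limits yields \eqref{eq:KS-variation}.
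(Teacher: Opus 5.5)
Your overall plan is the paper's: a pointwise $\CAT(0)$ inequality, integration against $\kappa_r$, the polarization limit for the cross term, and Fatou for the remainder. The gap is in the integration step, where you bound the error. Since $\int\kappa_r(x,y)\,\dd\m(y)=r^{-2}$, the part $|s-s'|^2\bigl(w(x)+w(y)\bigr)$ of your $\mathrm{Err}$ does not integrate to $O(r^2\|w\|_{L^\infty})$. For instance,
\[
 \iint\kappa_r(x,y)\,|\phi(x)-\phi(y)|^2w(x)\,\dd\m(y)\,\dd\m(x)
 =\int e_r[\phi]\,w\,\dd\m\longrightarrow\frac1{n+2}\int|\mathrm D\phi|^2w\,\dd\m ,
\]
which is of order one. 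Only the part $|s-s'|^2(a^2+b^2)$ is negligible. This loss is not harmless. The cross term you keep, $-(s-s')[(1-2s)w(x)-(1-2s')w(y)]$, itself contains $+2(s-s')^2w(x)$, and that piece produces the $\frac{2}{n+2}\int|\mathrm D\phi|^2w\,\dd\m$ inside the right-hand side of \eqref{eq:KS-variation}. A quadrilateral estimate obtained by expanding to first order in $s-s'$, with second-order error $C|s-s'|^2w$, therefore gives \eqref{eq:KS-variation} only up to an extra $C'\int|\mathrm D\phi|^2w\,\dd\m$. That weaker form would still suffice for \cref{thm:RCD-two-flow}, where $\phi=\sqrt h\,\zeta$ makes the extra term $O(h)$, but it is not the stated theorem. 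So the real difficulty is not controlling a second-order remainder; it is that no $w$-weighted second-order error is allowed at all.

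The fix is a pointwise inequality whose only inexact term is of energy type, and you need no expansion to get it. Compare the whole quadrilateral $P,Q,Q',P'$ with a Euclidean quadrilateral having the same four side lengths, for instance via Reshetnyak's majorization theorem; the paper instead uses the four-point calculation of \cite[Lemma~3.2]{ZhangZhu2026}. In $\mathbb R^m$, set $A=P-P'$, $B=Q-Q'$, $W=Q-P$, $W'=Q'-P'$. For all $s,s'\in[0,1]$ one has the exact identity
\[
 |A+sW-s'W'|^2+|B-sW+s'W'|^2
 =|A|^2+|B|^2-(s-s')\bigl[(1-2s)|W|^2-(1-2s')|W'|^2\bigr]
 -(s+s'-2ss')|B-A|^2 .
\]
The four interpolation points in $Y$ are images under the $1$-Lipschitz majorizing map of the corresponding boundary points. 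Together with $s+s'-2ss'\ge0$ and $|B-A|\ge|b-a|$, this transfers the identity to an inequality in $Y$, with $a,b,w(x),w(y)$ in place of $|A|,|B|,|W|^2,|W'|^2$. Writing $s+s'-2ss'=2s(1-s)+(s'-s)(1-2s)$ gives exactly \eqref{eq:finite-scale-CAT-variation}. Its only error, $|s-s'|(b-a)^2$, integrates to at most $2\Lip(\phi)\,r\,(\MapE_r(u)+\MapE_r(v))\to0$. Alternatively, $s+s'-2ss'\ge2m(1-m)$ with $m=(s+s')/2$ gives your displayed estimate with $\mathrm{Err}\equiv0$. With this replacement the rest of your argument goes through as written: the cross term via \eqref{eq:scalar-polarization-limit}, the $O(r)$ replacement of $m(1-m)$ by $\phi-\phi^2$, and Fatou along rational radii.
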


\begin{proof}
Fix $x,y$ and write
\[
 p=\phi(x),\quad q=\phi(y),\quad
 a=\dist_Y(u(x),u(y)),\quad b=\dist_Y(v(x),v(y)).
\]
The \CAT(0) four-point calculation used in
\cite[proof of Lemma~3.2]{ZhangZhu2026}, based on the inequalities
\eqref{eq:CAT0-strong-convexity} and \eqref{eq:Reshetnyak-four-point}
recalled in \cref{subsec:CAT0-barycenters}, gives
\begin{align}
 &\dist_Y^2(u_\phi(x),u_\phi(y))
 +\dist_Y^2(v_\phi(x),v_\phi(y))-a^2-b^2 \notag\\
 &\quad\le
 -(p-q)\bigl[(1-2p)w(x)-(1-2q)w(y)\bigr] \notag\\
 &\qquad-2p(1-p)(b-a)^2+|p-q|(b-a)^2.
 \label{eq:finite-scale-CAT-variation}
\end{align}
Indeed, apply the same strong-convexity inequality successively to
the two geodesics joining $u(x)$ to $v(x)$ and $u(y)$ to $v(y)$, with
parameters $p$ and $q$, and collect the mixed terms.  The last term is
precisely the error caused by the unequal parameters.
The last term is an $o(1)$ error after integration: since
$|p-q|\le\Lip(\phi)r$ on the support of $\kappa_r$,
\begin{equation}
\begin{aligned}
 &\iint\kappa_r|\phi(x)-\phi(y)|(b-a)^2\,
     \dd\m(y)\,\dd\m(x)\\
 &\qquad\le2\Lip(\phi)r
   \left(\int e_r[u]+\int e_r[v]\right)\longrightarrow0.
\end{aligned}
\end{equation}
Integrating \eqref{eq:finite-scale-CAT-variation} gives
\begin{align}
 &\MapE_r(u_\phi)+\MapE_r(v_\phi)-\MapE_r(u)-\MapE_r(v)\notag\\
 &\quad\le-2B_r\bigl(\phi,(1-2\phi)w\bigr)
 -2\int(\phi-\phi^2)R_r^{u,v}\,\dd\m+o(1),
\end{align}
where \(R_r^{u,v}\) is the finite-scale remainder in the theorem statement.
Let $r\downarrow0$ through rational radii.  The four energy terms converge by
\eqref{eq:KS-energy-definition} from \cref{subsec:KS-energy}; the scalar term
converges by Part~(c) of \cref{prop:KS-properties}, equivalently
\eqref{eq:scalar-polarization-limit}; and Fatou's lemma gives
\[
 \limsup_{r\downarrow0}
 \left[-\int(\phi-\phi^2)R_{r}^{u,v}\,\dd\m\right]
 \le-\int(\phi-\phi^2)R_{u,v}\,\dd\m.
\]
This proves the theorem.
\end{proof}

\subsection{The two-flow parabolic inequality}

\begin{lemma}[\EVI{} finite-difference inequality]\label{lem:EVI-finite-difference}
Let $u^t$ and $v^t$ be \EVI{} harmonic map heat flows, possibly in different
fixed-trace classes, and put $w^t=\dist_Y^2(u^t,v^t)$.  For $0<h<t$ and
$0\le\phi\le1$ compactly supported in the interior,
\begin{align}
 \int_\Omega\phi(w^t-w^{t-h})\,\dd\m
 \le{}&\dist_{L^2}^2(u^t,u^{t-h})+\dist_{L^2}^2(v^t,v^{t-h})\notag\\
 &+(n+2)h\bigl[\MapE((u^t)_\phi)-\MapE(u^t)
 +\MapE((v^t)_\phi)-\MapE(v^t)\bigr].
\end{align}
\end{lemma}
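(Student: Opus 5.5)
The plan is to test the \EVI{} \eqref{eq:EVI-HMHF} of each flow against its own geodesic variation toward the other flow. The resulting $L^2$ inequalities will then be converted into a pointwise statement about $w$ using $\CAT(0)$ comparison in $Y$. Throughout, write $R:=u^{t-h}(x)$, $R':=v^{t-h}(x)$, $U:=u^t(x)$, $V:=v^t(x)$, and $a:=\dist_Y(R,U)$, $b:=\dist_Y(R',V)$. Then $w^t(x)=\dist_Y^2(U,V)$ and $w^{t-h}(x)=\dist_Y^2(R,R')$.

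\emph{Step 1: discrete \EVI{}.} By \cref{lem:variation-admissible}, $z:=(u^t)_\phi$ lies in the trace class of $u$ and $z':=(v^t)_\phi$ lies in that of $v$, so they are admissible competitors. Integrate \eqref{eq:EVI-HMHF} for $u$ with $v=z$ over $[t-h,t]$; this is justified by \eqref{eq:EVI-HMHF-integrated} and absolute continuity. The energy monotonicity \eqref{eq:EVI-energy-monotonicity} gives $\FlowE(u^\tau)\ge\FlowE(u^t)$ for $\tau\le t$, hence
\[
 \tfrac12\dist_{L^2}^2(u^t,z)-\tfrac12\dist_{L^2}^2(u^{t-h},z)\le h\bigl(\FlowE(z)-\FlowE(u^t)\bigr).
\]
The same inequality holds for $v$ with $z'$. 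Summing the two, and recalling $2\FlowE=(n+2)\MapE$, yields
\[
\int_\Omega\bigl[\dist_Y^2(U,z)+\dist_Y^2(V,z')-\dist_Y^2(R,z)-\dist_Y^2(R',z')\bigr]\dd\m\le(n+2)h\,[\cdots],
\]
where $[\cdots]$ is exactly the bracket in the statement.

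\emph{Step 2: pointwise $\CAT(0)$ estimates.} First, $\dist_Y(U,z)=\phi\sqrt{w^t}$ and $\dist_Y(V,z')=\phi\sqrt{w^t}$, so the first two terms in the integrand sum to $2\phi^2w^t$. Next, apply \eqref{eq:CAT0-strong-convexity} with base point $R$ on the geodesic $U\to V$ at parameter $\phi$, and with base point $R'$ on the geodesic $V\to U$ at parameter $\phi$:
\[
\dist_Y^2(R,z)+\dist_Y^2(R',z')\le(1-\phi)(a^2+b^2)+\phi\bigl[\dist_Y^2(R,V)+\dist_Y^2(R',U)\bigr]-2\phi(1-\phi)w^t.
\]
The cross terms are then controlled by Reshetnyak's inequality \eqref{eq:Reshetnyak-four-point} with $P=R$, $S=V$, $Q=R'$, $R\mapsto U$, combined with $2\alpha\beta\le\alpha^2+\beta^2$:
\[
\dist_Y^2(R,V)+\dist_Y^2(R',U)\le a^2+b^2+w^{t-h}+w^t.
\]

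\emph{Step 3: combine.} Inserting both estimates into the integrand of Step 1, the integrand is bounded below by
\[
2\phi^2w^t+2\phi(1-\phi)w^t-(1-\phi)(a^2+b^2)-\phi\bigl(a^2+b^2+w^t+w^{t-h}\bigr),
\]
which simplifies to $\phi(w^t-w^{t-h})-(a^2+b^2)$. Integrating and using $\int a^2\,\dd\m=\dist_{L^2}^2(u^t,u^{t-h})$ and $\int b^2\,\dd\m=\dist_{L^2}^2(v^t,v^{t-h})$ gives the claim.

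The main obstacle is Step 1. One must justify integrating the distributional \EVI{} over the closed interval $[t-h,t]$ with a competitor $z$ that depends on the endpoint $t$. This uses the local absolute continuity of $\tau\mapsto\dist_{L^2}^2(u^\tau,z)$ on $(0,\infty)$, the assumption $h<t$, and the monotonicity of $\FlowE$ along the flow. The remaining steps are pointwise algebra and only require checking that every integral is finite, which follows from the bounded images and \cref{lem:variation-admissible}.
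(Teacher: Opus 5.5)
Your proof is correct and takes essentially the same route as the paper. Like the paper, you integrate each flow's \EVI{} over $[t-h,t]$ against its own geodesic competitor $(u^t)_\phi$ or $(v^t)_\phi$ using energy monotonicity, and combine this with the pointwise $\CAT(0)$ estimate from strong convexity and Reshetnyak's inequality; your explicit algebra, including $2\sqrt{w^{t-h}w^t}\le w^{t-h}+w^t$, simply writes out the quadrilateral inequality that the paper takes from \cite[Lemma~3.3]{ZhangZhu2026}.
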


\begin{proof}
Write \(u^t_{\phi}:=(u^t)_\phi\) and
\(v^t_{\phi}:=(v^t)_\phi\).  The pointwise \(\CAT(0)\) quadrilateral
inequality gives
\begin{align}
 \int_\Omega\phi(w^t-w^{t-h})\,\dd\m
 \le{}&\dist_{L^2}^2(u^t,u^{t-h})
       +\dist_{L^2}^2(u^t_{\phi},u^t)
       -\dist_{L^2}^2(u^t_{\phi},u^{t-h})\notag\\
 &+\dist_{L^2}^2(v^t,v^{t-h})
       +\dist_{L^2}^2(v^t_{\phi},v^t)
       -\dist_{L^2}^2(v^t_{\phi},v^{t-h}).
 \label{eq:EVI-quadrilateral-step}
\end{align}
Indeed, apply \eqref{eq:CAT0-strong-convexity} from
\cref{subsec:CAT0-barycenters} to the point \(u^t_{\phi}(x)\) on the
geodesic from \(u^t(x)\) to \(v^t(x)\), repeat the argument with
\(v^t_{\phi}(x)\), and add the two inequalities.  The remaining cross terms
are controlled by \eqref{eq:Reshetnyak-four-point} from
\cref{subsec:CAT0-barycenters};
see also \cite[Lemma~3.3]{ZhangZhu2026}.

Integrating \eqref{eq:EVI-HMHF} from
\cref{subsec:semigroup-HMHF} from \(t-h\) to \(t\) for the flow \(u\), with
the fixed competitor \(u^t_{\phi}\), and using the energy monotonicity
\eqref{eq:EVI-energy-monotonicity} from the same subsection gives
\begin{equation}
 \dist_{L^2}^2(u^t_{\phi},u^t)
 -\dist_{L^2}^2(u^t_{\phi},u^{t-h})
 \le 2h\bigl[\FlowE(u^t_{\phi})-\FlowE(u^t)\bigr].
 \label{eq:EVI-u-integrated-step}
\end{equation}
The estimate \eqref{eq:EVI-u-integrated-step} also holds with \(u\) replaced
by \(v\).  By
\cref{lem:variation-admissible}, the two competitors belong to their
respective fixed-trace classes; equality of the traces is not needed.
Substituting these two estimates into \eqref{eq:EVI-quadrilateral-step} and
using the normalization \eqref{eq:flow-energy-normalization} from
\cref{subsec:main-result} proves the claim.
\end{proof}

\begin{theorem}[Strong two-flow parabolic inequality]\label{thm:RCD-two-flow}
Let $u^t,v^t$ be bounded semigroup harmonic map heat flows, possibly in
different fixed-trace classes.  On every positive-time interior cylinder,
\begin{equation}\label{eq:RCD-two-flow-PDE}
 (\boldsymbol\Delta-\partial_t)\dist_Y^2(u,v)
 \ge2(n+2)R_{u^t,v^t}
\end{equation}
in the distributional sense.  In particular,
$(\boldsymbol\Delta-\partial_t)\dist_Y^2(u,v)\ge0$.
\end{theorem}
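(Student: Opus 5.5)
The proof combines \cref{lem:EVI-finite-difference} with the energy variation \cref{thm:KS-variation}, and then sends the time step $h\downarrow0$ after integrating against a time cut-off.

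\textbf{Finite-difference inequality.} Fix a nonnegative $\phi\in\Lip_c(\Omega'\times(a,b))$ with $0<a<b$ and $\Omega'\Subset\Omega$. After scaling, assume $0\le\phi\le1$; the inequality is linear in $\phi$. Then substitute \eqref{eq:KS-variation}, applied to the slice $\phi^t:=\phi(\cdot,t)$, into \cref{lem:EVI-finite-difference}. With $w^t=\dist_Y^2(u^t,v^t)$, this gives for each $t$ and small $h$
\[
 \int\phi^t(w^t-w^{t-h})\,\dd\m
 \le \dist_{L^2}^2(u^t,u^{t-h})+\dist_{L^2}^2(v^t,v^{t-h})
 -h\int\ip{\nabla\phi^t}{\nabla((1-2\phi^t)w^t)}\,\dd\m
 -2(n+2)h\int(\phi^t-(\phi^t)^2)R_{u^t,v^t}\,\dd\m .
\]
The unwanted factors $1-2\phi$ and $\phi-\phi^2$ come from the symmetric interpolation. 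To remove them, replace $\phi$ by $\varepsilon\phi$. Dividing by $\varepsilon$ leaves
\[
 -h\int\ip{\nabla\phi}{\nabla w}\,\dd\m
 -2(n+2)h\int\phi R\,\dd\m+O(\varepsilon h),
\]
and the error is controlled by $\int|\nabla\phi|^2w+\int\phi^2R$, which is finite by \eqref{eq:remainder-density-bound} and boundedness.

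The metric-speed terms, however, do not scale with $\varepsilon$. They are $O(h^2)$ by \eqref{eq:EVI-L}, so after division they contribute $O(h^2/\varepsilon)$. The order of limits is therefore: first divide by $h$ and let $h\to0$, and only then let $\varepsilon\to0$.

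\textbf{Passing $h\to0$.} Integrate in $t$ and divide by $h$. The left side becomes, after a discrete summation by parts,
\[
 \int\!\!\int w^t\,\frac{\phi^t-\phi^{t+h}}{h}\,\dd\m\,\dd t,
\]
which converges to $-\iint w\,\partial_t\phi$ because $w$ is bounded. The speed terms contribute $h^{-1}\int O(h^2)\,\dd t\to0$. The gradient and remainder terms are independent of $h$, apart from the shift $\phi^t$ versus $\phi^{t+h}$, which is harmless. The result is
\[
 -\iint w\,\partial_t\phi
 \le-\iint\ip{\nabla\phi}{\nabla w}
 -2(n+2)\iint\phi R+O(\varepsilon).
\]
Letting $\varepsilon\to0$ gives exactly the weak form of \eqref{eq:RCD-two-flow-PDE}. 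The membership $w\in L^2_tW^{1,2}_x$ needed for the weak formulation follows from \eqref{eq:distance-upper-gradient} and the energy monotonicity \eqref{eq:EVI-energy-monotonicity}. The final assertion follows from $R\ge0$.

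\textbf{Main obstacle.} I expect the main difficulty to be the interplay between the $\varepsilon$-scaling and the quadratic speed terms. The scaling must be done at the level of the finite-difference inequality, so the $h\to0$ limit has to come before $\varepsilon\to0$.

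There is also a measurability point. The map $t\mapsto\int\phi^tR_{u^t,v^t}$ needs to be measurable, and the slicing of a Lipschitz space–time $\phi$ must be admissible in \cref{thm:KS-variation}, which requires $\phi^t$ to be compactly supported and Lipschitz. The measurability can be handled by using rational radii and Fatou's lemma in $t$: apply the finite-scale version of the inequality before taking $\liminf$ in $r$, as in the proof of \cref{thm:KS-variation}.
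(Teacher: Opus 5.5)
Your argument is correct and is essentially the paper's: \cref{lem:EVI-finite-difference} combined with \cref{thm:KS-variation} for a test shrunk by a factor $\varepsilon$, with the $O(h^2)$ speed terms absorbed through \eqref{eq:EVI-L}. One small bookkeeping omission: the $O(\varepsilon)$ error also contains the cross term $\varepsilon\iint\phi\ip{\nabla\phi}{\nabla w}$, which is harmless by the $L^2_tW^{1,2}_x$ bound on $w$ that you invoke anyway.

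The only differences from the paper are cosmetic. The paper ties the two parameters together through $\phi_h=\sqrt h\,\zeta$ and divides by $h^{3/2}$, so a single limit $h\downarrow0$ suffices. It also treats separated tests $\zeta(x)\theta(t)$ first and then extends by approximation, whereas your iterated limit and direct slicing of a space--time test skip that density step.
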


\begin{proof}
Fix \(0<t_1<t_2<\infty\) inside the positive-time interval under
consideration.
By \cref{prop:space-time-KS-regularity,prop:KS-properties},
$w=\dist_Y^2(u,v)$ belongs locally to the spatial Sobolev class required in
the weak formulation.  Moreover, the $\dist_{L^2}$ metric-speed bounds give local
$L^1$ continuity in time.  The function
$R_{u^t,v^t}(x)$ is measurable and locally integrable by
\cref{thm:KS-variation}, specifically the remainder bound
\eqref{eq:remainder-density-bound}, and the energy monotonicity recalled in
\cref{subsec:semigroup-HMHF}.  Let \(L_u\) and \(L_v\) be the metric-speed
bounds \eqref{eq:EVI-L} from that subsection on a slightly larger
positive-time interval.

Take nonnegative \(\zeta\in\Lip_c(\Omega)\) and
\(\theta\in C_c^1((t_1,t_2))\).  For all sufficiently small \(h>0\), put
\(\phi_h=\sqrt h\,\zeta\).  Combining
\cref{lem:EVI-finite-difference,thm:KS-variation} and dividing by
\(h\sqrt h\) gives, for almost every \(t\),
\begin{equation}\label{eq:two-flow-scaled-difference}
\begin{split}
 \frac1h\int_\Omega\zeta(w^t-w^{t-h})\,\dd\m
 \le{}&\frac{\dist_{L^2}^2(u^t,u^{t-h})
              +\dist_{L^2}^2(v^t,v^{t-h})}{h^{3/2}}\\
 &-\int_\Omega
   \ip{\nabla\zeta}{\nabla\bigl((1-2\sqrt h\,\zeta)w^t\bigr)}\,\dd\m\\
 &-2(n+2)\int_\Omega
   (\zeta-\sqrt h\,\zeta^2)R_{u^t,v^t}\,\dd\m.
\end{split}
\end{equation}
The metric-speed estimate gives
\begin{equation}\label{eq:two-flow-speed-error}
 \frac{\dist_{L^2}^2(u^t,u^{t-h})
              +\dist_{L^2}^2(v^t,v^{t-h})}{h^{3/2}}
 \le (L_u^2+L_v^2)\sqrt h\longrightarrow0.
\end{equation}
After multiplying \eqref{eq:two-flow-scaled-difference} by \(\theta(t)\)
and integrating in time, the remaining \(\sqrt h\)-terms vanish by the local
Sobolev bound for \(w\) and the local integrability of \(R_{u^t,v^t}\).  Hence
\begin{align}
 &-\iint\theta\ip{\nabla\zeta}{\nabla\bigl((1-2\sqrt h\,\zeta)w\bigr)}
       \,\dd\m\,\dd t
 \longrightarrow-\iint\theta\ip{\nabla\zeta}{\nabla w}\,\dd\m\,\dd t,
 \label{eq:two-flow-spatial-limit}\\
 &-2(n+2)\iint\theta(\zeta-\sqrt h\,\zeta^2)R_{u^t,v^t}
       \,\dd\m\,\dd t
 \longrightarrow-2(n+2)\iint\theta\zeta R_{u^t,v^t}\,\dd\m\,\dd t.
 \label{eq:two-flow-remainder-limit}
\end{align}
The backward difference satisfies
\[
 \frac1h\int_{t_1}^{t_2}\theta(t)
 \int\zeta(w^t-w^{t-h})\,\dd\m\,\dd t
 \longrightarrow-\iint w\zeta\theta'\,\dd\m\,\dd t.
\]
Passing to the limit in \eqref{eq:two-flow-scaled-difference} using
\eqref{eq:two-flow-speed-error}, \eqref{eq:two-flow-spatial-limit},
\eqref{eq:two-flow-remainder-limit}, and the preceding backward-difference
limit gives
\[
 -\iint\theta\ip{\nabla w}{\nabla\zeta}\,\dd\m\,\dd t
 +\iint w\zeta\theta'\,\dd\m\,\dd t
 \ge2(n+2)\iint\theta\zeta R_{u^t,v^t}\,\dd\m\,\dd t,
\]
which proves the asserted inequality for tests of the form
\(\zeta(x)\theta(t)\).  To obtain the full
space--time formulation, first mollify a compactly supported Lipschitz test in
time and then approximate its time dependence by piecewise-linear
interpolation.  The resulting finite sums of separated tests converge in all
terms of the weak formulation: the tests converge uniformly on their common
compact support, their spatial gradients converge in \(L^2\), and their time
derivatives converge in \(L^1\).  The spatial Sobolev bound for \(w\) and the local
integrability of \(R_{u^t,v^t}\) established above allow passage to the limit.
\end{proof}

\begin{corollary}[Fixed-target inequalities]\label{cor:fixed-target}
For every $P\in Y$,
\begin{align}
 (\boldsymbol\Delta-\partial_t)\dist_Y^2(u,P)&\ge2(n+2)e_{u^t},
 \label{eq:RCD-square-P}\\
 (\boldsymbol\Delta-\partial_t)\dist_Y(u,P)&\ge0
 \label{eq:RCD-distance-P}
\end{align}
in distributions on positive-time interior cylinders.
\end{corollary}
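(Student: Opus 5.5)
The first inequality \eqref{eq:RCD-square-P} will come directly from \cref{thm:RCD-two-flow}, using the constant flow $v^t\equiv P$ as the second flow. First I will check that this is admissible. The constant map $P$ lies in $\KS^{1,2}(\Omega,Y)$ with zero energy and has bounded image. It is trivially an \EVI{} flow in its own fixed-trace class $\KS^{1,2}_P(\Omega,Y)$: the left side of \eqref{eq:EVI-HMHF} reduces to $\FlowE(P)=0\le\FlowE(v)$, because $\dist_{L^2}(P,v)$ does not depend on $t$. \Cref{thm:RCD-two-flow} explicitly allows different fixed-trace classes, and $u$ is bounded by \eqref{eq:bounded-image-ball}. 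It therefore gives $(\boldsymbol\Delta-\partial_t)\dist_Y^2(u,P)\ge 2(n+2)R_{u^t,P}$. Substituting \eqref{eq:remainder-fixed-P}, $R_{u^t,P}=e_{u^t}$ a.e., yields \eqref{eq:RCD-square-P}.

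For \eqref{eq:RCD-distance-P} I will set $\rho:=\dist_Y(u,P)$ and $\rho_\varepsilon:=\sqrt{\rho^2+\varepsilon^2}$, and run a chain-rule argument in weak form. By \eqref{eq:fixed-P-upper-gradient} and \cref{prop:space-time-KS-regularity}, $\rho$ lies locally in $L^2_tW^{1,2}_x$ with $|\mathrm D\rho|^2\le(n+2)e_{u^t}$. Also $\rho^2$ is locally $L^1$-continuous in time, in fact locally Lipschitz into $L^1$, by \eqref{eq:EVI-L} and $|\rho^2(t)-\rho^2(s)|\le(\rho(t)+\rho(s))\dist_Y(u^t,u^s)$ together with boundedness. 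The function $g(s)=\sqrt{s+\varepsilon^2}$ is concave and increasing on $[0,\infty)$, with $g'(s)=1/(2\rho_\varepsilon)$ and $g''(s)=-1/(4\rho_\varepsilon^3)$. Formally,
\[
 (\boldsymbol\Delta-\partial_t)\rho_\varepsilon
 =\frac{(\boldsymbol\Delta-\partial_t)\rho^2}{2\rho_\varepsilon}
 -\frac{|\mathrm D\rho^2|^2}{4\rho_\varepsilon^3}
 \ge\frac{(n+2)e_{u^t}}{\rho_\varepsilon}-\frac{\rho^2|\mathrm D\rho|^2}{\rho_\varepsilon^3}
 \ge\frac{(n+2)e_{u^t}}{\rho_\varepsilon}\Bigl(1-\frac{\rho^2}{\rho_\varepsilon^2}\Bigr)\ge0 .
\]
Letting $\varepsilon\downarrow0$, so that $\rho_\varepsilon\to\rho$ uniformly, then gives \eqref{eq:RCD-distance-P} in distributions.

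The main obstacle is making this chain rule rigorous with only a weak inequality, since $\rho^2$ has no time derivative a priori. My plan is to test \eqref{eq:RCD-square-P} with $\phi\,g'(\rho^2)$, where $\phi\ge0$ is Lipschitz with compact support. This test is not Lipschitz in time, so I will first regularize in time by Steklov averages $[\rho^2]_h$. The averaged inequality $\partial_t[\rho^2]_h-\boldsymbol\Delta[\rho^2]_h\le -2(n+2)[e]_h$ holds weakly in $x$ for each $t$, and I can test it with $\phi\,g'([\rho^2]_h)$, which lies in $W^{1,2}$ in space. The time term then becomes $\partial_t g([\rho^2]_h)$ exactly. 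The gradient term produces $\phi\,g''|\mathrm D[\rho^2]_h|^2$ plus $g'\ip{\nabla[\rho^2]_h}{\nabla\phi}$. Passing $h\to0$ uses $L^2_tW^{1,2}_x$ convergence of the Steklov averages and $L^1$ convergence of $[e]_h$. A cruder alternative avoids Steklov averages: use the backward-difference scheme of \cref{lem:EVI-finite-difference} directly, combined with the concavity inequality $g(b)-g(a)\le g'(a)(b-a)$. Either way, the only structural inputs are \eqref{eq:RCD-square-P} and \eqref{eq:fixed-P-upper-gradient}, and the last step checks that the positive term $(n+2)e/\rho_\varepsilon$ absorbs the concavity loss $\rho^2|\mathrm D\rho|^2/\rho_\varepsilon^3$.
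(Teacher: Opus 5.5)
Your proposal is correct and matches the paper's proof. The first inequality comes from \cref{thm:RCD-two-flow}, applied with the stationary constant flow $P$ and the identity $R_{u,P}=e_{u}$. The second comes from the regularization $\sqrt{\dist_Y^2(u,P)+\varepsilon^2}$ and the chain rule, where $|\mathrm D\rho|^2\le(n+2)e_{u^t}$ absorbs the concavity loss. Your Steklov-average justification of the parabolic chain rule spells out a step that the paper compresses into a citation of the elliptic measure-valued chain rule followed by a limit in the weak formulation.
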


\begin{proof}
The constant map $v^t\equiv P$ is the stationary \EVI{} flow in the fixed-trace
class with constant boundary datum \(P\).  Applying
\cref{thm:RCD-two-flow} and using the fixed-target identity from
\cref{thm:KS-variation}, specifically \eqref{eq:remainder-fixed-P}, gives the
first inequality; compare \cite[Corollary~3.5]{ZhangZhu2026}.  The two trace
classes need not coincide.

Put $r_P=\dist_Y(u,P)$ and $g_\delta=(r_P^2+\delta)^{1/2}$.  By
\cref{prop:KS-properties}\,(a), specifically
\eqref{eq:fixed-P-upper-gradient}, $|\mathrm D r_P|^2\le(n+2)e_{u^t}$.  The
measure-valued \RCD{} chain rule
\cite[Proposition~4.11]{Gigli2015} gives
\[
 (\boldsymbol\Delta-\partial_t)g_\delta
 \ge \frac{(n+2)e_{u^t}}{g_\delta}
      -\frac{r_P^2|\mathrm D r_P|^2}{g_\delta^3}
 \ge \frac{(n+2)\delta e_{u^t}}{g_\delta^3}\ge0.
\]
Letting $\delta\downarrow0$ in the weak formulation proves
the distance inequality.
\end{proof}

Mayer \cite[Theorem~3.4]{Mayer1998} proved that, on a relatively compact
Riemannian domain with fixed boundary values, the harmonic map heat flow
converges in $L^2$ to the unique energy-minimizing harmonic map.  Guo
\cite[Theorem~1.6]{Guo2021} extended this conclusion to singular metric
sources satisfying his property~B.  The required coercivity, lower
semicontinuity, and compactness hold in the present finite-dimensional
$\RCD(K,N)$ setting by the PI (doubling and Poincar\'e)  structure and the
Sobolev theory in
\cref{subsec:KS-energy,subsec:semigroup-HMHF}.  For a relatively compact
fixed-boundary domain with nonempty exterior, the required zero-boundary
Poincar\'e estimate follows by extending a function in $W^{1,2}_0(\Omega)$ by
zero and applying the Poincar\'e inequality on a containing ball.

For the harmonic limit $u_\infty$, the two-flow inequality shows that
$\dist_Y^2(u,u_\infty)$ is a nonnegative heat subsolution.  The scalar local
boundedness estimate then gives local uniform convergence from the $L^2$
convergence.

\begin{corollary}[Local uniform convergence to the harmonic map]
\label{cor:local-uniform-to-hm}
Let $\psi\in\KS^{1,2}(\Omega,Y)$, let
$u_0\in\KS^{1,2}_\psi(\Omega,Y)$ have bounded essential image in $Y$, and
let $u^t$ be the corresponding harmonic map heat flow.  Assume that
$\Omega$ is relatively compact in $X$ and
$X\setminus\overline\Omega\ne\varnothing$.
Then the fixed-trace class contains a unique energy-minimizing harmonic map
$u_\infty\in\KS^{1,2}_\psi(\Omega,Y)$, and, for every ball
$B_{2R}(x_0)\Subset\Omega$,
\begin{equation}\label{eq:local-uniform-to-hm}
 \lim_{t\to\infty}\sup_{x\in B_R(x_0)}
 \dist_Y\bigl(u^t(x),u_\infty(x)\bigr)=0.
\end{equation}
Here the supremum is taken using the continuous representatives of $u^t$ and
$u_\infty$.
\end{corollary}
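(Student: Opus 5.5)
The proof has three steps: existence and uniqueness of the limit, $L^2$ convergence, and an upgrade to local uniform convergence through the two-flow subsolution. Existence and uniqueness of $u_\infty$ come from the direct method in the fixed-trace class. Since $\Omega$ is relatively compact with nonempty exterior, extending $\dist_Y(u,\psi)\in W^{1,2}_0(\Omega)$ by zero and applying the local Poincar\'e inequality on a containing ball gives a zero-boundary Poincar\'e inequality. Combined with \eqref{eq:distance-upper-gradient}, this bounds $\dist_{L^2}(u,\psi)$ by $C(\MapE(u)+\MapE(\psi))^{1/2}$, which is the needed coercivity. Lower semicontinuity and convexity of $\MapE$ on the closed convex class $\KS^{1,2}_\psi$ are recalled in \cref{subsec:semigroup-HMHF}. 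Existence then follows as in Guo \cite[Theorem~1.6]{Guo2021}, which verifies property~B via the PI structure and the compactness of Gigli--Tyulenev. For uniqueness, integrating \eqref{eq:CAT0-strong-convexity} along the midpoint of two minimizers, together with \eqref{eq:KS-variation} applied with $\phi\equiv1/2$ near the interior, shows that $\dist_Y(u_1,u_2)$ has zero gradient. The zero-boundary Poincar\'e inequality then forces $u_1=u_2$.

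For the $L^2$ convergence $u^t\to u_\infty$, I would use the \EVI{} \eqref{eq:EVI-HMHF} with $v=u_\infty$ together with the strong convexity of $\FlowE$. The coercivity gives $\FlowE(v)-\FlowE(u_\infty)\ge c\,\dist_{L^2}^2(v,u_\infty)$ on the class: apply the midpoint argument to $v$ and $u_\infty$ and compare with the minimum. This yields $\frac{\dd}{\dd t}\dist_{L^2}^2(u^t,u_\infty)\le-2c\,\dist_{L^2}^2(u^t,u_\infty)$, hence exponential $L^2$ decay. Should strong convexity prove delicate, the weaker conclusion from Mayer \cite[Theorem~3.4]{Mayer1998} and Guo, namely $\FlowE(u^t)\downarrow\FlowE(u_\infty)$ together with the same coercivity, still gives $L^2$ convergence. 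In either case the truncation \eqref{eq:bounded-image-ball} keeps all images in $\overline B_M(P_0)$, and $u_\infty$ also has image in that ball, by projection and uniqueness.

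For the upgrade to local uniform convergence, regard $u_\infty$ as a stationary \EVI{} flow in the same trace class. \cref{thm:RCD-two-flow} then gives that $q=\dist_Y^2(u,u_\infty)\ge0$ satisfies $(\boldsymbol\Delta-\partial_t)q\ge0$. Applying \eqref{eq:expanded-local-boundedness} with $A=0$ on $Q_{2R}^-(x_0,t)$ for $t>4R^2$ gives
\begin{equation*}
\esssup_{Q_R^-(x_0,t)}q\le\frac{C}{R^2\m(B_{2R}(x_0))}\int_{t-4R^2}^{t}\dist_{L^2}^2(u^s,u_\infty)\,\dd s,
\end{equation*}
which tends to $0$ as $t\to\infty$.

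The main obstacle is passing from an essential supremum over space--time cylinders to the supremum over $x\in B_R(x_0)$ at each fixed time, taken with continuous representatives. Here I would use the continuity provided by \cref{main-thm}(i) for $u$, and Gigli/Mondino--Semola regularity for $u_\infty$. Since $q$ is continuous on $B_R(x_0)\times(t-R^2,t)$, the essential supremum dominates the pointwise values, and letting the slice time approach $t$ gives the bound for $\sup_{B_R(x_0)}q(\cdot,t)$. This yields \eqref{eq:local-uniform-to-hm}.
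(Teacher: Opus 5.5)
Your upgrade step is the paper's own argument. You treat $u_\infty$ as a stationary \EVI{} flow, so \cref{thm:RCD-two-flow} makes $q=\dist_Y^2(u,u_\infty)$ a nonnegative subsolution. Then \eqref{eq:expanded-local-boundedness} on $Q_{2R}^-((x_0,t))$ bounds $q$ by $\int_{t-4R^2}^{t}\dist_{L^2}^2(u^s,u_\infty)\,\dd s$, and continuity up to the top time turns the essential supremum into a pointwise one. The paper takes continuity from \cref{thm:RCD-holder-map-flow}; your use of \cref{main-thm}(i) is equally fine, since its hypothesis is the same bounded image.

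The difference is in the preliminaries. The paper simply cites Mayer \cite[Theorem~3.4]{Mayer1998} and Guo \cite[Theorem~1.6]{Guo2021} for existence, uniqueness and $L^2$ convergence. You instead feed a quadratic-growth bound $\FlowE(v)-\FlowE(u_\infty)\ge c\,\dist_{L^2}^2(v,u_\infty)$ into \eqref{eq:EVI-HMHF}. That route is sound and gives more than the paper states: exponential $L^2$ decay, and hence an exponential rate in \eqref{eq:local-uniform-to-hm}.

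However, the midpoint estimate behind both your uniqueness argument and the quadratic growth is not delivered by the tool you name. With $\phi\equiv\frac12$, the remainder in \eqref{eq:KS-variation} is $R_{u_1,u_2}$, built from $\bigl(\dist_Y(u_1(x),u_1(y))-\dist_Y(u_2(x),u_2(y))\bigr)^2$. It does not control $|\mathrm D\dist_Y(u_1,u_2)|^2$. For example, on $(-1,1)\subset\mathbb R$ with target $\mathbb R$, the maps $f(x)=|x|$ and $g(x)=2-|x|$ have the same boundary values and $R_{f,g}=0$ a.e., yet $f\ne g$. That theorem also requires compactly supported $\phi$, so taking $\phi\equiv\frac12$ only in the interior leaves transition-layer terms.

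What you need is the Korevaar--Schoen quadrilateral estimate. Apply \eqref{eq:CAT0-strong-convexity} repeatedly in the quadrilateral $u_1(x),u_2(x),u_2(y),u_1(y)$, and control the two diagonals by \eqref{eq:Reshetnyak-four-point}. For the midpoint map $m$ this yields
\[
 \dist_Y^2(m(x),m(y))\le\frac12\dist_Y^2(u_1(x),u_1(y))+\frac12\dist_Y^2(u_2(x),u_2(y))
 -\frac14\bigl(\dist_Y(u_1,u_2)(x)-\dist_Y(u_1,u_2)(y)\bigr)^2 ,
\]
and hence $\FlowE(m)\le\frac12\FlowE(u_1)+\frac12\FlowE(u_2)-\frac18\int_\Omega|\mathrm D\dist_Y(u_1,u_2)|^2\,\dd\m$.

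Combine this with $\dist_Y(u_1,u_2)\in W^{1,2}_0(\Omega)$, which follows from transitivity of the trace relation, and your zero-boundary Poincar\'e inequality. This one estimate then gives:
\begin{itemize}
\item uniqueness of the minimizer;
\item the quadratic growth you want, by taking $u_1=v$ and $u_2=u_\infty$;
\item existence, since minimizing sequences become $L^2$-Cauchy without any compactness.
\end{itemize}
With that substitution, or by simply citing Mayer and Guo as the paper does, your proof is complete.
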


\begin{proof}
By \cite[Theorem~3.4]{Mayer1998} and
\cite[Theorem~1.6]{Guo2021}, there is a unique minimizer $u_\infty$ and
\begin{equation}\label{eq:L2-convergence-to-hm}
 \dist_{L^2}(u^t,u_\infty)\longrightarrow0
 \qquad\text{as }t\to\infty.
\end{equation}
The projection argument preceding \eqref{eq:bounded-image-ball} shows that
$u_\infty$ lies in the same bounded target ball as $u_0$.  Since a minimizer
is a stationary \EVI{} flow, \cref{thm:RCD-two-flow} shows that
$q(x,t):=\dist_Y^2(u(x,t),u_\infty(x))$ is also a nonnegative heat subsolution.

Take the continuous representatives provided by
\cref{thm:RCD-holder-map-flow} and the harmonic-map regularity theory
\cite{Gigli2023,MondinoSemola2026}.  For $B_{2R}(x_0)\Subset\Omega$ and
$T>4R^2$, apply \eqref{eq:expanded-local-boundedness} on
$Q_{2R}^-((x_0,T))$.  Continuity up to the top time gives
\begin{align*}
 \sup_{x\in  B_R(x_0)}q(x,T)
 &\le \frac{C}{R^2\m(B_{2R}(x_0))}
 \int_{T-4R^2}^{T}
 \int_{B_{2R}(x_0)}q(x,t)\,\dd\m\,\dd t\\
 &\le \frac{4C}{\m(B_{2R}(x_0))}
 \max_{s\in[T-4R^2,T]}
 \dist_{L^2}^2(u^s,u_\infty).
\end{align*}
The right-hand side tends to zero by \eqref{eq:L2-convergence-to-hm}, which
proves \eqref{eq:local-uniform-to-hm}.
\end{proof}

\section{Positive-time regularity}\label{sec:positive-time}

Local H\"older regularity follows from the fixed-target square inequality and
the $L^2$ metric-speed bound, while pointwise Lipschitz continuity in time
follows from the two-flow inequality applied to time translates.  These are
the \RCD{} counterparts of the arguments in
\cite[Sections~4--5]{ZhangZhu2026}; the Campanato step is independent of
pointwise time regularity.

\subsection{Local H\"older continuity}

\begin{theorem}[Local H\"older representative]\label{thm:RCD-holder-map-flow}
The bounded harmonic map heat flow has a locally H\"older continuous
representative on $\Omega\times(0,\infty)$.  On every compact positive-time
cylinder there are $\alpha\in(0,1)$ and $C<\infty$ such that
\begin{equation}\label{eq:RCD-holder}
 \dist_Y(u(x,t),u(y,s))
 \le C\bigl(\dist(x,y)+|t-s|^{1/2}\bigr)^\alpha.
\end{equation}
\end{theorem}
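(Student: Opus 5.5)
The plan is a metric-valued Campanato argument on parabolic cylinders. It uses only the fixed-target square inequality \eqref{eq:RCD-square-P}, the scalar estimates of \cref{prop:scalar-parabolic-estimates}, the metric Poincar\'e inequality \eqref{eq:metric-Poincare}, and the metric-speed bound \eqref{eq:EVI-L}. I will prove a Campanato-type decay
\begin{equation*}
 \fint_{Q_r^-(z_0)}\dist_Y^2\bigl(u,\bar u_{Q_r^-(z_0)}\bigr)\,\dd\m\,\dd t\le C r^{2\alpha}
\end{equation*}
uniformly for $z_0$ in a compact positive-time cylinder and $r$ small, where $\bar u$ is the space--time barycenter. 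Campanato's characterization on the doubling space $X\times\R$ with the parabolic metric $\dist(x,y)+|t-s|^{1/2}$ then gives the H\"older representative \eqref{eq:RCD-holder}. I will rederive it in metric form by comparing barycenters on dyadic nested cylinders via the $\CAT(0)$ barycenter minimality; this avoids any linear structure on $Y$.

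\emph{Energy decay.} First I establish a Morrey-type estimate for the space--time density. For a cylinder $Q_{2r}^-(z_0)$, pick $P$ to be the barycenter of $u$ on $Q_{2r}^-$ (or on a slice) and set $q=\dist_Y^2(u,P)$. Testing \eqref{eq:RCD-square-P} with a good cut-off $\chi^2$ from \cref{prop:RCD-cutoffs}, times a time cut-off, gives a Caccioppoli inequality:
\[
\int_{Q_r^-}e\le \frac{C}{r^2}\int_{Q_{2r}^-}q.
\]
The time part of $q$ is controlled because $\dist_{L^2}(u^t,u^s)\le L|t-s|$. This makes the time oscillation on a cylinder of height $r^2$ of order $L^2r^4\m(B_r)^{-1}\cdot\m(B_r)$, and so lower order. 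The spatial part is controlled slice-wise by \eqref{eq:metric-Poincare}. Hence
\[
\fint_{Q_r^-}q\le C r^2\fint_{Q_{2r}^-}e+CL^2r^2.
\]

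\emph{Improvement by oscillation decay.} To get a power gain, I use the subsolution property of $\dist_Y(u,P)$ from \eqref{eq:RCD-distance-P} for many centers $P$. For fixed $P$, $h=\dist_Y(u,P)$ is a nonnegative bounded subsolution, so \eqref{eq:expanded-local-boundedness} controls $\sup_{Q_{r}^-}h$ by its average on $Q_{2r}^-$. The difficulty is that $Y$ is not a scalar space. I will mimic the Gigli/Zhang--Zhu argument: the image of $Q_{r}^-$ lies in a ball of radius $\rho(r)=\operatorname{osc}$. Choose $P$ on the boundary side, namely a point of $Y$ far along a geodesic from the barycenter. Then $M-h$ is a nonnegative supersolution, with $M$ the sup over $Q_{2r}^-$, and the weak Harnack \eqref{eq:expanded-weak-harnack} yields a definite fraction decrease of the image radius measured from the barycenter. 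I expect this to be the main obstacle: converting scalar weak Harnack for the family $\{\dist_Y(u,P)\}_P$ into geometric shrinking of the image. The standard route uses $\CAT(0)$ comparison, where the image of a smaller cylinder lies in a ball of radius $\theta\rho$ with $\theta<1$ about a suitably chosen point. If this is too delicate, I will instead run the Campanato iteration directly with the energy, using the comparison with the sub-averages from \eqref{eq:expanded-local-boundedness} to get $\fint_{Q_{\tau r}}e\le \tfrac12\fint_{Q_r}e$ plus lower-order terms $CL^2$.

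\emph{Iteration and conclusion.} Iterating the geometric decay, with the lower-order $L$ term absorbed since $r$ is small, yields $\operatorname{osc}_{Q_r^-}u\le C r^\alpha$, or equivalently the Campanato bound. The constants depend on $M$ from \eqref{eq:bounded-image-ball}, on $L$ from \eqref{eq:EVI-L}, and on $\FlowE(u_0)$ via \eqref{eq:EVI-energy-monotonicity}. Finally, Lebesgue points of $u$ at which the barycenters converge define the representative. The Campanato estimate then gives \eqref{eq:RCD-holder}, after converting the parabolic distance to the stated form $(\dist(x,y)+|t-s|^{1/2})^\alpha$.
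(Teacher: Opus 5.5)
There is a genuine gap: you never supply the step that produces decay. Your Caccioppoli inequality $\int_{Q_r^-}e\le Cr^{-2}\int_{Q_{2r}^-}\dist_Y^2(u,P)$ and the Poincar\'e bound of $\fint_{Q_r^-}\dist_Y^2(u,P)$ by $r^2\fint_{Q_{2r}^-}e$ only compare the energy at scale $r$ with the energy at scale $4r$. Neither of the routes you then sketch closes this.

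The ``far point'' route fails for general complete $\CAT(0)$ targets. Geodesics need not extend: a bounded closed convex subset of a Hilbert space is a complete $\CAT(0)$ space. So a point far along a geodesic from the barycenter, in the direction you need, may not exist. Even when it does, $\dist_Y(u,P)$ is only a subsolution, so weak Harnack for $\sup-\dist_Y(u,P)$ gives one-sided control in one direction at a time. In an infinite-dimensional target no finite family of directions controls the diameter of the image, and you give no mechanism that makes the estimate uniform over infinitely many $P$.

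The fallback $\fint_{Q_{\tau r}}e\le\frac12\fint_{Q_r}e+CL^2$ does not follow from Caccioppoli and local boundedness. If it held at all small scales it would give $e\in L^\infty_{\loc}$ at Lebesgue points, i.e.\ the spatial Lipschitz bound that is the main theorem. That is far more than the Morrey decay $r^2\fint_{Q_r}e\le Cr^{2\alpha}$ needed for H\"older continuity.

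The missing idea is the paper's: use one center $P$ (the barycenter of a much smaller cylinder), the squared distance, and suprema $A_P(r)=\esssup_{Q_r}\dist_Y^2(u,P)$ instead of averages. The squared distance matters because its inequality \eqref{eq:RCD-square-P} carries the energy term. The argument then runs as follows.
\begin{enumerate}
\item Weak Harnack for the nonnegative supersolution $A_P(4r)-\dist_Y^2(u,P)$ gives $A_P(r)\le(1-\delta)A_P(4r)+\delta B_P(r)+\dots$, where $B_P(r)$ is the average.
\item Testing \eqref{eq:RCD-square-P} after subtracting the constant $A_P(4r)$ gives $r^2\fint_{Q_{r/2}}e\le C[A_P(4r)-B_P(r)]$.
\item Together these bound the energy by the drop of the supremum: $r^2\fint_{Q_{r/2}}e\le C[A_P(4r)-A_P(r)]+\dots$.
\item Poincar\'e bounds averages at smaller scales by this energy. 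Barycenters lie in the closed convex hull of the image, so centers can be exchanged at the cost of a factor $4$.
\item Adding $C_0A_P(\varepsilon^mr)\le C_0A_P(r)$ (hole filling) yields $A_P(\varepsilon^mr)\le\beta A_P(4r)+Cr^2$ with $\beta<1$, which iterates to $A_{\bar u_{Q_\rho}}(\rho)\le C\rho^\gamma$.
\end{enumerate}

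Separately, your treatment of the time part is incorrect as written. The bound $\dist_{L^2}(u^t,u^s)\le L|t-s|$ is global in space, so on $B_r$ it gives an average of order $L^2r^4/\m(B_r)$, not $L^2r^2$. This is not $O(r^2)$ once $\m(B_r)$ decays faster than $r^2$, and it does not even tend to zero when $\m(B_r)\sim r^N$ with $N\ge4$. Localize first: apply \cref{thm:RCD-two-flow} to $u$ and its time translate, together with \eqref{eq:expanded-local-boundedness} on cylinders of a fixed size. This gives $\dist_Y(u(x,t+h),u(x,t))\le c_Lh$ almost everywhere without any continuity, after which the time part is $O(c_L^2r^4)$.
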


\begin{proof}
Fix compact cylinders
\(Q_0\Subset Q_1\Subset\Omega\times(0,\infty)\), and choose \(r_0>0\) so
that \(Q_{8r}(z_0)\Subset Q_1\) whenever
\(z_0\in Q_0\) and \(0<r<r_0\).  Write \(e=e_{u^t}\).  The positive-time
semigroup estimates in \cref{subsec:semigroup-HMHF}, specifically
\eqref{eq:EVI-L} and \eqref{eq:bounded-image-ball}, provide a metric-speed
bound \(L\) on the time projection of \(Q_1\) and give
\(\dist_Y(u,P_0)\le M\) on \(Q_1\).

For \(P\in\overline{B}_{2M}(P_0)\), set
\begin{equation}\label{eq:VP-definitions}
\begin{split}
 A_P(r;z_0)&:=\esssup_{Q_r(z_0)}\dist_Y^2(u,P),\\
 B_P(r;z_0)&:=\fint_{Q_r(z_0)}\dist_Y^2(u,P)\,\dd\m\,\dd t.
\end{split}
\end{equation}
For the quantities in \eqref{eq:VP-definitions}, there exist
\(\delta\in(0,1)\) and \(C<\infty\), depending only on the fixed localization
data, such that
\begin{align}
 A_P(r;z_0)
 &\le(1-\delta)A_P(4r;z_0)+\delta B_P(r;z_0)+CMLr^2,
 \label{eq:holder-oscillation-step}\\
 r^2\fint_{Q_{r/2}(z_0)}e\,\dd\m\,\dd t
 &\le C\bigl[A_P(4r;z_0)-A_P(r;z_0)+MLr^2\bigr],
 \label{eq:holder-energy-step}\\
 \fint_{Q_r(z_0)}\dist_Y^2(u,\bar u_{Q_r(z_0)})\,\dd\m\,\dd t
 &\le Cr^2\fint_{Q_{2r}(z_0)}e\,\dd\m\,\dd t+CMLr^2.
 \label{eq:holder-poincare-step}
\end{align}
To obtain \eqref{eq:holder-oscillation-step}, apply Part~(ii) of
\cref{prop:scalar-parabolic-estimates}, namely
\eqref{eq:expanded-weak-harnack}, to
\(A_P(4r;z_0)-\dist_Y^2(u,P)\), which is a nonnegative heat subsolution by
\cref{cor:fixed-target},
specifically \eqref{eq:RCD-square-P}.  Estimate
\eqref{eq:holder-energy-step} follows by testing
\eqref{eq:RCD-square-P} with the
cutoff from \cref{prop:RCD-cutoffs}.  Finally,
\eqref{eq:holder-poincare-step} follows from
\cref{prop:KS-properties}\,(d), specifically \eqref{eq:metric-Poincare}, and
the estimate \(\dist_{L^2}(u^t,u^s)\le L|t-s|\).
These are the \RCD{} forms of \cite[Lemmas~4.2--4.4]{ZhangZhu2026}.

We now derive the contraction estimate.  Fix \(\varepsilon=1/64\), and choose
\(m\in\mathbb N\) so that \((1-\delta)^m\le\varepsilon\).  For
\(0<\rho<\varepsilon^m r\), put
\(P_1:=\bar u_{Q_{r/4}(z_0)}\) and \(P:=\bar u_{Q_\rho(z_0)}\).
Both barycenters belong to \(\overline{B}_M(P_0)\).  The energy and Poincar\'e
estimates \eqref{eq:holder-energy-step}--\eqref{eq:holder-poincare-step},
local doubling, and the minimizing property of the barycenter give
\begin{align*}
 B_{P_1}(r';z_0)
 &\le C(m)\fint_{Q_{r/4}(z_0)}\dist_Y^2(u,P_1)\,\dd\m\,\dd t\\
 &\le Cr^2\fint_{Q_{r/2}(z_0)}e\,\dd\m\,\dd t+Cr^2\\
 &\le C_0\bigl[A_P(4r;z_0)-A_P(r;z_0)\bigr]+C_1r^2
\end{align*}
whenever \(\varepsilon^m r<r'\le r/4\).  Here the first line uses local
doubling, the second is \eqref{eq:holder-poincare-step} at scale \(r/4\),
and the last is \eqref{eq:holder-energy-step}.  Hence
\begin{equation}\label{eq:holder-average-decay}
 B_{P_1}(r';z_0)
 \le C_0\bigl[A_P(4r;z_0)-A_P(r;z_0)\bigr]+C_1r^2,
 \qquad \varepsilon^m r<r'\le r/4.
\end{equation}
The barycenter \(P\) lies in the closed convex hull of the essential image
of \(u\) on \(Q_\rho(z_0)\subset Q_{\varepsilon^m r}(z_0)\).  Since balls
are convex in a \(\CAT(0)\) space,
\(\dist_Y(P,P_1)\le A_{P_1}(\varepsilon^m r;z_0)^{1/2}\), and the triangle
inequality gives
\begin{equation}\label{eq:holder-barycenter-comparison}
 A_P(\varepsilon^m r;z_0)
 \le4A_{P_1}(\varepsilon^m r;z_0),
 \qquad A_{P_1}(r;z_0)\le4A_P(r;z_0).
\end{equation}
The second inequality follows in the same way because
\(P_1\) lies in the closed convex hull of the essential image on
\(Q_{r/4}(z_0)\subset Q_r(z_0)\).  The iterated form of
\eqref{eq:holder-oscillation-step} gives, for some
\(r'\in(\varepsilon^m r,r/4]\),
\[
 A_{P_1}(\varepsilon^m r;z_0)
 \le\varepsilon A_{P_1}(r;z_0)+B_{P_1}(r';z_0)+Cr^2.
\]
Using \eqref{eq:holder-average-decay},
\eqref{eq:holder-barycenter-comparison}, and \(\varepsilon=1/64\), we obtain
\begin{equation}\label{eq:holder-precontraction}
 A_P(\varepsilon^m r;z_0)
 \le\frac14A_P(r;z_0)
   +C_0\bigl[A_P(4r;z_0)-A_P(r;z_0)\bigr]+C_1r^2.
\end{equation}
By monotonicity,
\(C_0A_P(\varepsilon^m r;z_0)\le C_0A_P(r;z_0)\).  Adding this inequality
to \eqref{eq:holder-precontraction} gives
\[
 (1+C_0)A_P(\varepsilon^m r;z_0)
 \le(C_0+1/4)A_P(4r;z_0)+C_1r^2.
\]
Consequently,
\begin{equation}\label{eq:holder-contraction}
 A_P(\varepsilon^m r;z_0)
 \le\beta A_P(4r;z_0)+C_2r^2,
 \qquad
 \beta:=\frac{C_0+1/4}{C_0+1}<1.
\end{equation}
Set \(\lambda:=\varepsilon^m/4\).  The discrete Campanato iteration in
\cite[proof of Theorem~4.1]{ZhangZhu2026}, applied successively to
\eqref{eq:holder-contraction} with \(R=4r\), gives
\[
 A_P(\lambda^kR;z_0)
 \le\beta^kA_P(R;z_0)
   +CR^2\sum_{j=0}^{k-1}\beta^{k-1-j}\lambda^{2j}.
\]
Consequently, for some \(\gamma\in(0,2)\), uniformly in \(z_0\in Q_0\),
\begin{equation}\label{eq:holder-sup-decay}
 A_{\bar u_{Q_\rho(z_0)}}(\rho;z_0)\le C\rho^\gamma,
 \qquad 0<\rho<r_0.
\end{equation}
By \eqref{eq:holder-sup-decay}, the mean-square Campanato excess satisfies
the same decay.
The metric Campanato embedding, equivalently the barycenter-chain argument in
\cite[proof of Theorem~4.1]{ZhangZhu2026}, now gives
\[
 \begin{aligned}
 \dist_Y(u(z),u(z'))
 &\le C\bigl(\dist(x,x')+|t-t'|^{1/2}\bigr)^{\gamma/2},\\
 &\hspace{25mm}z=(x,t),\quad z'=(x',t')\in Q_0.
 \end{aligned}
\]
Since \(Q_0\) is arbitrary, this proves the theorem after renaming
\(\gamma/2\) as \(\alpha\).
\end{proof}

\subsection{Lipschitz continuity in time}

\begin{theorem}[Pointwise time-Lipschitz representative]
\label{thm:RCD-time-Lipschitz}
Let $B_{16R}(\bar x)\Subset\Omega$ and $0<t_*<T$.  On the representative in
\cref{thm:RCD-holder-map-flow}, there is a constant $c_L<\infty$ such that
\begin{equation}\label{eq:pointwise-time-Lip}
 \dist_Y(u(x,t),u(x,s))\le c_L|t-s|
\end{equation}
for every $x\in B_{2R}(\bar x)$ and $s,t\in[t_*,T]$.  The constant depends
only on the local \RCD{} data, $R,t_*,T$, and the metric-speed bound $L$ in
\cref{subsec:semigroup-HMHF}, specifically \eqref{eq:EVI-L}; more precisely,
$c_L\le C_{\mathrm{loc}}L$, where
$C_{\mathrm{loc}}$ depends on the fixed localization radii, the local
doubling--Poincar\'e constants, and the corresponding volume normalization.
\end{theorem}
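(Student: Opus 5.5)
The plan is to compare the flow with its own time translate and use the two-flow inequality as a scalar subsolution estimate, as announced at the start of \cref{sec:positive-time}. Fix \(h\) with \(0<|h|\) small, and set \(v^t:=u^{t+h}\). By the semigroup property, \(v\) is again a bounded \EVI{} harmonic map heat flow, with initial datum \(u^h\), and it lies in the same fixed-trace class. \Cref{thm:RCD-two-flow} therefore shows that
\[
 q_h(x,t):=\dist_Y^2\bigl(u(x,t),u(x,t+h)\bigr)
\]
is a nonnegative distributional heat subsolution, \((\boldsymbol\Delta-\partial_t)q_h\ge0\), on every positive-time interior cylinder. Each \(q_h\) is continuous on the H\"older representative of \cref{thm:RCD-holder-map-flow}.

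Next we apply the local boundedness estimate \eqref{eq:expanded-local-boundedness} with \(A=0\). Take \(x\in B_{2R}(\bar x)\) and \(t\in[t_*,T]\), and choose the parabolic radius \(r:=\min\{R,\sqrt{t_*/8}\}\). Then \(Q_{2r}^-((x,t))\) lies in \(B_{4R}(\bar x)\times[t_*/2,T]\), which is compactly contained in the region where \eqref{eq:EVI-L} holds. Continuity of the representative, applied at the top time, gives
\[
 q_h(x,t)\le\frac{C}{r^2\m(B_{2r}(x))}\int_{t-4r^2}^{t}\int_{B_{2r}(x)}q_h\,\dd\m\,\dd\tau
 \le\frac{4C}{\m(B_{2r}(x))}\sup_{\tau\in[t_*/2,T]}\dist_{L^2}^2(u^\tau,u^{\tau+h}).
\]
By \eqref{eq:EVI-L}, the last supremum is at most \(L^2h^2\), provided \(|h|\le1\) so that \(\tau+h\in[t_*/2,T+1]\). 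Local doubling bounds \(\m(B_{2r}(x))\) from below, uniformly for \(x\in B_{2R}(\bar x)\), by a multiple of \(\m(B_R(\bar x))\). This yields \(\dist_Y(u(x,t),u(x,t+h))\le C_{\mathrm{loc}}L|h|\). For \(|h|>1\) the same bound follows by splitting \([s,t]\) into steps of length at most \(1\) and using the triangle inequality. Replacing \(t\) by \(s\) and \(h\) by \(t-s\) gives \eqref{eq:pointwise-time-Lip}. The supremum is taken after the cylinder integral, so no pointwise time regularity is needed as input.

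The main obstacle is justifying the subsolution property of \(q_h\) on the relevant cylinders. \Cref{thm:RCD-two-flow} requires both flows to be bounded EVI flows with local Sobolev control. Boundedness follows from \eqref{eq:bounded-image-ball} applied to \(u\), since \(u^h\) inherits the ball \(\overline{B}_M(P_0)\). The translated flow's speed bound on \([t_*/2,T]\) is again controlled by \(L\).

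A second subtlety is using the essential supremum bound of \cref{prop:scalar-parabolic-estimates}(i) pointwise. For this, the sup over \(Q_r^-\) controls \(q_h(x,t)\) at the top time \(t\) because \(q_h\) is continuous up to the boundary. If that continuity is considered delicate, one instead takes cylinders centered slightly above \(t\) (inside \([t_*/2,T+1]\)) so that \((x,t)\) is an interior point.
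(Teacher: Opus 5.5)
Your proposal is correct and follows essentially the same route as the paper: you apply the two-flow inequality to $u$ and its time translate so that $\dist_Y^2(u(x,t),u(x,t+h))$ is a nonnegative subsolution, bound it through \eqref{eq:expanded-local-boundedness} with $A=0$ and $\dist_{L^2}(u^{\tau+h},u^\tau)\le L|h|$, pass from the essential supremum to pointwise values by continuity, and subdivide for large gaps. The only cosmetic fix is to restrict to $h>0$, which is harmless by symmetry in $s,t$, because for $h<0$ the translate is not a flow started at $u^h$.
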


\begin{proof}
For $h>0$ set \(w_h(x,t):=\dist_Y^2(u(x,t+h),u(x,t))\).
The semigroup property makes $u(\cdot,t+h)$ another harmonic map heat flow.
The time-translation step in \cite[proof of Theorem~5.1]{ZhangZhu2026},
with \cref{thm:RCD-two-flow} in place of its smooth-source counterpart, gives
$(\boldsymbol\Delta-\partial_t)w_h\ge0$.  Cover
$B_{2R}(\bar x)\times[t_*,T-h]$ by finitely many backward cylinders of one
fixed radius whose doubled cylinders lie in
$B_{4R}(\bar x)\times(t_*/2,T+1)$.  This radius, and hence the constants in
the scalar estimate below, depend only on the separation of
\(B_{2R}(\bar x)\times[t_*,T]\) from the boundary of the enlarged cylinder,
not on \(h\).  On each enlarged cylinder,
\[
 \int_\Omega w_h(\cdot,t)\,\dd\m
 =\dist_{L^2}^2(u^{t+h},u^t)\le L^2h^2.
\]
Part~(i) of \cref{prop:scalar-parabolic-estimates}, specifically
\eqref{eq:expanded-local-boundedness}, therefore yields
\[
 \esssup_{B_{2R}(\bar x)\times[t_*,T-h]}w_h\le C L^2h^2.
\]
Since $u$ is continuous by \cref{thm:RCD-holder-map-flow}, the essential
supremum is the ordinary supremum.  Taking square roots proves
\eqref{eq:pointwise-time-Lip} with $c_L=C^{1/2}L$.
The estimate first holds for $0<h\le h_0$, where $h_0>0$ is
fixed by the temporal separation from the enlarged cylinder.  For arbitrary
$s,t\in[t_*,T]$, subdivide the interval between them into finitely many
subintervals of length at most $h_0$ and sum the resulting estimates.  The
same constant $c_L$ is retained because the lengths of the subintervals add
to $|t-s|$.
\end{proof}

\section{Fixed-target inequalities and heat-kernel good sets}\label{sec:fixed-target-good-set}

The fixed-time distance and spatial-scale estimates below adapt
\cite[Section~5]{ZhangZhu2026} to an \RCD{} source.  The additional issues are the
choice of exceptional sets simultaneously in the target variable, recovery
of the energy density from the heat kernel, and localization by
Gaussian tails.

\subsection{Fixed-target scalar inequalities}

The representative from \cref{thm:RCD-holder-map-flow} is continuous on the
separable space \(\Omega\times(0,\infty)\), so its range is separable.  Let
\(Y_u\) be the closed convex hull of this range.  Then \(Y_u\) is separable:
starting from a countable dense subset of the range, take successively all
dyadic points on geodesic segments between points already obtained and then
take the closure.  This is the usual essentially separably valued convention
for metric-valued $L^2$ maps; see \cite{KorevaarSchoen1993,Ohta2004}.  Fix a
countable dense family \(\{P_j\}_{j\ge1}\subset Y_u\).

\begin{proposition}[Fixed-time scalar distance inequality]\label{prop:simultaneous-P}
Let \(U\Subset\Omega\) and \(I\Subset(0,\infty)\), and suppose that
the estimate from \cref{thm:RCD-time-Lipschitz}, specifically
\eqref{eq:pointwise-time-Lip}, holds on \(U\times I\) with constant \(c_L\).
There is a full-measure set
\(T_{\mathrm{dist}}=T_{\mathrm{dist}}(U,I)\subset I\) such that for every
\(t\in T_{\mathrm{dist}}\) and every \(P\in Y_u\),
\begin{equation}
 \boldsymbol\Delta\dist_Y(u^t,P)\ge-c_L\m
\end{equation}
on \(U\).  The exceptional set is independent of \(P\).
\end{proposition}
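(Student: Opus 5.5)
The plan is to convert the space--time inequality \eqref{eq:RCD-distance-P} of \cref{cor:fixed-target} into a fixed-time elliptic inequality by using the pointwise time-Lipschitz bound \eqref{eq:pointwise-time-Lip}. First fix $P$ from the countable dense family $\{P_j\}$ and set $r_P(x,t):=\dist_Y(u(x,t),P)$. By the triangle inequality and \eqref{eq:pointwise-time-Lip}, $|r_P(x,t)-r_P(x,s)|\le c_L|t-s|$ for $x\in U$ and $s,t\in I$. Hence $t\mapsto r_P(x,t)$ is Lipschitz with $|\partial_t r_P|\le c_L$ a.e. In the weak formulation, $\int\!\!\int r_P\,\partial_t\phi=-\int\!\!\int\partial_t r_P\,\phi\le c_L\int\!\!\int\phi$ for nonnegative tests $\phi$. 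Therefore \eqref{eq:RCD-distance-P} yields
\[
 \int_I\theta(t)\Bigl(-\int_U\ip{\nabla r_P^t}{\nabla\zeta}\,\dd\m+c_L\int_U\zeta\,\dd\m\Bigr)\dd t\ge0
\]
for all nonnegative $\zeta\in\Lip_c(U)$ and $\theta\in C_c^1(I)$.

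The next step is to localize in time. Take a countable family $\{\zeta_k\}$ of nonnegative Lipschitz functions compactly supported in $U$ that is dense in the following sense: every nonnegative $\zeta\in\Lip_c(U)$ is a limit of $\zeta_{k_i}$ with uniformly bounded supports, uniform convergence, and $\nabla\zeta_{k_i}\to\nabla\zeta$ in $L^2$. Such a family can be taken from rational combinations built from distance functions to a countable dense set, truncated, and its existence follows from separability of $W^{1,2}$. Because $r_P\in L^2_{\loc}(I;W^{1,2}(U))$ (by \cref{prop:space-time-KS-regularity,prop:KS-properties}(a)), the function $t\mapsto -\int\ip{\nabla r_P^t}{\nabla\zeta_k}+c_L\int\zeta_k$ is locally integrable. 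Since its integral against every nonnegative $\theta$ is nonnegative, it is nonnegative at every Lebesgue point. Intersecting over $k$ and $j$ gives a full-measure set $T_{\mathrm{dist}}$, on which we also require $e_{u^t}\in L^1(U)$ (true for a.e.\ $t$), so that $\sup_j\|r_{P_j}^t\|_{W^{1,2}(U)}<\infty$ by \eqref{eq:fixed-P-upper-gradient}. For $t\in T_{\mathrm{dist}}$, density of $\{\zeta_k\}$ then gives $\boldsymbol\Delta r_{P_j}^t\ge-c_L\m$ on $U$ for every $j$. This means the distribution $\boldsymbol\Delta r+c_L\m$ is nonnegative, hence a Radon measure, so $r\in D_{\loc}(\boldsymbol\Delta,U)$.

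Finally, pass from $P_j$ to an arbitrary $P\in Y_u$. Choose $P_{j_i}\to P$. Then $r_{P_{j_i}}^t\to r_P^t$ uniformly, with error at most $\dist_Y(P_{j_i},P)$. Since $|\mathrm D r_{P_{j_i}}^t|^2\le(n+2)e_{u^t}$ uniformly, $r_{P_{j_i}}^t\rightharpoonup r_P^t$ weakly in $W^{1,2}(U')$ for each $U'\Subset U$. The inequality $-\int\ip{\nabla r}{\nabla\zeta}\ge-c_L\int\zeta$ is linear in $r$, so it passes to weak limits, and the exceptional set does not depend on $P$.

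The main obstacle I expect is the time-localization step: producing a single null set that works for all test functions. The countable dense test family and the uniform $W^{1,2}$ control (from $e_{u^t}\in L^1$ and \eqref{eq:fixed-P-upper-gradient}) are exactly what make this step work. A secondary point to check is that $r_P$ has enough time regularity to integrate by parts, which follows from the Lipschitz bound on the continuous representative.
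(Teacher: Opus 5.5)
Your proposal is correct and follows essentially the same route as the paper: you test \eqref{eq:RCD-distance-P} with separated functions and use the time-Lipschitz bound to control the $\partial_t$ term. You then localize in time along countable dense families of spatial tests and target points $P_j$, recover the measure-valued Laplacian from positivity, and reach an arbitrary $P\in Y_u$ via uniform convergence and the uniform $W^{1,2}$ bound from \eqref{eq:fixed-P-upper-gradient}. The only differences are cosmetic. You bound $\int\!\!\int r_P\,\partial_t\phi$ by $c_L\int\!\!\int\phi$ before localizing in time, whereas the paper localizes first and then bounds $\partial_t r_P$. Also, you only need $W^{1,2}$-density of the countable test family, which separability supplies; the uniform convergence you mention is not needed.
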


\begin{proof}
Fix a compact ball \(B\Subset U\), one
target point \(P\in Y_u\), and
write \(r_P(x,t)=\dist_Y(u(x,t),P)\).  Testing the fixed-target distance
inequality in \cref{cor:fixed-target}, namely \eqref{eq:RCD-distance-P}, with a product
\(\eta(x)\chi(t)\), where \(\eta\ge0\) is Lipschitz and compactly supported
in \(B\), and using the time-Lipschitz estimate
\eqref{eq:pointwise-time-Lip} from
\cref{thm:RCD-time-Lipschitz} gives
 \[
 \int\chi(t)\left[-\int_B\ip{\nabla r_P(\cdot,t)}{\nabla\eta}\,\dd\m
 -\int_B\eta\,\partial_t r_P(\cdot,t)\,\dd\m\right]\dd t\ge0.
 \]
Here \(\partial_t r_P\) exists almost everywhere and
\(|\partial_t r_P|\le c_L\).  The fundamental lemma in the time variable
therefore gives, for almost every \(t\),
\[
 -\int_B\ip{\nabla r_P(\cdot,t)}{\nabla\eta}\,\dd\m
 \ge\int_B\eta\,\partial_t r_P(\cdot,t)\,\dd\m
 \ge-c_L\int_B\eta\,\dd\m.
\]
Taking countable dense families of
spatial tests and of target points \(P_j\) yields one full-measure set of times
on which
\[
 -\int_B\ip{\nabla r_{P_j}(\cdot,t)}{\nabla\eta}\,\dd\m
 \ge -c_L\int_B\eta\,\dd\m
\]
for every \(j\) and every nonnegative test \(\eta\).  Equivalently, the
functional
\[
 \eta\longmapsto
 -\int_B\ip{\nabla r_{P_j}(\cdot,t)}{\nabla\eta}\,\dd\m
 +c_L\int_B\eta\,\dd\m
\]
is nonnegative on nonnegative tests.  On each compact set, this functional is
bounded in the supremum norm: if \(0\le\eta\le1\) is supported there, choose
one nonnegative Lipschitz cutoff \(\chi\) that equals \(1\) on that set and
use positivity to bound the functional at \(\eta\) by its value at \(\chi\).
The Riesz--Markov theorem therefore constructs the measure-valued Laplacian
and gives \(\boldsymbol\Delta r_{P_j}\ge-c_L\m\).

Now let \(P_j\to P\).  The functions \(r_{P_j}\) converge uniformly to
\(r_P\) on the bounded image cylinder and satisfy
\(|\mathrm D r_{P_j}|^2\le(n+2)e_{u^t}\).  They are therefore uniformly bounded in
 \(W^{1,2}(B)\).  Strong \(L^2\)-convergence, the closure and minimality
properties \cite[Lemma~4.3(b) and Lemma~4.4]{AmbrosioGigliSavare2014}, and
weak compactness of gradients pass the preceding inequality to \(r_P\).  A countable
exhaustion of \(U\) proves the statement simultaneously for all
\(P\in Y_u\).
\end{proof}

\subsection{Two-point estimates}\label{subsec:two-point-estimates}

Throughout this subsection, write
\(\Z:=X\times X\) and \(\m_{\Z}:=\m\otimes\m\), and let \(\DeltaZ\) denote
the measure-valued Laplacian on \(\Z\).
For $x,y\in\Omega$ and $t>0$, set
\begin{equation}\label{eq:F-definition}
 F(x,y,t):=\dist_Y(u(x,t),u(y,t)).
\end{equation}

\begin{proposition}[Two-point spatial bound]\label{prop:F-two-point}
For every $t\in T_{\mathrm{dist}}$,
\begin{equation}\label{eq:F-slice-lower}
 \DeltaZ F(\cdot,\cdot,t)
 \ge-2c_L\,\m_{\Z}.
\end{equation}
Here the measure inequality is asserted on \(U\times U\), with
\(U,I,T_{\mathrm{dist}}\) as in \cref{prop:simultaneous-P}.
Moreover,
\begin{equation}\label{eq:F-time-Lip}
 |F(x,y,t)-F(x,y,s)|\le2c_L|t-s|,
\end{equation}
for \(x,y\in U\) and \(s,t\in I\).
\end{proposition}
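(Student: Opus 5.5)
The time-Lipschitz bound \eqref{eq:F-time-Lip} is immediate, so I start there. By the triangle inequality,
\[
 |F(x,y,t)-F(x,y,s)|\le\dist_Y(u(x,t),u(x,s))+\dist_Y(u(y,t),u(y,s)).
\]
Each term on the right is at most \(c_L|t-s|\) by \eqref{eq:pointwise-time-Lip}, since \(x,y\in U\) and \(s,t\in I\).

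For the Laplacian bound \eqref{eq:F-slice-lower}, fix \(t\in T_{\mathrm{dist}}\) and use the Kuwada-type doubling-of-variables idea. Freeze \(y\in U\) and set \(P=u(y,t)\). This point lies in the range of the continuous representative, hence in \(Y_u\). Since \(T_{\mathrm{dist}}\) was chosen independently of \(P\), \cref{prop:simultaneous-P} gives
\[
 \boldsymbol\Delta_x F(\cdot,y,t)=\boldsymbol\Delta\dist_Y(u^t,u(y,t))\ge-c_L\m \quad\text{on } U,
\]
and this holds for every \(y\in U\), not merely almost every \(y\). The symmetric statement in \(y\), with \(x\) frozen, holds in the same way.

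Next I convert the two one-variable bounds into a bound for \(\DeltaZ\) on \(U\times U\), via tensorization of the Cheeger energy on \(\Z\). The first input is that \(F(\cdot,\cdot,t)\) is locally Sobolev on \(\Z\). It is continuous and bounded, each slice \(F(\cdot,y,t)\) satisfies \(|\mathrm D_xF|^2\le(n+2)e_{u^t}(x)\) uniformly in \(y\) by \eqref{eq:fixed-P-upper-gradient}, and likewise in \(y\). The tensorization property of \(W^{1,2}\) on products of \(\RCD\) spaces (Ambrosio--Gigli--Savar\'e) then gives \(F\in W^{1,2}_{\loc}(U\times U)\) with
\[
 |\mathrm D F|^2=|\mathrm D_xF|^2+|\mathrm D_yF|^2
\]
and \(\ip{\nabla F}{\nabla\Phi}=\ip{\nabla_xF}{\nabla_x\Phi}+\ip{\nabla_yF}{\nabla_y\Phi}\) a.e. For a nonnegative \(\Phi\in\Lip_c(U\times U)\), Fubini then yields
\[
 -\int_{\Z}\ip{\nabla F}{\nabla\Phi}\,\dd\m_{\Z}
 =\int\Bigl(-\int\ip{\nabla_xF}{\nabla_x\Phi(\cdot,y)}\,\dd\m(x)\Bigr)\dd\m(y)
 +\bigl(\text{symmetric term}\bigr).
\]
Each inner integral is at least \(-c_L\int\Phi(\cdot,y)\,\dd\m\) by the slice inequality, because \(\Phi(\cdot,y)\in\Lip_c(U)\) is nonnegative. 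Summing the two contributions gives \(-2c_L\int\Phi\,\dd\m_{\Z}\). The positivity plus Riesz--Markov argument from the proof of \cref{prop:simultaneous-P} then shows that \(F(\cdot,\cdot,t)\in D_{\loc}(\DeltaZ,U\times U)\) and that \eqref{eq:F-slice-lower} holds.

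The main obstacle is the identification of the product-space gradient pairing with the sum of partial pairings, applied to the sectional derivatives \(\nabla_xF(\cdot,y)\) for a.e. \(y\). For this I will invoke the tensorization of Cheeger energies on \(\RCD\) products, together with density of tensor-product Lipschitz functions, to justify the Fubini splitting. If direct identification is delicate, my fallback is to prove the inequality first for tests of the form \(\eta(x)\zeta(y)\), for which only the sectional derivatives enter. I then pass to general \(\Phi\) by approximating with finite sums of such products, converging in \(W^{1,2}\) and uniformly, using the local \(W^{1,2}\) bound on \(F\) to take the limit.
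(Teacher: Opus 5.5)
Your proposal is correct and matches the paper's proof: you freeze one variable and apply \cref{prop:simultaneous-P} with target $u(y,t)\in Y_u$ (respectively $u(x,t)$), split the product pairing by the Ambrosio--Gigli--Savar\'e tensorization together with Fubini, and add the two sectional bounds; the time estimate comes from the triangle inequality and \eqref{eq:pointwise-time-Lip}. Your explicit Riesz--Markov step, and the observation that polarizing the tensorized gradient identity for $F\pm\Phi$ already splits $\ip{\nabla F}{\nabla\Phi}$ into partial pairings, spell out details the paper leaves implicit, so the product-test fallback is not needed.
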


\begin{proof}
Fix \(t\in T_{\mathrm{dist}}\), relatively compact balls
\(B_1,B_2\Subset U\), and a nonnegative test
\(\Phi\in\Lip_c(B_1\times B_2)\).  Here
\(\boldsymbol\Delta_x\) and \(\boldsymbol\Delta_y\) denote the
measure-valued slice Laplacians in the first and second variables.  For a.e.
\(y\), the section
\(x\mapsto\Phi(x,y)\) is a nonnegative compactly supported Lipschitz test on
\(B_1\).  Since
\(u(y,t)\in Y_u\), \cref{prop:simultaneous-P} gives
\[
 \int_{B_2}\left[\int_{B_1}\Phi(x,y)\,\dd(\boldsymbol\Delta_xF(\cdot,y,t))(x)\right]\,\dd\m(y)
 \ge -c_L\iint\Phi\,\dd\m\,\dd\m.
\]
Repeating this calculation in the second variable, now testing with
\(y\mapsto\Phi(x,y)\) and fixed target \(u(x,t)\), gives the identical lower
bound for \(\boldsymbol\Delta_yF\).  The tensorized Cheeger calculus
\cite[Theorems~6.17--6.18]{AmbrosioGigliSavareDuke2014} identifies the
product gradient with the two partial gradients and justifies Fubini; its
interval-factor analogue also appears in the first author's joint work with
Gigli \cite[Theorem~3.7]{GigliHan2018}.  Adding the two inequalities therefore
gives the asserted slice bound.  The Sobolev regularity of \(F\) on the product
follows from \(|\mathrm D_x F|^2\le(n+2)e_{u^t}(x)\) and
\(|\mathrm D_y F|^2\le(n+2)e_{u^t}(y)\), which are the fixed-target upper-gradient
estimates in \eqref{eq:fixed-P-upper-gradient} of
\cref{prop:KS-properties}.
Finally, \cref{thm:RCD-time-Lipschitz} gives
\[
 |F(x,y,t)-F(x,y,s)|
 \le \dist_Y(u(x,t),u(x,s))+\dist_Y(u(y,t),u(y,s))
 \le2c_L|t-s|,
\]
which proves the time-Lipschitz estimate.
\end{proof}

\medskip

For a continuous map $v$ and $r>0$, define
\begin{equation}\label{eq:lip-r-definition}
 \lip_rv(x):=\sup_{0<s<r}\ \sup_{y\in B_s(x)}
 \frac{\dist_Y(v(x),v(y))}{s}.
\end{equation}

\begin{proposition}[\RCD{} spatial-scale energy estimate]\label{prop:lip-r-energy}
Let $B_{16R}(\bar x)\Subset\Omega$ and $0<t_*<T$.  For almost every
$t\in(t_*,T)$ and every $0<r<R/4$,
\begin{equation}\label{eq:lip-r-energy}
 \int_{B_R(\bar x)}[\lip_ru^t]^2\,\dd\m
 \le C\int_{B_{4R}(\bar x)}e_{u^t}\,\dd\m
 +C c_L^2R^2\m(B_R(\bar x)).
\end{equation}
Here \(c_L\) is the pointwise time-Lipschitz constant in
\cref{thm:RCD-time-Lipschitz}, specifically
\eqref{eq:pointwise-time-Lip}, on the corresponding enlarged cylinder.
\end{proposition}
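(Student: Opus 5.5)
The plan is a Hajłasz-type pointwise bound for $\lip_r u^t$ in terms of the Hardy--Littlewood maximal function of the energy density, followed by the $L^2$ maximal theorem. I fix $t\in T_{\mathrm{dist}}$ for $U=B_{8R}(\bar x)$ and $I\Supset(t_*,T)$. This is a full-measure set of times, and for each such $t$ \cref{prop:simultaneous-P} gives
\[
\boldsymbol\Delta\dist_Y(u^t,P)\ge-c_L\m \quad\text{on } U,\ \text{for every } P\in Y_u \text{ simultaneously.}
\]
Now fix $x\in B_R(\bar x)$, $0<s<r<R/4$, and $y\in B_s(x)$. I take $P:=u(y,t)\in Y_u$ and $h:=\dist_Y(u^t,P)\ge0$. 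Then $h$ is continuous, and it is subharmonic up to the constant $c_L$ on $B_{2\cdot 2s}(y)\subset B_{2R}(\bar x)$.

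Part (iii) of \cref{prop:scalar-parabolic-estimates} applies to $h$ on $B_{4s}(y)\supset B_{2s}(y)\ni x$. Since $h$ is continuous, the essential supremum controls the pointwise value, and I obtain
\[
\dist_Y(u(x,t),u(y,t))\le h(x)\le C\Bigl(\fint_{B_{4s}(y)}\dist_Y(u^t,u(y,t))\,\dd\m+c_Ls^2\Bigr).
\]
The average on the right is controlled by a Poincaré-type estimate with base point $u(y,t)$. Rather than compare with the barycenter, which leaves the fixed point unhandled, I will use the standard telescoping argument with barycenters $\bar u_{B_{2^{-k}4s}(y)}$. By \eqref{eq:metric-Poincare}, consecutive barycenters differ by at most $C2^{-k}s\,(\fint_{B_{2^{1-k}4s}(y)}e_{u^t})^{1/2}$. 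The barycenters converge to $u(y,t)$ by continuity. Summing gives
\[
\fint_{B_{4s}(y)}\dist_Y(u^t,u(y,t))\,\dd\m\le Cs\,\bigl(\mathcal M e_{u^t}(y)\bigr)^{1/2}\le Cs\,\bigl(\mathcal M_{16 s} e_{u^t}(x)\bigr)^{1/2}.
\]
Here I use doubling and $B_{\rho}(y)\subset B_{2\rho}(x)$ for $\rho\ge s$, and $\mathcal M$ is the local maximal operator over radii up to $R$, with $e_{u^t}$ extended by zero outside $B_{4R}(\bar x)$. I expect this telescoping step to be the main obstacle. One must check that doubling and \eqref{eq:metric-Poincare} hold with uniform constants at all scales inside $B_{16R}(\bar x)$, and that the limit of barycenters is indeed the continuous value. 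The latter holds because the representative of \cref{thm:RCD-holder-map-flow} is continuous: every point is a Lebesgue point, and barycenters lie in convex hulls of small-image balls.

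Dividing by $s$ and taking suprema over $y$ and $s<r$ yields
\[
\lip_ru^t(x)\le C\bigl(\mathcal M e_{u^t}(x)\bigr)^{1/2}+Cc_Lr
\]
for every $x\in B_R(\bar x)$. I then square and integrate over $B_R(\bar x)$. The local maximal function is bounded on $L^1$ only weakly, so I cannot integrate $\mathcal M e_{u^t}$ directly. To fix this, I rewrite the chain bound with the maximal function of $e_{u^t}^{1/2}$, using the $L^1$ Poincaré inequality, which follows from \eqref{eq:metric-Poincare} by Cauchy--Schwarz, or rather the self-improved $(1,2)$ form available on PI spaces. This gives $\lip_ru^t\le C\mathcal M(e_{u^t}^{1/2})+Cc_Lr$. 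The strong $L^2$ bound for $\mathcal M$ on the doubling ball then gives $\int_{B_R}\mathcal M(e_{u^t}^{1/2})^2\le C\int_{B_{4R}}e_{u^t}$. Finally, $r<R/4$ turns $c_L^2r^2\m(B_R)$ into $Cc_L^2R^2\m(B_R(\bar x))$, which proves \eqref{eq:lip-r-energy}.
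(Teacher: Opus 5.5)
Your proposal has a genuine gap where you pass from $y$ to $x$, namely in the inequality $\fint_{B_{4s}(y)}\dist_Y(u^t,u(y,t))\,\dd\m\le Cs\,(\mathcal M e_{u^t}(y))^{1/2}\le Cs\,(\mathcal M_{16s}e_{u^t}(x))^{1/2}$. You chose the target point $P=u(y,t)$, so the barycentric chain must end at $u(y,t)$. It therefore runs through the balls $B_{2^{-k}4s}(y)$ for all $k$, shrinking to $y$. The inclusion $B_\rho(y)\subset B_{2\rho}(x)$ that you invoke only transfers the scales $\rho\ge s$. For $\rho<\dist(x,y)$, the averages $\fint_{B_\rho(y)}e_{u^t}\,\dd\m$ are not dominated by any average over a ball centred at $x$; concentrate the density near $y$ to see this. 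Hence $\mathcal M e_{u^t}(y)\le C\mathcal M_{16s}e_{u^t}(x)$ is false in general. What your argument actually gives is $\lip_ru^t(x)\lesssim\sup_{y\in B_r(x)}(\mathcal M e_{u^t}(y))^{1/2}+c_Lr$. A supremum of a maximal function over a neighbourhood has no $L^2$ bound in terms of $\int e_{u^t}\,\dd\m$; for a general integrable density it can be infinite on a set of positive measure.

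The repair is to reverse the roles of $x$ and $y$, which is exactly what the paper does. Take $P=u(x,t)$ and $h_x:=\dist_Y(u^t,u(x,t))$. Then $\dist_Y(u(x,t),u(y,t))=h_x(y)\le\sup_{B_s(x)}h_x\le C\bigl(\fint_{B_{2s}(x)}h_x\,\dd\m+c_Ls^2\bigr)$ by Part~(iii) of \cref{prop:scalar-parabolic-estimates}. Since $h_x\ge0$ is continuous with $h_x(x)=0$, a scalar chain for $h_x$ itself on balls centred at $x$ bounds $\fint_{B_{2s}(x)}h_x\,\dd\m$ by $s$ times a maximal function evaluated at $x$; no barycenters are needed. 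The paper packages this through the Haj\l asz--Koskela pointwise Poincar\'e estimate, which is why $\mathcal M_R(\mathcal M_R(\sqrt{e_{u^t}}))$ appears there.

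Separately, your claim that the $L^1$ form follows from \eqref{eq:metric-Poincare} by Cauchy--Schwarz goes the wrong way. Cauchy--Schwarz only yields $\fint_B\dist_Y(u^t,\bar u_B)\,\dd\m\le Cr\bigl(\fint_{2B}e_{u^t}\,\dd\m\bigr)^{1/2}$, which reproduces $(\mathcal M e_{u^t})^{1/2}$. You need a genuine $(1,q)$-Poincar\'e inequality with $q<2$, either the $(1,1)$ inequality available on $\RCD(K,N)$ spaces or Keith--Zhong self-improvement. Apply it to $h_x$ with the upper gradient $\sqrt{(n+2)e_{u^t}}$ from \cref{prop:KS-properties}\,(a). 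This gives $\lip_ru^t\le C\bigl(\mathcal M_R(e_{u^t}^{q/2})\bigr)^{1/q}+Cc_Lr$. The strong $L^{2/q}$ bound for $\mathcal M_R$, with $2/q>1$, then yields \eqref{eq:lip-r-energy}.
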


\begin{proof}
Fix a time $t$ for which the fixed-target inequality in
\cref{prop:simultaneous-P} and the slice Sobolev estimates hold, and fix a
point $x$.  Throughout
the proof, scalar distance functions are evaluated using the continuous
representative from \cref{thm:RCD-holder-map-flow}.  Therefore the
essential supremum in Part~(iii) of
\cref{prop:scalar-parabolic-estimates} agrees with the
ordinary suprema on the smaller balls appearing below.  Put
$h_x(y)=\dist_Y(u(x,t),u(y,t))$.  By
\cref{prop:simultaneous-P},
$\boldsymbol\Delta h_x\ge-c_L\m$.  Hence Part~(iii) of
\cref{prop:scalar-parabolic-estimates}, specifically
\eqref{eq:elliptic-local-boundedness}, gives
\begin{equation}\label{eq:hx-local-bound}
 \sup_{B_s(x)}h_x
 \le C\left(\fint_{B_{2s}(x)}h_x\,\dd\m+c_Ls^2\right).
\end{equation}
With all balls contained in the fixed ambient localization ball, define the
truncated Hardy--Littlewood maximal operator by
\(\mathcal M_R g(x):=\sup_{0<\rho<R}\fint_{B_\rho(x)}|g|\,\dd\m\).
On a doubling Poincar\'e space, the Haj\l asz--Koskela pointwise Poincar\'e
estimate \cite[Theorem~3.2]{HajlaszKoskela2000} implies, for almost every $x$,
\[
 \fint_{B_{2s}(x)}h_x\,\dd\m
 \le Cs\bigl(\mathcal M_R(\sqrt{(n+2)e_{u^t}})(x)
 +\mathcal M_R(\mathcal M_R(\sqrt{(n+2)e_{u^t}}))(x)\bigr),
\]
because \(|\mathrm D h_x|^2\le(n+2)e_{u^t}\) by
\cref{prop:KS-properties}\,(a).  Divide
\eqref{eq:hx-local-bound} by $s$ and take the supremum over $s<r$:
\[
 \lip_ru^t(x)\le C\bigl(\mathcal M_R(\sqrt{e_{u^t}})(x)
 +\mathcal M_R(\mathcal M_R(\sqrt{e_{u^t}}))(x)+c_LR\bigr).
\]
The factor \(n+2\) is absorbed into the local constant in this estimate.
The localized Hardy--Littlewood maximal operator is bounded on $L^2$ in a
locally doubling space by
\cite[Theorem~14.13]{HajlaszKoskela2000}.  Squaring and integrating gives
the asserted energy bound; cf. \cite[Proposition~5.4]{ZhangZhu2026}.
\end{proof}

\subsection{Heat-kernel differentiation and full-measure good sets}

Here a \emph{good set} means a Borel subset of full measure, with respect to
the ambient spatial or space--time product measure, on which all pointwise
heat-kernel limits needed later hold simultaneously.  Thus the definition
removes one common null set and imposes no additional regularity
condition.

Let \(U_0\Subset U\Subset\Omega\) and \(I\Subset(0,\infty)\) be arbitrary,
and let $p_s(x,y)$ denote the \RCD{} heat kernel.  Put
$\mu_s^x:=\Hheat_s\delta_x=p_s(x,\cdot)\m$.  Extend $u^t$ by the point $P_0$
from \cref{subsec:semigroup-HMHF}, specifically
\eqref{eq:bounded-image-ball}, outside \(U\), and suppress this extension
from the notation.  The following lemma shows that all subsequent limits at points
of \(U_0\) are independent of this bounded extension.  Related heat-kernel
localization and short-time concentration estimates occur in
\cite[Sections~2.1, 2.2, and~2.6]{HanZhu2026}.

\begin{lemma}[Off-diagonal heat-kernel tails]\label{lem:RCD-tail}
Let $U\subset X$ be open and $x_0\in U$.  If
$r_0=\dist(x_0,X\setminus U)>0$, then for $j=0,1,2$,
\begin{equation}
 \frac{1}{s}\int_{X\setminus U}(1+\dist^j(x,x_0))
 p_s(x_0,x)\,\dd\m(x)\longrightarrow0.
\end{equation}
Let $V\subset X$ be open and $y_0\in V$, and set
\(r_1:=\dist(y_0,X\setminus V)\) and \(r_*:=\min\{r_0,r_1\}\).
If $r_1>0$ and $\Pi_s$ is any coupling of $\mu_s^{x_0}$ and
$\mu_s^{y_0}$, then
\[
 \Pi_s\bigl((U\times V)^c\bigr)
 \le\mu_s^{x_0}(U^c)+\mu_s^{y_0}(V^c)=o(s).
\]
Consequently every uniformly bounded localization error supported outside
$U\times V$ has $\Pi_s$-integral $o(s)$.
\end{lemma}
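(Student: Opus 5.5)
The plan is to use the Gaussian upper bound \eqref{eq:Gaussian} together with local doubling, splitting $X\setminus U$ into dyadic annuli around $x_0$. Since $\dist(x,x_0)\ge r_0$ on $X\setminus U$, we write
\[
 X\setminus U\subset\bigcup_{k\ge0}A_k,\qquad A_k:=B_{2^{k+1}r_0}(x_0)\setminus B_{2^kr_0}(x_0).
\]
On $A_k$ the upper bound gives
\[
 p_s(x_0,x)\le\frac{c\,e^{c_2s}}{\m(B_{\sqrt s}(x_0))}e^{-4^kr_0^2/(c_3s)}.
\]
For $s<r_0^2$, local doubling (on the fixed scale, or the Bishop--Gromov volume growth $\m(B_\rho)\le C e^{C\rho}(\rho/\sqrt s)^N\m(B_{\sqrt s})$ valid in $\RCD(K,N)$ for $\rho\ge\sqrt s$) yields
\[
 \m(A_k)\le C\,e^{C2^kr_0}\bigl(2^{k+1}r_0/\sqrt s\bigr)^N\m(B_{\sqrt s}(x_0)).
\]
The weight $(1+\dist^j)$ contributes at most $(1+(2^{k+1}r_0)^j)$ on $A_k$.

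Summing over $k$, the $k$-th term is bounded by
\[
 C(1+(2^{k+1}r_0)^2)\,e^{C2^kr_0}\,(2^{k+1}r_0)^N s^{-N/2}e^{-4^kr_0^2/(c_3s)}.
\]
For $s\le s_0$ small, the Gaussian factor $e^{-4^kr_0^2/(2c_3s)}$ dominates the polynomial factor $s^{-N/2-1}$. The remaining half, $e^{-4^kr_0^2/(2c_3s)}$, beats the at most exponential growth $e^{C2^kr_0}$ in $k$ uniformly once $s$ is small. Hence the whole sum, divided by $s$, is $O\bigl(s^{-N/2-1}e^{-r_0^2/(2c_3s)}\bigr)\to0$. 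This settles the first claim for $j=0,1,2$ simultaneously. The only care needed is to use a global volume-growth estimate (Bishop--Gromov) rather than doubling on a fixed ball, since the annuli go to infinity. I expect this uniform summation to be the main technical point, though it is routine.

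For the coupling statement, since $(U\times V)^c\subset (U^c\times X)\cup(X\times V^c)$ and $\Pi_s$ has marginals $\mu_s^{x_0},\mu_s^{y_0}$, we get
\[
 \Pi_s((U\times V)^c)\le\mu_s^{x_0}(U^c)+\mu_s^{y_0}(V^c).
\]
Each term is $o(s)$ by the $j=0$ case, applied at $x_0$ with $r_0$ and at $y_0$ with $r_1$. Note that $r_*>0$ just records that both radii are positive. Finally, if an error term $E$ satisfies $|E|\le M'$ and is supported in $(U\times V)^c$, then $\bigl|\int E\,\dd\Pi_s\bigr|\le M'\,\Pi_s((U\times V)^c)=o(s)$, which gives the consequence.
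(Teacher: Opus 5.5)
Your proposal is correct and follows essentially the same route as the paper. For the marginal tails you sum the Gaussian upper bound \eqref{eq:Gaussian} over dyadic annuli with volume control, and for the coupling you use the elementary marginal bound. Your use of global Bishop--Gromov volume growth, rather than doubling on a fixed ball, to control annuli that escape to infinity is the right way to make the paper's terse ``local doubling'' step precise.
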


\begin{proof}
The Gaussian upper bound \eqref{eq:Gaussian} from
\cref{sec:prelim}, local doubling, and an annular
decomposition give the assertion.  For the first marginal the factor
$e^{-r_0^2/(Cs)}$, and for the second the factor $e^{-r_1^2/(Cs)}$, dominates
every inverse power of $s$; equivalently both errors are bounded by a
polynomial factor times $e^{-r_*^2/(Cs)}$.  The coupling assertion follows
from the two marginal estimates; see \cite{JiangLiZhang2016} and
\cite[Lemma~2.53]{MondinoSemolaWeakLaplacian2025}.
\end{proof}

\begin{lemma}[Heat-kernel recovery for continuous \KS{} maps]
\label{lem:continuous-KS-heat-recovery}
Let $W\Subset X$ and let
$w\in\KS^{1,2}_{\loc}(W,Y)\cap C(W,Y)$.  At $\m$-almost every $x\in W$,
\begin{equation}
 \lim_{s\downarrow0}\frac1s
 \Pheat_s\!\left[\dist_Y^2(w(\cdot),w(x))\right](x)
 =2(n+2)e_w(x).
\end{equation}
The heat flow may be applied to any bounded extension agreeing with $w$ near
$x$.
\end{lemma}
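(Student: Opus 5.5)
We compare the heat-kernel average with the ball averages $e_r[w](x)$, whose almost-everywhere convergence to $e_w(x)$ is \eqref{eq:GT-density-convergence}. Fix $x\in W$ at which three things hold: $e_r[w](x)\to e_w(x)$ as $r\downarrow0$; $x$ is a Lebesgue point of $e_w$; and the space is infinitesimally Euclidean of dimension $n$, meaning the rescaled balls $(X,r^{-1}\dist,\m(B_r(x))^{-1}\m,x)$ converge in the pointed measured Gromov--Hausdorff sense to $(\R^n,|\cdot|,c_n\mathcal L^n,0)$. These conditions hold $\m$-almost everywhere by Mondino--Naber and Bru\'e--Semola. By \cref{lem:RCD-tail}, the part of $\Pheat_s[\dist_Y^2(w(\cdot),w(x))](x)$ coming from outside a small ball $B_\rho(x)\Subset W$ is $o(s)$, since the integrand is bounded there. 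So the bounded extension is irrelevant, and only $\int_{B_\rho(x)}\dist_Y^2(w(y),w(x))p_s(x,y)\,\dd\m(y)$ matters.

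The idea is to use a layer-cake decomposition in the radius. Write $g(y):=\dist_Y^2(w(y),w(x))$ and $G(r):=\int_{B_r(x)}g\,\dd\m=r^2\m(B_r(x))\,e_r[w](x)$. We express the heat kernel near $x$ through its radial profile, $p_s(x,y)\approx\Phi_s(\dist(x,y))$. At a regular point, the short-time blow-up of the heat kernel (Ding; Jiang--Li--Zhang, together with the Gaussian bounds \eqref{eq:Gaussian}) gives
\[
\m(B_{\sqrt s}(x))\,p_s(x,\exp\text{-like point at distance }\sqrt s\,z)\to c\,(4\pi)^{-n/2}e^{-|z|^2/4}
\]
locally uniformly in the rescaled variable $z$, with Gaussian domination. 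A Stieltjes integration by parts gives
\[
\int_{B_\rho}g\,p_s\approx-\int_0^\rho G(r)\,\partial_r\Phi_s(r)\,\dd r .
\]
Substituting $G(r)=r^2\m(B_r(x))\bigl(e_w(x)+o(1)\bigr)$ and $\m(B_r(x))\approx\m(B_{\sqrt s}(x))(r/\sqrt s)^n$ turns the right-hand side into $s\,e_w(x)$ times the Euclidean integral $-\int_0^\infty r^{n+2}\,\partial_r\bigl[(4\pi)^{-n/2}e^{-r^2/4}\bigr]\,\dd r\,|S^{n-1}|/n$. That constant equals $2(n+2)$: it is the Euclidean Gaussian second moment $2n$ times $(n+2)/n$, which matches the normalization explained after \eqref{eq:KS-energy-definition}.

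A cleaner route avoids the radial profile. Check the constant on scalar test functions, and reduce to a comparison of two averaging operators applied to the same nonnegative function $g$. For the heat kernel, the measure $\mu_s^x$ restricted to $B_\rho$ is dominated by Gaussian bounds. For the ball average, the uniform bound $e_r[w](x)\le C$ for small $r$ at this $x$ (which follows from its convergence) allows dominated convergence in the scale variable. The main obstacle is precisely this step: on a nonsmooth source the heat kernel is not radial. The plan is therefore to rescale $\mu_s^x$ and $g/s$ by $\sqrt s$ and pass to the tangent cone. In the blow-up, $g/s$ at scale $\sqrt s$ converges in the averaged sense to $(\md_xw(z))^2$ by Gigli--Tyulenev's approximate metric differential, and $\mu_s^x$ converges to the Euclidean heat kernel, by stability of heat flows under pmGH convergence (Gigli--Mondino--Savar\'e). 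Then
\[
\frac1s\int g\,\dd\mu_s^x\to\int_{\R^n}(\md_xw(z))^2(4\pi)^{-n/2}e^{-|z|^2/4}\,\dd z=2(n+2)\fint_{B_1}(\md_xw)^2=2(n+2)e_w(x),
\]
using that $(\md_xw)^2$ is $2$-homogeneous.

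Justifying the exchange of limits requires uniform integrability of $g/s$ against the rescaled kernels. We would obtain it from the Gaussian upper bound together with the dyadic bound $\int_{B_{2^k\sqrt s}}g\le C\,4^k s\,\m(B_{2^k\sqrt s}(x))$, which again follows from the boundedness of $e_r[w](x)$ for small $r$. Continuity of $w$ is used only to make $g$ defined pointwise and to make $x$ itself the centre of evaluation.
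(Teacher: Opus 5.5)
Your blow-up plan is correct and is essentially the paper's proof. You work at an $n$-regular point of approximate metric differentiability, where $s^{-1}\dist_Y^2(w(\cdot),w(x))$ at scale $\sqrt s$ converges to $(\md_xw)^2$; the Gigli--Mondino--Savar\'e stability of heat flows handles the local part, Gaussian bounds and \cref{lem:RCD-tail} handle the rest, and the Gaussian moment of a $2$-homogeneous function gives the factor $2(n+2)$. Your dyadic bound $\int_{B_{2^k\sqrt s}(x)}\dist_Y^2(w(\cdot),w(x))\,\dd\m\le C4^ks\,\m(B_{2^k\sqrt s}(x))$ is a welcome explicit version of the paper's claim that the complement of every fixed rescaled ball is negligible. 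You should also add approximate metric differentiability of $w$ at $x$ to your list of almost-everywhere conditions on $x$.
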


\begin{proof}
Choose $x$ that is $n$-regular, meaning that its unique measured tangent is
Euclidean \(\mathbb R^n\), is an approximate metric differentiability point
of $w$, and is a Lebesgue point of $e_w$.  Put $r=\sqrt{s}$.  Under the
rescaling $r^{-1}\dist$, the pointed measured space converges to
$\mathbb R^n$.  The approximate metric differentiability and energy-density
convergence in \cite[Section~3.1 and Theorem~3.13]{GigliTyulenev2021} give
the local $L^2$ convergence
\[
 r^{-1}\dist_Y(w(\cdot),w(x))\longrightarrow\md_xw;
\]
equivalently, the squares converge locally in $L^1$.  Continuity of $w$
identifies the representative and its value at $x$ and permits localization.
Let \(\Pheat_s^{\mathbb R^n}\) denote the Euclidean heat semigroup.
The stability of heat flows under pointed measured convergence
\cite[Theorems~5.7 and~6.11]{GigliMondinoSavare2015} handles the local part,
while the Gaussian bounds and \cref{lem:RCD-tail} make the contribution from
the complement of every fixed rescaled ball negligible; see also
\cite[Sections~2.1 and~2.6]{HanZhu2026}.  The heat-kernel recovery argument in
\cite[Proposition~3.3]{MondinoSemola2026} depends only on these facts and
therefore applies to the present continuous metric-valued Sobolev map.
Consequently,
\[
 \lim_{s\downarrow0}\frac1s
 \Pheat_s\!\left[\dist_Y^2(w(\cdot),w(x))\right](x)
 =\Pheat^{\mathbb R^n}_1[(\md_xw)^2](0).
\]
Since $\md_xw$ is one-homogeneous, the Euclidean Gaussian calculation gives
\[
 \Pheat^{\mathbb R^n}_1[(\md_xw)^2](0)
 =2(n+2)\fint_{B_1(0)}(\md_xw(z))^2\,\dd\mathcal L^n(z)
 =2(n+2)e_w(x).
\]
The independence of the extension follows from \cref{lem:RCD-tail}.
\end{proof}

\begin{proposition}[Time-shifted energy identity]\label{prop:time-shift-energy}
On every compact positive-time cylinder there is a Borel set of full
\(\m\otimes\mathcal L^1\)-measure such that, at each of its points
\((x_0,t_0)\),
\begin{equation}
 \lim_{s\downarrow0}\frac{1}{s}\int_Xp_s(x_0,y)
 \dist_Y^2(u(y,t_0-s),u(x_0,t_0))\,\dd\m(y)
 =2(n+2)e_{u^{t_0}}(x_0).
\end{equation}
\end{proposition}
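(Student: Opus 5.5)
The plan is to reduce the time-shifted identity to the fixed-time heat-kernel recovery of \cref{lem:continuous-KS-heat-recovery}, and to control the shift by the pointwise time-Lipschitz bound of \cref{thm:RCD-time-Lipschitz}. Fix a compact cylinder \(\overline{U_0}\times I\) with \(I\Subset(0,\infty)\). Work with the continuous representative of \cref{thm:RCD-holder-map-flow}, extended by \(P_0\) outside \(U\). For each \(t_0\in I\) for which \(u^{t_0}\in\KS^{1,2}_\loc\) (every positive time), \cref{lem:continuous-KS-heat-recovery} gives a full-\(\m\) set \(G_{t_0}\subset U_0\) on which
\[
\lim_{s\downarrow0}\frac1s\int_X p_s(x_0,y)\dist_Y^2(u(y,t_0),u(x_0,t_0))\,\dd\m(y)=2(n+2)e_{u^{t_0}}(x_0).
\]
The desired good set is \(G:=\{(x_0,t_0):x_0\in G_{t_0}\}\). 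Its Borel measurability follows by writing it via rational \(s\), liminf/limsup of jointly measurable (indeed continuous in \((x_0,t_0)\) for fixed \(s\)) quantities, and a Borel version of \((x,t)\mapsto e_{u^t}(x)\), for instance the \(\liminf\) over rational \(r\) of \(e_r[u^t](x)\), which is jointly measurable. Fubini then gives full \(\m\otimes\mathcal L^1\)-measure.

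Next compare the shifted and unshifted integrands. Write \(a=\dist_Y(u(y,t_0-s),u(x_0,t_0))\) and \(b=\dist_Y(u(y,t_0),u(x_0,t_0))\). The triangle inequality gives \(|a-b|\le\dist_Y(u(y,t_0-s),u(y,t_0))\le c_Ls\) for \(y\in U\) by \eqref{eq:pointwise-time-Lip}. Hence \(|a^2-b^2|\le c_Ls(2b+c_Ls)\). Dividing by \(s\) and integrating against \(p_s(x_0,\cdot)\) leaves the error
\[
c_L\int p_s(x_0,y)\,\bigl(2b(y)+c_Ls\bigr)\,\dd\m(y)\le 2c_L\Bigl(\int p_s b^2\Bigr)^{1/2}+c_L^2s .
\]
The first factor is \(O(\sqrt s)\) by the already established limit, so the error tends to \(0\). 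This comparison is only valid on \(U\). On \(X\setminus U\) both integrands are bounded by \(4M^2\) by \eqref{eq:bounded-image-ball} and the extension, so \cref{lem:RCD-tail} makes their contribution \(o(s)\). Note that the Lipschitz bound holds only for \(x\in B_{2R}\) inside a fixed ball, so \(U\) should be covered by countably many such balls, with the cylinder handled piecewise.

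The main obstacle is the measurability and simultaneous selection of \(G\). The recovery lemma is applied slice by slice, with exceptional sets depending on \(t_0\), so one must check that \(G\) is Borel in space--time. I would handle this as sketched above, using continuity of \(u\) and of the heat kernel integrals in \((x_0,t_0)\) for fixed rational \(s\). Restricting to rational \(s\) is harmless: for fixed \(x_0,t_0\), the quantity \(s\mapsto\frac1s\int p_s\dist_Y^2\) is continuous in \(s>0\), so limits along rationals coincide with full limits.
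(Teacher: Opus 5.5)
Your proposal is correct and follows the paper's proof. You use the fixed-time heat-kernel recovery lemma on each slice together with Fubini, compare the shifted and unshifted integrands through the pointwise time-Lipschitz bound and $|a^2-b^2|\le(a-b)^2+2b|a-b|$, and handle the exterior region with the off-diagonal tail lemma. The differences are minor: you bound the cross term by Cauchy--Schwarz and the fixed-time limit, giving $O(\sqrt s)$, where the paper uses H\"older continuity of $u^{t_0}$ at $x_0$; and you check Borel measurability of the good set directly, where the paper passes to a Borel full-measure subset.
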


\begin{proof}
For almost every \(t_0\), the slice \(u^{t_0}\) belongs locally to
$\KS^{1,2}$, and \cref{thm:RCD-holder-map-flow} supplies its
continuous representative.  Applying
\cref{lem:continuous-KS-heat-recovery} to this slice and then using Fubini's theorem
gives, for almost every \((x_0,t_0)\),
\begin{equation}\label{eq:heat-energy-recovery-prelim}
 \lim_{s\downarrow0}\frac{1}{s}\int_Xp_s(x_0,y)
 \dist_Y^2(u(y,t_0),u(x_0,t_0))\,\dd\m(y)
 =2(n+2)e_{u^{t_0}}(x_0).
\end{equation}
The localization is independent of the bounded extension by
\cref{lem:RCD-tail}.

Starting from \eqref{eq:heat-energy-recovery-prelim}, work on a ball whose
closure is contained in the open set on which the chosen extension agrees
with \(u\).  Then
\cref{thm:RCD-time-Lipschitz}, specifically
\eqref{eq:pointwise-time-Lip}, and
$|a^2-b^2|\le(a-b)^2+2b|a-b|$ bound the difference between the time-shifted
and fixed-time integrands by
\[
 (c_Ls)^2+2c_Ls\dist_Y(u(y,t_0),u(x_0,t_0)).
\]
After division by $s$, the first term vanishes and the heat average of the last
distance tends to zero by the H\"older continuity of $u^{t_0}$.  The exterior
part is $o(1)$ by \cref{lem:RCD-tail}.  Taking a Borel representative of the
resulting full-measure set completes the proof.
\end{proof}

Define the space--time semigroup
\begin{equation}
 (\mathscr S_\tau f)(x,t):=\Pheat_\tau[f(\cdot,t-\tau)](x).
\end{equation}
For fixed $\tau$, this operator applies the heat flow only in the spatial
variable and translates the time variable from $t$ to $t-\tau$; it does not
mollify the function in time.  The average over $0<\tau<s$ in the next
proposition is used solely as a differentiation basis for $L^1$ functions.

\begin{proposition}[Parabolic semigroup differentiation]\label{prop:parabolic-differentiation}
For $f\in L^1_{\loc}(X\times\R)$,
\begin{equation}\label{eq:parabolic-differentiation}
 \frac{1}{s}\int_0^s\mathscr S_\tau f(x,t)\,\dd\tau
 \longrightarrow f(x,t)
\end{equation}
for almost every $(x,t)$ on every compactly contained space--time cylinder.
\end{proposition}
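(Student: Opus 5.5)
We prove \eqref{eq:parabolic-differentiation} as a Lebesgue differentiation theorem for the averaging operators
\[
 A_sf(x,t):=\frac1s\int_0^s\mathscr S_\tau f(x,t)\,\dd\tau
 =\frac1s\int_0^s\int_X p_\tau(x,y)f(y,t-\tau)\,\dd\m(y)\,\dd\tau .
\]
The scheme is the usual one: first prove convergence on a dense class, then show that the associated maximal operator satisfies a weak-type estimate, and conclude by a density argument. Everything is local. Fix a cylinder $Q=B\times J\Subset X\times\R$ and an enlargement $\widehat Q=\widehat B\times\widehat J$ with $Q\Subset\widehat Q$. We may then replace $f$ by $f\mathbf 1_{\widehat Q}\in L^1$. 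For $(x,t)\in Q$ and small $s$, the error from this replacement is the part of the kernel integral over $y\notin\widehat B$. By \cref{lem:RCD-tail}, that contribution is bounded by $\sup_{\tau<s}\mu^x_\tau(\widehat B^c)$ times local $L^1$ norms. Because of the exponential factor $e^{-r^2/(C\tau)}$, it tends to $0$ uniformly in $x\in B$. The time cutoff produces no error once $s<\dist(J,\widehat J^c)$.

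\emph{Dense class.} Take $f$ continuous with compact support in $X\times\R$. Then
\[
 |A_sf(x,t)-f(x,t)|\le \sup_{\tau<s}\int p_\tau(x,y)|f(y,t-\tau)-f(x,t)|\,\dd\m(y).
\]
Split the inner integral into $y\in B_\delta(x)$ and its complement. The inner part is controlled by the modulus of continuity of $f$ at scale $\delta+s$. The outer part is at most $2\|f\|_\infty\mu^x_\tau(B_\delta(x)^c)$, which is small by the Gaussian bound \eqref{eq:Gaussian} and local doubling, exactly as in the Lebesgue-point argument recalled after \eqref{eq:Gaussian}. Hence $A_sf\to f$ everywhere, in fact locally uniformly.

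\emph{Maximal inequality.} This is the main step. Define the maximal operator $\mathcal M^\ast f(x,t):=\sup_{0<s<s_0}A_s|f|(x,t)$. By \eqref{eq:Gaussian} and local doubling (the standard annular decomposition), for $\tau<s_0$ we have
\[
 p_\tau(x,y)\le \sum_{k\ge0}\frac{C e^{-c4^k}}{\m(B_{2^{k}\sqrt\tau}(x))}\,\mathbf 1_{B_{2^k\sqrt\tau}(x)}(y),
\]
where we use $\m(B_{2^k\sqrt\tau}(x))\le C_D^{k}\m(B_{\sqrt\tau}(x))$ and $\sum_k C_D^ke^{-c4^k}<\infty$. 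Substituting this bound gives
\[
 A_s|f|(x,t)\le C\sum_k e^{-c4^k}\,\frac1s\int_0^s\fint_{B_{2^k\sqrt\tau}(x)}|f(y,t-\tau)|\,\dd\m(y)\,\dd\tau .
\]
We would like to bound each summand by $C_k$ times an average over the backward parabolic cylinder $B_{2^k\sqrt s}(x)\times(t-s,t)$. The difficulty is that the ball radius $\sqrt\tau$ shrinks as $\tau\to0$, so the normalizations $\m(B_{2^k\sqrt\tau}(x))^{-1}$ blow up. To handle this, decompose $(0,s)$ dyadically into the pieces $\tau\in(2^{-j-1}s,2^{-j}s]$. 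On each piece the averaged quantity is comparable to the average of $|f|$ over the parabolic cylinder $B_{2^k2^{-j/2}\sqrt s}(x)\times(t-2^{-j}s,t-2^{-j-1}s)$, weighted by $2^{-j}$. That cylinder average is bounded by the parabolic Hardy--Littlewood maximal function $\mathcal M_{\mathrm{par}}f(x,t)$, taken over cylinders $B_\rho(x)\times(t-\rho^2,t)$ with $\rho$ ranging over comparable scales. Summing the weights $2^{-j}$ over $j$ and then the coefficients $e^{-c4^k}C_D^{k}$ over $k$ gives
\[
 \mathcal M^\ast f\le C\,\mathcal M_{\mathrm{par}}f .
\]
Next, the product $X\times\R$ with the parabolic quasi-metric $\dist(x,y)+|t-s|^{1/2}$ and the measure $\m\otimes\mathcal L^1$ is locally doubling. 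By the Vitali covering lemma, $\mathcal M_{\mathrm{par}}$ is therefore of weak type $(1,1)$ locally; the backward, one-sided cylinders are handled by the usual enlargement, since each backward cylinder is contained in a centered one of comparable measure. It follows that $\mathcal M^\ast$ is of weak type $(1,1)$ on $L^1(\widehat Q)$.

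\emph{Conclusion.} Given $f\in L^1(\widehat Q)$ and $\varepsilon>0$, choose $g\in C_c$ with $\|f-g\|_{L^1}<\varepsilon$. Since $A_sg\to g$ everywhere,
\[
\limsup_{s\to0}|A_sf-f|\le\mathcal M^\ast(f-g)+|f-g|.
\]
Applying the weak-type bound for $\mathcal M^\ast$ and Chebyshev's inequality for $|f-g|$, the set where this $\limsup$ exceeds $\lambda$ has measure at most $C\varepsilon/\lambda$. Letting $\varepsilon\to0$ and then $\lambda\downarrow0$ along a sequence gives \eqref{eq:parabolic-differentiation} almost everywhere on $Q$. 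An exhaustion by countably many cylinders finishes the proof. The step I expect to need the most care is the pointwise domination $\mathcal M^\ast\le C\mathcal M_{\mathrm{par}}$, specifically the uniform handling of the small-$\tau$ range via the dyadic decomposition. Doubling must be invoked only at scales inside the fixed localization ball, with constants independent of $(x,t)\in Q$.
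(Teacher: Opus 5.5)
Your proof is correct, but it takes a different route from the paper's. The paper gets the maximal inequality abstractly. After a cutoff and extension by zero, $(\mathscr S_\tau)_{\tau\ge0}$ is a strongly continuous semigroup of positive contractions on $L^1(X\times\mathbb R)$, and also on $L^\infty$ because $\Pheat_\tau$ is Markovian. The Hopf--Dunford--Schwartz maximal theorem then gives the weak-type $(1,1)$ bound for $\sup_{s}\frac1s\int_0^s\mathscr S_\tau|f|\,\dd\tau$ with no heat-kernel estimates at all. In the paper the Gaussian tail enters only to show that the limit and the good set do not depend on the cutoff.

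You build the maximal inequality by hand instead. You dominate the heat-kernel averages by a backward parabolic Hardy--Littlewood maximal function, using the Gaussian upper bound and volume comparison. You then apply Vitali covering on the locally doubling space $(X\times\mathbb R,\ \dist+|t-s|^{1/2},\ \m\otimes\mathcal L^1)$.

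What each route buys:
\begin{itemize}
\item The abstract route is shorter and uses only positivity and contractivity.
\item Yours is self-contained and checks the dense class explicitly. It also gives the pointwise bound $\mathcal M^\ast\le C\mathcal M_{\mathrm{par}}$.
\item Running the same domination on $|f-f(x,t)|$, using $\Pheat_\tau1=1$, shows convergence at every backward parabolic Lebesgue point of $f$. This describes the good set in item~(i) of \cref{prop:RCD-paired-mean-value} concretely.
\end{itemize}

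Three small repairs are needed.
\begin{itemize}
\item In the localization step, the exterior contribution is not bounded by $\mu^x_\tau(\widehat B^c)$ times an $L^1$ norm. Let $g$ be the integrable difference of two localizations. Bound the contribution by $\sup_{\tau<s}\sup_{y\notin\widehat B}p_\tau(x,y)\lesssim\sup_{\tau<s}\m(B_{\sqrt\tau}(x))^{-1}e^{-r^2/(C\tau)}\to0$ times $\frac1s\int_0^s\|g(\cdot,t-\tau)\|_{L^1(X)}\,\dd\tau$. The second factor is finite for a.e.\ $t$ by the one-dimensional maximal theorem. As in the paper, for merely locally integrable $f$ the averages are only defined through such localizations.
\item The slab $B_\rho(x)\times(t-2^{-j}s,t-2^{-j-1}s)$ with $\rho=2^k2^{-j/2}\sqrt s$ has time length $\rho^2/(2\cdot4^k)$. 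Enclosing it in $B_\rho(x)\times(t-\rho^2,t)$ therefore costs a factor $2\cdot4^k$.
\item For radii $2^k\sqrt\tau$ that leave the localization ball, $C_D^k$ must be replaced by the Bishop--Gromov bound $C2^{kN}e^{C2^k\sqrt\tau}$.
\end{itemize}
The extra factors in the last two repairs are absorbed by $e^{-c4^k}$.
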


\begin{proof}
Fix a compact cylinder \(Q'\Subset Q\) and a cutoff supported in \(Q\), and
extend the localized function by zero.  The operators
\(\mathscr S_\tau\) form a strongly continuous positive contraction semigroup
on the resulting global \(L^1\)-space: they preserve nonnegative functions,
satisfy \(\|\mathscr S_\tau f\|_{L^1}\le\|f\|_{L^1}\), and obey
\(\mathscr S_\tau f\to f\) in \(L^1\) as \(\tau\downarrow0\).  The
Hopf--Dunford--Schwartz maximal
theorem of Dunford--Schwartz \cite{DunfordSchwartz1958} and approximation by
bounded compactly supported functions give
the asserted differentiation limit almost everywhere on \(Q'\).  If two
localizations agree on a neighborhood of \(Q'\), then there is \(\eta>0\)
such that every \((y,s)\) in the support of their difference satisfies
\(\dist(x,y)+|t-s|^{1/2}\ge\eta\) for every \((x,t)\in Q'\).  The Gaussian heat-kernel tail
shows that its averaged contribution tends to zero.  Hence the resulting good
set and limit are independent of the chosen cutoff.
\end{proof}

\begin{lemma}[Localized Duhamel inequality]
\label{lem:Duhamel-boundary-tail}
Suppose
$q\in C(U\times I)\cap L^2_{\loc}(I;W^{1,2}_{\loc}(U))\cap L^\infty_{\loc}$
and \(a\in L^1_{\loc}(U\times I)\), and suppose
\((\boldsymbol\Delta-\partial_t)q\le a\) weakly.  If $B_{4r}(x)\Subset U$ and
$(x,t)$ is separated from the time
boundary, then
\begin{equation}\label{eq:Duhamel-conclusion}
 \Pheat_s[q(\cdot,t-s)](x)-q(x,t)
 \le\int_0^s\Pheat_\tau[a(\cdot,t-\tau)](x)\,\dd\tau+o(s).
\end{equation}
The error is locally uniform under uniform bounds for $q$ and its local
$L^2(I;W^{1,2})$ norm.
The heat term is computed using any bounded extension of
\(q(\cdot,t-s)\) that agrees with \(q\) near \(x\); two such choices change
the left-hand side by \(o(s)\).
\end{lemma}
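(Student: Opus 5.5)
The plan is to localize $q$ by a good cut-off, write the Duhamel formula for the localized function against the heat kernel centred at $x$, and control the commutator terms by heat-kernel tails.

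\textbf{Localization.} Let $\chi$ be the cut-off from \cref{prop:RCD-cutoffs} with $\chi\equiv1$ on $B_r(x)$ and $\supp\chi\subset B_{2r}(x)$. Put $\tilde q:=\chi q$, extended by zero. By \cref{lem:RCD-tail}, any two bounded extensions of $q(\cdot,t-s)$ that agree with $q$ near $x$ give heat averages at $x$ differing by $o(s)$. So it suffices to prove \eqref{eq:Duhamel-conclusion} with $\tilde q$ in place of $q$ in the heat term. Since $q$ is continuous, $\tilde q(x,t)=q(x,t)$.

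\textbf{Duhamel identity.} For $0<\tau<s$ put
\[
g(\tau):=\Pheat_\tau[\tilde q(\cdot,t-\tau)](x)=\int p_\tau(x,y)\,\tilde q(y,t-\tau)\,\dd\m(y).
\]
Formally,
\[
g'(\tau)=\int p_\tau(x,\cdot)\,(\boldsymbol\Delta-\partial_t)\tilde q(\cdot,t-\tau)\,\dd\m .
\]
To make this rigorous, use the test function $\phi(y,\sigma):=p_{t-\sigma}(x,y)\,\eta(\sigma)$. Here $\eta$ is a Lipschitz cut-off in time on $(t-s,t-\epsilon)$, and we let $\eta\to\mathbf 1_{[t-s,t-\epsilon]}$. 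The test satisfies $(\boldsymbol\Delta_y+\partial_\sigma)p_{t-\sigma}(x,\cdot)=0$ on this time window.

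It is nonnegative and, for $\epsilon>0$, locally Lipschitz in space; this uses the Lipschitz regularity of the heat kernel at positive time from \cite{Jiang2015}. It is not compactly supported, but $\tilde q$ is. So we test the weak inequality for $q$ with $\chi\phi$, which is an admissible nonnegative test. The weak formulation, the Leibniz rule, and $\boldsymbol\Delta\chi=g_\chi\m$ then give
\[
g(s)-g(\epsilon)\le\int_\epsilon^s\Pheat_\tau[\chi a(\cdot,t-\tau)](x)\,\dd\tau+\mathcal E,
\]
\[
\mathcal E=\int_\epsilon^s\!\!\int\Bigl(-2\ip{\nabla\chi}{\nabla q}\,p_\tau-q\,g_\chi\,p_\tau\Bigr)\dd\m\,\dd\tau ,
\]
after integrating by parts the term $\ip{\nabla(q\chi)}{\nabla p}$. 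Since $\chi a\le a$ where $a\ge0$, and $\chi\equiv1$ near $x$, the difference between the $\chi a$ and $a$ heat terms is supported off $B_r(x)$. By \cref{lem:RCD-tail} it contributes $o(s)$, at least for the positive part of $a$; the negative part needs the same tail argument applied to $a\,\mathbf 1_{\supp(1-\chi)}$, and this is where local integrability of $a$ must be combined with the Gaussian bound.

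\textbf{Commutator error.} The terms in $\mathcal E$ are supported in the annulus $B_{2r}(x)\setminus B_r(x)$. On that annulus $p_\tau(x,\cdot)\le C e^{-r^2/(C\tau)}/\m(B_{\sqrt\tau}(x))$ by \eqref{eq:Gaussian}, which is $o(\tau^k)$ for every $k$. Hence
\[
|\mathcal E|\le o(1)\int_0^s\Bigl(\|\nabla q\|_{L^2(B_{2r})}+\|q\|_{L^\infty}\,r^{-2}\Bigr)\dd\tau=o(s).
\]
This uses Cauchy--Schwarz and the local $L^2(I;W^{1,2})$ bound, and it is locally uniform, as claimed. Finally let $\epsilon\downarrow0$: $g(\epsilon)\to q(x,t)$ by the continuity of $q$ at $(x,t)$ and the heat-kernel concentration.

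\textbf{Main obstacle.} I expect the hardest step to be the justification of the Duhamel test with the singular kernel $p_{t-\sigma}(x,\cdot)$. One needs that it is an admissible test for the weak inequality, i.e.\ the needed Lipschitz and $W^{1,2}$ bounds on $p_\tau$ and $\partial_\tau p_\tau$ for $\tau\ge\epsilon$, together with a time approximation of $\mathbf 1_{[t-s,t-\epsilon]}$ compatible with the only-weak time derivative of $q$. If pointwise test regularity fails, I would first replace $p_\tau(x,\cdot)$ by $\Pheat_\tau\rho_\delta$ for a bump $\rho_\delta$ at $x$, run the argument, and let $\delta\to0$ using continuity of $q$.
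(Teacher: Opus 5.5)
Your proposal is correct and follows essentially the paper's route. You localize with the good cut-off of \cref{prop:RCD-cutoffs}, test the weak inequality against the cut-off backward heat kernel, and show that the Leibniz commutator $qg_\chi+2\ip{\nabla\chi}{\nabla q}$ on the annulus is $o(s)$ by the Gaussian bound and Cauchy--Schwarz against the local energy. The only real difference is technical: you pass $\eta\to\mathbf 1_{[t-s,t-\epsilon]}$ directly using the continuity of $q$, whereas the paper uses forward Steklov averages and heat-kernel approximations. Your direct route is legitimate, because the weak formulation puts the time derivative on the test function and $p_{t-\sigma}(x,\cdot)$ is jointly Lipschitz for $t-\sigma\ge\epsilon$, so your fallback is not needed. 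Three minor slips do not affect the conclusion: the commutator enters with a plus sign, not the minus sign you wrote, which is harmless since you bound $|\mathcal E|$. The factor you write as $o(1)$ must be kept as the superpolynomially small annular kernel bound, since Cauchy--Schwarz only gives $\int_0^s\|\nabla q\|_{L^2}\,\dd\tau=O(s^{1/2})$. And removing the cut-off from $a$ needs no estimate for $a^+$; only the $a^-$ part needs the Gaussian tail.
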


\begin{proof}
By \cref{prop:RCD-cutoffs}, choose a compactly supported \RCD{} test function
$\chi$, equal to one on
$B_{2r}(x)$ and supported in $B_{3r}(x)$, with $|\mathrm D\chi|\le C/r$ and
$\boldsymbol\Delta\chi=g_\chi\m$, $|g_\chi|\le C/r^2$.  For the time
integration at the stated regularity,
fix a compact subinterval $I'\Subset I$ containing $[t-s,t]$.  For
$0<\delta<\dist_{\mathbb R}(I',\partial I)$, use the forward temporal Steklov averages
\[
 q_\delta(x,r):=\frac1\delta\int_0^\delta q(x,r+\sigma)\,\dd\sigma,
 \qquad
 a_\delta(x,r):=\frac1\delta\int_0^\delta a(x,r+\sigma)\,\dd\sigma,
 \qquad r\in I'.
\]
The same subscript denotes this forward average when applied to any other
locally integrable space--time function below.
Testing the averaged weak inequality against a
nonnegative backward heat solution with the spatial cutoff $\chi$, and then
integrating from $t-s$ to $t$, gives
\[
 \Pheat_s[\chi q_\delta(\cdot,t-s)](x)
 -q_\delta(x,t)
 \le\int_0^s\Pheat_\tau[\chi a_\delta(\cdot,t-\tau)](x)\,\dd\tau
 +\mathcal R_{\delta,s}.
\]
Here
\[
 \mathcal R_{\delta,s}
 :=\int_0^s\Pheat_\tau\!\left[
 \left(qg_\chi+2\ip{\nabla\chi}{\nabla q}\right)_\delta
 (\cdot,t-\tau)\right](x)\,\dd\tau
\]
is the contribution of the spatial Leibniz commutator.
The inequality is first obtained with bounded heat-kernel approximations as
test functions.  These approximations belong to the local energy class and
are therefore admissible in the weak formulation; monotone approximation
then recovers the kernel.  Letting
$\delta\downarrow0$ is justified by the continuity of $q$, the local
$L^1$ assumption on $a$, and the local $L^2(I;W^{1,2})$ bound.  Passing to
the limit gives the preceding localized variation-of-constants inequality.
Its remainder is the Leibniz commutator
\(qg_\chi+2\ip{\nabla\chi}{\nabla q}\), supported in the annulus
$A=B_{3r}(x)\setminus B_{2r}(x)$.  The zeroth-order
term is $o(s)$ by \cref{lem:RCD-tail}; compare
\cite[Section~2.6]{HanZhu2026}.  For the gradient term,
Cauchy--Schwarz and the annular Gaussian bound give, for some $\nu>0$,
\begin{align*}
&\int_0^s\!\int_A p_\tau(x,z)|\mathrm D\chi|\,|\mathrm D q(z,t-\tau)|\,\dd\m(z)\,\dd\tau\\
&\quad\le C\left(\int_0^s \tau^{-\nu}e^{-cr^2/\tau}\,\dd\tau\right)^{1/2}
 \left(\int_0^s\!\int_A |\mathrm D q|^2\,\dd\m\,\dd\tau\right)^{1/2}=o(s).
\end{align*}
The exponential factor absorbs every power of $\tau^{-1}$, while the
second factor is finite by the local energy assumption.  Removing the cutoff
therefore gives the asserted Duhamel inequality.
\end{proof}

For $P\in Y$ set
\begin{equation}\label{eq:wP-definition}
\begin{split}
 w_{P,s}(x_0,y,t_0):={}&-\dist_Y^2(P,u(y,t_0-s))
 +\dist_Y^2(P,u(x_0,t_0))\\
 &+\dist_Y^2(u(y,t_0-s),u(x_0,t_0)).
\end{split}
\end{equation}

\begin{proposition}[Simultaneous heat-kernel inequality]
\label{prop:RCD-paired-mean-value}
On \(Q_0:=U_0\times I\), let \(G_{\mathrm{hk}}\) be the intersection of the following
Borel full-measure sets:
\begin{enumerate}[label=(\roman*),leftmargin=2em]
\item Lebesgue points of \(e\) at which the localized differentiation formula
in \cref{prop:parabolic-differentiation} holds;
\item points at which the identity in \cref{prop:time-shift-energy} holds.
\end{enumerate}
Then \(G_{\mathrm{hk}}\) is independent of the target point, and for every
\((x_0,t_0)\in G_{\mathrm{hk}}\) and every \(P\) in the closed convex hull of the bounded
image,
\begin{equation}\label{eq:RCD-wP-mean-value}
 \limsup_{s\downarrow0}\frac{1}{s}
 \int_Xp_s(x_0,y)w_{P,s}(x_0,y,t_0)\,\dd\m(y)\le0.
\end{equation}
Consequently,
\begin{equation}\label{eq:E-paired-good-set}
 E_{\mathrm{pair}}:=\left\{(x,y,t):(x,t)\in G_{\mathrm{hk}},\ (y,t)\in G_{\mathrm{hk}}\right\}
\end{equation}
has full $\m\otimes\m\otimes\mathcal L^1$ measure in
\(U_0\times U_0\times I\).
\end{proposition}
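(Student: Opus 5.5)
Fix $(x_0,t_0)\in G_{\mathrm{hk}}$ and $P$ in the closed convex hull of the bounded image. The plan is to split $w_{P,s}$ into two pieces that are already controlled:
\[
 w_{P,s}=\bigl[\dist_Y^2(P,u(x_0,t_0))-\dist_Y^2(P,u(y,t_0-s))\bigr]
 +\dist_Y^2(u(y,t_0-s),u(x_0,t_0)).
\]
The first bracket is the Duhamel term for the subsolution $q_P:=\dist_Y^2(u,P)$. The second is recovered from the energy density by \cref{prop:time-shift-energy}. These two contributions should cancel at the level of $2(n+2)e$.

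\textbf{Step 1 (Duhamel bound for the first bracket).} By \cref{cor:fixed-target}, \eqref{eq:RCD-square-P}, we have $(\boldsymbol\Delta-\partial_t)(-q_P)\le-2(n+2)e$ weakly. The function $-q_P$ is continuous by \cref{thm:RCD-holder-map-flow} and bounded by \eqref{eq:bounded-image-ball}. It lies in $L^2_{\loc}(W^{1,2}_{\loc})$ with a norm controlled uniformly in $P$ (through \eqref{eq:fixed-P-upper-gradient} and energy monotonicity), and $e\in L^1_{\loc}$. So \cref{lem:Duhamel-boundary-tail} applies with $a=-2(n+2)e$ and gives
\[
 \frac1s\int p_s(x_0,y)\bigl[q_P(x_0,t_0)-q_P(y,t_0-s)\bigr]\dd\m(y)
 \le-\frac{2(n+2)}{s}\int_0^s\mathscr S_\tau e(x_0,t_0)\,\dd\tau+o(1).
\]
The left side is computed with the extension by $P_0$; \cref{lem:RCD-tail} makes this choice irrelevant up to $o(1)$.

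\textbf{Step 2 (limits on the good set).} Since $(x_0,t_0)$ satisfies item (i), \cref{prop:parabolic-differentiation} sends the right-hand average to $-2(n+2)e(x_0,t_0)$. Since it satisfies item (ii), the second piece, divided by $s$ and integrated against $p_s(x_0,\cdot)$, tends to $2(n+2)e_{u^{t_0}}(x_0)$. Adding the two gives \eqref{eq:RCD-wP-mean-value}.

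Neither defining condition of $G_{\mathrm{hk}}$ mentions $P$, so the set is independent of $P$. The only $P$-dependence sits in the Duhamel remainder $o(s)$, and the uniformity clause of \cref{lem:Duhamel-boundary-tail} controls it because the bounds on $q_P$ are uniform over $P$ in the bounded hull. In any case, the $o(s)$ is needed only for each fixed $P$.

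\textbf{Step 3 (full measure of $E_{\mathrm{pair}}$).} Let $N$ be the complement of $G_{\mathrm{hk}}$ in $U_0\times I$, a null set. By Fubini, for a.e. $t$ the slice $N_t$ is $\m$-null. The complement of $E_{\mathrm{pair}}$ in $U_0\times U_0\times I$ is contained in $\{(x,y,t):x\in N_t\}\cup\{(x,y,t):y\in N_t\}$, and both sets are null by Tonelli. Borel measurability follows because $G_{\mathrm{hk}}$ is Borel.

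\textbf{Main obstacle.} The delicate point is Step 1: making sure the Duhamel lemma genuinely applies with a right-hand side that is only $L^1_{\loc}$ and a measure-valued inequality. If that is in doubt, I would first apply the lemma with $a$ replaced by the truncation $-2(n+2)\min(e,k)$, which the inequality still dominates, and then let $k\to\infty$ using monotone convergence in the differentiation limit.
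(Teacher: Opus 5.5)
Your proposal is correct and follows the paper's proof essentially step for step. You apply \cref{lem:Duhamel-boundary-tail} to the fixed-target square subsolution with right-hand side $-2(n+2)e$, then use the semigroup differentiation from item (i) and the time-shifted energy identity from item (ii), and finish with Fubini for $E_{\mathrm{pair}}$. Your observation that the $o(s)$ is needed only for each fixed $P$, because the Duhamel inequality holds at every point with no exceptional set, is exactly why $G_{\mathrm{hk}}$ is target-independent. The truncation fallback is unnecessary, since the lemma already allows $a\in L^1_{\loc}$; as phrased it would also need the differentiation formula for each $\min(e,k)$ at $(x_0,t_0)$, which item (i) does not supply unless you intersect countably many further full-measure sets.
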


\begin{proof}
Fix \((x_0,t_0)\in G_{\mathrm{hk}}\) and an arbitrary \(P\) in the closed convex hull of the
bounded image.  Apply the square inequality in \cref{cor:fixed-target},
namely \eqref{eq:RCD-square-P}, to
\(q_P(x,t):=-\dist_Y^2(P,u(x,t))+\dist_Y^2(P,u(x_0,t_0))\).
The localized Duhamel inequality
\eqref{eq:Duhamel-conclusion} of \cref{lem:Duhamel-boundary-tail} gives
\begin{equation}
 \limsup_{s\downarrow0}\frac{1}{s}\int_Xp_s(x_0,y)q_P(y,t_0-s)\,\dd\m(y)
 \le-2(n+2)e_{u^{t_0}}(x_0),
\end{equation}
because \((x_0,t_0)\) is a common semigroup differentiation point for
\(e\) given by \eqref{eq:parabolic-differentiation} of
\cref{prop:parabolic-differentiation}, without an
approximation \(P_j\to P\).  Combining this with the
identity in \cref{prop:time-shift-energy} proves
\eqref{eq:RCD-wP-mean-value}.

The differentiation set in item~(i) is independent of \(P\), because the
right-hand side of the Duhamel estimate is always \(-2(n+2)e\).  Its
cutoff-tail constants are uniform when \(P\) ranges over the closed convex
hull of the bounded image, since both the image and these target points lie
in a fixed bounded ball.  Item~(ii) is already target-independent.  Thus the
same \(G_{\mathrm{hk}}\) works simultaneously for all \(P\), and Fubini's theorem gives
the claimed full-measure product set.
\end{proof}

\section{Elliptic perturbations on \RCD{} spaces}\label{sec:contact-moving}

This section constructs the perturbations needed to move a contact point into
a prescribed full-measure set.  We first establish quadratic barrier bounds
on product spaces, then combine the localized elliptic ABP estimate with a
time-slicing argument.

\subsection{Quadratic barriers and product Laplacians}

The measure-valued calculus and Laplacian comparison are used in the form
established by Gigli \cite{Gigli2015}; see also the systematic treatment of
weak Laplacian bounds by Mondino--Semola
\cite{MondinoSemolaWeakLaplacian2025}.  For bounded lower semicontinuous functions on
\(\RCD(K,N)\) spaces, \(K\in\mathbb R\), \(1<N<\infty\),
distributional, comparison, viscosity, and heat-flow upper bounds for the
Laplacian are equivalent by Gigli--Mondino--Semola
\cite[Theorem~1.1]{GigliMondinoSemola2024}, provided that the prescribed upper bound is
continuous.  All applications below have a constant upper bound, except for
the passage in \cref{prop:heat-test-weak-formulation}, where temporal
mollification produces a continuous right-hand side before the equivalence is
used.

For \(K\in\mathbb R\), \(1<N<\infty\), put
\(K_-:=\max\{-K,0\}\).  For \(R>0\), set
\begin{equation}
  \mathsf c_{K,N}(R):=
  \begin{cases}
    N, & K\ge0,\\[1mm]
    1+\sqrt{(N-1)K_-}\,R
    \coth\!\left(\sqrt{\dfrac{K_-}{N-1}}\,R\right),&K<0,
  \end{cases}
\end{equation}
The value at \(K=0\) is the continuous limit of the second line.

\begin{proposition}[Squared-distance Laplacian comparison]
\label{prop:squared-distance-comparison}
Let \((X,\dist,\m)\) be \(\RCD(K,N)\), \(1<N<\infty\), and put
\(\dist_p(x):=\dist(x,p)\).  If \(p\in X\) and \(U\subset B_R(p)\), then
\begin{equation}\label{eq:square-lap-comp}
  \boldsymbol\Delta\!\left(\frac12\dist_p^2\right)
  \le \mathsf c_{K,N}(R)\,\m
  \qquad\text{on }U
\end{equation}
in the measure-valued sense.  The same local upper bound holds in the equivalent
comparison, viscosity, and heat-flow senses of Gigli--Mondino--Semola
\cite[Theorem~1.1]{GigliMondinoSemola2024}.
\end{proposition}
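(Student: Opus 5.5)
The plan is to reduce \eqref{eq:square-lap-comp} to the Laplacian comparison for the distance function itself on $\RCD(K,N)$ spaces, as established by Gigli \cite{Gigli2015}, and then apply the measure-valued chain rule.

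\textbf{Step 1 (comparison for the distance).} Gigli's comparison gives
\[
\boldsymbol\Delta\dist_p\le \frac{N-1}{\dist_p}\cdot\sqrt{\tfrac{K_-}{N-1}}\,\dist_p\coth\!\Bigl(\sqrt{\tfrac{K_-}{N-1}}\,\dist_p\Bigr)\,\m
\]
on $X\setminus\{p\}$. When $K\ge0$ the right-hand side is simply $(N-1)/\dist_p$. Both statements are understood in the measure-valued sense, and they hold because the singular part of $\boldsymbol\Delta\dist_p$ is nonpositive.

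\textbf{Step 2 (chain rule).} Apply the chain rule \cite[Proposition~4.11]{Gigli2015} with $\varphi(r)=r^2/2$. Since $|\mathrm D\dist_p|=1$ $\m$-a.e., this yields
\[
\boldsymbol\Delta\tfrac12\dist_p^2=\dist_p\,\boldsymbol\Delta\dist_p+\m .
\]
The singular part is multiplied by $\dist_p\ge0$, so it stays nonpositive. The absolutely continuous part is then bounded by
\[
1+\sqrt{(N-1)K_-}\,\dist_p\coth\!\Bigl(\sqrt{\tfrac{K_-}{N-1}}\,\dist_p\Bigr),
\]
which equals $N$ in the nonnegative case.

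\textbf{Step 3 (passing to the constant $\mathsf c_{K,N}(R)$).} The map $r\mapsto r\coth(ar)$ is nondecreasing, so on $U\subset B_R(p)$ we may replace $\dist_p$ by $R$. This gives exactly $\mathsf c_{K,N}(R)$.

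\textbf{Step 4 (the point $p$).} The point $p$ itself must be handled, since Step 1 only holds on $X\setminus\{p\}$. I would do this by testing with cutoffs that vanish near $p$ at scale $\varepsilon$. The error term is controlled by
\[
\int|\mathrm D\chi_\varepsilon|\,\dist_p\,\dd\m\lesssim \m(B_{2\varepsilon}(p))\to0,
\]
so no mass is produced at $p$. Alternatively, one can use directly the global form of the comparison for $\tfrac12\dist_p^2$ in \cite{Gigli2015}.

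\textbf{Step 5 (other senses).} The comparison, viscosity and heat-flow versions then follow from \cite[Theorem~1.1]{GigliMondinoSemola2024}. This is legitimate because the bound is a constant, hence continuous, and $\tfrac12\dist_p^2$ is continuous and bounded on $U$.

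The main obstacle is the bookkeeping of the singular part at the cut locus and at $p$ (Steps 2 and 4). Everything else is a monotonicity check and a citation.
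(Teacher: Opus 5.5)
Your proposal is correct and follows the paper's proof. The steps are the comparison for $\dist_p$, the chain rule for $r\mapsto r^2/2$ with $|\mathrm D\dist_p|=1$ (the factor $\dist_p\ge0$ preserving the sign of the singular part), the monotone bound $\dist_p\le R$, and the Gigli--Mondino--Semola equivalence for the other formulations. The only difference is at the vertex: the paper cites a comparison for $\dist_p$ that already includes $p$, whereas your cutoff estimate $\int\phi\,\dist_p|\mathrm D\chi_\varepsilon|\,\dd\m\lesssim\m(B_{2\varepsilon}(p))\to0$ removes $p$ directly for $\tfrac12\dist_p^2$, which is slightly more robust since $\boldsymbol\Delta\dist_p$ itself can charge $p$ (e.g.\ on $\mathbb R$).
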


\begin{proof}
The Laplacian comparison theorem in the measure-valued formulation gives the
distributional upper bound for \(\dist_p\), including its singularity at
\(p\); see \cite[Theorem~3.7]{MondinoSemola2026}.  Applying the chain rule
\cite[Proposition~4.11]{Gigli2015} to \(r\mapsto r^2/2\) gives
\(\boldsymbol\Delta(\dist_p^2/2)
 =\dist_p\boldsymbol\Delta\dist_p+|\mathrm D\dist_p|^2\m\), while
\(|\mathrm D\dist_p|=1\) \(\m\)-a.e.\ off \(p\).  Bounding
\(\dist_p\le R\) on \(U\) yields the claimed comparison.  The other
formulations follow from
Gigli--Mondino--Semola \cite[Theorem~1.1]{GigliMondinoSemola2024}.
\end{proof}

\medskip

\noindent\textbf{Tensorized \RCD{} calculus.}
Let \((X_i,\dist_i,\m_i)\) be \(\RCD(K_i,N_i)\), \(i=1,2\), with
\(1<N_i<\infty\), and put \(\Z:=X_1\times X_2\) and
\(\m_{\Z}:=\m_1\otimes\m_2\).  Equip \(\Z\) with the product distance
defined by
\begin{equation}
  \dist_{\Z}^2((x_1,x_2),(y_1,y_2))
  :=\dist_1^2(x_1,y_1)+\dist_2^2(x_2,y_2).
\end{equation}
We write \(\DeltaZ\) for the measure-valued Laplacian on \(\Z\), and
\(\boldsymbol\Delta_i\) for the
measure-valued Laplacian on the $i$th factor.  For functions \(\psi_i\) on
\(X_i\), set \((\psi_1\oplus\psi_2)(x_1,x_2):=\psi_1(x_1)+\psi_2(x_2)\).
Set \(K_{12}:=\min\{K_1,K_2\}\) and \(N_{12}:=N_1+N_2\).  Then
\begin{equation}
  (\Z,\dist_{\Z},\m_{\Z})
  \text{ is an }\RCD(K_{12},N_{12})\text{ space}.
\end{equation}
For separated functions, the measure-valued product Laplacian is
\begin{equation}\label{eq:product-lap}
  \DeltaZ(\psi_1\oplus\psi_2)
  =(\boldsymbol\Delta_1\psi_1)\otimes\m_2
   +\m_1\otimes(\boldsymbol\Delta_2\psi_2).
\end{equation}
Compactly supported tests are approximated in \(W^{1,2}\) by finite sums of
functions \(\varphi_1(x_1)\varphi_2(x_2)\), and the minimal weak upper gradient decomposes into its two
partial components.  Distributional identities may therefore be verified on
these product functions and extended by density and Fubini's theorem.
The \RCD{} tensorization and the displayed parameters follow from
\cite[Theorem~6.11]{AmbrosioGigliSavareDuke2014} and the
curvature--dimension tensorization in \cite{SturmActaII}.  The product
gradient decomposition and \eqref{eq:product-lap} follow from
\cite[Theorems~6.17--6.18]{AmbrosioGigliSavareDuke2014}.  For the analogous
slice calculus when one factor is an interval, see
\cite[Theorem~3.7]{GigliHan2018}, from the first author's joint work with
Gigli.

\begin{lemma}[Sliced upper Laplacian bounds on products]
\label{lem:sliced-product-lap}
Let \(U_i\Subset X_i\) be bounded open sets and let
\(G\in W^{1,2}_{\loc}(U_1\times U_2)\cap L^\infty_{\loc}(U_1\times U_2)\).
Assume that for a.e. \(x_2\in U_2\), the slice \(G(\cdot,x_2)\)
has a measure-valued Laplacian on \(U_1\) with
\(\boldsymbol\Delta_{1}G(\cdot,x_2)\le C_1\m_1\),
and that for a.e. \(x_1\in U_1\), the slice \(G(x_1,\cdot)\) has a
measure-valued Laplacian on \(U_2\) with
\(\boldsymbol\Delta_{2}G(x_1,\cdot)\le C_2\m_2\).
Then \(G\) has the product distributional upper bound
\begin{equation}\label{eq:sliced-product-lap-bound}
 \DeltaZ G
 \le (C_1+C_2)\m_{\Z}
 \qquad\text{on }U_1\times U_2.
\end{equation}
\end{lemma}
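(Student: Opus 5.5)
The plan is to verify the distributional inequality directly on a nonnegative test \(\Phi\in\Lip_c(U_1\times U_2)\). That is, we show
\[
 -\int_{\Z}\ip{\nabla_{\Z}G}{\nabla_{\Z}\Phi}\,\dd\m_{\Z}
 \le (C_1+C_2)\int_{\Z}\Phi\,\dd\m_{\Z},
\]
and then read off \(\DeltaZ G\le(C_1+C_2)\m_{\Z}\) by the Riesz--Markov argument used in the proof of \cref{prop:simultaneous-P}. Positivity of the functional \(\Phi\mapsto(C_1+C_2)\int\Phi\,\dd\m_{\Z}+\int\ip{\nabla G}{\nabla\Phi}\,\dd\m_{\Z}\) on nonnegative tests gives a Radon measure, and hence membership in \(D_{\loc}(\DeltaZ,U_1\times U_2)\) with the stated upper bound.

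The first step is to split the product pairing into its two partial components. By the tensorized Cheeger calculus recalled before the lemma (\cite[Theorems~6.17--6.18]{AmbrosioGigliSavareDuke2014}), for \(G\in W^{1,2}_{\loc}\) and Lipschitz \(\Phi\), for \(\m_{\Z}\)-a.e.\ point we have
\[
 \ip{\nabla_{\Z}G}{\nabla_{\Z}\Phi}
 =\ip{\nabla_1 G(\cdot,x_2)}{\nabla_1\Phi(\cdot,x_2)}(x_1)
 +\ip{\nabla_2 G(x_1,\cdot)}{\nabla_2\Phi(x_1,\cdot)}(x_2).
\]
Here, for a.e.\ \(x_2\), the slice \(G(\cdot,x_2)\) lies in \(W^{1,2}_{\loc}(U_1)\), and the slice gradient agrees a.e.\ with the first partial component. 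The second step integrates with Fubini. For a.e.\ \(x_2\), the section \(\Phi(\cdot,x_2)\) is a nonnegative Lipschitz test with compact support in \(U_1\), so the slice hypothesis gives
\[
 -\int_{U_1}\ip{\nabla_1 G(\cdot,x_2)}{\nabla_1\Phi(\cdot,x_2)}\,\dd\m_1
 \le C_1\int_{U_1}\Phi(\cdot,x_2)\,\dd\m_1 .
\]
We integrate this in \(x_2\), treat the second variable symmetrically, and add the two inequalities. Integrability of the mixed terms follows from \(G\in W^{1,2}_{\loc}\) and the boundedness of \(\nabla\Phi\) on its compact support, which justifies Fubini.

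The main obstacle is the identification step. We must show that the partial components of the product minimal weak upper gradient, and of the polarized pairing, coincide a.e.\ with the slicewise gradients of a general \(W^{1,2}_{\loc}\) function, not merely of a product function. I would handle it as follows.
\begin{enumerate}[label=(\roman*),leftmargin=2em]
\item Localize with a cutoff so that \(G\) is in the global space.
\item Approximate \(G\) in \(W^{1,2}(\Z)\) by finite sums of products \(\varphi_1\otimes\varphi_2\), as stated in the tensorization paragraph. For these the identity is immediate from the one-factor calculus.
\item Pass to the limit. The right-hand side converges in \(L^1\) after Fubini, because slicewise \(L^2\) convergence of gradients holds for a.e.\ slice along a subsequence.
\end{enumerate}
The bounds \(C_1,C_2\) enter only through the slices of \(G\) itself, not through the approximants, so no Laplacian control of the approximations is needed. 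Only the first-order identity is passed to the limit, and the inequality is applied afterwards to the true slices.
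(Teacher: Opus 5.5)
Your proposal is correct and follows the paper's proof. You test with a nonnegative $\Phi\in\Lip_c(U_1\times U_2)$, apply the two slice inequalities to the sections of $\Phi$, integrate in the other variable by Fubini, and recombine the two partial pairings into $-\int\ip{\nabla G}{\nabla\Phi}\,\dd\m_{\Z}$ via the Ambrosio--Gigli--Savar\'e tensorization identity; your explicit Riesz--Markov step is a harmless addition. The density detour in your last paragraph is unnecessary once that theorem is cited for general $W^{1,2}$ functions, as you already do in your first step, and it would not avoid the theorem anyway: even for sums of products, the decomposition and the slicewise bound needed to pass to the limit are the content of tensorization rather than one-factor calculus, and $\Phi$ itself is not a product.
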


\begin{proof}
Let \(0\le\varphi\in\Lip_c(U_1\times U_2)\).  For a.e. \(x_2\), the section
\(x_1\mapsto\varphi(x_1,x_2)\) is a nonnegative compactly supported
Lipschitz test in \(U_1\), and for a.e. \(x_1\) the analogous statement holds
for \(x_2\mapsto\varphi(x_1,x_2)\) in \(U_2\).  Applying the two sliced inequalities and integrating in the other
variable gives
\begin{align*}
 -\int_{U_2}\int_{U_1}\ip{\nabla_{1}G}{\nabla_{1}\varphi}\,\dd\m_1\,\dd\m_2
 &\le C_1\int_{U_1\times U_2}\varphi\,\dd(\m_1\otimes\m_2),\\
 -\int_{U_1}\int_{U_2}\ip{\nabla_{2}G}{\nabla_{2}\varphi}\,\dd\m_2\,\dd\m_1
 &\le C_2\int_{U_1\times U_2}\varphi\,\dd(\m_1\otimes\m_2).
\end{align*}
By Ambrosio--Gigli--Savar\'e
\cite[Theorem~6.17]{AmbrosioGigliSavareDuke2014}, the product Cheeger energy
satisfies the tensorization identity for the
minimal weak upper gradient, so the sum of the two left-hand sides is
\(-\int\ip{\nabla G}{\nabla\varphi}\,\dd(\m_1\otimes\m_2)\).  The result is
\eqref{eq:sliced-product-lap-bound} under
\cref{def:measure-valued-laplacian}.
\end{proof}

\medskip

In a squared-distance barrier \(\alpha\dist_i^2(\cdot,p)/2\), we call \(p\) its
\emph{vertex} and \(\alpha\) its \emph{opening}.

\begin{lemma}[Separated quadratic barriers]\label{lem:product-barrier}
Let \(U_i\Subset X_i\) be bounded open sets and let \(D_i\Subset X_i\) be
compact vertex sets.  Define the local radii and constants
\begin{equation}
  R_i:=\sup\left\{\dist_i(x_i,p_i):x_i\in U_i,\ p_i\in D_i\right\},
  \qquad
  C_i:=\mathsf c_{K_i,N_i}(R_i),
  \qquad i=1,2.
\end{equation}
If
\begin{equation}
  \psi_i(x_i)=\sum_{j=1}^M\alpha_j\frac12\dist_i^2(x_i,p_{ij}),
  \qquad p_{ij}\in D_i,\quad \alpha_j\ge0,
\end{equation}
then
\begin{equation}
  \boldsymbol\Delta_i\psi_i
  \le\left(\sum_{j=1}^M\alpha_j\right)C_i\m_i
  \qquad\hbox{on }U_i.
\end{equation}
Consequently,
\begin{equation}
  \DeltaZ(\psi_1\oplus\psi_2)
  \le\left(\sum_{j=1}^M\alpha_j\right)(C_1+C_2)\m_{\Z}
  \qquad\hbox{on }U_1\times U_2.
\end{equation}
\end{lemma}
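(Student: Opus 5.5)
The bound on each factor comes from applying \cref{prop:squared-distance-comparison} to every summand and adding. The product bound then follows from the tensorized identity \eqref{eq:product-lap}.

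\emph{Step 1 (single barrier).} Fix \(i\) and a vertex \(p_{ij}\in D_i\). By the definition of \(R_i\), every \(x_i\in U_i\) satisfies \(\dist_i(x_i,p_{ij})\le R_i\). So \(U_i\) lies in the closed ball \(\overline B_{R_i}(p_{ij})\), and hence in \(B_{R}(p_{ij})\) for every \(R>R_i\). \Cref{prop:squared-distance-comparison} then gives
\[
\boldsymbol\Delta_i\bigl(\tfrac12\dist_i^2(\cdot,p_{ij})\bigr)\le\mathsf c_{K_i,N_i}(R)\,\m_i\qquad\text{on }U_i .
\]
The function \(R\mapsto\mathsf c_{K_i,N_i}(R)\) is continuous and nondecreasing: it is constant for \(K_i\ge0\), and for \(K_i<0\) the function \(x\coth x\) is increasing. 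Letting \(R\downarrow R_i\) therefore yields the bound with constant \(C_i\). Both sides are Radon measures tested against fixed nonnegative Lipschitz test functions, so this limit is harmless. Each \(\tfrac12\dist_i^2(\cdot,p_{ij})\) is locally Lipschitz, hence lies in \(W^{1,2}_{\loc}(U_i)\), and it belongs to \(D_{\loc}(\boldsymbol\Delta_i,U_i)\) by that proposition.

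\emph{Step 2 (sum over \(j\)).} The set \(D_{\loc}(\boldsymbol\Delta_i,U_i)\) is a linear space, and \cref{def:measure-valued-laplacian} is linear in \(f\). Since \(\alpha_j\ge0\), multiplying the inequality for each vertex by \(\alpha_j\) and summing over \(j\) gives
\[
\boldsymbol\Delta_i\psi_i\le\Bigl(\sum_j\alpha_j\Bigr)C_i\,\m_i \qquad\text{on }U_i .
\]

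\emph{Step 3 (product).} The function \(\psi_1\oplus\psi_2\) is locally Lipschitz on \(U_1\times U_2\). By \eqref{eq:product-lap},
\[
\DeltaZ(\psi_1\oplus\psi_2)=(\boldsymbol\Delta_1\psi_1)\otimes\m_2+\m_1\otimes(\boldsymbol\Delta_2\psi_2).
\]
Tensoring with the nonnegative measures \(\m_2\) and \(\m_1\) preserves the inequalities from Step 2. Adding the two bounds gives \(\bigl(\sum_j\alpha_j\bigr)(C_1+C_2)\,\m_{\Z}\).

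Alternatively, Step 3 follows from \cref{lem:sliced-product-lap} with \(G=\psi_1\oplus\psi_2\). Every slice of \(G\) is a translate of \(\psi_1\) or of \(\psi_2\) by a constant, so the sliced hypotheses of that lemma are exactly the factor bounds from Step 2.

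The only delicate point is the boundary case in Step 1: points of \(U_i\) may lie at distance exactly \(R_i\) from a vertex, since \(R_i\) is a supremum. This is handled either by the monotone limit \(R\downarrow R_i\) above, or by observing that the proof of \cref{prop:squared-distance-comparison} only uses \(\dist_p\le R\) on \(U\).
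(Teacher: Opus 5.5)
Your proof is correct and follows the paper's argument: apply the squared-distance Laplacian comparison to each vertex $p_{ij}$, sum the resulting bounds with the nonnegative weights $\alpha_j$, and then use the product-Laplacian identity \eqref{eq:product-lap}. Your treatment of points of $U_i$ at distance exactly $R_i$ from a vertex, via continuity and monotonicity of $R\mapsto\mathsf c_{K_i,N_i}(R)$, is a harmless refinement that the paper leaves implicit.
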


\begin{proof}
Apply \cref{prop:squared-distance-comparison} to each vertex \(p_{ij}\), add
the resulting measure inequalities with nonnegative coefficients, and then
apply the product-Laplacian identity of
\cite[Theorem~6.18]{AmbrosioGigliSavareDuke2014}.
\end{proof}

\begin{lemma}[Diagonal squared-distance bound on a product]
\label{lem:diagonal-square}
Let \((X,\dist,\m)\) be \(\RCD(K,N)\), and let \(U,V\Subset X\) be
bounded and open.  Put \(\Z:=X\times X\) and
\(\m_{\Z}:=\m\otimes\m\).  Set
\(R_{UV}:=\sup\{\dist(x,y):x\in U,\ y\in V\}\).
Define \(C_{\mathrm{diag}}(K,N,R_{UV}):=2\mathsf c_{K,N}(R_{UV})\).
For \(G(x,y):=\dist^2(x,y)/2\) on \(U\times V\),
\begin{equation}\label{eq:diagonal-square-measure}
  \DeltaZ G
  \le C_{\mathrm{diag}}(K,N,R_{UV})\,\m_{\Z}.
\end{equation}
\end{lemma}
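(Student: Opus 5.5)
The plan is to reduce the diagonal bound to the sliced product criterion in \cref{lem:sliced-product-lap}, with constants \(C_1=C_2=\mathsf c_{K,N}(R_{UV})\).

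\emph{Regularity of \(G\).} The function \(G(x,y)=\dist^2(x,y)/2\) is locally Lipschitz and bounded on \(U\times V\), since \(U\) and \(V\) are bounded. Hence \(G\in W^{1,2}_{\loc}(U\times V)\cap L^\infty_{\loc}(U\times V)\), as the lemma requires.

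\emph{Slice bounds.} Fix \(y\in V\). The slice \(G(\cdot,y)=\frac12\dist_y^2\) on \(U\) satisfies \(U\subset B_{R}(y)\) for every \(R>R_{UV}\). By \cref{prop:squared-distance-comparison},
\[
 \boldsymbol\Delta_x G(\cdot,y)\le \mathsf c_{K,N}(R)\,\m \qquad\text{on }U .
\]
Here we may take \(R\downarrow R_{UV}\). This is allowed because \(\mathsf c_{K,N}\) is continuous and nondecreasing in \(R\); indeed \(t\coth t\) is increasing. Alternatively, one can read the proof of \cref{prop:squared-distance-comparison} with \(\dist_y\le R_{UV}\) on \(U\), which gives \(\mathsf c_{K,N}(R_{UV})\) directly. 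By symmetry of \(\dist\), the same bound holds for every \(x\in U\):
\[
 \boldsymbol\Delta_y G(x,\cdot)\le \mathsf c_{K,N}(R_{UV})\,\m \qquad\text{on }V .
\]
These bounds hold for every slice, not merely almost every slice.

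\emph{Conclusion.} Applying \cref{lem:sliced-product-lap} with \(X_1=X_2=X\), \(U_1=U\), \(U_2=V\) gives
\[
 \DeltaZ G\le 2\,\mathsf c_{K,N}(R_{UV})\,\m_{\Z}=C_{\mathrm{diag}}\,\m_{\Z}.
\]
The product-measure tensorization needed there is already recorded via \cite[Theorems~6.17--6.18]{AmbrosioGigliSavareDuke2014}.

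\emph{Main obstacle.} The only delicate point is the boundary case where \(R_{UV}\) is attained only as a supremum, since \(U\) may then fail to lie in the open ball \(B_{R_{UV}}(y)\). Continuity of \(\mathsf c_{K,N}\) in \(R\) resolves this. Everything else is a direct citation of earlier results.
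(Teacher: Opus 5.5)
Your proposal is correct and is essentially the paper's proof. The paper likewise obtains the slice bounds \(\boldsymbol\Delta_xG(\cdot,y)\le\mathsf c_{K,N}(R_{UV})\m\) on \(U\) and \(\boldsymbol\Delta_yG(x,\cdot)\le\mathsf c_{K,N}(R_{UV})\m\) on \(V\) from \cref{prop:squared-distance-comparison}, and then applies \cref{lem:sliced-product-lap} with \(C_1=C_2=\mathsf c_{K,N}(R_{UV})\). Your remark that \(R_{UV}\) may be only a supremum is a harmless refinement of a point the paper passes over silently; you handle it by right-continuity of \(\mathsf c_{K,N}\), or by using \(\dist_y\le R_{UV}\) directly in the comparison argument.
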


\begin{proof}
For fixed \(y\in V\), \cref{prop:squared-distance-comparison} gives
\(\boldsymbol\Delta_xG(\cdot,y)\le\mathsf c_{K,N}(R_{UV})\m\) on \(U\).
For fixed \(x\in U\), the same comparison in the second variable gives
\(\boldsymbol\Delta_yG(x,\cdot)\le\mathsf c_{K,N}(R_{UV})\m\) on \(V\).
Applying \cref{lem:sliced-product-lap} with these two bounds yields
\eqref{eq:diagonal-square-measure}.
\end{proof}

\begin{lemma}[Power-distance bound on a product]
\label{lem:diagonal-power}
Under the assumptions of \cref{lem:diagonal-square}, let \(p\ge2\) be an
integer.  Then
\begin{equation}\label{eq:diagonal-power-measure}
 \DeltaZ\!\left(\frac{\dist^p(x,y)}p\right)
 \le C_p(K,N,R_{UV})\,\m_{\Z}
 \quad\text{on }U\times V,
\end{equation}
where one may take
\[
 C_p(K,N,R_{UV})
 :=\bigl[C_{\mathrm{diag}}(K,N,R_{UV})+2(p-2)\bigr]R_{UV}^{p-2}.
\]
\end{lemma}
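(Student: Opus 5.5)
Write \(G=\dist^2(x,y)/2\), so that \(\dist^p/p=\Phi(G)\) with \(\Phi(g):=(2g)^{p/2}/p\). Since \(p\ge2\), \(\Phi\) is \(C^2\) on \([0,\infty)\) (for \(p=2\) it is linear, for \(p\ge3\) it has \(\Phi'(0)=0\)), with
\[
 \Phi'(g)=(2g)^{(p-2)/2}=\dist^{p-2},\qquad
 \Phi''(g)=(p-2)(2g)^{(p-4)/2}=(p-2)\dist^{p-4}.
\]
I will apply the measure-valued chain rule of Gigli \cite[Proposition~4.11]{Gigli2015} on the \(\RCD(2K,2N)\) product \(\Z\). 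Since \(G\) is locally Lipschitz, \(G\in D_{\loc}(\DeltaZ,U\times V)\) by \cref{lem:diagonal-square}, and \(\Phi\in C^2\) on the bounded range of \(G\), this gives
\[
 \DeltaZ\Phi(G)=\Phi'(G)\,\DeltaZ G+\Phi''(G)\,|\mathrm D_{\Z}G|^2\,\m_{\Z}.
\]

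The first term is handled directly. Because \(\Phi'(G)=\dist^{p-2}\ge0\) is continuous and bounded by \(R_{UV}^{p-2}\), multiplying the measure inequality \eqref{eq:diagonal-square-measure} by this nonnegative continuous density keeps its direction, and gives \(\Phi'(G)\DeltaZ G\le C_{\mathrm{diag}}R_{UV}^{p-2}\m_{\Z}\). One point needs care: the singular part of \(\DeltaZ G\) is nonpositive, so it stays nonpositive after multiplication by a nonnegative function.

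For the second term I will use the tensorization of the minimal weak upper gradient from \cite[Theorem~6.17]{AmbrosioGigliSavareDuke2014}:
\[
 |\mathrm D_{\Z}G|^2=|\mathrm D_xG|^2+|\mathrm D_yG|^2\le2\dist^2(x,y),
\]
because \(|\mathrm D_x\dist(\cdot,y)|\le1\). Hence \(\Phi''(G)|\mathrm D_{\Z}G|^2\le2(p-2)\dist^{p-2}\le2(p-2)R_{UV}^{p-2}\). Adding the two bounds gives exactly the constant \(C_p\) in \eqref{eq:diagonal-power-measure}.

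The main obstacle is justifying the chain rule near the diagonal, where \(\dist^{p-4}\) is singular for \(p=3\). I would avoid the singularity by applying the chain rule to the smooth regularization \(\Phi_\varepsilon(g)=(2g+\varepsilon)^{p/2}/p\). Its derivatives are \(\Phi_\varepsilon'=(2g+\varepsilon)^{(p-2)/2}\ge0\) and \(\Phi_\varepsilon''=(p-2)(2g+\varepsilon)^{(p-4)/2}\ge0\). The same estimates as above give the bound with \(R_{UV}^2\) replaced by \(R_{UV}^2+\varepsilon\), using \(\Phi_\varepsilon''\cdot2\dist^2\le2(p-2)(\dist^2+\varepsilon)^{(p-2)/2}\). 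Finally I let \(\varepsilon\downarrow0\) in the weak formulation. Here \(\Phi_\varepsilon(G)\to\dist^p/p\) in \(W^{1,2}_{\loc}\) with uniformly bounded gradients, so the test integrals converge.
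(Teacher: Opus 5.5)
Your proposal is correct and follows essentially the paper's proof. Both apply Gigli's chain rule to \(\Phi(G)=(2G)^{p/2}/p\), multiply the bound \(\DeltaZ G\le C_{\mathrm{diag}}\m_{\Z}\) from \cref{lem:diagonal-square} by \(\Phi'(G)=\dist^{p-2}\le R_{UV}^{p-2}\), use the tensorized estimate \(|\mathrm D_{\Z}G|^2\le2\dist^2\), and regularize at \(p=3\), where \(\Phi''(g)=(2g)^{-1/2}\) is not continuous at \(0\), contrary to your opening claim that \(\Phi\) is \(C^2\) for all \(p\ge2\). The only real difference is the regularizer: the paper's \(\Phi_\delta(s)=\frac{(2s+\delta)^{3/2}-\delta^{3/2}}{3}-\sqrt\delta\,s\) satisfies \(0\le\Phi_\delta'\le\sqrt{2s}\) and gives exactly \(C_p\) uniformly in \(\delta\), whereas yours gives \((R_{UV}^2+\varepsilon)^{(p-2)/2}\) in place of \(R_{UV}^{p-2}\), which is equally fine once \(\varepsilon\downarrow0\) (also, the product is \(\RCD(K,2N)\), not \(\RCD(2K,2N)\), though this plays no role).
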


\begin{proof}
Set \(G:=\dist^2/2\).  The product upper-gradient calculus gives
\(|\mathrm DG|^2\le2\dist^2=4G\) almost everywhere.  Apply the
measure-valued chain rule \cite[Proposition~4.11]{Gigli2015} to
\(\Phi(G):=(2G)^{p/2}/p\), using
\(\Phi'(G)=\dist^{p-2}\) and \(\Phi''(G)=(p-2)\dist^{p-4}\), and combine it
with \eqref{eq:diagonal-square-measure} from
\cref{lem:diagonal-square}.  Since \(\dist\le R_{UV}\) on \(U\times V\),
this gives \eqref{eq:diagonal-power-measure}.  When \(p=3\), use instead
\[
 \Phi_\delta(s)
 :=\frac{(2s+\delta)^{3/2}-\delta^{3/2}}3-\sqrt\delta\,s.
\]
Then \(0\le\Phi_\delta'(s)\le\sqrt{2s}\) and
\(\Phi_\delta''(s)=(2s+\delta)^{-1/2}\).  The preceding calculation gives
the stated bound uniformly in \(\delta\).  Moreover,
\(\Phi_\delta(G)\to\dist^3/3\) locally in \(L^2\), while
\(0\le\Phi_\delta'(G)\le\dist\) and
\(\Phi_\delta'(G)\to\dist\) almost everywhere.  The local boundedness of
\(\dist\), the Sobolev chain rule, and dominated convergence therefore give
strong convergence in \(W^{1,2}_{\loc}(U\times V)\).  Testing the uniform
distributional inequality and passing to the limit proves
\eqref{eq:diagonal-power-measure} for \(p=3\).
\end{proof}

For a continuous function with at most quadratic growth, define
\begin{equation}\label{eq:heat-flow-upper-laplacian}
  \HeatDelta f(x)
  :=\limsup_{s\downarrow0}
  \frac{\Pheat_s f(x)-f(x)}s.
\end{equation}
By \cref{lem:RCD-tail}, the limsup in
\eqref{eq:heat-flow-upper-laplacian} is unchanged if \(f\) is replaced by
another function of at most quadratic growth that agrees with it near \(x\).

\subsection{Sliced elliptic contact selection}
\label{subsec:sliced-contact-selection}

For a bounded open set $U\Subset X$, a vertex set $D\subset X$, an
opening $a>0$, and $w\in C(\overline{U})$, write
\[
 \mathcal A_a(D,U,w):=
 \left\{x\in \overline{U}:\
 \begin{aligned}
 &\text{there is }y\in D\text{ such that}\\[-1mm]
 &w(x)+\frac a2\dist^2(x,y)
   =\min_{z\in\overline{U}}\left(w(z)+\frac a2\dist^2(z,y)\right)
 \end{aligned}
 \right\}.
\]
The inclusion
$\mathcal A_a(D_1,U,w)\subset\mathcal A_a(D_2,U,w)$ holds whenever
$D_1\subset D_2$.

We use the localized ABP contact estimate
\cite[Theorem~4.3, equations~(4.26)--(4.36)]{MondinoSemola2026} in the
following form.  For the sharp
ABP estimate on \RCD{} spaces obtained by an optimal-transport argument, see
the first author's work \cite[Theorem~1.4]{Han2026}.

\begin{proposition}[Localized contact estimate, ABP estimate]
\label{prop:MS-localized-contact}
Let $U\Subset X$ be bounded and open, and let
$w\in W^{1,2}_{\loc}(U)\cap C(\overline{U})$ satisfy
$\boldsymbol\Delta w\le L\m$ for some $L>0$.  Let $D\subset X$ be compact
and fix $a>0$.  Assume that $w$ attains a strict minimum in $U$, that is,
\(\min_{\overline{U}}w<\min_{\partial U}w\), and that
$\mathcal A_a(D,U,w)\subset U$.  Then
\begin{equation}
 \m(D)\le C_{\mathrm E}\,\m\bigl(\mathcal A_a(D,U,w)\bigr),
\end{equation}
where, with \(K_-:=\max\{-K,0\}\), one may take
\begin{equation}\label{eq:localized-contact-constant}
 C_{\mathrm E}
 :=\exp\!\left(\frac{K_-\operatorname{osc}_{\overline{U}}w+L}{a}\right)
 <\infty.
\end{equation}
In particular, this constant is independent of the compact vertex set
\(D\).
\end{proposition}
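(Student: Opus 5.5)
The plan is to reproduce the optimal-transport/ABP argument of Mondino--Semola in the localized form required here. The crucial observation is that the constant in \eqref{eq:localized-contact-constant} does not depend on $D$.

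\textbf{Step 1: the contact map.} For each vertex $y\in D$ choose a minimum point $x=T(y)$ of $z\mapsto w(z)+\frac a2\dist^2(z,y)$ over $\overline U$. The hypothesis $\mathcal A_a(D,U,w)\subset U$ forces $T(y)\in U$, so each contact is interior and $T(D)\subset\mathcal A_a(D,U,w)$. Since $\overline U$ and $D$ are compact, a measurable selection exists. Up to the factor $a$, the minimization makes $y$ a $c$-transform partner of $x$. Concretely, the function
\[
\varphi(y):=\min_{z}\Bigl(w(z)+\tfrac a2\dist^2(z,y)\Bigr)
\]
is semiconcave, and $x\mapsto -w(x)/a$ is $c$-concave on the contact set. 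So $y$ is obtained from $x$ along a geodesic of the Kantorovich potential $-w/a$; morally $y=\exp_x(\nabla w(x)/a)$.

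\textbf{Step 2: the transport.} Consider the normalized measure $\m|_{\mathcal A}/\m(\mathcal A)$ and push it through the $c$-optimal map induced by the potential $-w/a$. The image covers $D$. I would apply this with Gigli's / Cavalletti--Mondino's theory of optimal maps on $\RCD$ spaces, localizing to the interior contact set. The strict-minimum hypothesis, together with the restriction to interior contacts, guarantees that the potential is defined and $c$-concave on a neighborhood.

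\textbf{Step 3: Jacobian bound.} The key input is the Jacobian (Monge--Amp\`ere type) estimate along the interpolation from the contact set to $D$. The $\mathrm{CD}(K,N)$ distortion coefficients, combined with the Laplacian bound $\boldsymbol\Delta w\le L\m$ (used in the form valid on the contact set), give
\[
\log\frac{\dd\m\text{-Jacobian at }y}{\dd\m\text{ at }x}\le \frac{L}{a}+\frac{K_-\dist^2(x,y)}{\ldots}.
\]
The transport distance is controlled by $\frac a2\dist^2(x,y)\le \operatorname{osc}_{\overline U}w$, which follows by comparing the value at $x$ with the value at the competitor $z=y$ (or at the boundary minimum). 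This turns the curvature term into $K_-\operatorname{osc}w/a$.

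\textbf{Step 4: conclusion.} Integrating the Jacobian bound yields $\m(D)\le e^{(K_-\operatorname{osc}w+L)/a}\m(\mathcal A_a(D,U,w))$.

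The main obstacle is Step 3. It requires a rigorous pointwise Jacobian/Laplacian comparison on a nonsmooth space at contact points where $w$ is only Sobolev with a measure Laplacian bound. I would not redo it. Instead I would verify that the hypotheses of \cite[Theorem~4.3]{MondinoSemola2026}, namely interior contacts, a strict minimum, and a one-sided Laplacian bound (interpreted through the viscosity/heat-flow equivalence of \cite{GigliMondinoSemola2024}), match ours, and cite its estimates (4.26)--(4.36) verbatim. Independence from $D$ is then immediate, since no step uses any property of $D$ beyond compactness.
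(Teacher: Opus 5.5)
This is essentially the paper's own route. The paper gives no independent argument either: it imports the localized contact estimate from the proof of Mondino--Semola's Theorem~4.3, equations (4.26)--(4.36), which is exactly where your proposal ends, so your Steps~1--3 serve only as a heuristic gloss. One caution on that gloss: the bound $\frac a2\dist^2(x,y)\le\operatorname{osc}_{\overline U}w$ via the competitor $z=y$ needs $y\in\overline U$ (true in the paper's application, where the vertex set lies inside $U$), and comparison with a boundary point does not give it, so for $K<0$ your sketch uses more about $D$ than compactness.
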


\medskip

The parabolic contact statement follows by applying this elliptic estimate on
individual time slices.  Fix an $\RCD(K,N)$ space $(X,\dist,\m)$, a bounded
open set $U\Subset X$ with $\m(\partial U)=0$, and an interval
$I=[t_-,t_+]$.  Let $u\in C(\overline{U}\times I)$ and suppose that, for some
constant $L_{\mathrm{time}}>0$,
\begin{equation}\label{eq:uniform-time-lipschitz-contact}
 |u(x,t)-u(x,s)|\le L_{\mathrm{time}}|t-s|
 \quad\text{for all }x\in\overline{U}\text{ and }s,t\in I,
\end{equation}
and, for almost every $t\in I$,
\begin{equation}\label{eq:slice-lap-assumption}
 u^t\in W^{1,2}_{\loc}(U),\qquad
 \boldsymbol\Delta u^t\le L\m
 \quad\text{on }U.
\end{equation}
Here $L>0$ is independent of $t$.
Assume that $u$ attains its minimum at
$(x_*,t_*)\in U\times(t_-,t_+]$.  Put $u_*:=u(x_*,t_*)$ and assume that
\begin{equation}\label{eq:sliced-boundary-gap}
 g_*:=\min\left\{
 \inf_{\partial U\times I}u,
 \inf_{\overline{U}\times\{t_-\}}u
 \right\}-u_*>0.
\end{equation}
Let $D\subset X$ be compact and let $a>0$ satisfy $\m(D)>0$ and
\begin{equation}\label{eq:sliced-vertex-smallness}
 \sup_{y\in D}\frac a2\dist^2(x_*,y)\le \frac{g_*}{8}.
\end{equation}
For $y\in D$ and $t\in[t_-,t_*]$, set
\[
 m_y(t):=\min_{z\in\overline{U}}
 \left(u(z,t)+\frac a2\dist^2(z,y)\right),
 \qquad
 M_y(t):=\min_{s\in[t_-,t]}m_y(s),
\]
and define
\[
 D_t:=\left\{y\in D:M_y(t)=m_y(t)\le u_*+\frac{g_*}{2}\right\}.
\]
Thus $D_t$ consists of the vertices whose running minimum is attained on the
current slice at a level not exceeding $u_*+g_*/2$.
Define
\[
 \Gamma^-:=\{(x,t)\in\overline{U}\times[t_-,t_*]:
 x\in\mathcal A_a(D_t,U,u^t)\}.
\]
Whenever $D_t\ne\varnothing$, the level restriction gives
$\mathcal A_a(D_t,U,u^t)\Subset U$ and
$\min_{\overline{U}}u^t<\min_{\partial U}u^t$.  Thus the two localization
hypotheses of \cref{prop:MS-localized-contact} hold on every active slice.

\begin{proposition}[Sliced elliptic contact selection]
\label{prop:sliced-contact-selection}
Under the preceding notation and assumptions, $\Gamma^-$ is Borel and has
positive $\m\otimes\mathcal L^1$-measure.  More precisely,
\begin{equation}
 (\m\otimes\mathcal L^1)(\Gamma^-)
 \ge \frac{3g_*}{8L_{\mathrm{time}}C_{\mathrm E}}\,\m(D)>0,
\end{equation}
where $C_{\mathrm E}$ is the finite constant in
\cref{prop:MS-localized-contact}.
Consequently, $\Gamma^-$ intersects every prescribed full-measure Borel subset
of $U\times(t_-,t_*)$.
\end{proposition}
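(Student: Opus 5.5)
The plan is to write \((\m\otimes\mathcal L^1)(\Gamma^-)=\int_{t_-}^{t_*}\m(\Gamma^-_t)\,\dd t\) by Fubini, where \(\Gamma^-_t=\mathcal A_a(D_t,U,u^t)\). On each active slice we apply \cref{prop:MS-localized-contact}, which gives \(\m(\Gamma^-_t)\ge C_{\mathrm E}^{-1}\m(D_t)\). Fubini once more reduces the claim to a lower bound on \(\int_{t_-}^{t_*}\m(D_t)\,\dd t=\int_D\mathcal L^1(\{t:y\in D_t\})\,\dd\m(y)\). It therefore suffices to show that every \(y\in D\) satisfies
\[
\mathcal L^1(\{t\in[t_-,t_*]:y\in D_t\})\ge\frac{3g_*}{8L_{\mathrm{time}}}.
\]

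\textbf{The one-vertex estimate.} Fix \(y\in D\). By \eqref{eq:uniform-time-lipschitz-contact}, \(t\mapsto m_y(t)\) is \(L_{\mathrm{time}}\)-Lipschitz, since a minimum of uniformly Lipschitz functions is Lipschitz. Hence \(M_y\) is Lipschitz and nonincreasing, with \(|M_y'|\le L_{\mathrm{time}}\) a.e.

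We record the two endpoint values. At \(t_-\) we have \(m_y(t_-)\ge\inf_{\overline U\times\{t_-\}}u\ge u_*+g_*\). At \(t_*\), evaluating at \(z=x_*\) and using \eqref{eq:sliced-vertex-smallness} gives \(m_y(t_*)\le u_*+g_*/8\). Thus \(M_y\) drops from at least \(u_*+g_*\) to at most \(u_*+g_*/8\).

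Let \(t_1\) be the first time at which \(M_y=u_*+g_*/2\). On \([t_1,t_*]\) the function \(M_y\) decreases by at least \(3g_*/8\). At a.e. time where \(M_y'(t)<0\), the running minimum is attained at the current time, so \(M_y(t)=m_y(t)\); for \(t\ge t_1\) this value is at most \(u_*+g_*/2\), hence \(y\in D_t\). This gives
\[
\frac{3g_*}{8}\le\int_{\{t\in[t_1,t_*]:M_y'<0\}}|M_y'|\,\dd t\le L_{\mathrm{time}}\,\mathcal L^1(\{t:y\in D_t\}),
\]
which is the required one-vertex bound.

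\textbf{Checking the hypotheses of \cref{prop:MS-localized-contact} on active slices.} Fix \(t\) with \(D_t\ne\varnothing\). We need the strict-minimum and contact-containment hypotheses; the paper asserts both just before the statement, and we verify them as follows.
\begin{itemize}
\item If \(x\in\mathcal A_a(D_t,U,u^t)\), then \(u(x,t)\le m_y(t)\le u_*+g_*/2\), which is strictly below \(\inf_{\partial U\times I}u\). Hence \(x\notin\partial U\).
\item Similarly, \(\min u^t\le u_*+g_*/2<\min_{\partial U}u^t\), so \(u^t\) attains a strict interior minimum.
\item The constant \(C_{\mathrm E}\) depends only on \(\operatorname{osc}u^t\le\operatorname{osc}_{\overline U\times I}u\) and on \(L\), so one uniform constant serves all slices.
\item The slice Laplacian bound \eqref{eq:slice-lap-assumption} holds for a.e. \(t\), which suffices for the time integral.
\end{itemize}
The proposition requires a compact vertex set, whereas \(D_t\) is only closed in \(D\). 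Since \(y\mapsto M_y(t)\) and \(y\mapsto m_y(t)\) are continuous in \(y\) (uniform continuity of \(\dist^2\) on bounded sets), \(D_t\) is a closed subset of the compact set \(D\), hence compact.

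\textbf{Measurability, which I expect to be the main obstacle.} The functions \((y,t)\mapsto m_y(t)\) and \((y,t)\mapsto M_y(t)\) are jointly continuous. Hence \(\{(y,t):y\in D_t\}\) is closed, and \(t\mapsto\m(D_t)\) is Borel. For \(\Gamma^-\), write it as the projection onto \((x,t)\) of the closed set
\[
\Bigl\{(x,t,y):y\in D_t,\ u(x,t)+\tfrac a2\dist^2(x,y)=m_y(t)\Bigr\}
\]
in the compact space \(\overline U\times I\times D\). The projection of a compact set is compact, so \(\Gamma^-\) is closed and in particular Borel.

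Combining the three steps,
\[
(\m\otimes\mathcal L^1)(\Gamma^-)\ge C_{\mathrm E}^{-1}\int\m(D_t)\,\dd t\ge\frac{3g_*}{8L_{\mathrm{time}}C_{\mathrm E}}\,\m(D)>0.
\]
The final assertion follows because \(\Gamma^-\cap(\overline U\times\{t_*\})\) and the sets \(\partial U\times I\) are null (using \(\m(\partial U)=0\)). So \(\Gamma^-\cap(U\times(t_-,t_*))\) has positive measure and meets any prescribed full-measure Borel set.
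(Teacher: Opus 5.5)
Your proposal is correct and follows the paper's proof essentially step by step. Both arguments use the joint continuity of $m_y$ and $M_y$, obtain $\Gamma^-$ as the projection of a compact set, prove the one-vertex running-minimum bound $\mathcal L^1(\{t:y\in D_t\})\ge 3g_*/(8L_{\mathrm{time}})$, apply Fubini, and apply the localized ABP estimate slicewise with one uniform constant $C_{\mathrm E}$. The only difference is cosmetic: you integrate $|M_y'|$ after the first time $M_y$ reaches $u_*+g_*/2$, whereas the paper integrates the derivative of the truncated running minimum $\min\{M_y,u_*+g_*/2\}$, and the two are equivalent.
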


\begin{proof}
The function $(y,t)\mapsto m_y(t)$ is continuous: by the properness in
\cref{sec:prelim}, \(\overline{U}\) is compact, and \(m_y(t)\) is the minimum
there of a jointly continuous function.  Uniform
continuity on $D\times[t_-,t_*]$ then shows that $(y,t)\mapsto M_y(t)$ is also
continuous.  In particular, every $D_t$ is compact and
\[
 \left\{(y,t)\in D\times[t_-,t_*]:y\in D_t\right\}
\]
is closed, hence Borel.

The contact set is measurable because
\[
 \left\{(x,t,y)\in\overline{U}\times[t_-,t_*]\times D:
 u(x,t)+\frac a2\dist^2(x,y)=M_y(t)
 \le u_*+\frac{g_*}{2}\right\}
\]
is compact and its projection onto $\overline{U}\times[t_-,t_*]$ is exactly
$\Gamma^-$, so $\Gamma^-$ is compact and, in particular, Borel.

By \cref{subsec:sliced-contact-selection}, specifically
\eqref{eq:uniform-time-lipschitz-contact}, the function $m_y$ is
$L_{\mathrm{time}}$-Lipschitz for each fixed $y$, and its running minimum $M_y$ is
nonincreasing and $L_{\mathrm{time}}$-Lipschitz.  Indeed, if \(t_1<t_2\),
any new minimum on $[t_1,t_2]$ is at least
$M_y(t_1)-L_{\mathrm{time}}(t_2-t_1)$.
Define the truncated running minimum
\(M_y^{\mathrm{tr}}(t):=\min\{M_y(t),u_*+g_*/2\}\).
It is absolutely continuous in $t$.  If $M_y(t)<u_*+\frac{g_*}{2}$ but $m_y(t)>M_y(t)$, then
the strict gap and continuity imply that $M_y$ is locally constant at $t$.
Thus, at almost every $t$ where $M_y^{\mathrm{tr}}$ is differentiable, a negative
derivative can occur only when $M_y(t)=m_y(t)\le u_*+\frac{g_*}{2}$, namely when
$y\in D_t$.  Since the Lipschitz constant is $L_{\mathrm{time}}$,
\begin{equation}\label{eq:truncated-running-minimum}
 -(M_y^{\mathrm{tr}})'(t)
 \le L_{\mathrm{time}}\mathbf 1_{D_t}(y)
 \qquad\text{for a.e. }t\in[t_-,t_*].
\end{equation}

The boundary gap \eqref{eq:sliced-boundary-gap} from
\cref{subsec:sliced-contact-selection} gives
$M_y(t_-)=m_y(t_-)\ge u_*+g_*$, and therefore
$M_y^{\mathrm{tr}}(t_-)=u_*+\frac{g_*}{2}$.  Testing the running minimum at $(x_*,t_*)$ and
using \eqref{eq:sliced-vertex-smallness} from
\cref{subsec:sliced-contact-selection} gives
$M_y(t_*)\le u_*+\frac{g_*}{8}$, hence
$M_y^{\mathrm{tr}}(t_*)\le u_*+\frac{g_*}{8}$.  Integrating
\eqref{eq:truncated-running-minimum} yields
\begin{equation}\label{eq:running-minimum-measure}
 \int_{t_-}^{t_*}\mathbf 1_{D_t}(y)\,\dd t
 \ge \frac{3g_*}{8L_{\mathrm{time}}}.
\end{equation}
Fubini's theorem and \eqref{eq:running-minimum-measure} give
\begin{equation}\label{eq:active-vertices-integrated}
 \int_{t_-}^{t_*}\m(D_t)\,\dd t
 \ge \frac{3g_*}{8L_{\mathrm{time}}}\m(D).
\end{equation}

Fix a time $t$ for which the slice assumption
\eqref{eq:slice-lap-assumption} in
\cref{subsec:sliced-contact-selection} holds.  If $D_t$ is
empty, the required slice estimate is trivial.  Otherwise, for every
$y\in D_t$,
\[
 m_y(t)\le u_*+\frac{g_*}{2},
 \qquad
 \min_{z\in\partial U}
 \left(u^t(z)+\frac a2\dist^2(z,y)\right)\ge u_*+g_*.
\]
Hence every contact associated with a vertex in $D_t$ lies in $U$.  Moreover,
the nonnegativity of the quadratic term gives
\[
 \min_{\overline{U}}u^t\le m_y(t)\le u_*+\frac{g_*}{2}
 <\min_{\partial U}u^t,
\]
so $u^t$ itself attains a strict minimum in $U$.  Both localization
hypotheses of \cref{prop:MS-localized-contact} are therefore satisfied.
Furthermore,
\(\operatorname{osc}_{\overline{U}}u^t\le
\operatorname{osc}_{\overline{U}\times I}u\).
Thus \eqref{eq:localized-contact-constant} gives the same finite constant
$C_{\mathrm E}$ for every such $t$, and
\(\m(D_t)\le C_{\mathrm E}\m(\mathcal A_a(D_t,U,u^t))\).
By definition,
\[
 \Gamma_t^-:=\{x\in\overline{U}:(x,t)\in\Gamma^-\}
 =\mathcal A_a(D_t,U,u^t).
\]
Integrating over the full-measure set of times in
\eqref{eq:slice-lap-assumption} from
\cref{subsec:sliced-contact-selection}, and using
\eqref{eq:active-vertices-integrated} and Fubini's theorem, we obtain
\[
 \frac{3g_*}{8L_{\mathrm{time}}}\m(D)
 \le\int_{t_-}^{t_*}\m(D_t)\,\dd t
 \le C_{\mathrm E}\int_{t_-}^{t_*}\m(\Gamma_t^-)\,\dd t
 =C_{\mathrm E}(\m\otimes\mathcal L^1)(\Gamma^-).
\]
This proves the claimed lower bound.  Finally,
$\m(\partial U)=0$ and the two time endpoints have zero
$\mathcal L^1$-measure, so $\Gamma^-\cap(U\times(t_-,t_*))$ still has positive
measure.  Its intersection with any prescribed full-measure Borel subset of
$U\times(t_-,t_*)$ is therefore nonempty.
\end{proof}

\medskip

We next derive the product-space form of
\cref{prop:sliced-contact-selection} used at a Hamilton--Jacobi contact point.

Fix $\RCD(K_i,N_i)$ spaces $(X_i,\dist_i,\m_i)$, open sets
$U_i\subset X_i$, $i=1,2$, and an open interval $J\subset\mathbb R$.  Write
\(\Z:=X_1\times X_2\) and \(\m_{\Z}:=\m_1\otimes\m_2\), and let
\(\DeltaZ\) be the product measure-valued Laplacian; see
\cite[Theorems~6.17--6.18]{AmbrosioGigliSavareDuke2014}.  Suppose that
$h\in C(U_1\times U_2\times J)$ has a local minimum at
$p^\ast=(x_1^\ast,x_2^\ast,t^\ast)$.  Assume that on every compact product
cylinder the maps $t\mapsto h(x_1,x_2,t)$ have Lipschitz constants uniform
in $(x_1,x_2)$ and that, for almost every $t$,
\begin{equation}
 \DeltaZ h^t\le L\,\m_{\Z}
\end{equation}
for some $L>0$ independent of $t$, where
$h^t(\cdot,\cdot):=h(\cdot,\cdot,t)$.  Let
$E\subset U_1\times U_2\times J$ be a Borel set of full
$\m_{\Z}\otimes\mathcal L^1$-measure.

For $\kappa,a>0$ and vertices $\xi_i\in X_i$, put
\[
 \eta_0(t):=\frac\kappa2(t-t^\ast)^2,
 \qquad
 \eta_i(x_i):=\frac\kappa2\dist_i^2(x_i,x_i^\ast)
 +\frac a2\dist_i^2(x_i,\xi_i),\quad i=1,2.
\]
The radius $r_0$ below is a topological localization parameter, not a
parabolic radius: the time interval has length $r_0$, so the spatial and
temporal quadratic boundary gaps are both of order $\kappa r_0^2$.

\begin{proposition}[Normalized sliced \RCD{} perturbation]
\label{prop:normalized-rcd-perturbation}
For every $\delta>0$ there exists a sufficiently small localization radius
$r_0>0$, numbers $\kappa>0$ and $0<a\le\kappa$, vertices
$\xi_i\in B_{r_0/32}(x_i^\ast)$, and a point
\[
 p_0=(x_{1,0},x_{2,0},t_0)\in E,
 \qquad t_0\le t^\ast,
\]
such that $h+\eta_0+\eta_1+\eta_2$ attains a minimum at $p_0$ on the
backward cylinder
\[
 \overline{B}_{r_0}(x_1^\ast)\times
 \overline{B}_{r_0}(x_2^\ast)\times
 [t^\ast-r_0,t_0].
\]
The parameters can be chosen so that
\begin{equation}
 -\eta_0'(t_0)+\Delta_{\mathrm H,1}^{+}\eta_1(x_{1,0})
 +\Delta_{\mathrm H,2}^{+}\eta_2(x_{2,0})\le\delta.
\end{equation}
\end{proposition}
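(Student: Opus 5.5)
The plan is to apply \cref{prop:sliced-contact-selection} on the product space \(\Z=X_1\times X_2\) to the perturbed function \(\tilde h:=h+\eta_0+\frac\kappa2\dist_1^2(\cdot,x_1^\ast)+\frac\kappa2\dist_2^2(\cdot,x_2^\ast)\). The vertex perturbation \(\frac a2(\dist_1^2(\cdot,\xi_1)+\dist_2^2(\cdot,\xi_2))=\frac a2\dist_{\Z}^2(\cdot,(\xi_1,\xi_2))\) is then exactly the squared-distance perturbation appearing in \(\mathcal A_a\) on the \(\RCD(K_{12},N_{12})\) space \(\Z\).

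\textbf{Step 1 (localization).} Choose \(r_0\) small so that \(p^\ast\) is a minimum of \(h\) on \(\overline B_{r_0}(x_1^\ast)\times\overline B_{r_0}(x_2^\ast)\times[t^\ast-r_0,t^\ast]\), and so that this cylinder is compactly contained. Take \(U:=B_{r_0}(x_1^\ast)\times B_{r_0}(x_2^\ast)\); after shrinking \(r_0\) slightly we may assume \(\m_{\Z}(\partial U)=0\), since only countably many radii have charged spheres. Put \(I=[t^\ast-r_0,t^\ast]\). Because \(\eta_0\) and the \(\kappa\)-terms vanish only at \(p^\ast\), \(\tilde h\) has a strict minimum there. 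On \(\partial U\times I\) the \(\kappa\)-terms give a gap of at least \(\kappa r_0^2/2\), and on the bottom slice \(t=t^\ast-r_0\) the term \(\eta_0\) gives at least \(\kappa r_0^2/2\). So \(g_*\ge\kappa r_0^2/2\).

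\textbf{Step 2 (hypotheses of the sliced selection).} The time-Lipschitz bound on the compact cylinder holds with \(L_{\mathrm{time}}=\Lip(h)+\kappa r_0\). By hypothesis and \cref{lem:product-barrier}, for a.e.\ \(t\) we have \(\DeltaZ\tilde h^t\le(L+\kappa(C_1+C_2))\m_{\Z}\), so \eqref{eq:slice-lap-assumption} holds. Take \(D:=\overline B_{\rho}(x_1^\ast)\times\overline B_\rho(x_2^\ast)\) with \(\rho\le r_0/32\) and \(a\le\kappa\). Then \(\frac a2\dist_{\Z}^2(x^\ast,y)\le a\rho^2\le\kappa r_0^2/16\le g_*/8\), which is \eqref{eq:sliced-vertex-smallness}. \Cref{prop:sliced-contact-selection} now gives a point \((x_{1,0},x_{2,0},t_0)\in\Gamma^-\cap E\) with \(t_0\le t^\ast\). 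By definition of \(D_{t_0}\) there is a vertex \((\xi_1,\xi_2)\in D_{t_0}\) such that \(x_0\) minimizes \(\tilde h^{t_0}+\frac a2\dist_{\Z}^2(\cdot,\xi)\) over \(\overline U\), and \(M_\xi(t_0)=m_\xi(t_0)\). This means the slice minimum at \(t_0\) is the running minimum over \([t^\ast-r_0,t_0]\). Hence \(h+\eta_0+\eta_1+\eta_2\) is minimized at \(p_0\) on the backward cylinder.

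\textbf{Step 3 (the differential bound).} We have \(-\eta_0'(t_0)=\kappa(t^\ast-t_0)\le\kappa r_0\). For the heat-flow upper Laplacians, the equivalence of Gigli--Mondino--Semola, via \cref{prop:squared-distance-comparison}, gives pointwise \(\Delta^+_{\mathrm H,i}(\frac12\dist_i^2(\cdot,p))\le\mathsf c_{K_i,N_i}(R_i)\) on the relevant ball. By subadditivity of limsup, \(\Delta^+_{\mathrm H,i}\eta_i(x_{i,0})\le(\kappa+a)\mathsf c_{K_i,N_i}(2r_0)\le 2\kappa\,\mathsf c_{K_i,N_i}(2r_0)\). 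The total is therefore at most \(\kappa(r_0+2\mathsf c_1+2\mathsf c_2)\). Choosing \(\kappa\le\delta/(1+2\mathsf c_1+2\mathsf c_2)\) after fixing \(r_0\le1\), and then \(a=\kappa\), gives the bound \(\le\delta\). This choice of \(\kappa\) is independent of \(r_0\), so fixing \(r_0\) first is consistent.

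\textbf{Main obstacle.} The delicate step is Step 2: making sure the contact obtained in \(\Gamma^-\) really gives a minimum on the whole backward cylinder up to \(t_0\), not just on the slice. The condition \(M_\xi(t_0)=m_\xi(t_0)\) built into \(D_t\) is what supplies this. A second point to check is that the pointwise heat-flow Laplacian bound for \(\dist^2\) holds at every point, including the vertex, since \(x_{i,0}\) may coincide with \(\xi_i\) or \(x_i^\ast\). I would invoke \cite[Theorem~1.1]{GigliMondinoSemola2024} with the constant upper bound, which yields the bound at every point for the continuous function \(\dist_p^2\).
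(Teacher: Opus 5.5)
Your proposal is correct and follows the paper's proof essentially step by step. The shared steps are: the $\kappa$-quadratic localization in space and time giving the gap $g_*\ge\kappa r_0^2/2$; the application of the sliced contact selection on the product space with a product of small balls of radius at most $r_0/32$ as vertex set; and the Gigli--Mondino--Semola conversion of the distributional barrier bounds into heat-flow bounds, yielding $\kappa r_0+(\kappa+a)(C_1+C_2)\le\delta$. Your explicit check that $a=\kappa$ already satisfies the vertex smallness condition, and your explanation of why $M_\xi(t_0)=m_\xi(t_0)$ gives the minimum on the whole backward cylinder, only spell out what the paper states briefly.
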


\begin{proof}
For almost every radius \(r_0>0\), both
\(\m_1(\partial B_{r_0}(x_1^\ast))\) and
\(\m_2(\partial B_{r_0}(x_2^\ast))\) vanish: spheres of distinct radii are
disjoint, and bounded balls have finite measure, so only countably many
radii can have a boundary of positive measure.  Choose such an \(r_0\), small
enough that the closed product cylinder is contained in the neighborhood on
which \(p^\ast\) is a minimum.  For
$i=1,2$, let $C_i$ be the constant in \cref{lem:product-barrier} for the
spatial set $B_{r_0}(x_i^\ast)$ and the vertex set
$\{x_i^\ast\}\cup\overline{B}_{r_0/32}(x_i^\ast)$.  Choose $\kappa>0$ so
small that
\begin{equation}\label{eq:normalized-perturbation-kappa}
 \kappa r_0+2\kappa(C_1+C_2)\le\delta.
\end{equation}
On this product cylinder, set
\[
 \widetilde h:=h+\frac\kappa2\dist_1^2(\cdot,x_1^\ast)
      +\frac\kappa2\dist_2^2(\cdot,x_2^\ast)
      +\frac\kappa2(t-t^\ast)^2.
\]
Because \(h\ge h(p^\ast)\) on the cylinder, the added term is at least
\(\kappa r_0^2/2\) on both the lateral boundary and the lower time face.
Thus \(\widetilde h\) has the strict boundary gap required in
\eqref{eq:sliced-boundary-gap} from
\cref{subsec:sliced-contact-selection}.  Its almost-everywhere slice bound is
\[
 \DeltaZ[\widetilde h(\cdot,\cdot,t)]
 \le\bigl(L+\kappa(C_1+C_2)\bigr)\m_{\Z},
\]
by \cref{lem:product-barrier}.

Let
\(D:=\overline{B}_{r_0/32}(x_1^\ast)\times
\overline{B}_{r_0/32}(x_2^\ast)\).
Properness makes \(D\) compact, and full support gives
\(\m_{\Z}(D)>0\).  The choice of \(r_0\) also gives zero measure to the
boundary of the spatial product domain.  The assumed local time-Lipschitz
bound for \(h\), together with the temporal perturbation, verifies
\eqref{eq:uniform-time-lipschitz-contact} from
\cref{subsec:sliced-contact-selection}.
Choose the elliptic opening $0<a\le\kappa$ sufficiently small that the
vertex condition \eqref{eq:sliced-vertex-smallness} in
\cref{prop:sliced-contact-selection} holds for this $D$.  Apply that
proposition on the product space $\Z$.  The resulting positive-measure
backward contact set intersects $E$.  Writing the selected product vertex as
$(\xi_1,\xi_2)$ gives the perturbations defined above and the asserted
backward minimum.

The temporal perturbation satisfies
$|\eta_0'|\le\kappa r_0$.  The constants \(C_i\) are uniform over the
chosen vertex sets.  Hence the factorwise bounds in
\cref{lem:product-barrier} and the distributional-to-heat-flow implication of
Gigli--Mondino--Semola \cite[Theorem~1.1]{GigliMondinoSemola2024}, applied
with a constant right-hand side, give
\[
 \Delta_{\mathrm H,1}^{+}\eta_1+\Delta_{\mathrm H,2}^{+}\eta_2
 \le(\kappa+a)(C_1+C_2).
\]
Since $a\le\kappa$, \eqref{eq:normalized-perturbation-kappa} gives the
required tolerance.
\end{proof}

\section{Hamilton--Jacobi estimate and Lipschitz regularity}\label{sec:HJ-final}

We combine the fixed-target good sets from
\cref{sec:fixed-target-good-set} with the normalized perturbation from
\cref{prop:normalized-rcd-perturbation}.  The resulting supersolution estimate
for the Hamilton--Jacobi envelope yields the spatial Lipschitz bound.

\subsection{Localized Hamilton--Jacobi auxiliary function}

Throughout this section, write
\(\Z:=X\times X\) and \(\m_{\Z}:=\m\otimes\m\), and let \(\DeltaZ\) denote
the measure-valued Laplacian on \(\Z\).
Recall also the two-point distance
\(F(x,y,t)=\dist_Y(u(x,t),u(y,t))\), defined in
\eqref{eq:F-definition} of \cref{prop:F-two-point};
its slice-Laplacian and time-Lipschitz bounds are collected in
\cref{prop:F-two-point}.

On a smooth Riemannian source, Zhang--Zhu formulate the corresponding step
using viscosity supersolutions
\cite[Definition~7.2 and Lemma~7.6]{ZhangZhu2026} and then invoke the
classical viscosity--distributional equivalence of Ishii
\cite{Ishii1995}.  Since that classical equivalence does
not apply on an \RCD{} source, we use the heat-test formulation below.  The
passage from this formulation to the weak one, proved in
\cref{prop:heat-test-weak-formulation}, is an additional contribution needed
for the \RCD{} source setting.

\begin{definition}[Heat tests and heat residuals]
\label{def:heat-test}
Let \(U\subset X\) be open, let \(I\subset\mathbb R\) be an open interval,
and put $Q=U\times I$.  A heat test on $Q$ is a function $v\in C(Q)$ such that
$t\mapsto v^t$ is locally $C^1$ in the topology of locally uniform
convergence, $\partial_tv\in C(Q)$, and
\(v^t\in D_{\loc}(\boldsymbol\Delta,U)\), with
\(\boldsymbol\Delta v^t=g(\cdot,t)\m\) for a single function $g\in C(Q)$.
Its pointwise heat residual is
\(\mathcal H_v(x,t):=g(x,t)-\partial_tv(x,t)\).
\end{definition}

\begin{definition}[Parabolic touching and supersolutions]
\label{def:heat-test-supersolution}
Let \(Q=U\times I\), and let \(v\) be a heat test on \(Q\) in the sense of
\cref{def:heat-test}.
The function \(v\) \emph{touches \(f\) from below at}
\((x_0,t_0)\in Q\) if there is \(\rho>0\) such that
\[
 \begin{gathered}
 B_\rho(x_0)\times(t_0-\rho^2,t_0]\subset Q,\\
 v\le f\ \text{on }B_\rho(x_0)\times(t_0-\rho^2,t_0],
 \qquad v(x_0,t_0)=f(x_0,t_0).
 \end{gathered}
\]
A lower semicontinuous $f$ is a heat-test supersolution of
$(\boldsymbol\Delta-\partial_t)f\le0$ if every heat test touching $f$ from
below in this backward parabolic sense has $\mathcal H_v\le0$ at the contact
point.
\end{definition}

Fix $R>0$ with $B_{16R}(\bar x)\Subset\Omega$ and choose an open interval
\([t_*,T]\Subset J_{\mathrm{amb}}=(\tau_-,\tau_+)\Subset(t_*/2,T+1)\).
Applying \cref{thm:RCD-time-Lipschitz} to a finite covering of
\(\overline{B}_{12R}(\bar x)\times\overline{J_{\mathrm{amb}}}\) gives a common
pointwise time-Lipschitz constant \(c_L\) on that cylinder.  We use
\cref{prop:simultaneous-P} with
\(U=B_{8R}(\bar x)\), \(I=J_{\mathrm{amb}}\), and
\cref{prop:RCD-paired-mean-value} with
\(U_0=B_{8R}(\bar x)\), \(U=B_{12R}(\bar x)\), and
\(I=J_{\mathrm{amb}}\).
Denote the resulting full-measure sets by \(T_{\mathrm{dist}}\), \(G_{\mathrm{hk}}\), and
\(E_{\mathrm{pair}}\).  In particular, \(E_{\mathrm{pair}}\) has full measure in
\(B_{8R}(\bar x)\times B_{8R}(\bar x)\times J_{\mathrm{amb}}\).

Base points will lie in $B_{5R}(\bar x)$ and maximizing vertices in
$B_{6R}(\bar x)$.  Since the representative is continuous and $\m$ has full
support, the almost-everywhere image bound from
\cref{subsec:semigroup-HMHF}, specifically \eqref{eq:bounded-image-ball},
implies the corresponding pointwise bound: otherwise continuity would make
\(\dist_Y(u,P_0)>M\) hold on a nonempty open set, which has positive measure
by full support.  Thus \(\dist_Y(u(x,t),P_0)\le M\) on
\(B_{16R}(\bar x)\times J_{\mathrm{amb}}\).
For
\(\varepsilon>0\) define
\begin{equation}\label{eq:HJ-envelope-def}
 \mathcal V_\varepsilon(x,t):=
 \max_{y\in\overline{B}_{6R}(\bar x)}
 \left\{
 F(x,y,t)-\frac{e^{-2Kt}}{2\varepsilon}\dist^2(x,y)
 \right\}.
\end{equation}
We call any point \(y\) attaining the maximum in
\eqref{eq:HJ-envelope-def} a \emph{maximizing vertex} for \((x,t)\).
The lemmas below localize the maximizing vertices and establish the
regularity of this envelope.  Its heat-test supersolution property is proved
in \cref{thm:HJ-supersolution} and converted to the weak formulation in
\cref{prop:heat-test-weak-formulation}.

\medskip

\begin{lemma}[Localization of maximizing vertices]
\label{lem:maximizer-localization}
There are positive constants \(\varepsilon_0\) and \(C_{\mathrm{vert}}\).
The former depends only on \((K,R,J_{\mathrm{amb}},M)\), and the latter only
on \((K,J_{\mathrm{amb}},M)\).  For
\(0<\varepsilon<\varepsilon_0\) and
\((x,t)\in B_{5R}(\bar x)\times J_{\mathrm{amb}}\), every maximizing vertex in
\eqref{eq:HJ-envelope-def} lies in the interior of \(B_{6R}(\bar x)\).  Moreover, if
\(y\) is such a maximizing vertex, then
\begin{equation}\label{eq:maximizer-sqrt-eps}
  \dist(x,y)\le C_{\mathrm{vert}}\sqrt\varepsilon.
\end{equation}
\end{lemma}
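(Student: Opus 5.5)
The plan is to compare the value of the objective at a maximizing vertex \(y\) with its value at the trivial competitor \(y=x\), and to use the pointwise image bound \(\dist_Y(u,P_0)\le M\) on \(B_{16R}(\bar x)\times J_{\mathrm{amb}}\) established just before \eqref{eq:HJ-envelope-def}. Since \(x\in B_{5R}(\bar x)\subset\overline{B}_{6R}(\bar x)\), the point \(y=x\) is admissible and gives the value \(F(x,x,t)=0\). Therefore, for a maximizing vertex \(y\),
\[
 0\le F(x,y,t)-\frac{e^{-2Kt}}{2\varepsilon}\dist^2(x,y).
\]
The triangle inequality through \(P_0\) gives \(F(x,y,t)\le 2M\). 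Hence
\[
 \dist^2(x,y)\le 4M\varepsilon\, e^{2Kt}\le 4M\varepsilon\,\sup_{t\in J_{\mathrm{amb}}}e^{2Kt}.
\]
Because \(J_{\mathrm{amb}}\) is a bounded interval, the supremum is finite and depends only on \(K\) and \(J_{\mathrm{amb}}\). Setting \(C_{\mathrm{vert}}:=2\bigl(M\sup_{J_{\mathrm{amb}}}e^{2Kt}\bigr)^{1/2}\) yields \eqref{eq:maximizer-sqrt-eps} with the stated dependence on \((K,J_{\mathrm{amb}}, M)\).

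For the interior localization, it suffices to have \(C_{\mathrm{vert}}\sqrt\varepsilon<R\). Then \(\dist(y,\bar x)\le\dist(x,\bar x)+\dist(x,y)<5R+R=6R\), so \(y\in B_{6R}(\bar x)\), the open ball, and in particular \(y\) is not on the boundary sphere of the closed ball. Choosing \(\varepsilon_0:=R^2/C_{\mathrm{vert}}^2\), which depends only on \((K,R,J_{\mathrm{amb}},M)\), gives the claim for all \(0<\varepsilon<\varepsilon_0\).

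There is no real obstacle. The only points to check are that the maximum in \eqref{eq:HJ-envelope-def} is attained, and that the bound \(F\le 2M\) holds pointwise rather than merely almost everywhere. Attainment follows because \(F(x,\cdot,t)\) is continuous for the continuous representative of \cref{thm:RCD-holder-map-flow} and \(\overline{B}_{6R}(\bar x)\) is compact by properness. The pointwise bound was already derived from full support of \(\m\) and continuity, so it applies at every \(y\in\overline{B}_{6R}(\bar x)\).
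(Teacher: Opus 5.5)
Your argument is correct and is essentially the paper's proof. You use the competitor \(y=x\) and the bound \(F\le 2M\), and your constant \(C_{\mathrm{vert}}=2\bigl(M\sup_{J_{\mathrm{amb}}}e^{2Kt}\bigr)^{1/2}\) equals the paper's \(2\sqrt{M/c_J}\), where \(c_J=\min_{\overline{J}_{\mathrm{amb}}}e^{-2Kt}\). Your threshold \(\varepsilon_0=R^2/C_{\mathrm{vert}}^2\) is the paper's \(c_JR^2/(4M)\).

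One detail is missing: when \(M=0\) your formula for \(\varepsilon_0\) divides by zero and \(C_{\mathrm{vert}}\) is not positive. The paper handles \(M=0\) separately, noting that then \(\dist(x,y)=0\) for every \(\varepsilon\). Alternatively, you can replace \(M\) by \(\max\{M,1\}\) in both constants.
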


\begin{proof}
Set \(c_J:=\min_{t\in\overline{J}_{\mathrm{amb}}}e^{-2Kt}>0\).
The competitor \(y=x\) in the envelope definition
\eqref{eq:HJ-envelope-def} from \cref{sec:HJ-final} gives
\(\mathcal V_\varepsilon(x,t)\ge0\).  Hence every maximizing vertex \(y\), with
\(s:=\dist(x,y)\), satisfies
\begin{equation}\label{eq:maximizer-basic}
 0\le F(x,y,t)-\frac{e^{-2Kt}}{2\varepsilon}s^2
 \le 2M-\frac{c_J}{2\varepsilon}s^2.
\end{equation}
It follows that \(s\le 2\sqrt{M/c_J}\,\sqrt\varepsilon\).
This proves the localization estimate with
\(C_{\mathrm{vert}}=2\sqrt{M/c_J}\).  If \(M>0\), choose
\(\varepsilon_0<c_JR^2/(4M)\); then \eqref{eq:maximizer-basic} rules out
\(s\ge R\), and in particular rules out \(y\in\partial B_{6R}(\bar x)\).
If \(M=0\), \eqref{eq:maximizer-basic} gives \(s=0\), so the same conclusions
hold for every \(\varepsilon>0\).
\end{proof}

\begin{lemma}[Local Sobolev and time regularity of $\mathcal V_\varepsilon$]
\label{lem:HJ-envelope-regularity}
For $0<\varepsilon<\varepsilon_0$, the function $\mathcal V_\varepsilon$ is locally
bounded and continuous.  Moreover, on every compact positive-time cylinder,
the maps $t\mapsto \mathcal V_\varepsilon(x,t)$ have Lipschitz constants uniform in
$x$, and
\begin{equation}\label{eq:HJ-envelope-energy-class}
 \mathcal V_\varepsilon\in
 L^2_{\loc}(J_{\mathrm{amb}};W^{1,2}_{\loc}(B_{5R}(\bar x))).
\end{equation}
More precisely, on every smaller cylinder there is $C_\varepsilon<\infty$
such that
\begin{align}\label{eq:HJ-envelope-point-difference}
 |\mathcal V_\varepsilon(x,t)-\mathcal V_\varepsilon(x',t')|
 \le C_\varepsilon\bigl(&\dist(x,x')+|t-t'|\notag\\
 &+\dist_Y(u(x,t),u(x',t'))\bigr).
\end{align}
\end{lemma}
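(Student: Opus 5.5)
The plan is to prove the pointwise difference estimate \eqref{eq:HJ-envelope-point-difference} first, and then read off the other conclusions from it. Fix \((x,t),(x',t')\) in a smaller cylinder \(B_{5R-\rho}(\bar x)\times J'\), with \(J'\Subset J_{\mathrm{amb}}\). Let \(y\) be a maximizing vertex for \((x,t)\); it exists because \(\overline{B}_{6R}(\bar x)\) is compact and the bracket in \eqref{eq:HJ-envelope-def} is continuous in \(y\). Test the envelope at \((x',t')\) with the same \(y\) and subtract:
\[
 \mathcal V_\varepsilon(x,t)-\mathcal V_\varepsilon(x',t')
 \le F(x,y,t)-F(x',y,t')
 +\frac{1}{2\varepsilon}\Bigl|e^{-2Kt'}\dist^2(x',y)-e^{-2Kt}\dist^2(x,y)\Bigr|.
\]

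Each term is controlled separately.
\begin{itemize}
\item By the triangle inequality in \(Y\), \(|F(x,y,t)-F(x',y,t')|\le\dist_Y(u(x,t),u(x',t'))+\dist_Y(u(y,t),u(y,t'))\).
\item By \cref{thm:RCD-time-Lipschitz}, applied on the enlarged cylinder \(\overline{B}_{12R}(\bar x)\times\overline{J_{\mathrm{amb}}}\) that contains \(y\in\overline{B}_{6R}(\bar x)\), the last distance is at most \(c_L|t-t'|\).
\item All distances involved are at most \(12R\), and \(e^{-2Kt}\) is Lipschitz on \(\overline{J_{\mathrm{amb}}}\). Hence the quadratic term is at most \(C\varepsilon^{-1}(\dist(x,x')+|t-t'|)\), with \(C\) depending on \(K,R,J_{\mathrm{amb}}\).
\end{itemize}
Exchanging the roles of the two points gives \eqref{eq:HJ-envelope-point-difference} with \(C_\varepsilon=\max\{1,c_L,C/\varepsilon\}\).

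The remaining conclusions follow from \eqref{eq:HJ-envelope-point-difference}.
\begin{itemize}
\item \emph{Local boundedness:} \(0\le\mathcal V_\varepsilon\le 2M\), by the competitor \(y=x\) and the pointwise image bound \(\dist_Y(u,P_0)\le M\) established before \eqref{eq:HJ-envelope-def}.
\item \emph{Continuity:} this follows from \eqref{eq:HJ-envelope-point-difference} and the continuity of the representative from \cref{thm:RCD-holder-map-flow}.
\item \emph{Uniform time-Lipschitz bound:} take \(x=x'\); the \(Y\)-term becomes \(\dist_Y(u(x,t),u(x,t'))\le c_L|t-t'|\) by \cref{thm:RCD-time-Lipschitz}, so the constant \(C_\varepsilon+c_LC_\varepsilon\) is independent of \(x\).
\end{itemize}

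The main obstacle is the spatial Sobolev regularity \eqref{eq:HJ-envelope-energy-class}, since \(u^t\) is only \(\KS^{1,2}\) and not Lipschitz. For fixed \(t\), the \(t=t'\) case of \eqref{eq:HJ-envelope-point-difference} reads
\[
 |\mathcal V_\varepsilon(x,t)-\mathcal V_\varepsilon(x',t)|
 \le C_\varepsilon\bigl(\dist(x,x')+\dist_Y(u(x,t),u(x',t))\bigr).
\]
Thus \(\mathcal V_\varepsilon^t\) is controlled pointwise by a Lipschitz function plus the metric increments of \(u^t\). The plan is to insert this bound into the Korevaar--Schoen scale density: \(e_r[\mathcal V_\varepsilon^t]\le 2C_\varepsilon^2\bigl(C+e_r[u^t]\bigr)\). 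Integrating over a compact subset of \(B_{5R}(\bar x)\) and applying \cref{prop:finite-scale-KS-domination} bounds the scale energies uniformly in \(r\) by \(C\bigl(1+\MapE(u^t)\bigr)\). By \eqref{eq:KS-space-definition}, together with the identification \(|\mathrm D f|^2=(n+2)e_f\) for scalar \(f\), this gives \(\mathcal V_\varepsilon^t\in W^{1,2}_{\loc}\), with \(\int|\mathrm D\mathcal V_\varepsilon^t|^2\le C_\varepsilon'\bigl(1+\MapE(u^t)\bigr)\).

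An alternative route to the same estimate is the Hajłasz characterization: the bound \(\lip_r u^t\le C\,\mathcal M_R(\dots)\) from the proof of \cref{prop:lip-r-energy} gives \(\lip_r\mathcal V_\varepsilon^t\le C_\varepsilon(1+\lip_ru^t)\), which lies in \(L^2\) by \eqref{eq:lip-r-energy}. For the time integrability, energy monotonicity \eqref{eq:EVI-energy-monotonicity} makes \(\MapE(u^t)\le\MapE(u_0)\), so the bound is uniform in \(t\). Measurability in \(t\) comes from the joint continuity of \(\mathcal V_\varepsilon\), which yields \eqref{eq:HJ-envelope-energy-class}.
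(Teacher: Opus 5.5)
Your proposal is correct. For the pointwise estimate \eqref{eq:HJ-envelope-point-difference} it follows the paper's argument: a maximizing vertex for \((x,t)\) serves as a competitor at \((x',t')\), then the triangle inequality in \(Y\), \cref{thm:RCD-time-Lipschitz} on the enlarged cylinder, and interchange of the two points. There is one harmless slip: the coefficient of \(|t-t'|\) is \(c_L+C/\varepsilon\), so take \(C_\varepsilon:=1+c_L+C/\varepsilon\) rather than the maximum. The paper gets continuity directly from compactness of \(\overline{B}_{6R}(\bar x)\), whereas you read it off from the estimate; the two are equivalent.

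The genuine difference is in the Sobolev assertion. The paper works vertex by vertex.
\begin{itemize}
\item For fixed \(y\), the function \(x\mapsto\frac{e^{-2Kt}}{2\varepsilon}\dist^2(x,y)-F(x,y,t)\) has an upper gradient bounded by \(C_\varepsilon+\sqrt{(n+2)e_{u^t}}\), by \cref{prop:KS-properties}\,(a).
\item It takes finite minima over a countable dense set of vertices and applies the Sobolev lattice property.
\item It passes to the monotone limit by closure, obtaining \(|\mathrm D\mathcal V_\varepsilon(\cdot,t)|\le C_\varepsilon+C\sqrt{e_{u^t}}\).
\end{itemize}

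You instead treat \(\mathcal V_\varepsilon^t\) as a black box satisfying the equal-time two-point inequality. You bound its scale density by \(2C_\varepsilon^2(1+e_r[u^t])\) and then invoke \cref{prop:finite-scale-KS-domination}. What you actually need at that step is the identification of scalar \(\KS^{1,2}\) with \(W^{1,2}\), which is Gigli--Tyulenev's Corollary~3.10; \eqref{eq:KS-space-definition} alone does not suffice.

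Your route reuses the estimate already proved and avoids the countable approximation and the lattice/closure step. Combined with \eqref{eq:GT-density-convergence}, it even yields \(e_{\mathcal V_\varepsilon^t}\le 2C_\varepsilon^2(1+e_{u^t})\) almost everywhere, which is the same kind of pointwise gradient bound the paper obtains. The paper's route stays within first-order Sobolev calculus and never passes through finite-scale energies of the scalar envelope.

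Your \(\lip_r\) alternative also works but is heavier: \cref{prop:lip-r-energy} holds only for almost every \(t\) and rests on \cref{prop:simultaneous-P}.
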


\begin{proof}
The vertex set \(\overline{B}_{6R}(\bar x)\) is compact by the properness in
\cref{sec:prelim}, and the expression in the envelope
definition \eqref{eq:HJ-envelope-def} from \cref{sec:HJ-final} is continuous
in \((x,y,t)\).  Hence the maximum in that definition is attained, and the
compactness of the parameter set and uniform continuity of the integrand give
continuity of the maximum.
For the quantitative estimate, fix two points \((x,t)\), \((x',t')\), choose a
maximizing vertex \(y\) for \((x,t)\), and use the same \(y\) as a
competitor for \((x',t')\).  The difference of the squared-distance terms is
bounded on the localized ball by
\(C_\varepsilon(\dist(x,x')+|t-t'|)\), using the Lipschitz dependence of
\(e^{-2Kt}\) on \(J_{\mathrm{amb}}\).  The
difference of the two \(F\) terms is controlled by the triangle inequality and
\cref{thm:RCD-time-Lipschitz}, specifically
\eqref{eq:pointwise-time-Lip}.  Interchanging the two points gives
\eqref{eq:HJ-envelope-point-difference}.

For the Sobolev assertion, fix a compact spatial ball
\(U'\Subset B_{5R}(\bar x)\) and a time \(t\) for which the spatial slice
belongs to the required energy class, and write
\[
 \Phi^t(x,y)=\frac{e^{-2Kt}}{2\varepsilon}\dist^2(x,y)-F(x,y,t),
 \qquad y\in \overline{B}_{6R}(\bar x).
\]
The squared-distance part has spatial upper gradient bounded by
\(C_\varepsilon\) on the localized ball.  For the second term, with \(y\) fixed,
\(x\mapsto F(x,y,t)=\dist_Y(u(x,t),u(y,t))\) has upper gradient bounded by
\(\sqrt{(n+2)e_{u^t}}\) by the fixed-target post-composition estimate in
\cref{prop:KS-properties}.  Since
\(-\mathcal V_\varepsilon=\inf_y\Phi^t(\cdot,y)\), choose a countable dense
family \(\{y_j\}\) in the compact vertex set.  The finite minima of
\(\Phi^t(\cdot,y_j)\) have the same upper-gradient bound by the Sobolev
lattice property \cite[Proposition~4.8]{AmbrosioGigliSavare2014}; passing to
their decreasing limit by
\cite[Lemma~4.3(b) and Lemma~4.4]{AmbrosioGigliSavare2014} gives
\[
  |\mathrm D\mathcal V_\varepsilon(\cdot,t)|
  \le C_\varepsilon+C\sqrt{e_{u^t}}
\]
on smaller balls.  For every compact interval \(I'\Subset J_{\mathrm{amb}}\),
the energy monotonicity \eqref{eq:EVI-energy-monotonicity} from
\cref{subsec:semigroup-HMHF} and the normalization
\(\FlowE=(n+2)\MapE/2\) give
\[
 \int_{I'}\int_{U'}e_{u^t}\,\dd\m\,\dd t
 \le \frac{2|I'|}{n+2}\FlowE(u_0).
\]
Integrating the preceding gradient bound proves the Sobolev assertion.  This
is the \RCD{} analogue of \cite[Lemma~7.1(3)]{ZhangZhu2026}.
\end{proof}

\begin{lemma}[Heat-semigroup expansion for heat tests]
\label{lem:test-heat-expansion}
Let $v$ be a heat test on $Q=U\times I$.  If $(x,t)\in Q$ and
$\widetilde v$ is any bounded extension agreeing with $v$ on a neighborhood
of $x$ for all times near $t$, then
\begin{equation}\label{eq:test-heat-expansion}
 \lim_{s\downarrow0}
 \frac{\Pheat_s[\widetilde v(\cdot,t-s)](x)-v(x,t)}s
 =\mathcal H_v(x,t).
\end{equation}
The limit is independent of the chosen extension.
\end{lemma}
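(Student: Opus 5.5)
The plan is to split the increment into a spatial heat part and a temporal part:
\[
 \Pheat_s[\widetilde v(\cdot,t-s)](x)-v(x,t)
 =\Bigl(\Pheat_s[\widetilde v(\cdot,t-s)](x)-\Pheat_s[\widetilde v(\cdot,t)](x)\Bigr)
 +\Bigl(\Pheat_s[\widetilde v(\cdot,t)](x)-v(x,t)\Bigr).
\]
First, we dispose of the extension. Two bounded extensions agreeing near \(x\) for times near \(t\) differ by a bounded function supported outside a fixed ball \(B_r(x)\). By \cref{lem:RCD-tail}, its heat integral is \(o(s)\), so the limit is independent of the extension. We may therefore fix a convenient one: choose a cutoff \(\chi\) from \cref{prop:RCD-cutoffs}, equal to \(1\) on \(B_{2r}(x)\) and supported in \(B_{3r}(x)\subset U\), and replace \(\widetilde v\) by \(\chi v\). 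This is a heat test on all of \(X\) (for times near \(t\)), with compactly supported data.

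For the temporal part, the \(C^1\) dependence of \(t\mapsto v^t\) in the locally uniform topology gives
\[
\chi\bigl(v(\cdot,t-s)-v(\cdot,t)\bigr)=-s\,\chi\,\partial_tv(\cdot,t)+o(s)
\]
uniformly on \(\supp\chi\). Since \(\Pheat_s\) is an \(L^\infty\)-contraction (Markovian), this yields \(-s\,\Pheat_s[\chi\partial_tv(\cdot,t)](x)+o(s)\). Continuity of \(\partial_tv\) at \((x,t)\), together with the Gaussian concentration of \(p_s(x,\cdot)\) from \eqref{eq:Gaussian}, gives \(\Pheat_s[\chi\partial_tv(\cdot,t)](x)\to\partial_tv(x,t)\). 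Hence the temporal part equals \(-s\,\partial_tv(x,t)+o(s)\).

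For the spatial part we need \(\Pheat_s f(x)-f(x)=s\,g(x,t)+o(s)\), where \(f:=\chi v^t\). By the Leibniz rule, \(f\) has the measure Laplacian
\[
\boldsymbol\Delta f=\bigl(\chi g+v g_\chi+2\ip{\nabla\chi}{\nabla v}\bigr)\m=:\tilde g\,\m,
\]
and \(\tilde g=g(\cdot,t)\) on \(B_{2r}(x)\), where it is continuous. The step I expect to be the main obstacle is justifying
\[
\Pheat_sf(x)-f(x)=\int_0^s\Pheat_\tau\tilde g(x)\,\dd\tau
\]
pointwise at \(x\), rather than only \(\m\)-a.e. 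My approach:

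\begin{enumerate}[label=(\alph*)]
\item Since \(f\in W^{1,2}\) has compact support and \(\tilde g\in L^2\), \(f\) lies in the domain of the \(L^2\) generator. So the Duhamel identity holds in \(L^2\), hence a.e.
\item Both sides are continuous in the base point. \(\Pheat_sf\) is continuous by the Gaussian bounds, and \(f\) is continuous. For the right-hand side, \(\tilde g\) is bounded near \(\supp\chi\) except for the commutator term \(\ip{\nabla\chi}{\nabla v}\in L^2\). That term lives on the annulus \(B_{3r}\setminus B_{2r}\), and by Cauchy--Schwarz with the annular Gaussian bound exactly as in \cref{lem:Duhamel-boundary-tail}, its contribution is \(o(s)\) and continuous in the base point near \(x\).
\item Therefore the identity holds at every point near \(x\). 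Alternatively, one can apply \cref{lem:Duhamel-boundary-tail} twice, to \(\pm\) the time-independent function \(v^t\) with \(a=\mp g\). Both functions are continuous, and this gives matching upper and lower inequalities directly.
\end{enumerate}

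Finally, \(\frac1s\int_0^s\Pheat_\tau\tilde g(x)\,\dd\tau\to g(x,t)\). This follows by splitting \(\tilde g\) into the part on \(B_{2r}(x)\), which is continuous at \(x\) so the Gaussian concentration applies, and the annular part, which is \(o(1)\) by \cref{lem:RCD-tail} and the Cauchy--Schwarz estimate above. Adding the two parts gives the limit \(g(x,t)-\partial_tv(x,t)=\mathcal H_v(x,t)\), which is \eqref{eq:test-heat-expansion}.
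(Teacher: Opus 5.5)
Your proof is correct, and its skeleton is the paper's. You use the same splitting into a temporal increment and a fixed-time heat increment. You treat the temporal term the same way, via locally uniform \(C^1\) dependence and Gaussian concentration. Your extension-independence argument via \cref{lem:RCD-tail} is also the paper's.

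The difference lies in the fixed-time term \(\frac1s\bigl(\Pheat_s[\widetilde v(\cdot,t)](x)-v(x,t)\bigr)\to g(x,t)\). The paper gets it in one line from the local heat-flow characterization of Laplacian bounds of Gigli--Mondino--Semola, using only that \(g\) is continuous. You instead prove it by hand, in three steps:
\begin{itemize}
\item the Leibniz rule places \(\chi v^t\) in the \(L^2\)-domain of the generator, with density \(\tilde g\);
\item the \(L^2\) Duhamel identity is upgraded from \(\m\)-a.e.\ to the point \(x\) by continuity of both sides near \(x\);
\item the Ces\`aro mean of \(\Pheat_\tau\tilde g(x)\) is computed by Gaussian concentration on \(B_{2r}(x)\), plus the annular Cauchy--Schwarz bound for the commutator.
\end{itemize}
Alternatively you apply \cref{lem:Duhamel-boundary-tail} to \(\pm v^t\).

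The paper's route is shorter. It rests, however, on an equivalence theorem designed for one-sided and possibly singular Laplacian bounds. It also has to be used twice, for \(v^t\) and \(-v^t\), to produce a two-sided limit. Yours is self-contained and exploits the fact that for a heat test \(\boldsymbol\Delta v^t=g\,\m\) is an exact, absolutely continuous identity. Elementary semigroup theory therefore suffices, and the limit comes out directly as an identity.

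There are three small inaccuracies, none of which affects the argument.
\begin{itemize}
\item \(\chi v\) is not a heat test on all of \(X\): its Laplacian density contains the commutator \(2\ip{\nabla\chi}{\nabla v}\), which is merely \(L^2\) and not continuous. You handle this term correctly afterwards, so nothing breaks.
\item In the alternative via the Duhamel lemma, the right-hand sides are \(a=\pm g(\cdot,t)\) for \(\pm v^t\), not \(\mp g\).
\item Continuity of \(\Pheat_s f\) and of \(\Pheat_\tau\tilde g\) in the base point comes from continuity of the \RCD{} heat kernel (the strong Feller property), not from the Gaussian bounds alone. The Gaussian bounds only supply the domination needed to integrate in \(\tau\).
\end{itemize}
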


\begin{proof}
Decompose the difference quotient as
\[
 \frac{\Pheat_s[\widetilde v(\cdot,t-s)-\widetilde v(\cdot,t)](x)}s
 +\frac{\Pheat_s[\widetilde v(\cdot,t)](x)-v(x,t)}s.
\]
The first term converges to $-\partial_tv(x,t)$ by the locally uniform
$C^1$ dependence on time and \cref{lem:RCD-tail}.  The second converges to
$g(x,t)$ by the local heat-flow characterization of the measure-valued
Laplacian.  Its right-hand side is continuous by the definition of a heat
test; see Gigli--Mondino--Semola
\cite[Theorem~1.1]{GigliMondinoSemola2024}.  If two extensions are used, their
difference is bounded and vanishes near \(x\).  By \cref{lem:RCD-tail}, its
heat average at \(x\) is \(O(e^{-c/s})=o(s)\); hence the quotient in
\eqref{eq:test-heat-expansion} has the same limit for both extensions.
\end{proof}

For a heat test \(v\), set
\begin{equation}\label{eq:H-def}
 H(x,y,t):=
 \frac{e^{-2Kt}}{2\varepsilon}\dist^2(x,y)-F(x,y,t)-v(x,t).
\end{equation}
If $v$ touches $-\mathcal V_\varepsilon$ from below at
$(x^\ast,t^\ast)$ and $y^\ast$ is a maximizing vertex in
\eqref{eq:HJ-envelope-def} from \cref{sec:HJ-final}, then $H$ has a local minimum at
$(x^\ast,y^\ast,t^\ast)$.  Indeed, near the touching point,
\[
 H(x,y,t)\ge-\mathcal V_\varepsilon(x,t)-v(x,t)\ge0,
\]
and both inequalities are equalities at
\((x^\ast,y^\ast,t^\ast)\).

\begin{proposition}[Regularity and slice bound for the auxiliary function]
\label{prop:H-admissible}
Let \(v\) be a heat test and let \(H\) be defined by \eqref{eq:H-def}.  On
every compact product cylinder
\(U\times V\times I\Subset
B_{8R}(\bar x)\times B_{8R}(\bar x)\times J_{\mathrm{amb}}\),
the maps $t\mapsto H(x,y,t)$ have Lipschitz constants
uniform in $(x,y)$,
\(H^t\in W^{1,2}_{\loc}(U\times V)\) for a.e. \(t\), and
\begin{equation}\label{eq:HJ-aux-slice-bound}
 \DeltaZ H^t
 \le \left[
 \frac{e^{2|K|\sup I}}{\varepsilon}C_{\mathrm{diag}}(K,N,R_{UV})
 +2c_L+C_v\right]\m_{\Z},
\end{equation}
where \(R_{UV}\) and \(C_{\mathrm{diag}}(K,N,R_{UV})\) are as in
\cref{lem:diagonal-square}, while \(g\) is the continuous density in
\(\boldsymbol\Delta v^t=g(\cdot,t)\m\) from \cref{def:heat-test}, and
\(C_v:=\|\max\{-g,0\}\|_{L^\infty(U\times I)}\).
Consequently, \(H\) satisfies the hypotheses of
\cref{prop:normalized-rcd-perturbation} on every smaller cylinder.
\end{proposition}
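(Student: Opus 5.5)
The plan is to split \(H\) into its three summands
\[
 H=\frac{e^{-2Kt}}{2\varepsilon}\dist^2(x,y)\;-\;F(x,y,t)\;-\;v(x,t)
\]
and treat each separately for time regularity, slice Sobolev regularity, and the slice Laplacian upper bound.

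\emph{Time regularity.} The first summand is smooth in \(t\) with derivative bounded by \(|K|e^{2|K|\sup I}R_{UV}^2/\varepsilon\). The second is \(2c_L\)-Lipschitz in \(t\) by \eqref{eq:F-time-Lip} of \cref{prop:F-two-point}. The third is locally \(C^1\) in time with \(\partial_tv\in C(Q)\), hence Lipschitz on the compact cylinder.

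\emph{Slice Sobolev regularity.} For a.e.\ \(t\) the squared distance on \(U\times V\) is Lipschitz. \(F^t\) lies in \(W^{1,2}_{\loc}(U\times V)\) by the partial upper-gradient bounds and tensorization recorded in the proof of \cref{prop:F-two-point}. Finally \(v^t\in W^{1,2}_{\loc}(U)\) because \(v^t\in D_{\loc}(\boldsymbol\Delta,U)\), and it is viewed as a function on the product that does not depend on \(y\).

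\emph{Laplacian bound.} We add three measure inequalities on \(U\times V\), for a.e.\ \(t\), using linearity of \(\DeltaZ\).
\begin{itemize}
\item \cref{lem:diagonal-square} gives \(\DeltaZ\bigl(\tfrac{e^{-2Kt}}{2\varepsilon}\dist^2\bigr)\le \tfrac{e^{-2Kt}}{\varepsilon}C_{\mathrm{diag}}\m_{\Z}\), and \(e^{-2Kt}\le e^{2|K|\sup I}\).
\item For \(t\in T_{\mathrm{dist}}\), which has full measure, \cref{prop:F-two-point} gives \(\DeltaZ(-F^t)\le 2c_L\m_{\Z}\). This requires \(U,V\subset B_{8R}(\bar x)\), which is exactly the set on which \(T_{\mathrm{dist}}\) was constructed.
\item For \(-v^t\), the product formula \eqref{eq:product-lap} with \(\psi_2=0\) gives \(\DeltaZ(-v^t)=-g(\cdot,t)\m\otimes\m\le C_v\m_{\Z}\).
\end{itemize}
Summing the three bounds yields \eqref{eq:HJ-aux-slice-bound}.

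\emph{Hypotheses of \cref{prop:normalized-rcd-perturbation}.} Continuity of \(H\) follows from continuity of \(u\), of \(\dist\), and of \(v\). Together with the uniform time-Lipschitz bound and the \(t\)-independent slice bound \(L\), this verifies the hypotheses on smaller cylinders.

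\emph{Main obstacle.} The delicate point is legitimacy rather than computation: the three measure Laplacians must be defined on the same domain \(U\times V\) and added there. That needs the first-order Sobolev regularity of \(H^t\) on the product, which we get from the tensorized Cheeger calculus of \cite[Theorems~6.17--6.18]{AmbrosioGigliSavareDuke2014}. The argument also needs the \(y\)-independent function \(v^t\) to have product Laplacian \(g\,\m_{\Z}\). I would check this directly on separated tests \(\varphi_1\otimes\varphi_2\) and extend by density, as in the text preceding \cref{lem:sliced-product-lap}.
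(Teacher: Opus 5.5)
Your proposal is correct and is essentially the paper's own proof. You use the same split of \(H\) into the weighted diagonal squared distance, \(-F\), and \(-v\), with time regularity from \eqref{eq:F-time-Lip} and the \(C^1\) time dependence of \(v\). You obtain the slice bound, as the paper does, by adding three inequalities: \cref{lem:diagonal-square}, the bound \eqref{eq:F-slice-lower} for \(t\in T_{\mathrm{dist}}\), and the product identity \eqref{eq:product-lap} applied to the \(y\)-independent \(v^t\), all on \(U\times V\) with Sobolev regularity supplied by the tensorized calculus.
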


\begin{proof}
The time regularity follows from \cref{prop:F-two-point}, specifically
\eqref{eq:F-time-Lip}, the smooth
coefficient \(e^{-2Kt}\), and the time regularity of \(v\).  The spatial
Sobolev assertion follows from the definition of a heat test and the product
upper-gradient bounds for \(F\) in \cref{prop:F-two-point}.

By \eqref{eq:diagonal-square-measure} of \cref{lem:diagonal-square},
\[
 \DeltaZ\!\left[\frac{e^{-2Kt}}{2\varepsilon}\dist^2(x,y)\right]
 \le \frac{e^{2|K|\sup I}}{\varepsilon}
 C_{\mathrm{diag}}(K,N,R_{UV})\,\m_{\Z}.
\]
Moreover, \eqref{eq:F-slice-lower} of \cref{prop:F-two-point} gives
\(\DeltaZ(-F^t)\le2c_L\m_{\Z}\).  Since \(v\)
depends only on the first variable,
\(\DeltaZ(-v^t)=(-g(\cdot,t)\m)\otimes\m\le C_v\m_{\Z}\).
By the separated product identity \eqref{eq:product-lap} preceding
\cref{lem:sliced-product-lap}, adding the three inequalities proves
\eqref{eq:HJ-aux-slice-bound}.
\end{proof}

\subsection{Supersolution property}

\begin{theorem}[Hamilton--Jacobi supersolution property]
\label{thm:HJ-supersolution}
For every $0<\varepsilon<\varepsilon_0$, the function
$-\mathcal V_\varepsilon$, with $\mathcal V_\varepsilon$ defined in
\eqref{eq:HJ-envelope-def} of \cref{sec:HJ-final}, is a local heat-test
supersolution on
\(B_{5R}(\bar x)\times J_{\mathrm{amb}}\):
\begin{equation}\label{eq:HJ-negative-envelope-supersolution}
 (\boldsymbol\Delta-\partial_t)(-\mathcal V_\varepsilon)\le0
\end{equation}
in the sense of \cref{def:heat-test-supersolution}.
\end{theorem}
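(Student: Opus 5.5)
We argue by contradiction.  Suppose a heat test \(v\) touches \(-\mathcal V_\varepsilon\) from below at \((x^\ast,t^\ast)\in B_{5R}(\bar x)\times J_{\mathrm{amb}}\) in the backward sense of \cref{def:heat-test-supersolution}, but \(\mathcal H_v(x^\ast,t^\ast)=:4\delta>0\).  By continuity of \(g\) and \(\partial_tv\), we may shrink to a backward cylinder on which \(\mathcal H_v\ge3\delta\).  Choose a maximizing vertex \(y^\ast\), which lies in \(B_{6R}(\bar x)\) and satisfies \(\dist(x^\ast,y^\ast)\le C_{\mathrm{vert}}\sqrt\varepsilon\) by \cref{lem:maximizer-localization}.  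By the observation after \eqref{eq:H-def}, \(H\) has a local (backward) minimum at \(p^\ast=(x^\ast,y^\ast,t^\ast)\).

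\cref{prop:H-admissible} shows that \(H\) satisfies the hypotheses of \cref{prop:normalized-rcd-perturbation} with \(E=E_{\mathrm{pair}}\cap(B_{8R}\times B_{8R}\times J_{\mathrm{amb}})\).  Only backward minimality is needed there, since that proposition works on a backward cylinder.  It yields a point \(p_0=(x_0,y_0,t_0)\in E_{\mathrm{pair}}\) with \(t_0\le t^\ast\), close to \(p^\ast\), at which \(H+\eta_0+\eta_1+\eta_2\) attains a backward minimum.  It also yields
\[
-\eta_0'(t_0)+\Delta^+_{\mathrm H,1}\eta_1(x_0)+\Delta^+_{\mathrm H,2}\eta_2(y_0)\le\delta .
\]
Since \(p_0\) is a minimum, for every coupling \(\Pi_s\) of \(\mu^{x_0}_s\) and \(\mu^{y_0}_s\) we have
\[
\int\bigl[(H+\eta)(x,y,t_0-s)-(H+\eta)(p_0)\bigr]\,\dd\Pi_s(x,y)\ge -o(s).
\]
The localization error is controlled by \cref{lem:RCD-tail}.

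We take \(\Pi_s\) to be an optimal \(W_2\)-coupling and expand each term after dividing by \(s\).

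\textbf{Diagonal term.}  By \eqref{eq:Wp-contraction},
\[
\int\dist^2\,\dd\Pi_s\le e^{-2Ks}\dist^2(x_0,y_0).
\]
Combined with the time derivative of \(e^{-2Kt}\), this gives
\[
\frac{e^{-2K(t_0-s)}}{2\varepsilon}\int\dist^2\,\dd\Pi_s-\frac{e^{-2Kt_0}}{2\varepsilon}\dist^2(x_0,y_0)\le o(s).
\]
This is the reason for the weight \(e^{-2Kt}\).

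\textbf{The \(-F\) term.}  Use \(P=\) the midpoint of \(u(x_0,t_0)\) and \(u(y_0,t_0)\), and apply the four-point/convexity estimate \eqref{eq:CAT0-strong-convexity} as in \cite[Section~7]{ZhangZhu2026}.  This bounds \(F(x,y,t_0-s)-F(x_0,y_0,t_0)\) from below by a combination of the integrands \(w_{P,s}\) from \eqref{eq:wP-definition} at both base points.  Both base points lie in \(G_{\mathrm{hk}}\), because \(p_0\in E_{\mathrm{pair}}\), so \eqref{eq:RCD-wP-mean-value} gives
\[
\liminf_{s\downarrow0}\frac1s\int\bigl[F(x,y,t_0-s)-F(p_0)\bigr]\,\dd\Pi_s\ge0 .
\]
When \(F(p_0)=0\) this is trivial.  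Otherwise we pass through \(F^2\) and \cref{prop:time-shift-energy}.

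\textbf{The \(-v\) term.}  It depends only on \(x\), so its contribution is \(-\mathcal H_v(x_0,t_0)s+o(s)\le-3\delta s+o(s)\) by \cref{lem:test-heat-expansion}.

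\textbf{The perturbation terms.}  These contribute at most \(\delta s+o(s)\) by the displayed bound on \(\eta_0,\eta_1,\eta_2\) and the definition \eqref{eq:heat-flow-upper-laplacian}.

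Summing the four contributions gives \(-o(s)\le(-3\delta+\delta)s+o(s)\), which is a contradiction for small \(s\).

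The main obstacle is the \(-F\) term.  We need a one-sided first-order lower bound for \(F\) under a coupling, uniformly using the heat-kernel identities at the single pair \(p_0\).  This requires a careful \(\CAT(0)\) quadrilateral argument with the time-shifted target \(u(\cdot,t_0-s)\), and it must handle the degenerate case \(F(p_0)=0\) (for instance by perturbing with \(\sqrt{F^2+\theta}\)).
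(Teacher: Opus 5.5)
Your proposal is correct and follows the paper's proof essentially step by step. The shared steps are: contradiction at a backward touching point, the normalized sliced perturbation landing in $E_{\mathrm{pair}}$, the tail estimate for an optimal coupling, and the four-term expansion handled by $W_2$ contraction, the Korevaar--Schoen midpoint four-point inequality at $Q_m$ with the target-independent mean-value inequality on $G_{\mathrm{hk}}$, the heat-test expansion, and the perturbation bound. The detour through $F^2$ and the $\sqrt{F^2+\theta}$ regularization are unnecessary: for $F_0>0$ the paper simply divides the midpoint inequality by $F_0$, and for $F_0=0$ the term is trivially of the right sign, as you already observe.
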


We use the following four estimates from the proof of
\cite[Lemma~7.6]{ZhangZhu2026} in the form needed at the perturbed contact
point.  Fix
$x_0,y_0\in X$ and $t_0>0$, and let $\Pi_s$ be an optimal coupling of
$\mu_s^{x_0}:=\Hheat_s\delta_{x_0}$ and
$\mu_s^{y_0}:=\Hheat_s\delta_{y_0}$.  Let $v$ be a heat test near
$(x_0,t_0)$, and let $\eta_0,\eta_1,\eta_2$ be quadratic perturbations of the
form used in \cref{prop:normalized-rcd-perturbation}.  Using bounded extensions of
$v,\eta_1,\eta_2$ which agree with them near $x_0$ and $y_0$, define
\begin{align}
 I_1(s)&:=\frac1{2\varepsilon}\int
 [e^{-2K(t_0-s)}\dist^2(x,y)
 -e^{-2Kt_0}\dist^2(x_0,y_0)]\,\dd\Pi_s,
 \label{eq:I1}\\
 I_2(s)&:=-\int[F(x,y,t_0-s)-F(x_0,y_0,t_0)]\,\dd\Pi_s,
 \label{eq:I2}\\
 I_3(s)&:=-\int[v(x,t_0-s)-v(x_0,t_0)]\,\dd\mu_s^{x_0}(x),
 \label{eq:I3}\\
 I_4(s)&:=\eta_0(t_0-s)-\eta_0(t_0)
 +\int[\eta_1(x)-\eta_1(x_0)]\,\dd\mu_s^{x_0}(x)\notag\\
 &\qquad
 +\int[\eta_2(y)-\eta_2(y_0)]\,\dd\mu_s^{y_0}(y).
 \label{eq:I4}
\end{align}

\begin{lemma}[Estimates at a perturbed contact point]
\label{lem:four-term-estimates}
Under the preceding notation,
\begin{align}
 \limsup_{s\downarrow0}\frac{I_1(s)}s&\le0,
 \label{eq:I1-limsup}\\
 \limsup_{s\downarrow0}\frac{I_2(s)}s&\le0
 \quad\text{if }(x_0,t_0),(y_0,t_0)\in G_{\mathrm{hk}},
 \label{eq:I2-limsup}\\
 \lim_{s\downarrow0}\frac{I_3(s)}s
 &=-\mathcal H_v(x_0,t_0),
 \label{eq:I3-limit}\\
 \limsup_{s\downarrow0}\frac{I_4(s)}s
 &\le-\eta_0'(t_0)+\Delta_{\mathrm H,1}^{+}\eta_1(x_0)
 +\Delta_{\mathrm H,2}^{+}\eta_2(y_0).
 \label{eq:I4-limsup}
\end{align}
\end{lemma}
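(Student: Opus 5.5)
I will treat the four estimates separately, since each uses a different earlier ingredient; the two substantive ones are \eqref{eq:I1-limsup} and \eqref{eq:I2-limsup}.

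\emph{The two routine terms.} For \eqref{eq:I3-limit}, note that \(I_3(s)=-\bigl(\Pheat_s[\widetilde v(\cdot,t_0-s)](x_0)-v(x_0,t_0)\bigr)\). \Cref{lem:test-heat-expansion} then gives the limit \(-\mathcal H_v(x_0,t_0)\) directly, independently of the extension. For \eqref{eq:I4-limsup}, split \(I_4\) into three pieces. The temporal piece satisfies \((\eta_0(t_0-s)-\eta_0(t_0))/s\to-\eta_0'(t_0)\) because \(\eta_0\) is a smooth quadratic. The spatial pieces are exactly \((\Pheat_s\eta_i-\eta_i)/s\) evaluated at \(x_0\) and \(y_0\). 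By the definition \eqref{eq:heat-flow-upper-laplacian} their limsups are \(\Delta^+_{\mathrm H,i}\eta_i\), and the subadditivity of limsup gives the bound.

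\emph{The coupling term \(I_1\).} I will expand \(e^{-2K(t_0-s)}=e^{-2Kt_0}(1+2Ks+O(s^2))\) and write
\[
 2\varepsilon\, I_1(s)=e^{-2Kt_0}\Bigl[(1+2Ks)\int\dist^2\,\dd\Pi_s-\dist^2(x_0,y_0)\Bigr]+O(s^2).
\]
Since \(\Pi_s\) is optimal, \(\int\dist^2\,\dd\Pi_s=W_2^2(\mu_s^{x_0},\mu_s^{y_0})\le e^{-2Ks}\dist^2(x_0,y_0)\) by \eqref{eq:Wp-contraction}. Then \((1+2Ks)e^{-2Ks}-1=O(s^2)\), so \(I_1(s)\le O(s^2)\) and \eqref{eq:I1-limsup} follows. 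Before this I must restrict to bounded extensions. The coupling mass outside \(U\times V\) is \(o(s)\) by \cref{lem:RCD-tail}, the moments being handled with the \(j=2\) tail, so replacing \(\dist^2\) by a truncation costs only \(o(s)\).

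\emph{The two-point term \(I_2\) (main obstacle).} The idea is to compare \(F(x,y,t_0-s)\) with \(F(x_0,y_0,t_0)\) through the target points. Write \(P=u(x_0,t_0)\), \(Q=u(y_0,t_0)\), \(P'=u(x,t_0-s)\) and \(Q'=u(y,t_0-s)\). Following the proof of \cite[Lemma~7.6]{ZhangZhu2026}, I would use Reshetnyak's four-point inequality \eqref{eq:Reshetnyak-four-point}, or rather a quadrilateral comparison, to bound \(\dist_Y(P,Q)-\dist_Y(P',Q')\) from above. The bound I aim for consists of the quantities \(w_{Q,s}(x_0,x,t_0)\) and \(w_{P,s}(y_0,y,t_0)\) from \eqref{eq:wP-definition}, divided by \(2\dist_Y(P,Q)\), plus terms quadratic in \(\dist_Y(P,P')\) and \(\dist_Y(Q,Q')\). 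Concretely, the plan is as follows.
\begin{enumerate}[label=(\alph*),leftmargin=2em]
\item In the case \(F(x_0,y_0,t_0)=0\), the claim is immediate from \(F\ge0\).
\item Otherwise, the \(\CAT(0)\) first-variation inequality for the geodesic \([P,Q]\) gives \(-(\dist_Y(P',Q')-\dist_Y(P,Q))\) bounded above by a linear combination of the two \(w\)-terms. It also produces an error of order \((\dist_Y^2(P,P')+\dist_Y^2(Q,Q'))/\dist_Y(P,Q)\).
\item Integrate against \(\Pi_s\). The \(w\)-terms depend on only one variable, so they integrate against the marginals \(\mu_s^{x_0}\) and \(\mu_s^{y_0}\). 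By \eqref{eq:RCD-wP-mean-value} with targets \(Q,P\), which are allowed since they lie in the closed convex hull of the image, each has \(\limsup\) of its \(1/s\)-average at most \(0\).
\end{enumerate}
The expected difficulty is the quadratic error in (b), whose heat average is of order \(2(n+2)e\,s\) and does not vanish. I would try to avoid it by an exact inequality: apply \eqref{eq:CAT0-strong-convexity} at the midpoint of \([P',Q']\) relative to \(P,Q\) so that the squared displacements cancel against the \(+\dist_Y^2(u(y,t_0-s),u(x_0,t_0))\) term built into \(w_{P,s}\). That is, I seek an inequality of the form \(2\dist_Y(P,Q)\bigl(\dist_Y(P,Q)-\dist_Y(P',Q')\bigr)\le w_{Q,s}+w_{P,s}\) up to \(o(s)\), with no residual energy term. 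The tails outside the localization are again \(o(s)\) by \cref{lem:RCD-tail}.
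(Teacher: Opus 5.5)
\textbf{Verdict: the proposal is correct in substance. It follows the paper for $I_1$, $I_3$, $I_4$ and takes a different but valid route for $I_2$. The one step you leave open closes exactly by an inequality already in the paper.}

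For $I_1$ the paper uses $e^{-2K(t_0-s)}e^{-2Ks}=e^{-2Kt_0}$, so $I_1(s)\le 0$ for every $s$. Your Taylor expansion and the truncation of $\dist^2$ are therefore unnecessary but harmless.

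For $I_2$, the inequality you are ``seeking'' holds exactly, with no $o(s)$ error. In your notation $P=u(x_0,t_0)$, $Q=u(y_0,t_0)$, $P'=u(x,t_0-s)$, $Q'=u(y,t_0-s)$, expanding \eqref{eq:wP-definition} gives
\begin{align*}
 &w_{Q,s}(x_0,x,t_0)+w_{P,s}(y_0,y,t_0)
 -2\dist_Y(P,Q)\bigl(\dist_Y(P,Q)-\dist_Y(P',Q')\bigr)\\
 &\qquad=\dist_Y^2(P,P')+\dist_Y^2(Q,Q')+2\dist_Y(P,Q)\dist_Y(P',Q')
 -\dist_Y^2(P,Q')-\dist_Y^2(Q,P').
\end{align*}
The right-hand side is nonnegative by \eqref{eq:Reshetnyak-four-point} with $R=P'$, $S=Q'$.

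You are right that the first-variation error in step (b) would survive the limit: its heat average converges to a multiple of $e$ by \cref{prop:time-shift-energy}. So that detour should be dropped. The midpoint-convexity construction you sketch is also not needed, and it would not produce the inequality anyway. At the midpoint of $[P',Q']$, the cross terms $\dist_Y^2(P,Q')$ and $\dist_Y^2(Q,P')$ enter the right-hand side of \eqref{eq:CAT0-strong-convexity} with positive coefficients, so they are bounded from below, not from above.

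With the inequality in hand, for $F_0>0$ divide by $2F_0$ and integrate against $\Pi_s$, using its marginals. This gives
\[
 I_2(s)\le\frac{1}{2F_0}\Bigl[\int w_{Q,s}(x_0,\cdot,t_0)\,\dd\mu_s^{x_0}+\int w_{P,s}(y_0,\cdot,t_0)\,\dd\mu_s^{y_0}\Bigr].
\]
Then \eqref{eq:RCD-wP-mean-value}, applied with target $Q$ at $(x_0,t_0)$ and target $P$ at $(y_0,t_0)$, finishes the estimate.

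The paper argues differently for $I_2$. It uses a single common target $Q_m$, the midpoint of $[u(x_0,t_0),u(y_0,t_0)]$, together with the Korevaar--Schoen midpoint four-point inequality \cite[Corollary~2.1.3]{KorevaarSchoen1993}. This yields $[F(x,y,t_0-s)-F_0]F_0\ge-w_{Q_m,s}(x_0,x,t_0)-w_{Q_m,s}(y_0,y,t_0)$, hence a bound with factor $1/F_0$. In Euclidean space the two routes coincide, since $w_{Q_m,s}(x_0,\cdot,t_0)=\tfrac12 w_{Q,s}(x_0,\cdot,t_0)$ and $w_{Q_m,s}(y_0,\cdot,t_0)=\tfrac12 w_{P,s}(y_0,\cdot,t_0)$.

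Your version uses the opposite endpoints as two different targets at the two base points. This is legitimate because $G_{\mathrm{hk}}$ in \cref{prop:RCD-paired-mean-value} is independent of the target. What it buys is economy: it needs only the Reshetnyak inequality already recorded in \cref{subsec:CAT0-barycenters}, and its targets lie in the image itself rather than only in its closed convex hull. The paper's version has one target common to both base points, at the cost of importing the additional midpoint inequality.
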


\begin{proof}
The contraction \eqref{eq:Wp-contraction} from \cref{sec:prelim} gives
\[
 \int\dist^2(x,y)\,\dd\Pi_s
 \le e^{-2Ks}\dist^2(x_0,y_0).
\]
Since
$e^{-2K(t_0-s)}e^{-2Ks}=e^{-2Kt_0}$, this proves the first assertion.

For \(I_2\), put
$F_0:=F(x_0,y_0,t_0)$.  If $F_0=0$, then
$I_2(s)\le0$.  If $F_0>0$, let $Q_m$ be the midpoint of
$u(x_0,t_0)$ and $u(y_0,t_0)$.  For $P,S,Q,R\in Y$, with $Q_m$ the
midpoint of $Q$ and $R$, the $\CAT(0)$ midpoint four-point inequality reads
\begin{align*}
 [\dist_Y(P,S)-\dist_Y(Q,R)]\dist_Y(Q,R)
 &\ge \dist_Y^2(P,Q_m)-\dist_Y^2(P,Q)-\dist_Y^2(Q_m,Q)\\
 &\quad+\dist_Y^2(S,Q_m)-\dist_Y^2(S,R)-\dist_Y^2(Q_m,R);
\end{align*}
see \cite[Corollary~2.1.3]{KorevaarSchoen1993}.  Apply it with
\[
 P=u(x,t_0-s),\quad S=u(y,t_0-s),\quad
 Q=u(x_0,t_0),\quad R=u(y_0,t_0).
\]
By the definition \eqref{eq:wP-definition} of $w_{Q_m,s}$, the resulting
inequality is
\[
 [F(x,y,t_0-s)-F_0]F_0
 \ge-w_{Q_m,s}(x_0,x,t_0)-w_{Q_m,s}(y_0,y,t_0).
\]
After integration against $\Pi_s$, its two marginal identities give
\[
 I_2(s)\le\frac1{F_0}\left[
 \int w_{Q_m,s}(x_0,x,t_0)\,\dd\mu_s^{x_0}(x)
 +\int w_{Q_m,s}(y_0,y,t_0)\,\dd\mu_s^{y_0}(y)\right].
\]
The point $Q_m$ lies in the closed convex hull of the bounded image.  Hence
\eqref{eq:RCD-wP-mean-value} from
\cref{prop:RCD-paired-mean-value} applies with the same target point $Q_m$ at
both $(x_0,t_0)$ and $(y_0,t_0)$.  Dividing the preceding inequality by $s$
and taking the upper limit proves the second assertion.

By \eqref{eq:test-heat-expansion} from
\cref{lem:test-heat-expansion},
\[
 \lim_{s\downarrow0}\frac1s
 \int[v(x,t_0-s)-v(x_0,t_0)]\,\dd\mu_s^{x_0}(x)
 =\mathcal H_v(x_0,t_0).
\]
The negative sign in the definition of \(I_3\) proves the third assertion.  For the
temporal part of \(I_4\),
\[
 \frac{\eta_0(t_0-s)-\eta_0(t_0)}s\longrightarrow-\eta_0'(t_0),
\]
whereas \eqref{eq:heat-flow-upper-laplacian} and
\cref{lem:RCD-tail} give
\[
 \limsup_{s\downarrow0}\frac1s
 \int[\eta_1(x)-\eta_1(x_0)]\,\dd\mu_s^{x_0}(x)
 \le\Delta_{\mathrm H,1}^{+}\eta_1(x_0).
\]
The analogous estimate for \(\eta_2\) at \(y_0\) proves the final assertion.
\end{proof}

\begin{proof}[Proof of Theorem~\ref{thm:HJ-supersolution}]
Suppose, to the contrary, that a heat test $v$ touches
$-\mathcal V_\varepsilon$ from below at $(x^\ast,t^\ast)$ and
\begin{equation}\label{eq:positive-heat-residual}
 \theta_0:=\mathcal H_v(x^\ast,t^\ast)>0.
\end{equation}
After shrinking the touching cylinder, continuity of $\mathcal H_v$ gives
\begin{equation}\label{eq:heat-residual-gap}
 \mathcal H_v\ge\frac{\theta_0}{2}.
\end{equation}
Choose a maximizing vertex $y^\ast$ as in the envelope definition
\eqref{eq:HJ-envelope-def} from \cref{sec:HJ-final}.  By the definition of
$H$ in \eqref{eq:H-def} and the
touching property in \cref{def:heat-test-supersolution}, $H$ has a local minimum at
$(x^\ast,y^\ast,t^\ast)$.

Set
\begin{equation}\label{eq:contact-full-measure-set}
 E^*:=E_{\mathrm{pair}}
 \cap\bigl(B_{8R}(\bar x)\times B_{8R}(\bar x)
            \times T_{\mathrm{dist}}\bigr).
\end{equation}
Combining the definition \eqref{eq:contact-full-measure-set}, the
full-measure conclusion \eqref{eq:E-paired-good-set} of
\cref{prop:RCD-paired-mean-value}, and \cref{prop:simultaneous-P} shows that
$E^*$ has full measure in the ambient product cylinder.  On the time slices in
$T_{\mathrm{dist}}$, \cref{prop:H-admissible}, specifically
\eqref{eq:HJ-aux-slice-bound}, verifies the slice hypotheses of
\cref{prop:normalized-rcd-perturbation}.  Apply that proposition on the
smaller touching cylinder to $H$ and $E^*$ with $\delta=\theta_0/8$.  It gives
$(x_0,y_0,t_0)\in E^*$ and perturbations $\eta_0,\eta_1,\eta_2$ such that
\begin{equation}\label{eq:perturbed-H}
 H_1:=H+\eta_0+\eta_1+\eta_2
\end{equation}
attains a minimum at $(x_0,y_0,t_0)$ on a backward product cylinder, and
\begin{equation}\label{eq:contact-perturbation-bound}
 -\eta_0'(t_0)+\Delta_{\mathrm H,1}^{+}\eta_1(x_0)
 +\Delta_{\mathrm H,2}^{+}\eta_2(y_0)
 \le\frac{\theta_0}{8}.
\end{equation}

Let $\mu_s^{x_0}:=\Hheat_s\delta_{x_0}$ and
$\mu_s^{y_0}:=\Hheat_s\delta_{y_0}$, and let $\Pi_s$ be an optimal coupling
of these measures.  Use bounded extensions of $v,\eta_1,\eta_2$ that agree
with them near the selected point.  The function $F$ is bounded, while the
diagonal squared-distance term has quadratic growth.  The minimum property in
\eqref{eq:perturbed-H} and the off-diagonal estimate
\cref{lem:RCD-tail} give the following estimate.  For small \(s\), the
integrand in the negative part vanishes on the backward product cylinder on
which \(H_1\) is minimized.  Outside that cylinder it is uniformly bounded
above after the bounded extensions are fixed; the nonnegative diagonal term
can only decrease this upper bound.  By \cref{lem:RCD-tail}, the two
marginals, and hence the coupling, assign mass \(o(s)\) to the complement.
Consequently,
\begin{equation}\label{eq:S-tail}
 \int_{\Z}
 \max\!\left\{H_1(x_0,y_0,t_0)-H_1(x,y,t_0-s),0\right\}
 \,\dd\Pi_s=o(s).
\end{equation}
By \eqref{eq:S-tail},
\begin{equation}\label{eq:S-liminf}
 0\le\liminf_{s\downarrow0}S_s,
 \qquad
 S_s:=\frac{1}{s}\int_{\Z}
 [H_1(x,y,t_0-s)-H_1(x_0,y_0,t_0)]\,\dd\Pi_s.
\end{equation}
Expanding $H_1$ by \eqref{eq:H-def} from \cref{prop:H-admissible} and
\eqref{eq:perturbed-H} gives
\[
 sS_s=I_1(s)+I_2(s)+I_3(s)+I_4(s),
\]
where the terms are defined in \eqref{eq:I1}--\eqref{eq:I4} of
\cref{lem:four-term-estimates}.  Moreover,
$(x_0,t_0),(y_0,t_0)\in G_{\mathrm{hk}}$ because
$(x_0,y_0,t_0)\in E^*$.  Applying
\cref{lem:four-term-estimates}, specifically
\eqref{eq:I1-limsup}--\eqref{eq:I4-limsup}, together with
\eqref{eq:heat-residual-gap} and \eqref{eq:contact-perturbation-bound}, gives
\begin{equation}\label{eq:I-limsup}
 \begin{aligned}
 \limsup_{s\downarrow0}\frac{I_1(s)}s&\le0,
 &\qquad \limsup_{s\downarrow0}\frac{I_2(s)}s&\le0,\\
 \limsup_{s\downarrow0}\frac{I_3(s)}s&\le-\frac{\theta_0}{2},
 &\limsup_{s\downarrow0}\frac{I_4(s)}s&\le\frac{\theta_0}{8}.
 \end{aligned}
\end{equation}
Combining \eqref{eq:S-liminf} and \eqref{eq:I-limsup} yields
\begin{equation}
 0\le\liminf_{s\downarrow0}S_s
 \le\limsup_{s\downarrow0}S_s
 \le\sum_{j=1}^4\limsup_{s\downarrow0}\frac{I_j(s)}s
 \le-\frac{3\theta_0}{8}<0.
\end{equation}
This contradicts the nonnegativity in \eqref{eq:S-liminf}.  Hence no heat
test touching $-\mathcal V_\varepsilon$ from below can satisfy
\eqref{eq:positive-heat-residual}, which proves the theorem.
\end{proof}

\medskip

\subsection{Weak formulation and the spatial Lipschitz bound}

The following semigroup comparison converts the heat-test supersolution
property into the weak formulation required for the local boundedness
estimate.

\begin{lemma}[Local backward heat-semigroup inequality]
\label{lem:heat-test-to-comparison}
Let
$f\in C(U\times I)\cap L^2_{\loc}(I;W^{1,2}_{\loc}(U))$ be locally bounded
and a heat-test supersolution of
$(\boldsymbol\Delta-\partial_t)f\le0$.  Let
\(U_0\Subset U_1\Subset U\) and \([a,b]\Subset I\),
and choose $\chi\in\Lip_c(U_1)$ with $0\le\chi\le1$ and $\chi=1$ on a
neighborhood of $U_0$.  After adding a constant so that
$f\ge0$ on $\overline{U_1}\times[a,b]$, there are constants $c,C>0$ such that
for $x\in U_0$ and $a<t-s<t<b$,
\begin{equation}\label{eq:localized-backward-semigroup}
 \Pheat_s\!\left[\chi f(\cdot,t-s)\right](x)
 \le f(x,t)+C e^{-c/s}.
\end{equation}
The constants are uniform when $t-s$ and $t$ range in a fixed compact
subinterval of $(a,b)$.
\end{lemma}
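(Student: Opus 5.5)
The plan is a comparison argument by contradiction, using heat tests built from the heat semigroup itself. Fix $x_1\in U_0$ and times $a<t_1-s_1<t_1<b$, and set $\tau_0:=t_1-s_1$. Consider the space--time function
\[
 v(x,t):=\Pheat_{t-\tau_0}\bigl[\chi f(\cdot,\tau_0)\bigr](x)-\lambda(t-\tau_0)-\beta(t)
\]
for $t\ge\tau_0$. Here $\lambda>0$ is a small slack and $\beta$ is a boundary correction to be chosen. Since $\chi f(\cdot,\tau_0)$ is bounded and compactly supported, $v^t$ solves the heat equation for $t>\tau_0$, with $\boldsymbol\Delta v^t=g\,\m$ and $g=\partial_t\Pheat_{t-\tau_0}[\cdot]$ continuous. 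Thus $v$ is a heat test in the sense of \cref{def:heat-test} on $U_1\times(\tau_0,b)$, with residual $\mathcal H_v=\lambda+\beta'>0$ once $\beta$ is nondecreasing.

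The goal is to show $v\le f$ on $\overline{U_0'}\times[\tau_0,t_1]$ for an intermediate open set $U_0\Subset U_0'\Subset\{\chi=1\}^\circ$. We then let $\lambda\downarrow0$, which gives \eqref{eq:localized-backward-semigroup} with error $\beta(t_1)$.

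\textbf{Boundary control.} On $\partial U_0'$ we need $v\le f$. Since $f\ge0$, it suffices that $\Pheat_{t-\tau_0}[\chi f(\cdot,\tau_0)]\le \beta(t)$ there. This fails in general, because the heat flow of the data at a boundary point is comparable to $f$ itself and not small. So the simple comparison on $U_0'$ will not close, and I would instead split the data as $\chi f=\chi_0 f+(\chi-\chi_0)f$. Here $\chi_0$ is a cutoff equal to $1$ on $U_0$ and supported in a tiny neighbourhood $U_0''$. The tail piece $(\chi-\chi_0)f$ is supported at positive distance from $U_0$, so by \cref{lem:RCD-tail} and the Gaussian bound \eqref{eq:Gaussian} its heat flow at $x\in U_0$ is at most $Ce^{-c/s}$. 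This is the source of the exponential error. For the main piece we run the comparison on a domain $W\supset U_0''$ with $\overline W\Subset\{\chi=1\}$. On $\partial W$ the heat flow of $\chi_0 f$ is again exponentially small for short times, and can be absorbed by $\beta(t)=Ce^{-c/(t-\tau_0)}$, which is nondecreasing and vanishes at $\tau_0$. For larger times $s$, in a compact range, I would iterate over short subintervals using the semigroup property, which keeps the constants uniform.

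\textbf{Touching argument.} Suppose $\sup_{\overline W\times[\tau_0,t_1]}(v-f)=\gamma>0$. At $t=\tau_0$ we have $v=\chi_0 f\le f$, and on $\partial W$ we have $v\le f$. By continuity the supremum is attained at an interior point $(x_\ast,t_\ast)$ with $t_\ast>\tau_0$; we take the first time at which the level $\gamma$ is reached. Then $v-\gamma$ touches $f$ from below in the backward parabolic sense of \cref{def:heat-test-supersolution}. The supersolution property forces $\mathcal H_v(x_\ast,t_\ast)\le0$, contradicting $\mathcal H_v\ge\lambda>0$. Continuity of $v$ down to $t=\tau_0$ needs care at points where $\chi_0 f(\cdot,\tau_0)$ is only continuous. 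There I would use $\Pheat_\sigma h\to h$ locally uniformly for continuous compactly supported $h$, which follows from the Gaussian bounds.

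\textbf{Main obstacle.} I expect the hardest part to be this boundary bookkeeping: making the lateral comparison hold with only an $e^{-c/s}$ loss, uniformly over the compact range of $t,t-s$. The interior touching step is comparatively routine.
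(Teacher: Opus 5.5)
Your argument is correct, and its core is the paper's: evolve the cut-off datum by the heat semigroup to get a heat test with zero residual, subtract a slack term so the residual becomes strictly positive, control the lateral boundary with the Gaussian bound, and reach a contradiction at a backward touching point.

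The difference is your split $\chi f=\chi_0 f+(\chi-\chi_0)f$ with the comparison run on a smaller domain $W$. You introduced it because a comparison on a domain inside $\{\chi=1\}$ does not close. The paper avoids this by putting the lateral boundary outside the support of the datum. It compares $h(z,r)=\Pheat_{r-(t-s)}[\chi f(\cdot,t-s)](z)$ with $f$ on $U_1\times[t-s,t]$. Since $\supp\chi\Subset U_1$, the Gaussian bound gives $h\le\omega_s:=C\|f\|_{L^\infty}e^{-c/s}$ on $(X\setminus U_1)\times[t-s,t]$. The normalization $f\ge0$ on $\overline{U_1}$ then gives $h-\omega_s\le0\le f$ on $\partial U_1$; that is exactly what this hypothesis is for. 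On the bottom face, $h=\chi f\le f$. Subtracting the constant $\omega_s$ and $\eta(r-t+s)$ closes the argument in one step.

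Your $W$-comparison is this same argument with $(\chi_0,W)$ in place of $(\chi,U_1)$. So the split, the second off-diagonal estimate for $(\chi-\chi_0)f$, and the time-dependent correction $\beta$ add nothing. The iteration over short subintervals is also unnecessary. The Gaussian bound is uniform for $\tau$ in a bounded range. Moreover, for $s$ bounded below the inequality is trivial, because $0\le\Pheat_s[\chi f]\le\sup f$ and $f\ge0$.

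The direct route is also slightly more general. It gives the bound at every $x\in U_1$ and never uses $\chi=1$ near $U_0$; that property is needed only later, in \cref{prop:heat-test-weak-formulation}. Your tail estimate, by contrast, requires $\chi_0=1$ on a neighborhood of $\overline{U_0}$, so that $\supp(\chi-\chi_0)$ stays at positive distance from $U_0$.
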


\begin{proof}
Fix $t-s\in(a,b)$ and put
\[
 h(x,r):=\Pheat_{r-(t-s)}[\chi f(\cdot,t-s)](x),
 \qquad t-s\le r\le t.
\]
The heat-kernel regularity of Jiang \cite{Jiang2015}, together with the
semigroup identity
\(\partial_\tau\Pheat_\tau g=\boldsymbol\Delta\Pheat_\tau g\) for
\(\tau>0\), shows that $h$ is a heat test on compactly contained cylinders
and satisfies
$(\boldsymbol\Delta-\partial_r)h=0$.  On the lower face,
$h=\chi f(\cdot,t-s)\le f$.  Since
$\dist(\supp\chi,X\setminus U_1)>0$, the Gaussian upper bound
\eqref{eq:Gaussian} from \cref{sec:prelim} gives local constants
$c,C>0$ such that
\[
 \sup_{z\in X\setminus U_1}\sup_{0<\tau\le s}
 \Pheat_\tau[\chi f(\cdot,t-s)](z)
 \le C\|f\|_{L^\infty(U_1\times(a,b))}e^{-c/s}=:\omega_s.
\]
Consequently $h-\omega_s\le0\le f$ on the lateral boundary of
$U_1\times[t-s,t]$.

Suppose that $h(x,t)>f(x,t)+\omega_s$ at some $x\in U_0$.  For sufficiently
small $\eta>0$, the function
\(h_\eta(z,r):=h(z,r)-\omega_s-\eta(r-t+s)\)
still exceeds $f$ somewhere at time $t$ and is below $f$ on the parabolic
boundary.  If $m_\eta:=\max(h_\eta-f)>0$, then $h_\eta-m_\eta$ touches $f$
from below at an interior point, whereas
\((\boldsymbol\Delta-\partial_r)(h_\eta-m_\eta)=\eta>0\),
contradicting the heat-test supersolution property.  This proves the lemma.
\end{proof}

\begin{proposition}[Weak supersolution property]
\label{prop:heat-test-weak-formulation}
Let
$f\in C(U\times I)\cap L^2_{\loc}(I;W^{1,2}_{\loc}(U))$ be locally bounded.
Assume that, on every compact cylinder, the maps $t\mapsto f(x,t)$ have
Lipschitz constants uniform in $x$.  If $f$ is a heat-test supersolution of
$(\boldsymbol\Delta-\partial_t)f\le0$, then, for every nonnegative
$\Phi\in\Lip_c(U\times I)$,
\begin{equation}\label{eq:elliptic-slice-weak}
 -\iint\ip{\nabla f}{\nabla\Phi}\,\dd\m\,\dd t
 +\iint f\,\partial_t\Phi\,\dd\m\,\dd t\le0.
\end{equation}
\end{proposition}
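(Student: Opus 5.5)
Starting from the heat-test supersolution property, we first derive a pointwise semigroup inequality, then take the limit of a time-mollified version in the weak sense. The natural tool is \cref{lem:heat-test-to-comparison}. Fix a nonnegative $\Phi\in\Lip_c(U\times I)$, and choose $U_0\Subset U_1\Subset U$ and $[a,b]\Subset I$ so that $\supp\Phi\subset U_0\times(a,b)$, together with a cutoff $\chi$ as in that lemma. Adding a constant to $f$ changes neither side of \eqref{eq:elliptic-slice-weak}: the gradient term is unaffected, and $\iint\partial_t\Phi=0$. So we may assume $f\ge0$ on $\overline{U_1}\times[a,b]$. The lemma then gives, for $x\in U_0$ and small $s>0$,
\[
 \Pheat_s[\chi f(\cdot,t-s)](x)-f(x,t)\le Ce^{-c/s}.
\]

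The plan is to multiply this inequality by $\Phi(x,t)/s$, integrate over $U\times I$, and let $s\downarrow0$. We split the left-hand side as
\[
 \frac{\Pheat_s[\chi f(\cdot,t-s)]-\chi f(\cdot,t-s)}{s}
 +\frac{f(\cdot,t-s)-f(\cdot,t)}{s},
\]
using $\chi=1$ on $U_0$. The second term, integrated against $\Phi$, becomes after the change of variables $t\mapsto t+s$
\[
 \iint f\,\frac{\Phi(x,t+s)-\Phi(x,t)}{s}\,\dd\m\,\dd t
 \longrightarrow\iint f\,\partial_t\Phi.
\]
This convergence holds by local boundedness of $f$ and uniform convergence of the difference quotients of the Lipschitz $\Phi$ (dominated convergence with $\partial_t\Phi\in L^\infty$ a.e.). For the first term we use the symmetry of $\Pheat_s$ in $L^2(\m)$ to move the heat flow onto the test function:
\[
 \int\Phi(\cdot,t)\,\frac{\Pheat_s g-g}{s}\,\dd\m
 =\int g\,\frac{\Pheat_s\Phi(\cdot,t)-\Phi(\cdot,t)}{s}\,\dd\m,
 \qquad g=\chi f(\cdot,t-s).
\]
Next we write $\Pheat_s\Phi-\Phi=\int_0^s\boldsymbol\Delta\Pheat_\tau\Phi\,\dd\tau$ in the weak sense and integrate by parts, which gives
\[
 -\frac1s\int_0^s\int\ip{\nabla g}{\nabla\Pheat_\tau\Phi(\cdot,t)}\,\dd\m\,\dd\tau .
\]
Since $\Phi(\cdot,t)\in W^{1,2}$, we have $\Pheat_\tau\Phi\to\Phi$ in $W^{1,2}$. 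Moreover $t\mapsto\chi f(\cdot,t-s)\to\chi f(\cdot,t)$ weakly in $L^2_{\loc}(I;W^{1,2})$ as $s\to0$, by translation continuity in $L^2(I;W^{1,2})$. Together these show that the integrated first term tends to $-\iint\ip{\nabla(\chi f)}{\nabla\Phi}=-\iint\ip{\nabla f}{\nabla\Phi}$, the last equality because $\chi\equiv1$ near $\supp\Phi$. Finally, the error term $\iint\Phi\,Ce^{-c/s}/s$ tends to $0$. Combining the three limits yields \eqref{eq:elliptic-slice-weak}.

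The main obstacle is the uniformity required in \cref{lem:heat-test-to-comparison}: its constants must be uniform for $t-s,t$ in a compact subinterval, which is where the Lipschitz-in-time hypothesis enters. A second concern is the rigorous justification that the function $h$ in that lemma is an admissible heat test, in particular continuity of its density $g$. If continuity fails, I would first mollify $f$ in time, using Steklov averages as in \cref{lem:Duhamel-boundary-tail}. This keeps the supersolution property up to an error controlled by the time-Lipschitz constant, and it produces the continuous right-hand sides needed to invoke the Gigli--Mondino--Semola equivalence \cite[Theorem~1.1]{GigliMondinoSemola2024}, as the paper announces. After that, the passage to the limit is the routine step above.
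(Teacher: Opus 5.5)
Your proof is correct, but after the common first step (\cref{lem:heat-test-to-comparison}) it takes a different route from the paper.

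\textbf{The paper's route.} It mollifies $f$ in time and integrates the semigroup comparison in the convolution variable. This gives $\Pheat_s[\chi f^{(\delta)}(\cdot,t)]\le f^{(\delta)}(\cdot,t+s)+Ce^{-c/s}$, from which it reads off the pointwise bound $\HeatDelta f^{(\delta)}(\cdot,t)\le\partial_tf^{(\delta)}(\cdot,t)$ on $U_0$. It then converts this heat-flow bound into a distributional one, slice by slice, via Gigli--Mondino--Semola \cite[Theorem~1.1]{GigliMondinoSemola2024}. The mollification is there precisely to make the right-hand side continuous, as that theorem requires. Only afterwards does the paper integrate in time and let $\delta\downarrow0$.

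\textbf{Your route.} You bypass the pointwise Laplacian altogether. You test the comparison against $\Phi/s$, use the $L^2$-symmetry of $\Pheat_s$, and write $\Pheat_s\Phi-\Phi=\int_0^s\boldsymbol\Delta\Pheat_\tau\Phi\,\dd\tau$. This is the classical convergence of approximating Dirichlet forms. You make it uniform along the shifted family $\chi f(\cdot,t-s)$ using strong continuity of $\Pheat_\tau$ on $W^{1,2}$ (with dominated convergence in $t$) and translation continuity in $L^2(I';W^{1,2})$. The integration by parts against $\boldsymbol\Delta\Pheat_\tau\Phi$ is legitimate because $\chi f(\cdot,t-s)\in W^{1,2}(X)$ for a.e.\ $t$, by local boundedness and the $L^2_tW^{1,2}_x$ hypothesis.

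\textbf{Comparison.} Your argument is more elementary and self-contained. It needs neither the time mollification nor the equivalence theorem with its continuity requirement. It uses only local boundedness of $f$ and its $L^2_tW^{1,2}_x$ bound. The paper's route, in exchange, stays within its framework of equivalent one-sided Laplacian bounds and yields a slice-wise distributional inequality for each $f^{(\delta)}$.

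\textbf{Two side remarks.} Two comments in your last paragraph are inaccurate, though they do not affect the argument.
\begin{itemize}
\item The time-Lipschitz hypothesis plays no role in your proof. The constants in \cref{lem:heat-test-to-comparison} come from the Gaussian bound and $\|f\|_{L^\infty}$, not from time regularity. The term $\iint f\,\partial_t\Phi$ needs only boundedness of $f$ and the Lipschitz property of $\Phi$.
\item The proposed fallback is unnecessary. That $h$ is a heat test for $r>t-s$ is settled inside the lemma by heat-kernel regularity, independently of any time regularity of $f$. In the paper, the time mollification serves only to feed the Gigli--Mondino--Semola theorem.
\end{itemize}
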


\begin{proof}
Choose \(U_0\Subset U_1\Subset U\) and \(I_0\Subset I_1\Subset I\)
so that $\supp\Phi\subset U_0\times I_0$, and choose
$\chi\in\Lip_c(U_1)$ with $\chi=1$ on a neighborhood of $U_0$.  Add a
constant to $f$ so that $f\ge0$ on $\overline{U_1}\times\overline{I_1}$; this
does not change the desired weak inequality.  Mollify $f$ in time to
produce a continuous right-hand side for the equivalence theorem of
Gigli--Mondino--Semola \cite[Theorem~1.1]{GigliMondinoSemola2024}.  Choose an
even function $\rho\in C_c^\infty((-1,1))$ such that $\rho\ge0$ and
$\int_{\mathbb R}\rho\,\dd r=1$, and put
\[
 \rho_\delta(r):=\frac1\delta\rho\!\left(\frac r\delta\right),
 \qquad
 0<\delta<\frac13\dist_{\mathbb R}(I_0,\partial I_1).
\]
Thus $\rho_\delta\in C_c^\infty((-\delta,\delta))$, $\rho_\delta\ge0$, and
$\int\rho_\delta\,\dd r=1$.  Whenever
$[t-\delta,t+\delta]\subset I_1$, define
\[
 f^{(\delta)}(x,t)=\int\rho_\delta(r)f(x,t-r)\,\dd r.
\]
For fixed $\delta$, the map $t\mapsto f^{(\delta)}(\cdot,t)$ is smooth in the
topology of locally uniform convergence, with
\[
 \partial_t f^{(\delta)}(x,t)
 =\int\rho_\delta'(r)f(x,t-r)\,\dd r.
\]
On every compact cylinder on which $f(x,\cdot)$ is uniformly
$L$-Lipschitz, one also has $|\partial_t f^{(\delta)}|\le L$.  Moreover, as
$\delta\downarrow0$,
\begin{equation}\label{eq:time-mollifier-convergence}
 f^{(\delta)}\longrightarrow f
 \quad\text{locally uniformly and in }
 L^2_{\loc}(I;W^{1,2}_{\loc}(U)).
\end{equation}
The first convergence follows from the continuity of $f$.  For the second,
restrict first to \(I'\Subset I\) and \(U'\Subset U\); the standard
approximate-identity theorem in the Banach space
\(L^2(I';W^{1,2}(U'))\) applies.  In particular, for almost every \(t\in I_0\),
\[
 (f^{(\delta)})^t\in W^{1,2}_{\loc}(U),\qquad
 \nabla (f^{(\delta)})^t
 =\int\rho_\delta(r)\nabla f^{t-r}\,\dd r
 \quad\text{locally in }L^2.
\]
The last identity follows first for finite Riemann sums from the linearity of
the differential and then for the convolution from its closedness in
\(L^2\).
Integrating \eqref{eq:localized-backward-semigroup} from
\cref{lem:heat-test-to-comparison} in the
convolution variable and then replacing $t$ by $t+s$ gives, uniformly for
$(x,t)\in U_0\times I_0$,
\begin{equation}\label{eq:localized-slice-heat-bound}
 \Pheat_s[\chi f^{(\delta)}(\cdot,t)](x)
 \le f^{(\delta)}(x,t+s)+C e^{-c/s}.
\end{equation}

Since \(\chi=1\) near \(U_0\), the function
\(\chi f^{(\delta)}(\cdot,t)\), extended by zero outside \(U_1\), is a
bounded global extension of the spatial slice \(f^{(\delta)}(\cdot,t)\) near
every point of \(U_0\).  Divide
\eqref{eq:localized-slice-heat-bound} by \(s\), subtract
\(f^{(\delta)}(x,t)/s\), and use \(e^{-c/s}=o(s)\).  The \(C^1\) time
dependence gives
\begin{equation}
 \limsup_{s\downarrow0}
 \frac{\Pheat_s[\chi f^{(\delta)}(\cdot,t)](x)
       -f^{(\delta)}(x,t)}s
 \le\partial_tf^{(\delta)}(x,t)
 \qquad (x,t)\in U_0\times I_0.
\end{equation}
Each spatial slice therefore has the heat-flow Laplacian bound
\[
 \HeatDelta f^{(\delta)}(\cdot,t)
 \le\partial_tf^{(\delta)}(\cdot,t)
 \qquad\text{on }U_0.
\]
The preceding properties of the time mollification show that both
$f^{(\delta)}$ and $\partial_tf^{(\delta)}$ are continuous and bounded on
$\overline{U_0}\times\overline{I_0}$.  In particular,
$\partial_tf^{(\delta)}(\cdot,t)$ is bounded and continuous on $U_0$ for every
fixed $t$.
Thus the continuous-right-hand-side hypothesis in the
heat-flow-to-distributional implication of Gigli--Mondino--Semola
\cite[Theorem~1.1]{GigliMondinoSemola2024} is satisfied.  It gives,
for every nonnegative
$\zeta\in\Lip_c(U_0)$,
\[
 -\int\ip{\nabla f^{(\delta)}}{\nabla\zeta}\,\dd\m
 \le\int\partial_tf^{(\delta)}\,\zeta\,\dd\m.
\]
Apply this to the time slices of $\Phi$, integrate in time, and integrate by
parts.  The two convergences in
\eqref{eq:time-mollifier-convergence} allow $\delta\downarrow0$ and prove the
proposition.
\end{proof}

By \cref{thm:HJ-supersolution,lem:HJ-envelope-regularity},
$-\mathcal V_\varepsilon$ is a heat-test supersolution and satisfies the
regularity hypotheses of
\cref{prop:heat-test-weak-formulation}.  Applying that proposition
to \(f=-\mathcal V_\varepsilon\) gives
\begin{equation}\label{eq:HJ-envelope-weak-subsolution}
 (\boldsymbol\Delta-\partial_t)\mathcal V_\varepsilon\ge0
\end{equation}
weakly on \(B_{5R}(\bar x)\times J_{\mathrm{amb}}\).

\medskip

Recall from \eqref{eq:lip-r-definition} that
\(\lip_r u^t(x)=\sup_{0<s<r}\sup_{y\in B_s(x)}F(x,y,t)/s\), and that its
scale-uniform energy estimate is \cref{prop:lip-r-energy}.

\begin{proposition}[Metric Hamilton--Jacobi bound]\label{prop:metric-HJ-bound}
For $0<\varepsilon<\varepsilon_0$,
\begin{equation}\label{eq:metric-HJ-bound}
 0\le\frac{\mathcal V_\varepsilon(x,t)}\varepsilon
 \le2e^{2|K|\tau_+}[\lip_{C_{\mathrm{vert}}\sqrt\varepsilon}u^t(x)]^2
\end{equation}
for \((x,t)\in B_{5R}(\bar x)\times J_{\mathrm{amb}}\), where
$C_{\mathrm{vert}}$ is the constant from
\cref{lem:maximizer-localization}, specifically
\eqref{eq:maximizer-sqrt-eps}.
\end{proposition}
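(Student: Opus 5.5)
The lower bound is immediate: the competitor \(y=x\) in \eqref{eq:HJ-envelope-def} gives \(\mathcal V_\varepsilon(x,t)\ge F(x,x,t)=0\).

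For the upper bound, fix \((x,t)\in B_{5R}(\bar x)\times J_{\mathrm{amb}}\) and a maximizing vertex \(y\). Put \(s:=\dist(x,y)\). By \cref{lem:maximizer-localization}, specifically \eqref{eq:maximizer-sqrt-eps}, we have \(s\le C_{\mathrm{vert}}\sqrt\varepsilon\). If \(s=0\), then \(\mathcal V_\varepsilon(x,t)=0\) and there is nothing to prove. Otherwise \(y\in \overline{B}_{s}(x)\subset B_{s'}(x)\) for every \(s'\in(s,C_{\mathrm{vert}}\sqrt\varepsilon)\).

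We first bound \(F\) by the local slope. Letting \(s'\downarrow s\) in the definition \eqref{eq:lip-r-definition}, and using continuity of \(u^t\) at \(y\) (approximate \(y\) by points of \(B_s(x)\) if needed), we obtain
\[
 F(x,y,t)\le s\,\Lambda,\qquad \Lambda:=\lip_{C_{\mathrm{vert}}\sqrt\varepsilon}u^t(x).
\]
The one delicate point is that the open-ball and strict-inequality conventions in \eqref{eq:lip-r-definition} must be checked. If \(s=C_{\mathrm{vert}}\sqrt\varepsilon\) exactly, this bound follows by a limiting argument: use \(\lip_r\) for \(r\) slightly larger and then let \(r\downarrow C_{\mathrm{vert}}\sqrt\varepsilon\). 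Alternatively, one observes that the strict maximization inequality \eqref{eq:maximizer-basic} actually gives \(s<C_{\mathrm{vert}}\sqrt\varepsilon\) whenever \(F(x,y,t)<2M\). I expect this convention bookkeeping to be the only real obstacle.

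Next we maximize the resulting quadratic. Setting \(c:=e^{-2Kt}\ge e^{-2|K|\tau_+}\), we get
\[
 \mathcal V_\varepsilon(x,t)=F(x,y,t)-\frac{c}{2\varepsilon}s^2
 \le \Lambda s-\frac{c}{2\varepsilon}s^2
 \le \sup_{\sigma\ge0}\Bigl(\Lambda\sigma-\frac{c}{2\varepsilon}\sigma^2\Bigr)
 =\frac{\varepsilon\Lambda^2}{2c}.
\]
Since \(1/c\le e^{2|K|\tau_+}\), dividing by \(\varepsilon\) yields
\[
 \frac{\mathcal V_\varepsilon(x,t)}{\varepsilon}\le \tfrac12 e^{2|K|\tau_+}\Lambda^2,
\]
which is stronger than the stated bound with constant \(2\). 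The factor \(2\) leaves room to absorb the boundary-case limiting argument, for example by replacing \(\Lambda\) with its value at a slightly larger radius.
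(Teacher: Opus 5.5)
Your proof is correct and follows the paper's route: localize the vertex via \eqref{eq:maximizer-sqrt-eps}, bound $F(x,y,t)\le s\Lambda$ with your $\Lambda$, then use the quadratic penalty. The one difference is the last step: you maximize $\Lambda s-\frac{c}{2\varepsilon}s^2$ over $s$ and get the sharper constant $\tfrac12$, whereas the paper uses $\mathcal V_\varepsilon\ge0$ to get $s\le2\varepsilon e^{2Kt}\Lambda$ and then $\mathcal V_\varepsilon\le F\le\Lambda s$, which gives $2$. Your approximation of $y$ by points of $B_s(x)$ (available since $X$ is geodesic), together with continuity of $u^t$, already handles the case $s=C_{\mathrm{vert}}\sqrt\varepsilon$. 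So the remark about spending the constant's slack on a larger radius is unnecessary, and that slack alone would not compare $\lip_{r'}u^t$ with $\lip_r u^t$ anyway.
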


\begin{proof}
Let \(y\) be a maximizing vertex in \eqref{eq:HJ-envelope-def} from
\cref{sec:HJ-final}, and set \(s:=\dist(x,y)\).  The localization estimate
\eqref{eq:maximizer-sqrt-eps} from \cref{lem:maximizer-localization} gives
\(s\le C_{\mathrm{vert}}\sqrt\varepsilon\).
If \(s=0\), the assertion follows directly.  If \(s>0\), then the definition
\eqref{eq:lip-r-definition} from \cref{subsec:two-point-estimates} gives
\(F(x,y,t)\le\lip_{C_{\mathrm{vert}}\sqrt\varepsilon}u^t(x)s\).
Since the competitor \(y=x\) shows that \(\mathcal V_\varepsilon\ge0\),
the maximizing identity also gives
\(0\le\mathcal V_\varepsilon(x,t)=F(x,y,t)-e^{-2Kt}s^2/(2\varepsilon)\).
Thus \(s\le2\varepsilon e^{2Kt}
\lip_{C_{\mathrm{vert}}\sqrt\varepsilon}u^t(x)\).
Substituting this bound into the preceding estimate for \(F\) and using
\(e^{2Kt}\le e^{2|K|\tau_+}\) proves the proposition; this is the
calculation in \cite[Lemma~7.7]{ZhangZhu2026}.
\end{proof}

\begin{theorem}[Local spatial Lipschitz regularity]\label{thm:spatial-Lipschitz}
Let $B_{16R}(\bar x)\Subset\Omega$ and $0<t_*<T$.  There is $C<\infty$ such
that
\begin{equation}\label{eq:local-spatial-Lipschitz}
 \dist_Y(u(x,t),u(y,t))\le C\dist(x,y)
\end{equation}
for $x,y\in B_R(\bar x)$ and $t\in[t_*,T]$.  The constant depends only on
$K,N,R,t_*,T$, the fixed local doubling--Poincar\'e and volume data on
$B_{16R}(\bar x)$, the image radius $M$ from
\cref{subsec:semigroup-HMHF}, specifically \eqref{eq:bounded-image-ball}, and
$\FlowE(u_0)$.
\end{theorem}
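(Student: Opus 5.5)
The plan is to combine the weak subsolution property \eqref{eq:HJ-envelope-weak-subsolution} of \(\mathcal V_\varepsilon\) with the scalar local boundedness estimate \eqref{eq:expanded-local-boundedness}. Then we bound the integral of \(\mathcal V_\varepsilon/\varepsilon\) through the metric Hamilton--Jacobi bound \eqref{eq:metric-HJ-bound} and the scale-uniform energy estimate \eqref{eq:lip-r-energy}. Finally we let \(\varepsilon\downarrow0\).

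\textbf{Step 1: local boundedness of \(\mathcal V_\varepsilon/\varepsilon\).} By \eqref{eq:HJ-envelope-weak-subsolution} and \cref{lem:HJ-envelope-regularity}, the nonnegative function \(q:=\mathcal V_\varepsilon/\varepsilon\) lies in \(L^2_{\loc}(J_{\mathrm{amb}};W^{1,2}_{\loc}(B_{5R}(\bar x)))\) and satisfies \((\boldsymbol\Delta-\partial_t)q\ge0\) weakly. Cover \(B_{2R}(\bar x)\times[t_*,T]\) by finitely many backward cylinders \(Q_{r}^-(z_0)\) with one fixed radius \(r\le R\), chosen so that the doubled cylinders \(Q_{2r}^-(z_0)\) lie in \(B_{4R}(\bar x)\times J_{\mathrm{amb}}\). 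Part~(i) of \cref{prop:scalar-parabolic-estimates} with \(A=0\) then gives
\[
 \sup_{B_{2R}(\bar x)\times[t_*,T]}\frac{\mathcal V_\varepsilon}{\varepsilon}
 \le \frac{C}{r^2\m(B_{2r}(x_0))}\int_{J_{\mathrm{amb}}}\int_{B_{4R}(\bar x)}\frac{\mathcal V_\varepsilon}{\varepsilon}\,\dd\m\,\dd t .
\]
Continuity of \(\mathcal V_\varepsilon\) lets us replace the essential supremum by the pointwise one. The constants depend only on the local data and are independent of \(\varepsilon\).

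\textbf{Step 2: the integral bound.} By \eqref{eq:metric-HJ-bound}, for \(C_{\mathrm{vert}}\sqrt\varepsilon<R/4\),
\[
 \int_{B_{4R}}\frac{\mathcal V_\varepsilon}{\varepsilon}\,\dd\m
 \le 2e^{2|K|\tau_+}\int_{B_{4R}}[\lip_{C_{\mathrm{vert}}\sqrt\varepsilon}u^t]^2\,\dd\m .
\]
Apply \cref{prop:lip-r-energy}, after covering \(B_{4R}\) by balls whose fourfold enlargements lie in \(B_{16R}\). This bounds the right-hand side, for a.e.\ \(t\), by
\[
 C\int_{B_{16R}}e_{u^t}\,\dd\m+Cc_L^2R^2\m(B_{4R}).
\]
Integrating in time and using energy monotonicity \eqref{eq:EVI-energy-monotonicity} gives a bound \(C_1\) depending on \(\FlowE(u_0)\) and \(c_L\). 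The constant \(c_L\) is at most \(C_{\mathrm{loc}}L\) by \cref{thm:RCD-time-Lipschitz}. The one step requiring care is to make \(L\) depend only on the stated data. For this I would use the standard EVI regularization estimate \(|\dot u^\tau|^2\le\FlowE(u_0)/\tau\)-type bounds (Mayer), so that \(L^2\lesssim\FlowE(u_0)/t_*\). This gives \(\mathcal V_\varepsilon(x,t)\le C_1\varepsilon\) on \(B_{2R}(\bar x)\times[t_*,T]\), uniformly for small \(\varepsilon\).

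\textbf{Step 3: extracting the Lipschitz bound.} Fix \(x,y\in B_R(\bar x)\) and \(t\in[t_*,T]\). Since \(y\in\overline{B}_{6R}(\bar x)\) is an admissible competitor in \eqref{eq:HJ-envelope-def},
\[
 F(x,y,t)\le \mathcal V_\varepsilon(x,t)+\frac{e^{-2Kt}}{2\varepsilon}\dist^2(x,y)
 \le C_1\varepsilon+\frac{C_2}{2\varepsilon}\dist^2(x,y).
\]
Optimize by choosing \(\varepsilon=\dist(x,y)/\sqrt{C_1C_2}\). This is allowed when \(\dist(x,y)\) is small enough that \(\varepsilon<\varepsilon_0\) and \(C_{\mathrm{vert}}\sqrt\varepsilon<R/4\). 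The choice yields \(F(x,y,t)\le C\dist(x,y)\). For larger \(\dist(x,y)\), use the trivial bound \(F\le 2M\le (2M/\delta_0)\dist(x,y)\).

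I expect the main obstacle to be the \(\varepsilon\)-uniformity in Step 1. The subsolution estimate must be applied with constants that do not depend on the \(\varepsilon\)-dependent Sobolev norm of \(\mathcal V_\varepsilon\); that norm is used only qualitatively, for admissibility. The second delicate point is tracking the dependence of \(c_L\) on \(\FlowE(u_0)\), as described in Step 2.
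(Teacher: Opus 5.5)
Your proposal is correct and follows the paper's proof essentially step for step. The paper also obtains $\varepsilon$-uniform local boundedness of $\mathcal V_\varepsilon/\varepsilon$ from \eqref{eq:expanded-local-boundedness}, and the $L^1$ bound from \eqref{eq:metric-HJ-bound} and \eqref{eq:lip-r-energy} with $c_L\le C_{\mathrm{loc}}L$ and $L^2\le 2\FlowE(u_0)/t_*$ from Mayer. It then chooses $\varepsilon$ proportional to $\dist(x,y)$, and the only difference is at the very end. The paper extends the short-range bound to all $x,y\in B_R(\bar x)$ by subdividing a minimizing geodesic, which stays in $B_{3R}(\bar x)$. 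You instead use $F\le 2M\le(2M/\delta_0)\dist(x,y)$ for $\dist(x,y)\ge\delta_0$. Both are valid; yours adds a factor $M/\delta_0$ to the constant, which is still within the stated dependence.
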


\begin{proof}
Choose bounded open intervals $J,J'$ such that
\begin{equation}\label{eq:HJ-fixed-time-intervals}
 [t_*,T]\Subset J\Subset J'\Subset J_{\mathrm{amb}},
\end{equation}
and set
\begin{equation}\label{eq:HJ-fixed-cylinders}
 Q:=B_{3R}(\bar x)\times J,
 \qquad Q':=B_{4R}(\bar x)\times J'.
\end{equation}
For $q_\varepsilon:=\mathcal V_\varepsilon/\varepsilon$, the competitor
$y=x$ in the envelope definition \eqref{eq:HJ-envelope-def} from
\cref{sec:HJ-final} gives $q_\varepsilon\ge0$.  The weak subsolution
conclusion obtained from
\cref{thm:HJ-supersolution,lem:HJ-envelope-regularity,prop:heat-test-weak-formulation}, namely
\eqref{eq:HJ-envelope-weak-subsolution}, gives the differential inequality.
Thus
\begin{equation}\label{eq:qepsilon-subsolution}
 q_\varepsilon\ge0,\qquad
 (\boldsymbol\Delta-\partial_t)q_\varepsilon\ge0
 \quad\text{weakly on }B_{5R}(\bar x)\times J_{\mathrm{amb}}.
\end{equation}
Moreover, \cref{prop:metric-HJ-bound}, specifically
\eqref{eq:metric-HJ-bound}, gives
\begin{equation}\label{eq:Veps-lipr-control}
 0\le \frac{\mathcal V_\varepsilon}{\varepsilon}
 \le 2e^{2|K|\tau_+}
 [\lip_{C_{\mathrm{vert}}\sqrt\varepsilon}u^t]^2
 \quad\text{on }Q'.
\end{equation}

By the properness in \cref{sec:prelim}, cover
$\overline{B}_{4R}(\bar x)$ by finitely many balls $B_\rho(x_i)$, where
$\rho>0$ is fixed so that
\begin{equation}\label{eq:HJ-covering-balls}
 B_{16\rho}(x_i)\Subset B_{12R}(\bar x)
 \qquad\text{for every }i.
\end{equation}
Decrease $\varepsilon_0$, if necessary, so that
$C_{\mathrm{vert}}\sqrt{\varepsilon_0}<\rho/4$.  Integrating
\eqref{eq:Veps-lipr-control} and applying
\cref{prop:lip-r-energy}, specifically \eqref{eq:lip-r-energy}, on the
balls in \eqref{eq:HJ-covering-balls} yield, for
$0<\varepsilon<\varepsilon_0$,
\begin{equation}\label{eq:Veps-L1-bound}
 \int_{Q'}q_\varepsilon\,\dd\m\,\dd t
 \le C_{\mathrm{loc}}\left[
 \int_{J'}\int_{B_{16R}(\bar x)}e_{u^t}\,\dd\m\,\dd t
 +c_L^2R^2\m(B_{16R}(\bar x))|J'|\right].
\end{equation}

By \eqref{eq:HJ-fixed-time-intervals}--\eqref{eq:HJ-fixed-cylinders}, finitely
many backward cylinders cover $Q$ and have doubled cylinders compactly
contained in $Q'$.  Apply Part~(i) of
\cref{prop:scalar-parabolic-estimates}, specifically
\eqref{eq:expanded-local-boundedness} with $A=0$, to $q_\varepsilon$ on each
of them, using
\eqref{eq:qepsilon-subsolution} and \eqref{eq:Veps-L1-bound}.  Since
$q_\varepsilon$ is continuous by \cref{lem:HJ-envelope-regularity}, the
resulting essential supremum is an ordinary supremum, and
\begin{equation}\label{eq:epsilon-uniform-bound}
 \sup_Q\frac{\mathcal V_\varepsilon}{\varepsilon}
 \le C_{\mathrm{loc}}\left[
 \int_{J'}\int_{B_{16R}(\bar x)}e_{u^t}\,\dd\m\,\dd t
 +c_L^2R^2\m(B_{16R}(\bar x))|J'|\right]
 =:C_{\mathrm{HJ}}.
\end{equation}

For $x\in B_{3R}(\bar x)$, $t\in J$, and
$y\in\overline{B}_{6R}(\bar x)$, the envelope definition
\eqref{eq:HJ-envelope-def} from \cref{sec:HJ-final} and
\eqref{eq:epsilon-uniform-bound} give
\begin{equation}\label{eq:small-pair-HJ}
 \frac{F(x,y,t)}\varepsilon-
 \frac{e^{-2Kt}\dist^2(x,y)}{2\varepsilon^2}
 \le\frac{\mathcal V_\varepsilon(x,t)}\varepsilon\le C_{\mathrm{HJ}}.
\end{equation}
Choose $0<\varepsilon_1<\min\{\varepsilon_0,R\}$.  If
$x\in B_{3R}(\bar x)$, $y\in B_{4R}(\bar x)$, and
$0<\dist(x,y)<\varepsilon_1$, set $\varepsilon=\dist(x,y)$ in
\eqref{eq:small-pair-HJ}.  Then
\begin{equation}\label{eq:local-small-pair-lipschitz}
 F(x,y,t)
 \le C_{\mathrm{HJ}}\dist(x,y)
   +\frac{e^{-2Kt}}{2}\dist(x,y)
 \le C_1\dist(x,y)
 \qquad (t\in J),
\end{equation}
where $C_1:=C_{\mathrm{HJ}}+\frac12\sup_{t\in J}e^{-2Kt}$.

For arbitrary \(x,y\in B_R(\bar x)\), let \(\sigma\) be a minimizing
geodesic from \(x\) to \(y\).  For every point \(z\) on \(\sigma\),
\[
 \dist(z,\bar x)\le \dist(z,x)+\dist(x,\bar x)
 < \dist(x,y)+R<3R,
\]
so the whole geodesic lies in \(B_{3R}(\bar x)\).  Subdivide \(\sigma\) into
segments of length below \(\varepsilon_1\).  Each pair of consecutive
subdivision points lies in the domain of
\eqref{eq:local-small-pair-lipschitz} and has distance below
\(\varepsilon_1\).  Summing that estimate and
using the triangle
inequality in \(Y\) gives
\[
 \dist_Y(u(x,t),u(y,t))\le C_1\sum_j\dist(z_j,z_{j+1})
 =C_1\dist(x,y),
\]
which proves the spatial Lipschitz estimate.

The dependence on the image radius $M$ enters through $C_{\mathrm{vert}}$; the
remaining quantities in \eqref{eq:epsilon-uniform-bound} satisfy
\begin{equation}\label{eq:local-constant-bounds}
 c_L\le C_{\mathrm{loc}}\sqrt{\frac{\FlowE(u_0)}{t_*}},\qquad
 \int_{t_*/2}^{T+1}\int_{B_{16R}(\bar x)}e_{u^t}\,\dd\m\,\dd t
 \le\frac{2(T+1-t_*/2)}{n+2}\FlowE(u_0).
\end{equation}
Indeed, \eqref{eq:pointwise-time-Lip} from
\cref{thm:RCD-time-Lipschitz} gives $c_L\le C_{\mathrm{loc}}L$.
By Mayer \cite[Theorem~2.17, Corollary~2.18, and
Theorem~2.36]{Mayer1998}, the metric speed is nonincreasing and its square is
the rate of energy dissipation.  Hence
$L^2\le 2\FlowE(u_0)/t_*$.  The second inequality in
\eqref{eq:local-constant-bounds} follows from energy monotonicity
\eqref{eq:EVI-energy-monotonicity} from
\cref{subsec:semigroup-HMHF} and the normalization
\(\FlowE=(n+2)\MapE/2\).  The localization estimate from
\cref{lem:maximizer-localization}, specifically
\eqref{eq:maximizer-sqrt-eps}, together with the local formulas
\eqref{eq:epsilon-uniform-bound} and \eqref{eq:local-constant-bounds}, gives
the asserted dependence of the
Lipschitz constant.
\end{proof}

\medskip

\begin{proof}[Proof of \cref{main-thm}]
\emph{(i) Local Lipschitz regularity.}

We use the H\"older representative given by
\cref{thm:RCD-holder-map-flow}.
The assertion is local, so fix a compact cylinder
\(B_{16R}(x_0)\Subset\Omega\), \(0<t_*<T<\infty\).
The positive-time estimate \cref{thm:RCD-time-Lipschitz} gives
\(\dist_Y(u(z,t),u(z,s))\le c_L|t-s|\) for
\(z\in B_R(x_0)\) and \(s,t\in[t_*,T]\).
The spatial estimate \cref{thm:spatial-Lipschitz}, applied with the same
localization ball, gives a constant $C_S<\infty$ such that
\(\dist_Y(u(x,t),u(y,t))\le C_S\dist(x,y)\) for
\(x,y\in B_R(x_0)\) and \(t\in[t_*,T]\).
Hence, for \(x,y\in B_R(x_0)\) and \(s,t\in[t_*,T]\),
\begin{align*}
 \dist_Y(u(x,t),u(y,s))
 &\le\dist_Y(u(x,t),u(y,t))
       +\dist_Y(u(y,t),u(y,s))\\
 &\le C_S\dist(x,y)+c_L|t-s|.
\end{align*}
This gives the space--time Lipschitz estimate, with the
dependence listed in \cref{main-thm} through the larger ball
\(B_{16R}(x_0)\).  A finite covering yields local space--time Lipschitz
regularity on \(\Omega\times(0,\infty)\).

The quantitative bounds \eqref{eq:local-constant-bounds} from
\cref{thm:spatial-Lipschitz}, together with
the image bound \eqref{eq:bounded-image-ball} from
\cref{subsec:semigroup-HMHF}, give the dependence stated in part~(i).

\medskip

\emph{(ii) The Eells--Sampson-type Bochner inequality.}

Let $p\ge2$ be an integer and let $q=p/(p-1)$.  For $\varepsilon>0$ define
\begin{equation}\label{eq:HJ-envelope-def-p}
 \mathcal V_{\varepsilon,p}(x,t):=
 \max_{y\in\overline{B}_{6R}(x_0)}
 \left\{
 F(x,y,t)-\frac{e^{-pKt}}{p\varepsilon^{p-1}}\dist^p(x,y)
 \right\}.
\end{equation}

Let \(y\) be a maximizing vertex in \eqref{eq:HJ-envelope-def-p} for
\((x,t)\in B_{5R}(x_0)\times[t_*,T]\), and put \(s:=\dist(x,y)\).  Since
\(y=x\) is an admissible competitor, \(\mathcal V_{\varepsilon,p}\ge0\), and
the bounded-image estimate \eqref{eq:bounded-image-ball} from
\cref{subsec:semigroup-HMHF} gives
\(e^{-pKt}s^p/(p\varepsilon^{p-1})\le F(x,y,t)\le2M\).
For \(0<\varepsilon\le1\), it follows that
\(s\le e^{|K|T}(2Mp)^{1/p}\varepsilon^{1-1/p}\le C_0\sqrt\varepsilon\),
where \(C_0\) is independent of \(p\ge2\) and \(\varepsilon\).  Decrease
\(\varepsilon_0>0\) so that \(C_0\sqrt{\varepsilon_0}<R/2\).  Then every
maximizing vertex satisfies \(s<R/2\) and, in particular, belongs to
\(B_{6R}(x_0)\).  Since
\(B_{8R}(x)\subset B_{13R}(x_0)\Subset\Omega\), part~(i), applied at \(x\)
with radius \(R/2\), gives a uniform constant
\(C_S<\infty\) such that \(F(x,y,t)\le C_Ss\).  Combining this estimate
with the maximizing inequality above, if \(s>0\), yields
\begin{equation}\label{eq:p-maximizer-localization}
 s\le p^{1/(p-1)}e^{qKt}C_S^{1/(p-1)}\varepsilon
 \le C_*\varepsilon,
\end{equation}
where \(C_*\) is independent of \(p\ge2\) and \(\varepsilon\); the same
conclusion is immediate when \(s=0\).  Consequently, when a heat test touches
\(-\mathcal V_{\varepsilon,p}\) from below, the corresponding auxiliary
function has a local minimum in an ambient product neighborhood, as required
by \cref{prop:normalized-rcd-perturbation}.

On an \RCD{} source, the normalized perturbation in
\cref{prop:normalized-rcd-perturbation} replaces the contact-selection step in
\cite[Section~8, especially Lemma~8.2]{ZhangZhu2026}.  We first verify that
the perturbation proposition applies to the \(p\)-power penalty.  On a fixed
product cylinder \(U\times V\times I\), \cref{lem:diagonal-power} gives
\[
 \DeltaZ\!\left[
  \frac{e^{-pKt}}{p\varepsilon^{p-1}}\dist^p(x,y)
 \right]
 \le \frac{e^{p|K|\sup I}}{\varepsilon^{p-1}}
 C_p(K,N,R_{UV})\,\m_{\Z}.
\]
Together with the bounds for \(-F\) in \cref{prop:F-two-point} and for a
heat test in \cref{def:heat-test}, this is the slice upper-Laplacian bound
required by \cref{prop:normalized-rcd-perturbation}.  At the selected contact
point, use an optimal \(W_p\)-coupling.  The contraction
\eqref{eq:Wp-contraction} from \cref{sec:prelim} yields
\[
 \int_{\Z}\dist^p(x,y)\,\dd\Pi_s(x,y)
 \le e^{-pKs}\dist^p(x_0,y_0),
\]
and hence
\begin{align*}
 &\frac{e^{-pK(t_0-s)}}{p\varepsilon^{p-1}}
   \int_{\Z}\dist^p(x,y)\,\dd\Pi_s(x,y)
   -\frac{e^{-pKt_0}}{p\varepsilon^{p-1}}\dist^p(x_0,y_0)\\
 &\hspace{35mm}\le0.
\end{align*}
Thus the \(p\)-penalty satisfies precisely the estimate
\eqref{eq:I1-limsup} in \cref{lem:four-term-estimates}.  The estimates
\eqref{eq:I2-limsup}--\eqref{eq:I4-limsup} from that lemma do not involve
the exponent of the penalty.  The contradiction step of
\cite[Lemma~8.2]{ZhangZhu2026}, with \eqref{eq:I1-limsup} supplied by the
preceding $p$-penalty estimate, therefore shows that
\(-\mathcal V_{\varepsilon,p}\) is a heat-test supersolution.  The argument of
\cref{lem:HJ-envelope-regularity}, with the penalty in
\eqref{eq:HJ-envelope-def-p}, supplies the local Sobolev and time regularity;
hence \cref{prop:heat-test-weak-formulation} gives
\begin{equation}\label{eq:p-envelope-subsolution}
 \mathcal V_{\varepsilon,p}\ge0,
 \qquad
 (\boldsymbol\Delta-\partial_t)\mathcal V_{\varepsilon,p}\ge0
\end{equation}
weakly.

Set \(g_{\varepsilon,p}:=\mathcal V_{\varepsilon,p}/\varepsilon\).  The
localization estimate \eqref{eq:p-maximizer-localization} gives
\begin{equation}\label{eq:p-envelope-uniform-bound}
 0\le g_{\varepsilon,p}\le C
\end{equation}
on every smaller cylinder, with \(C\) independent of \(p\) and
\(\varepsilon\), since
\(g_{\varepsilon,p}\le F(x,y,t)/\varepsilon
\le C_SC_*\).
The local Caccioppoli estimate applied to
\eqref{eq:p-envelope-subsolution}, together with
\eqref{eq:p-envelope-uniform-bound}, makes
\(\{g_{\varepsilon,p}\}_{\varepsilon,p}\) locally bounded in \(V_2\),
uniformly in both parameters; this is the argument in
\cite[proof of Theorem~8.1, equations~(8.8)--(8.10)]{ZhangZhu2026}.
Moreover, the metric Hamilton--Jacobi limit
\cite[Lemma~4.4]{ZhangZhongZhu2019}, applied to the metric
\(\dist_t=e^{-Kt}\dist\), gives the precise almost-everywhere limit
\begin{equation}\label{eq:p-envelope-limit}
 \lim_{\varepsilon\downarrow0}g_{\varepsilon,p}(x,t)
 =\frac{e^{qKt}}q\bigl(\lip_x u(x,t)\bigr)^q.
\end{equation}
Here the sign is positive because \(\mathcal V_{\varepsilon,p}\) is the
negative of the infimum envelope used in the cited formula.  The bound
\eqref{eq:p-envelope-uniform-bound}, weak compactness, and
\eqref{eq:p-envelope-limit} show that
\[
 \frac{e^{qKt}}q(\lip_x u)^q\in V_{2,\loc},
 \qquad
 (\boldsymbol\Delta-\partial_t)
 \left[\frac{e^{qKt}}q(\lip_x u)^q\right]\ge0.
\]
Letting \(p\to\infty\), so that \(q\downarrow1\), the local
\(L^\infty\)-bound from \cref{main-thm}(i) gives
\(e^{qKt}(\lip_x u)^q/q\to
e^{Kt}\lip_x u\) strongly in local \(L^2\).  The uniform \(V_2\)
bound and weak lower semicontinuity then yield
\begin{equation}\label{eq:linear-lip-bochner}
 \lip_x u\in V_{2,\loc}\cap L^\infty_{\loc},
 \qquad
 (\boldsymbol\Delta-\partial_t)\lip_x u
 \ge K\,\lip_x u
\end{equation}
in the sense of distributions.

To pass from \eqref{eq:linear-lip-bochner} to the inequality for the square,
put \(\lambda:=\lip_x u\) and fix a nonnegative
\(\phi\in\Lip_c(\Omega\times(0,\infty))\).  On a cylinder containing
\(\supp\phi\), spatial Lipschitz approximation and temporal Steklov averaging
extend \eqref{eq:linear-lip-bochner} to bounded compactly supported energy
tests.  If \(\lambda_h\) denotes the temporal Steklov average, apply the
averaged inequality with test \(2\min\{\lambda_h,R\}\phi\) and then let
\(h\downarrow0\).  The Steklov averages converge to \(\lambda\) strongly in the local
\(L^2_tW^{1,2}_x\) topology, which permits passage to both terms containing
\(\lambda\).
Since \(\lambda\) is locally bounded, choose \(R\) larger than its
\(L^\infty\)-bound on the cylinder.  The Sobolev chain rule gives
\[
 -\int\!\!\int\ip{\nabla\lambda^2}{\nabla\phi}\,\dd\m\,\dd t
 +\int\!\!\int\lambda^2\partial_t\phi\,\dd\m\,\dd t
 \ge2\int\!\!\int
 \bigl(|\mathrm D\lambda|^2+K\lambda^2\bigr)\phi\,\dd\m\,\dd t.
\]
This is precisely
\[
 (\boldsymbol\Delta-\partial_t)(\lip_x u)^2
 \ge 2|\mathrm D\lip_x u|^2+2K(\lip_x u)^2.
\]
\end{proof}

\section*{Declaration}

The authors used OpenAI Codex during manuscript preparation for language
editing, cross-reference and bibliography checks, and suggestions on
manuscript organization.
The authors independently checked all mathematical statements and proofs and
take full responsibility for the manuscript.

\end{document}